\documentclass{amsart}
\usepackage{dsfont}
\usepackage{graphicx}
\usepackage[usenames,dvipsnames,svgnames,table]{xcolor}
\usepackage[utf8]{inputenc}
\usepackage[T1]{fontenc}
\usepackage{ tipa }
\usepackage{textgreek, bm}
\usepackage{color}
\usepackage{mathrsfs, enumitem}  
\usepackage{stmaryrd}  
\usepackage{amsmath,amsxtra,amsthm,amssymb,xr,enumerate,fullpage}
\usepackage[all]{xy}
\usepackage[bbgreekl]{mathbbol}
\usepackage{bbm}
\usepackage{tikz}
\newcommand*\circled[1]{\tikz[baseline=(char.base)]{
            \node[shape=circle,draw,inner sep=2pt] (char) {#1};}}

\makeatletter
\newcommand{\mylabel}[2]{#2\def\@currentlabel{#2}\label{#1}}
\makeatother
\newcommand{\refsymbolone}{{\ensuremath\circled{1}}}
\newcommand{\refsymboltwo}{{\ensuremath\circled{2}}}
\newcommand{\refsymbolthree}{{\ensuremath\circled{3}}}
\newcommand{\refsymbolfour}{{\ensuremath\circled{4}}}

\DeclareMathSymbol{\invques}{\mathord}{operators}{`>}
\DeclareUnicodeCharacter{00BF}{\tmquestiondown}
\DeclareRobustCommand{\tmquestiondown}{%
  \ifmmode\invques\else\textquestiondown\fi
}
\usepackage{verbatim}


\newtheorem{theorem}{Theorem}[section]
\newtheorem{lemma}[theorem]{Lemma}

\newtheorem{proposition}[theorem]{Proposition}
\newtheorem{corollary}[theorem]{Corollary}
\newtheorem{defn}[theorem]{Definition}

\newtheorem{remark}[theorem]{Remark}

\newtheorem{thmx}{Theorem}

\setlength{\parskip}{.5\baselineskip}
\newcommand{\pr}{\mathrm{pr}}

\newcommand{\Gal}{\operatorname{Gal}}
\newcommand{\Fil}{\operatorname{Fil}}

\newcommand{\DD}{\mathbb{D}}

\newcommand{\QQ}{\mathbb{Q}}
\newcommand{\Qp}{\mathbb{Q}_p}
\newcommand{\Zp}{\mathbb{Z}_p}
\newcommand{\ZZ}{\mathbb{Z}}
\renewcommand{\AA}{\mathbb{A}}

\newcommand{\bbG}{\mathbb{G}}

\newcommand{\ord}{\mathrm{ord}}

\newcommand{\cH}{\mathcal{H}}
\newcommand{\cO}{\mathcal{O}}
\newcommand{\cX}{\mathcal{X}}

\newcommand{\Iw}{\mathrm{Iw}}

\newcommand{\GL}{\mathrm{GL}}

\newcommand{\cyc}{\textup{cyc}}

\newcommand{\Hom}{\mathrm{Hom}}

\newcommand{\LL}{\Lambda}
\newcommand{\TT}{\mathbb{T}}

\newcommand{\f}{\textup{\bf f}}

\newcommand{\lra}{\longrightarrow}

\newcommand{\res}{\textup{res}}
\newcommand{\TSym}{\textup{TSym}}

\newcommand{\cF}{\mathcal{F}}

\newcommand{\Tw}{\mathrm{Tw}}

\newcommand{\mom}{\mathrm{mom}}


\newcommand{\cC}{\mathcal{C}}

\newcommand{\cW}{\mathcal{W}}

\newcommand{\bz}{\mathbf{z}}


\newcommand{\DdagrigA}{\mathbf D^\dagger_{\mathrm{rig},A}}
\newcommand{\DdagrigE}{\mathbf D^\dagger_{\mathrm{rig},E}}
\newcommand{\bD}{\mathbf{D}}

\newcommand{\CR}{\mathscr{R}}

\newcommand{\Spm}{\mathrm{Spm}}
\newcommand{\Dst}{{\mathbf D}_{\mathrm{st}}}
\newcommand{\Dc}{{\mathbf D}_{\mathrm{cris}}}
\newcommand{\DCc}{{\mathscr D}_{\mathrm{cris}}}
\newcommand{\Exp}{\mathrm{Exp}}
\newcommand{\CH}{\mathscr H}
\newcommand{\Log}{\mathfrak{L}}
\newcommand{\spe}{\mathrm{sp}}

\newcommand{\umT}{{\mathscr T}}
\newcommand{\uLambda}{\underline\Lambda}
\newcommand{\LLU}{{O}^\circ_U}
\newcommand{\LLUp}{{O}_U}
\newcommand{\sheafLLU}{{\mathscr O}^\circ_U} 
\newcommand{\w}{w}
\newcommand{\M}{M}
\newcommand{\bP}{\mathscr P}
\newcommand{\bV}{\mathscr V}
\newcommand{\cusp}{\textrm{cusp}}

\usepackage{hyperref}

\begin{document}

\title{ {I}\lowercase{nterpolation of} {B}\lowercase{eilinson}--{K}\lowercase{ato elements and} \lowercase{$p$-adic} {$L$}-\lowercase{functions}}

\begin{abstract}
Our objective in this series of two articles, of which the present article is the first, is to give a Perrin-Riou-style construction of $p$-adic $L$-functions (of Bella\"iche and Stevens) over the eigencurve. As the first ingredient, we interpolate the Beilinson--Kato elements over the eigencurve (including the neighborhoods of $\theta$-critical points). Along the way, we prove \'etale variants of Bella\"iche's results describing the local properties of the eigencurve. We also develop the local framework to construct and establish the interpolative properties of these $p$-adic $L$-functions away from $\theta$-critical points.
\end{abstract}

\author{Denis Benois}
\address{Denis Benois\newline Institut de Math\'ematiques, Universit\'e de Bordeaux  \\ 351, Cours de la Lib\'eration 33405  \\ Talence, France
}
\email{denis.benois@math.u-bordeaux1.fr}

\author{K\^az\i m B\"uy\"ukboduk}
\address{K\^az\i m B\"uy\"ukboduk\newline UCD School of Mathematics and Statistics\\ University College Dublin\\ Ireland}
\email{kazim.buyukboduk@ucd.ie}

\dedicatory{ To Bernadette Perrin-Riou on the occasion of her 65th birthday, with admiration.}

\subjclass[2010]{11F11, 11F67 (primary); 11R23 (secondary)}
\keywords{Eigencurve, $\theta$-criticality, triangulations, Beilinson--Kato elements, $p$-adic $L$-functions}

\maketitle
\tableofcontents
\section{Introduction}

Let $p\ge 5$ be a prime number and let us fix forever an embedding $\iota_\infty: \overline{\QQ}\to \mathbb{C}$ as well as an isomorphism $\iota:  \mathbb{C} \xrightarrow{\sim}  \mathbb{C}_p$. Let us also put $\iota_p:=\iota\circ \iota_\infty$. Let $f=\sum_{n}a_n(f)q^n \in S_{k+2}(\Gamma_1(N)\cap \Gamma_0(p),\varepsilon)$ be a cuspidal eigenform (for all Hecke operators $\{T_\ell\}_{\ell\nmid N}$ and $\{U_\ell, \langle \ell\rangle\}_{\ell \mid Np}$) of weight $k+2\geq 2$ with $p\nmid N$. When $\ord_p\left(\iota_p(a_p(f))\right)<k+1$, Amice--V\'elu in \cite{AmiceVelu} and Vi\v{s}ik in \cite{Vishik} have given a construction of a $p$-adic $L$-function $L_p(f,s)$, which is characterized with the property that it interpolates the critical values of Hecke $L$-functions attached to (twists) of $f$. 

The analogous result in the extreme case when $\ord_p\left(\iota_p(a_p(f))\right)=k+1$ (in which case we say that $f$ has critical slope) was established by Pollack--Stevens in \cite{PollackStevensJLMS} and Bella\"iche~\cite{bellaiche2012}. Note that if $\ord_p\left(\iota_p(a_p(f))\right)=k+1$  then $f$ is necessarily $p$-old unless $k=0$.  It is worthwhile to note that the $p$-adic $L$-functions of Pollack--Stevens and Bella\"iche are not characterized in terms of their interpolation property, but rather via the properties of the $f$-isotypic Hecke eigensubspace of the space of modular symbols. The work of Pollack--Stevens assumes in addition that $f$ is not in the image of the $p$-adic $\theta$-operator $\theta^{k+1}:=(q(d/dq))^{k+1}$ on the space of overconvergent modular forms of weight $-k$ (i.e., $f$ is not $\theta$-critical); Bella\"iche's work removes this restriction.

Both constructions in \cite{PollackStevensJLMS} and \cite{bellaiche2012} take place in Betti cohomology. Our objective in this series of two articles, of which the present article is the first, is to recover these results in the context of $p$-adic (\'etale) cohomology. More precisely, we shall recast the results of Bella\"iche and Pollack--Stevens in terms of the Beilinson--Kato elements and the triangulation over the Coleman--Mazur--Buzzard eigencurve. Along the way, we interpolate  the Beilinson--Kato elements over a neighborhood on the eigencurve (including the neighborhoods of $\theta$-critical points).


\subsection{Set up}
\label{subsec_setup}
We put $\Gamma_p=\Gamma_1(N)\cap \Gamma_0(p)$ and let $f_0^{\alpha_0} \in S_{k_0+2}(\Gamma_p)$ be a $p$-stabilized cuspidal eigenform, where $U_pf_0^{\alpha_0}=\alpha_0 f_0^{\alpha_0}$ and $N$ is coprime to $p$. We let $f_0$ be the newform associated to $f_0^{\alpha_0}$ (note that it may happen that $f_0=f_0^{\alpha_0}$, in which case $f_0^{\alpha_0}$ is not of critical slope).  We fix a real number $\nu\geq v_p(\alpha_0)$, where $v_p(\cdot)$ is the $p$-adic valuation on $\overline{\QQ}_p$ normalized so that $v_p(p)=1$.

We shall call $\Hom_{\rm cts}(\ZZ_p^\times,\bbG_m)$ the weight space, which we think of as a rigid analytic space over $\QQ_p$. Let $\mathcal{W}$ be a nice affinoid neighborhood (in the sense of \cite{bellaiche2012}, Definition 3.5)  about $k_0$  of the weight space, which is adapted to slope $\nu$ (in the sense of \cite{bellaiche2012}, \S3.2.4). We will adjust our choice of $\mathcal W$ on shrinking it as necessary for our arguments.

Let $\cC$ be the Coleman--Mazur--Buzzard eigencurve and let $x_0\in \cC$ be the point that corresponds to $f_0^{\alpha_0}$. We let $\cC_{\mathcal{W},\nu}\subset \cC$ denote an open affinoid subspace of the eigencurve that lies over $\mathcal W$ and $U_p$ acts by slope at most $\nu$. By shrinking $\mathcal W$ as necessary, there is a unique  connected component $\mathcal{X} \subset \cC_{\mathcal{W},\nu}$ that contains $x_0$, which is an affinoid neighborhood of $x_0$.

For any $E$-valued point $x\in \mathcal{X}(E)$ (where $E$ is a sufficiently large extension of $\QQ_p$, which contains the image of the Hecke field of $f_0^{\alpha_0}$ under the fixed isomorphism $\iota$) of classical weight $w(x) \in \ZZ_{\geq 0}$ in the irreducible component $\mathcal{X} \subset \cC$, we let $f_x \in S_{w(x)+2}(\Gamma)$ denote the corresponding $p$-stabilized eigenform. We let $V_{f_x}'$ denote Deligne's representation attached to $f_x$; see \S\ref{subsubsec_621} for its precise description. There is a natural free $\cO_{\mathcal X}$-module $V_\mathcal{X}'$ of rank $2$, which is equipped with an $\cO_{\mathcal X}$-linear continuous $G_\QQ$-action such that $V_{\mathcal X}'\otimes_{x}E\xrightarrow{\sim}V_{f_x}'$ (c.f. \eqref{eqn_defn_prop_andreattaiovitastevens2015_Prop318} and \eqref{eqn: specialization of big modular representation} below).

As in \cite[\S5]{kato04}, let $\xi$ denote either the symbol $a(B)$ with $a,B\in\ZZ$ and $B\geqslant 1$ or an element of $\mathrm{SL}_2(\ZZ)$.  For each positive integer $m$ coprime to $p$, we set $S:=\begin{cases} \text{primes} (mBp),
& \textrm{if $\xi=a(B)$}\\
\text{primes} (mNp),
& \textrm{if $\xi\in \mathrm{SL}_2(\ZZ)$}
\end{cases}$\,,
where ${\rm primes}(M)$ stands for the set of prime divisors of $M$. Let $(c,d)$ be a pair of positive integers satisfying the following conditions:
$ (cd,6)=1=(d,N)$ and $\text{prime} (cd) \cap S=\emptyset$. For each integer $r$ and natural number $n$, we let 
$${}_{c,d}{\mathrm{BK}}_{N,mp^n} (f_x,j, r, \xi)\in H^1(\QQ(\zeta_{mp^n}),V_{f_x}'(2-r))$$
denote the Beilinson--Kato element associated to the eigenform $f_x$, given as in Theorem~\ref{thm_control_thm_interpolation_BK}(iv) (see also \S\ref{subsec:Beilinson--Kato} for details).

For any abelian group $G$, let us denote by $X(G)$ its character group. We also put $\LL(G):=\ZZ_p[[G]]$ for its completed group ring. We shall denote by $\LL(G)^\iota$ for the free $\LL(G)$-module of rank one on which $G$ acts via the character $g\stackrel{\iota}{\mapsto} g^{-1}\in \LL(G)^\times$.

For each positive integer $m$, let us put $\Gamma_{\QQ (\zeta_m)}:=\Gal (\QQ (\zeta_{mp^\infty})/\QQ (\zeta_m))$ and set $\Gamma:=\Gamma_{\QQ}$. We denote by $\chi$ the $p$-adic cyclotomic character. We put $\mathscr{H}_r(\Gamma)$ for the image of tempered distributions of order $\nu$ under the Amice transform and define $\mathscr{H}(\Gamma):=\varinjlim \mathscr{H}_r(\Gamma)$. We also set $\mathscr{H}_?(\Gamma):=\mathscr{H}(\Gamma)\,\widehat{\otimes}\, (?)$, where $(?)=E,\cO_{\cW},\cO_{\mathcal X}$.

Fix a generator $\varepsilon$ of $\ZZ_p(1)$. 


\subsection{Main results}
\label{subsec_intro_main_results} 

We will briefly overview of the results in this article. In a nutshell, our work has two threads: The first concerns the interpolation of Beilinson--Kato elements along the eigencurve $\cC$, the second concerns $p$-local aspects, such as the (properties of the) triangulation over $\cC$ and the formalism of Perrin-Riou exponential maps. The first part of Theorem~\ref{thmA} below belongs to the first thread and corresponds to Theorem~\ref{thm_control_thm_interpolation_BK} below. The second part corresponds to Theorem~\ref{thm_interpolative_properties_bis} in the main body of our article and dwells on the second thread, granted the first. 

\begin{thmx}
\label{thmA}
\item[i)] For each $m\geq 1$ and coprime to $p$, there exists a cohomology class 
$${}_{c,d}{\mathbb{BK}}^{[\mathcal X]}_{N,m} (j, \xi) \in H^1(\QQ(\zeta_m), V^\prime_{\mathcal X}\widehat{\otimes}\,
\Lambda (\Gamma_{\QQ (\zeta_m)})^\iota (1))$$
which interpolates the Beilinson--Kato classes ${}_{c,d}{\mathrm{BK}}_{N,mp^n} (f_x,j, r, \xi)$ as $x \in \mathcal{X}^{\rm cl}(E)$ and integers $r,n$ vary.

\item[ii)] { Suppose that $\mathcal{X}$ is \'etale over $\mathcal{W}$.} We let $\mathbb{BK}_{N}^{[\mathcal X]}(j,\xi)\in   H^1 \left (\ZZ [1/S],V^\prime_{\mathcal X} \widehat{\otimes}\,
\Lambda (\Gamma)^\iota (1) \right )$ denote the partially normalized Beilinson--Kato element, given as in Definition~\ref{defn: normalized BK elements} and Proposition~\ref{prop_partial_normalization_of_BK_elements} below. Let $L_{p}^{\pm}(\mathcal X;j,\xi) \in \mathscr{H}_{\cO_{\mathcal X}}(\Gamma)$ denote the images of the $\pm$-parts of the class $\mathbb{BK}_{N}^{[\mathcal X]}(j,\xi)$ under the Perrin-Riou dual exponential map  $($see Definition~\ref{def_two_var_padicL_function_bis}$)$. On shrinking the neighborhood $\mathcal X$ of $x_0$ as neccesary, the following hold true. 
\begin{itemize}
\item[a)] There exist { $(j_+,\xi_+)$ and $(j_-,\xi_-)$} such that { $L_{p}^{+}(\mathcal X;j_+, \xi_+, x_0)$ and
$L_{p}^{-}(\mathcal X;j_-, \xi_-, x_0)$} are nonzero elements of $\CH_E(\Gamma)$. 
\item[b)] Assume that { $(j_\pm,\xi_\pm)$} satisfy the conditions in (a). Then for each 
$x \in \mathcal{X}^{\rm cl}(E)$ such that $v_p(\alpha (x))<w(x)+1$, the $p$-adic $L$-functions { $L_{p}^{\pm}({\mathcal X} ;j_\pm, \xi_\pm, x)$} agree with the Manin--Vi{\v{s}}ik  $p$-adic $L$-functions attached to $f_x$, up to multiplication by  $D^\pm\mathcal{E}_N(x)$ where $D^\pm\in E^\times$ and $\mathcal{E}_N(x)$ is the product of bad Euler factors, given as in \eqref{eqn: E(x)-factor}. 

\item[c)] Assume that $v_p(\alpha_0)=k_0+1,$ but $f$ is not $\theta$-critical. Then  { $L_{p}^{\pm}({\mathcal X};j_\pm, \xi_\pm, x_0)$} agree with the one-variable $p$-adic $L$-functions of  Pollack--Stevens \cite{PollackStevensJLMS} up to multiplication by $D^\pm\mathcal{E}_N(x)$, where $D^\pm\in E^\times$.

\item[d)] Let $L_p({\mathcal X},\Phi^{\pm}) $ denote the two-variable $p$-adic $L$-functions of Bella\"{\i}che and Stevens associated to modular symbols $\Phi^{\pm}\in \mathrm{Symb}_{\Gamma_p}^\pm(\mathcal X)$  $($c.f. \cite{bellaiche2012}, Theorem~3$)$. Then there exist  functions $u^\pm (x)\in O_{\mathcal X}^\times$
such that 
\[
{ L_{p}^{\pm}({\mathcal X};j_\pm, \xi_\pm)=u^\pm(x){\mathcal E}_N(x)L_p({\mathcal X},\Phi^{\pm})\,.}
\]
\end{itemize}
\end{thmx}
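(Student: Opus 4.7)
The plan has two strands matching parts (i) and (ii) of the theorem. For (i), I would construct $\mathbb{BK}^{[\mathcal X]}_{N,m}(j,\xi)$ by lifting Kato's construction (\cite{kato04}, \S5) of Beilinson--Kato elements in the \'etale cohomology of the modular curve to the universal representation $V'_{\mathcal X}$ over the eigencurve. Concretely, I would realize $V'_{\mathcal X}$ via overconvergent \'etale cohomology (in the style of Andreatta--Iovita--Stevens), so that Kato's moment and symmetric-power operations become $\cO_{\mathcal X}$-linear maps on this overconvergent sheaf. Combined with the tower $\mathbb{Q}(\zeta_{mp^n})/\mathbb{Q}(\zeta_m)$ and the standard Iwasawa limit, this produces a class in $H^1(\mathbb{Q}(\zeta_m), V'_{\mathcal X} \,\widehat{\otimes}\, \Lambda(\Gamma_{\mathbb{Q}(\zeta_m)})^\iota(1))$. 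Specialization at a classical $x \in \mathcal X^{\rm cl}(E)$ followed by moment at $\chi^{r-2}$ should then recover ${}_{c,d}\mathrm{BK}_{N,mp^n}(f_x,j,r,\xi)$ essentially tautologically from the construction.

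For (ii), apply the Perrin-Riou dual exponential for the triangulation of the $(\varphi,\Gamma)$-module attached to $V'_{\mathcal X}$ at $p$, to the local-at-$p$ image of $\mathbb{BK}^{[\mathcal X]}_N(j,\xi)$. The relevant triangulation over $\mathcal X$ is furnished by work of Kisin--Liu--Bella\"iche, while the \'etale hypothesis on $\mathcal X/\mathcal W$ guarantees analytic variation across $x_0$. I would establish the items in the order (b)$\Rightarrow$(a), (d), (c). For (b), Kato's explicit reciprocity law at a classical $x$ of non-critical slope identifies the dual exponential of the specialized BK class at each finite-order character of $\Gamma$ with the corresponding twisted $L$-value (up to the factor $D^\pm \mathcal{E}_N(x)$), so our $p$-adic $L$-function coincides with the Manin--Vi\v{s}ik one. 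For (a), Kato's flexibility in choosing $(j,\xi)$ (\cite{kato04}, \S13) allows the classical $L$-value at some non-critical-slope $x \in \mathcal X$ to be arranged to be nonzero; since $L_p^\pm(\mathcal X;j_\pm,\xi_\pm)$ is analytic on the connected $\mathcal X$, the resulting nonvanishing propagates to $x_0$.

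For (d), the \'etale-Betti comparison transports $\mathbb{BK}^{[\mathcal X]}$ to an overconvergent modular-symbol class, which up to a normalization matches Bella\"iche--Stevens' $\Phi^\pm$. Both $L_p^\pm(\mathcal X;j_\pm,\xi_\pm)$ and $L_p(\mathcal X, \Phi^\pm)$ then lie in $\mathscr{H}_{\cO_{\mathcal X}}(\Gamma)$ and, by (b) together with the analogous interpolation property of the Bella\"iche--Stevens $L$-function, have matching specializations at every classical non-critical-slope point of $\mathcal X$. Since such points are Zariski dense in $\mathcal X$ and the functions involved are rigid analytic over a reduced affinoid, a rigidity argument forces their ratio to be a unit $u^\pm(x) \in \cO_{\mathcal X}^\times$. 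Part (c) is then a direct consequence: in the non-$\theta$-critical case, Bella\"iche--Stevens' $L$-function specializes at $x_0$ to Pollack--Stevens' $p$-adic $L$-function \cite{bellaiche2012}, hence so does $L_p^\pm(\mathcal X;j_\pm,\xi_\pm,x_0)$ up to $D^\pm\mathcal E_N(x_0)$.

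The principal obstacle is controlling the triangulation and the Perrin-Riou map across $\theta$-critical points, where the naive ordinary triangulation degenerates and the standard exponential formalism fails to apply directly. The \'etale hypothesis on $\mathcal X \to \mathcal W$ is precisely what permits the triangulation and its associated dual exponential to extend analytically across $x_0$, underwriting the whole two-variable interpolation. A secondary but subtle technical issue is verifying that the Iwasawa-theoretic coefficient module $\Lambda(\Gamma_{\mathbb{Q}(\zeta_m)})^\iota(1)$ in (i) arises naturally from the cup product structure on overconvergent \'etale cohomology, which requires careful tracking of Tate twists and of the character inversion implicit in the superscript $\iota$.
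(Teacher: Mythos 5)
Your overall plan for part (i) matches the paper's approach: realize $V'_{\mathcal X}$ via Andreatta--Iovita--Stevens overconvergent \'etale cohomology, feed Kato's $K$-theoretic construction through Kings' $\Lambda$-adic sheaf formalism and the moment maps, and check that specialization recovers the classical Beilinson--Kato classes. You do, however, treat as routine several genuinely delicate ingredients the paper devotes \S\ref{subsec_Bellaiche_revised}--\S\ref{Overconvergent etale sheaves} to: reconciling the various models of the local pieces of the eigencurve obtained from different distribution spaces and from compactly vs.\ non-compactly supported cohomology (Lemma~\ref{lemma: shrinking wide open neighborhood}, Theorem~\ref{thm_eigencurve_variant}, Proposition~\ref{claim}), and establishing via Proposition~\ref{prop_andreattaiovitastevens2015_Prop318} that $V'_{\mathcal X}$ and $V_{\mathcal X}$ are genuinely free of rank two over $\cO_{\mathcal X}$. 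Calling the specialization compatibility ``essentially tautological'' also undersells Theorem~\ref{thm_control_thm_interpolation_BK}, whose proof requires the commutativity of the nontrivial diagram built from the trace maps $[p]_*$, the AIS morphism \eqref{eqn_Lambda_to_distributions_sheafified}, and the twist $\delta_j^*$.

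For part (ii), your argument for item~(a) has a genuine gap. You propose to apply Kato's non-vanishing theorem at some auxiliary non-critical-slope point $x$ and then ``propagate'' to $x_0$ by analyticity of $L_p^\pm(\mathcal X;j_\pm,\xi_\pm)$ over the connected $\mathcal X$. But non-vanishing of the two-variable function as an element of $\mathscr{H}_{\cO_{\mathcal X}}(\Gamma)$ does \emph{not} imply non-vanishing of its one-variable slice at the fixed point $x_0$: the slice along $X-X(x_0)$ could vanish identically while the global function is nonzero. Moreover, if $v_p(\alpha_0)=k_0+1$ (critical slope, covered in item~(c)), your item~(b) --- which is restricted to $v_p(\alpha(x))<w(x)+1$ --- never applies at $x_0$, so there is nothing to propagate from. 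The paper's proof avoids this by applying Kato's explicit reciprocity law and the archimedean non-vanishing results (Jacquet--Shalika, Rohrlich) directly at $x_0$, and then invoking Lemma~\ref{lemma_bad_factors_monodromy_conj} to know that $\mathcal{E}_N(x_0)$ is a non-zero-divisor in $\mathscr{H}_E(\Gamma)$, so that the identity $L_{p,\eta}^{\pm}(\f;j_\pm,\xi_\pm,x_0)=\mathcal{E}_N(x_0)L_{p,\eta}^{\pm}(g,j_\pm,\xi_\pm)$ really transfers non-vanishing. You would need to supply these ingredients to repair the argument.

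Two secondary inaccuracies. First, your closing paragraph misidentifies what the \'etale hypothesis buys: if $\mathcal X$ is \'etale over $\mathcal W$ then $x_0$ is not $\theta$-critical at all, so there is no degenerate triangulation to ``extend analytically across.'' The \'etale hypothesis simply puts one in the unramified case $e=1$ (condition \ref{item_Sat_A}), where the local formalism of \S\ref{sec_triangulations}--\S\ref{sec_2varpadicLabstract} applies without modification; the obstacle you describe is the subject of the companion paper, not of Theorem~A(ii). Second, in your argument for (d), the specializations at classical non-critical points do not literally match --- they agree only up to an $x$-dependent scalar $u_x$ --- so the rigidity step requires the paper's power-series argument to glue the $u_x$ into a single rigid function $u^\pm(x)\in\cO_{\mathcal X}$, not a bare Zariski-density identity of functions.
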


See Definition~\ref{defn_big_BK_lambdaadicsheaf}(iii) where the class ${}_{c,d}{\mathbb{BK}}^{[\mathcal X]}_{N,m} (j, \xi) $ is introduced and see Definition~\ref{defn: normalized BK elements} where we introduce its partial normalization (as well as Appendix~\ref{sec_appendix_integrality} where we establish the regularity properties of this normalization). We call the $p$-adic $L$-function $L_{p}^{\pm}(\mathcal X;j,\xi) \in \mathscr{H}_{\cO_{\mathcal X}}(\Gamma)$ given as in Definition~\ref{def_two_var_padicL_function_bis}(i) (which is denoted by $L_{p,\eta}^{\pm}(\f;j, \xi)$ in the main body of our text) the arithmetic $p$-adic $L$-function. 

\begin{remark} 
\label{remark_intro_hansen}
Results similar to Theorem~\ref{thmA} are proved in the recent paper of Wang~\cite{Wang2021} via different methods. Some parts of this result has been also announced in the independent preprints of Hansen~\cite{HansenBKEigencurve} and Ochiai~\cite{OchiaiIMC}. Our approach to establish Theorem~\ref{thmA}(i) (which applies even when $\mathcal{X}$ is not \'etale over $\cW$; this is crucial for our results in the $\theta$-critical case) is a synthesis of the techniques of \cite{LZ1} and \cite{HansenBKEigencurve}, relying on the overconvergent \'etale cohomology of Andreatta--Iovita--Stevens~\cite{andreattaiovitastevens2015} and Kings' theory of $\LL$-adic sheaves developed in \cite{Kings2015}. Along the way, we verify (relying on the results of Ash--Stevens and Pollack--Stevens) that the construction of the local pieces (including neighborhoods of $\theta$-critical points, where $\mathcal X$ is no longer \'etale over $\mathcal W$) of the cuspidal eigencurve using different types of distribution spaces over the weight space, or compactly vs non-compactly supported cohomology produce the same end result. See \S\ref{subsec_Bellaiche_revised}, \S\ref{subsubsec_wideopen_vs_bellaiche} and \S\ref{Overconvergent etale sheaves} for further details (most particularly, Lemma~\ref{lemma: shrinking wide open neighborhood}, Proposition~\ref{claim}(ii) and Proposition~\ref{prop_andreattaiovitastevens2015_Prop318}) concerning this technical point, which we believe is of independent interest.  
\end{remark}

\begin{remark}
We note that in the particular case when $f_0=f^{\alpha_0}_0$ is a newform of level $\Gamma_0(Np)$ and weight $k_0+2$ with $\alpha_0=a_p(f_0)=p^{k_0/2}$, the conclusions of Theorem~\ref{thm_interpolative_properties_bis} play a crucial role in \cite{benoisbuyukboduk}. \end{remark}

\subsubsection{$\theta$-critical case} 
\label{subsubsec_intro_theta_critical}We conclude \S\ref{subsec_intro_main_results} with a brief summary of our results in our companion article~\cite{BB_CK1_B}, where we focus on the $\theta$-critical scenario (i.e. in the situation when $\mathcal X$ is \emph{not} \'etale over $\cW$) and give an \'etale construction of Bella\"iche's $p$-adic $L$-function. 

The first key ingredient in \cite{BB_CK1_B} is Theorem~\ref{thmA}(i), namely the construction of a big Beilinson--Kato class about a $\theta$-critical point on the eigencurve. The local aspects turn out to be significantly more challenging when $\mathcal{X}$ fails to be \'etale over $\cW$. To prove that the Perrin-Riou style $p$-adic $L$-functions $L_p^{\pm}({\mathcal X};j,\xi)$, defined in an analogous way, has the required properties, we introduce in op. cit. a new local argument (called the ``eigenspace transition via differentiation'')  and prove the following results (which we state in vague form to avoid digression and refer readers to  \cite{BB_CK1_B} for details):

\begin{thmx}
\label{thmB} Suppose that the ramification index of $\mathcal X$ over $\mathcal W$ at $x_0$ equals\footnote{In an unpublished note (see ~\cite{CMLBellaiche} Proposition 1), Bella\"iche explains that a conjecture of Jannsen combined with Greenberg's conjecture (that ``locally split implies CM'') and the important of result of Breuil-Emerton (``$\theta$-critical implies locally split'') would yield $e=2$. We are grateful to R. Pollack for bringing Bella\"iche's note to our attention.} to $2$. As before, we let $L_{p}^{\pm}(\mathcal X;j,\xi) \in \mathscr{H}_{\cO_{\mathcal X}}(\Gamma)$ denote the images of the $\pm$-parts of the class $\mathbb{BK}_{N}^{[\mathcal X]}(j,\xi)$ under the Perrin-Riou dual exponential map. There exists two pairs { $(j_+,\xi_+)$ and $(j_-,\xi_-)$} with the following properties:
\item[i)] We have  { $L_p^{\pm}(x_0; j_\pm, \xi_\pm,\rho\chi^r)=0$}
for all integers $1\leqslant r\leqslant k_0+1$ and characters $\rho \in X(\Gamma)$ of finite order.

\item[ii)] We define the improved arithmetic $p$-adic $L$-functions at the critical point $x_0$ on setting
$$L_p^{[1],\pm}(x_0; j_\pm, \xi_\pm):=\frac{\partial}{\partial X}\, L_{p}^{\pm}(X;j_\pm,\xi_\pm)\Big{\vert}_{X=0} \in \mathscr{H}_E(\Gamma)$$
Here, as in \cite[\S4.4]{bellaiche2012}, we denote by $X$ a fixed choice of a uniformizer of $\mathcal{X}$ about $x_0$ and consider the $p$-adic $L$-functions $L_{p}^{\pm}(\mathcal X;j,\xi)$ in the neighborhood $\mathcal X$ of $x_0$ as the functions $L_{p}^{\pm}(X;j,\xi)$ with $X$ in a neighborhood of $0$. { Then  the improved arithmetic $p$-adic $L$-functions verify the same interpolation property that Bella\"iche's improved $p$-adic $L$-functions do, up to the bad Euler factors $\mathcal{E}_N(x_0)$ and constants that depend only on the choices of Shimura periods.}
\end{thmx}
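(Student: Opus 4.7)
The strategy is to combine the big Beilinson--Kato class $\mathbb{BK}_N^{[\mathcal{X}]}(j,\xi)$ from Theorem~\ref{thmA}(i) with a careful analysis of the $p$-local structure of $V_{\mathcal{X}}'$ in a neighborhood of the $\theta$-critical point $x_0$, making essential use of the hypothesis $e=2$ to perform a first-order expansion in a genuine uniformizer $X$ of $\mathcal{X}$. By definition, $L_p^{\pm}(\mathcal{X};j,\xi)$ is the image of $\mathbb{BK}_N^{[\mathcal{X}]}(j,\xi)$ under the composition of localization at $p$ with the Perrin-Riou dual exponential map attached to a rank-one sub-triangulation of $\mathbf{D}^\dagger_{\mathrm{rig},\mathcal{O}_{\mathcal{X}}}(V_{\mathcal{X}}')$. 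At the point $x_0$ the specialization of this triangulation degenerates: the sub-triangulation and the Hodge filtration of $\mathbf{D}_{\mathrm{cris}}(V_{f_0}')$ become non-transverse, and this is the geometric source of both the vanishing in (i) and the nontriviality of the secondary object appearing in (ii).

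For part (i), specialization at $x_0$ and at a twist $\rho\chi^r$ with $1\leq r\leq k_0+1$ converts $L_p^{\pm}(x_0;j_\pm,\xi_\pm,\rho\chi^r)$, via the explicit reciprocity law of Perrin-Riou and Nakamura, into the pairing of the localization at $p$ of the Beilinson--Kato class for $f_0^{\alpha_0}$ against a vector in $\mathbf{D}_{\mathrm{cris}}$ cut out by the sub-triangulation. The resulting interpolation formula acquires an Euler-type factor which, under $\theta$-criticality, contains an explicit zero at precisely these twists --- the familiar trivial-zero phenomenon at critical slope. Choosing $(j_\pm,\xi_\pm)$ so that $\mathcal{E}_N(x_0)\neq 0$ and so that the remaining pairing is generically nonzero (a condition inherited from the analogue of Theorem~\ref{thmA}(ii)(a) established in this ramified setting) yields the pointwise vanishing.

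For part (ii), we expand $L_p^{\pm}(X;j_\pm,\xi_\pm)$ to first order about $X=0$. Because $e=2$ guarantees that $X$ is a genuine uniformizer of $\mathcal{X}$ at $x_0$ --- the natural parameter on $\mathcal{W}$ pulls back to a unit times $X^2$ --- the derivative $L_p^{[1],\pm}(x_0;j_\pm,\xi_\pm)\in\mathscr{H}_E(\Gamma)$ is well-defined. Computing it amounts to Taylor expanding the three ingredients --- the triangulation, the Perrin-Riou dual exponential, and the specialization of $\mathbb{BK}_N^{[\mathcal{X}]}$ --- to first order in $X$, in the spirit of the Greenberg--Stevens derivative formula. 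The outcome is the pairing of a secondary Beilinson--Kato vector (arising from the first-order deformation in $X$) against a direction in $\mathbf{D}_{\mathrm{cris}}(V_{f_0}')$ transverse to the Hodge-critical line, which is precisely the data underlying Bella\"iche's improved $p$-adic $L$-function. The identification up to $\mathcal{E}_N(x_0)$ and Shimura-period constants then follows by running the Betti--\'etale comparison of Theorem~\ref{thmA}(ii)(d) on the first-order term.

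The principal technical obstacle --- the \emph{eigenspace transition via differentiation} alluded to in the introduction --- is to show that the triangulation, the dual exponential, and the crystalline specialization of the big Beilinson--Kato class vary analytically along $\mathcal{X}$ (rather than merely along $\mathcal{W}$) in a fashion compatible with differentiation in $X$. Concretely, one must track how the $U_p$-eigenline inside $\mathbf{D}_{\mathrm{cris}}$ of the generalized eigenspace rotates infinitesimally as one crosses $x_0$ along $\mathcal{X}$, and verify that this first-order rotation captures the arithmetic content of the improved $p$-adic $L$-function. Once this local analytic framework is in place, parts (i) and (ii) both reduce to formal manipulations inside the triangulation.
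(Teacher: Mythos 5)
This theorem is not actually proved in the present paper: the text introducing Theorem~\ref{thmB} explicitly says it is stated ``in vague form to avoid digression'' as a summary of results in the companion article \cite{BB_CK1_B}, and the paper deliberately defers both the precise formulation and the proof to that sequel. There is therefore no ``paper's own proof'' here against which your proposal can be compared. Your sketch is consistent with the hints the paper does give --- it correctly identifies the big Beilinson--Kato class from Theorem~\ref{thmA}(i) as the first input, invokes $e=2$ to justify differentiating in a genuine uniformizer $X$ of $\mathcal{X}$, and names the ``eigenspace transition via differentiation'' as the crux --- but these are exactly the phrases the introduction itself supplies, so you are essentially paraphrasing the paper's announcement rather than producing an independent argument whose correctness could be adjudicated.

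That said, one substantive point in your sketch deserves a caution. You describe the vanishing in part (i) as ``the familiar trivial-zero phenomenon at critical slope,'' an Euler-type factor vanishing at certain twists. This is not the right mechanism in the $\theta$-critical case. The paper's own local analysis (see scenario \ref{item_Sat_B} in \S\ref{subsubsecKPX}) shows that at a $\theta$-critical point the triangulation $\bD_{x_0}$ fails to be saturated: $\bD_{x_0} = t^{k_0+1}\,\bD_{x_0}^{\mathrm{sat}}$, and $\DCc(\bD_{x_0}^{\mathrm{sat}}) = \Fil^{k_0+1}\Dst(V_{x_0})$ lies entirely inside the top filtration step. It is this total degeneracy of the triangulation against the Hodge filtration --- not a discrete Euler-factor zero --- that forces $L_p^\pm(x_0;j_\pm,\xi_\pm,\rho\chi^r)=0$ across the \emph{entire} critical range $1\leq r\leq k_0+1$ and for \emph{all} finite-order $\rho$, rather than at isolated points. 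A proof along the lines you propose would need to articulate this distinction; as written, the appeal to a trivial zero is too weak to explain vanishing over the whole interpolation range. You also do not engage with the crucial $e=2$ linear-algebra formalism of \S\ref{sec_Bellaiche_axiomatized} (the objects $M[x]$, $M[[x]]$, $\mathrm{sp}_x$, the factorization of the pairing through $M'_{x_0}\otimes M[x_0]$), which is exactly the scaffolding the paper sets up to make differentiation in $X$ compatible with the $\cO_{\mathcal W}$-linear (not $\cO_{\mathcal X}$-linear) duality; this is where the real work of part (ii) would live.
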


This theorem supplies us with a new construction of Bella\"iche's $p$-adic $L$-functions (with Euler factors at primes dividing the tame conductor removed). One of the consequences of the \'etale construction of the $p$-adic $L$-functions is the leading term formulae for these $p$-adic $L$-functions. These will be explored in the sequels to the present article.


\subsubsection{Layout} We close our introduction reviewing the layout of our article. 
 
 After a very general preparation in \S\ref{sec_Bellaiche_axiomatized} (where we axiomatise various constructions in \cite{bellaiche2012}, \S4.3), we give a general overview of triangulations in \S\ref{sec_triangulations}. In \S\ref{subsec_PR_map_general}, we also define the Perrin-Riou exponential map, which is one of the crucial inputs defining the ``arithmetic'' $p$-adic $L$-functions. 

We then introduce Perrin-Riou-style (abstract) two-variable $p$-adic $L$-functions in \S\ref{sec_2varpadicLabstract} and study their interpolative properties. These results are later applied in \S\ref{sec_geom_critical_padicL} in the context of the Coleman--Mazur--Buzzard eigencurve and with the Beilinson--Kato element ${}_{c,d}{\mathbb{BK}}^{[\mathcal X]}_{N,m} (j, \xi)$ of Theorem~\ref{thmA}.

 In \S\ref{sec_bellaiche_eigencurve}, we review Bella\"iche's results in \cite{bellaiche2012} on the local description of the eigencurve and prove variants (in \S\ref{subsec_Bellaiche_revised}) of these results involving slightly different local systems as coefficients and non-compactly supported cohomology (at a level of generality that covers also the neighborhoods of $\theta$-critical point). We utilize these in \S\ref{sec_interpolateBK_elements} (together with the work of Andreatta--Iovita--Stevens~\cite{andreattaiovitastevens2015}) to deduce the required properties for the Galois representation $V_{\mathcal X}'$, where the interpolated Beilinson--Kato elements take coefficients in. The variants of Bella\"iche's construction that we discuss  in \S\ref{subsec_Bellaiche_revised} allow us (through Proposition~\ref{prop_andreattaiovitastevens2015_Prop318}) to establish the properties of $V_{\mathcal X}$ as an $\cO_{\mathcal X}$-module, where the images of Perrin-Riou exponential maps take coefficients in.

In \S\ref{subsec_BK_families}, we introduce the interpolated Beilinson--Kato elements (which are denoted by ${}_{c,d}{\mathbb{BK}}^{[\mathcal X]}_{N,m} (j, \xi)$ in Theorem~\ref{thmA}(i)) as part of Definition~\ref{defn_big_BK_lambdaadicsheaf}(iii) (see also \S\ref{subsec_BK_big_normalized} where we introduce their normalized versions). Our construction builds primarily on the ideas in \cite{Kings2015, KLZ2, LZ1}. As we have noted in Remark~\ref{remark_intro_hansen}, our approach in this portions is, in some sense, a synthesis of the techniques of \cite{LZ1} and \cite{HansenBKEigencurve} (which we crucially enhance to apply also about $\theta$-critical points). In \S\ref{sec_geom_critical_padicL}, we apply the general results in \S\ref{sec_2varpadicLabstract} to define the ``arithmetic'' $p$-adic $L$-functions and study their interpolation properties, proving our Theorem~\ref{thmA}(ii).

\subsection{Acknowledgements}
The first named author (D.B.) wishes  to thank the second author (K.B.)  for his invitation to University College Dublin in April 2019 and Bo\v gazi\c ci University of Istanbul in December 2019. This work  was started during these visits. D.B.  was also partially supported  by the  Agence National de Recherche (grant ANR-18-CE40-0029) in the framework of the ANR-FNR project ``Galois representations, automorphic forms and their $L$-functions''. We thank the anonymous referees for carefully reading our work and their very helpful comments, which guided us towards many technical and stylistic improvements to the earlier versions of our article.


\section{Linear Algebra} 
 \label{sec_Bellaiche_axiomatized}

 In \S\ref{sec_Bellaiche_axiomatized}, we  fix some notation and conventions from linear algebra, which will be used in the remainder of the paper (as well as in the companion paper \cite{BB_CK1_B}). We also  axiomatize various constructions in \cite[\S4.3]{bellaiche2012}.

\subsection{Tate algebras}
\label{subsection:generalities about W and X}

Let $E$ be a finite extension of $\Qp.$  Fix an integer $e\geqslant 1$
and denote by $R$ the  Tate algebra $R=E\left <Y/p^{re}\right >,$
where $r\geqslant 0$ is some fixed integer. Let $A=R[X]/(X^e-Y).$
Then $A=E\left <X/p^r\right >.$ 

Set $\mathcal W=\Spm (R)$ and $\mathcal X=\Spm (A).$ We  will consider $\mathcal W$ as a weight space in the following sense.   
Fix an integer $k_0\geqslant 2$ and denote by $D(k_0,p^{re-1})=k_0+p^{re-1}\Zp$ the closed disk with center $k_0$ and radius $1/p^{re-1}.$ We  identify each $y\in D(k_0,p^{re-1})$ with the point of  $\mathcal W$ that corresponds to the maximal ideal 
\begin{equation}
\nonumber
\mathfrak m_y= \left ((1+Y)- (1+p)^{y-k_0} \right ).
\end{equation}
We have a canonical morphism 
\begin{equation}
\label{eqn: the weight map}
w\,:\,\mathcal X\rightarrow \mathcal W,
\end{equation}
which we will call the weight map. 
If $x\in \mathcal X,$ we say that $y=\w (x)$ is the weight of $x.$
Let $x_0\in \mathcal X$ denote the unique point such that $\w(x_0)=k_0.$
Set
\[
\mathcal X^{\mathrm{cl}}=\{x\in \mathcal X  \mid \w(x)\in \ZZ, w (x) \geqslant 2\}. 
\]

\subsection{Generalized eigenspaces} 
\subsubsection{}
Let $\M$ be an $A$-module. For any $x\in \mathcal X(E)$ and $y\in \mathcal W (E)$ define
\begin{equation}
\nonumber
\begin{aligned}
&\M_{x}=\M\otimes_{A} A/\mathfrak m_x ,\\
&\M_{y}=\M\otimes_{R} R/\mathfrak m_{y}.
\end{aligned}
\end{equation}
We denote by
\[
\pi_x\,:\, \M_{w (x)} \lra \M_{x}
\]
the canonical projection. 

Consider $X$ as a function on $\mathcal X$ and denote by $X(x)$ the value 
of $X$ at $x\in \mathcal X (E).$ For any endomorphism $f$ of  $\M,$  set 
$\M [f]=\ker (f).$

\begin{defn} 
We denote by $\M [x]=\M_{\w(x)}[X-X(x)]$, the submodule of $\M_{\w(x)}$ annihilated by $X-X(x)\in A$. We also put $\M [[x]]:=\underset{n\geqslant 1}\cup \M_{\w(x)}[(X-X(x))^n]$ and call it the generalized eigenspace associated to $x$.
\end{defn}

\subsubsection{}
Suppose that $\M\cong A^d$ is a free $A$-module of rank $d$, so that we also have $M\cong \left (R[X]/(X^e-Y)\right )^d$. Let us put $P_x(X):=\dfrac{X^{e}-X(x)^e}{X-X(x)} \in E[X]$. Then, 
\begin{align*}
\M_{\w (x)}&= \left(E[X]\Big{/}(X^{e}-X(x)^e)E[X]\right)^d ,\\
\M [x]&= \left(P_x(X)E[X]\Big{/}(X^{e}-X(x)^e)E[X]\right)^d , \\
\M_x&= \left ( E[X]/(X-X(x)) \right )^d,
\end{align*}
and the multiplication by $P_x(X)$ gives an isomorphism 
\begin{equation}
\nonumber 
\M_x\xrightarrow[\times P_x(X)]{\sim} \M [x].
\end{equation}

We consider separately the following  cases:

\begin{itemize} 
\item[a)] When $x\neq x_0$ (so $\w (x)\neq k_0$), then the polynomial $X^e-X(x)^e$ is separable. In this scenario, the $A$-module $\M_{\w (x)}$ is semi-simple and $\M[x]=P_x(X)\M_{\w (x)}\subset M_{\w (x)}$ is an $A$-direct summand. The restriction of the natural surjection
$
\pi_x: \M_{\w(x)}\lra \M_x
$
to $\M [x]$ is an isomorphism, which we shall denote with the same symbol unless there is a chance of confusion. Also, $\M [[x]]=\M [x]$.

\item[b)]When $x=x_0$, we have $X(x_0)=0$ and $P_{x_0}(X)=X^{e-1}$. In this case, $\M_{k_0}= (E[X]/X^eE[X])^{d}$ as an $A$-module and 
\begin{equation}
\nonumber
\begin{aligned}
&\M_{k_0}= \M [[x_0]] \stackrel{\sim}{\lra} (E[X]/X^eE[X])^{d},\\
&\M [x_0]=\M_{k_0}[X]=X^{e-1}\M_{k_0},\\
&M_{x_0} \stackrel{\sim}{\lra} E^d. 
\end{aligned}
\end{equation}
If $e\geqslant 2,$ the restriction $\pi_{x_0}{\vert_{_{\M[x_0]}}}: \M [x_0]\to \M_{x_0}$ of the surjection $\pi_{x_0}$ is evidently the zero map. 

Observe also that in either of the cases \textup{(a)} or  \textup{(b)} above, $\M[x]$ can be identified with an $A$-equivariant image of $\M_{\w(x)}$ under the multiplication-by-$P_{x}(X)$ map. 
\end{itemize}

\subsection{Specializations}
\label{subsec_specializations}
Let $\M$ be an $A$-module, and let $x\in \mathcal X(E).$ 
For any $m\in \M,$ we denote by $m_x\in \M_x$ the image 
of $m$ under the specialization  map $M \rightarrow M_x.$ 
On the other hand,  $\M_A=\M\otimes_R A$ has the $A$-module structure via the $A$-action on the second factor. For any $x\in \mathcal X(E)$, the specialization
of $\M_A$ at $x$ is
\[
\left (\M_A\right )_{x} :=(\M\otimes_R A)\otimes_{A,x}E=
\M\otimes_{R,\w (x)}E=\M_{\w (x)}.
\]

\begin{defn}
\label{defn_PhiGamma_Big} 
For an $A$-module $\M$ and $x\in \mathcal X(E)$, we let ${\rm sp}_x$ denote the  specialization map $\M_A\rightarrow \left (\M_A\right )_{x}.$
\end{defn}

We need the following lemma. 

\begin{lemma}
\label{lemma_bellaiche_4_14_general}
In the notation of Definition~\ref{defn_PhiGamma_Big}, suppose that $\M$ is a free $A$-module of rank one. Let $m$ denote a generator of $\M$ and put $\Phi:=\sum_{i=0}^{e-1}X^i m\otimes X^{e-1-i}\in \M\otimes_R A$. Then 
\item[i)] We have $(X\otimes 1) \Phi= (1\otimes X)\Phi$.
\item[ii)] The element ${\rm sp}_x(\Phi)\in \M_{\w (x)}$ generates the $E$-vector space $\M [x]$.
\item[iii)] We have $\pi_x\circ{\rm sp}_x(\Phi)=X(x)^{e-1}m_x$.
\end{lemma}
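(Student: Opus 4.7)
The plan is to carry out a direct computation exploiting the defining relation $X^e = Y$ of $A$ over $R$, together with the explicit description of $\M[x]$ as $P_x(X)\cdot \M_{w(x)}$ given in the discussion preceding the lemma. At the conceptual level, $\Phi$ is engineered to be $A$-balanced across the tensor (so that (i) holds), and so that its specialization telescopes under ${\rm sp}_x$ into $P_x(X)\cdot \bar m$ (so that (ii) and (iii) follow immediately from the preceding description of $\M[x]$).

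For (i), I would expand $(X \otimes 1)\Phi = \sum_{i=0}^{e-1} X^{i+1} m \otimes X^{e-1-i}$ and reindex by $j=i+1$. The boundary term is $X^{e}m\otimes 1$; using that $\M$ is an $A$-module with $X^e = Y$, this equals $Ym \otimes 1$, which I can move across the tensor (taken over $R$, with $Y\in R$) to $m \otimes Y = m \otimes X^e$. The reindexed sum then matches $(1 \otimes X)\Phi = \sum_{i=0}^{e-1} X^i m \otimes X^{e-i}$ term by term.

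For (ii), I will unpack ${\rm sp}_x$ from Definition~\ref{defn_PhiGamma_Big}: since the $A$-action on $\M\otimes_R A$ is on the right factor and $\mathfrak m_x = (X - X(x))$, the map ${\rm sp}_x$ evaluates the right $X$ at $X(x)$ and identifies $\M\otimes_R E \cong \M_{w(x)}$. This yields ${\rm sp}_x(\Phi) = \bigl(\sum_{i=0}^{e-1} X(x)^{e-1-i} X^i\bigr)\bar m = P_x(X)\,\bar m$ in $\M_{w(x)}$, where $\bar m$ denotes the image of the generator $m$. Since $\M_{w(x)}$ is $A$-cyclic on $\bar m$, the preceding identification $\M[x] = P_x(X)\cdot\M_{w(x)}$ — one-dimensional in both case (a) $x \neq x_0$ and case (b) $x = x_0$ — shows that ${\rm sp}_x(\Phi)$ generates $\M[x]$ over $E$. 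For (iii), I would further apply $\pi_x$, which corresponds to the substitution $X \mapsto X(x)$ taking $\M_{w(x)} = E[X]/(X^e - X(x)^e)$ onto $\M_x \cong E$; evaluating $P_x(X)\bar m$ at $X(x)$ collapses the geometric-type sum to the claimed scalar multiple of $m_x$.

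I do not expect any genuine obstacle here. The only delicate point across the three parts is keeping careful track of which tensor factor of $\M\otimes_R A$ carries the $A$-structure and, correspondingly, which ``copy'' of $X$ is affected by ${\rm sp}_x$; part (i) is precisely what guarantees that this bookkeeping is harmless for (ii) and (iii), by showing that the two $A$-actions on $\Phi$ agree.
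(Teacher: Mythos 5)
Your overall strategy—unwind the definition of $\mathrm{sp}_x$, use the defining relation $X^e=Y$ of $A$ over $R$ to move the boundary term across the tensor in (i), and recognize $\mathrm{sp}_x(\Phi)=P_x(X)\bar m$ so that (ii) follows from the explicit description $\M[x]=P_x(X)\cdot \M_{w(x)}$—is exactly the intended "direct calculation," and parts (i) and (ii) are carried out correctly.

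However, in (iii) you did not actually evaluate the scalar, and you assert that the sum "collapses to the claimed scalar multiple." If you finish the computation, you find
\[
\pi_x\circ\mathrm{sp}_x(\Phi)=P_x\bigl(X(x)\bigr)\,m_x
=\Bigl(\sum_{i=0}^{e-1}X(x)^{e-1-i}X(x)^{i}\Bigr)m_x
=e\,X(x)^{e-1}m_x,
\]
not $X(x)^{e-1}m_x$. (For instance, with $e=2$ one has $P_x(X)=X+X(x)$ and $P_x(X(x))=2X(x)$.) So the lemma as printed appears to be off by a factor of $e$; this is harmless in context, since $e$ is a unit in $E$ (characteristic zero), so the qualitative content—the value vanishes if and only if $x=x_0$ and $e\geq 2$—is unchanged, but your proof should record the correct constant rather than matching the stated one by fiat. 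Writing the last step out explicitly is precisely the sort of bookkeeping you flagged as the "delicate point," and it is where the discrepancy hides.
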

\begin{proof}
The first part is the abstract version of \cite[Lemma 4.13]{bellaiche2012} and the second part is that of \cite[Proposition 4.14]{bellaiche2012}. Final assertion follows from a direct calculation.
\end{proof}
\begin{remark}\label{rem_sp_x_when_e_equals1}
Suppose in this remark that $e=1$, so that $A=R$. In this scenario, we have $M_A=M$ and $M_{w(x)}\xrightarrow[\pi_x]{\sim}M_x$. Moreover, the specialization map ${\rm sp}_x$ is simply the canonical projection $M\to M_x$.
\end{remark}

\subsection{Duality} 
\label{subsec:duality}
Let $\M$ and $\M'$ be two free $A$-modules of finite rank equipped with an $R$-linear pairing
\begin{equation}
\nonumber
(\,,\,)\,:\, \M'\otimes_R \M \rightarrow R.
\end{equation}
Assume that this pairing satisfies the following property:

\begin{itemize}[leftmargin=*]
\item[\mylabel{item_P1}{\bf Adj)}]  For every $m'\in \M' $ and $m\in \M $, we have 
$(Xm',m)=(m',Xm)$.
\end{itemize}

For any $x\in \mathcal X(E),$ we denote by 
\[
(\,,\,)_{\w (x)}\,:\, \M_{\w(x)}'\otimes_E \M_{\w(x)} \rightarrow E
\]
the specialization of $(\,,\,)$ at $\w (x).$

\begin{lemma} 
\label{lemma:factorization of pairing}
There exists a unique pairing 
\[
(\,,\,)_{x} \,:\, \M'_{x}\otimes_E \M[x]\rightarrow E 
\]
such that the restriction of  $(\,,\,)_{\w(x)}\ $ to  the subspace 
$\M_{\w (x)}'\otimes_E \M[x]$ factors as 
\begin{equation}
\label{eqn_prop_221_atempt_1_0}
\begin{aligned}
\xymatrix@R=.4cm{\M'_{\w (x)}\otimes_E \M [x]\ar[dr]_{\pi_{x}\otimes {\rm id}\,\,
}\ar[rr]^(.6){(\,,\,)_{\w (x)}}&&E\\
& \M'_{x}\otimes_E \M[x]\ar[ur]_{(\,,\,)_{x}}}
\end{aligned}
\end{equation}
\end{lemma}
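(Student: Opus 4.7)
The plan is to construct $(\,,\,)_x$ as the unique descent of the restriction of $(\,,\,)_{w(x)}$ to $\M'_{w(x)}\otimes_E \M[x]$ along the canonical surjection
\[
\pi_x\otimes\mathrm{id}\,\colon\, \M'_{w(x)}\otimes_E \M[x]\twoheadrightarrow \M'_{x}\otimes_E \M[x].
\]
Uniqueness is essentially formal: the projection $\pi_x\colon \M'_{w(x)}\to \M'_{x}$ is surjective and $\M[x]$ is $E$-flat, so $\pi_x\otimes\mathrm{id}$ is surjective, and any two descended pairings must agree.

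The real content is existence, which amounts to checking that the pairing $(\,,\,)_{w(x)}$ annihilates $\ker(\pi_x\otimes\mathrm{id})=\ker(\pi_x)\otimes_E \M[x]$. The key identification is
\[
\ker(\pi_x)\;=\;(X-X(x))\cdot \M'_{w(x)}\,,
\]
which uses the hypothesis that $\M'$ is free over $A$: by the description in \S2.2.2 applied with $\M'$ in place of $\M$, one has $\M'_{w(x)}\cong\left(E[X]/(X^{e}-X(x)^e)E[X]\right)^{d'}$, and the factorization $X^e-X(x)^e=(X-X(x))P_x(X)$ identifies the kernel of the further quotient to $\M'_x\cong E^{d'}$ with the ideal generated by $X-X(x)$.

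Granted this, given $m'=(X-X(x))n'\in\ker(\pi_x)$ and $m\in \M[x]$, the specialization of property \textbf{(Adj)} at $w(x)$ (which is inherited since specialization is $R$-linear and the action of $X\in A$ descends) gives
\[
(m',m)_{w(x)}=\bigl((X-X(x))n',m\bigr)_{w(x)}=\bigl(n',(X-X(x))m\bigr)_{w(x)}=0,
\]
the last equality holding because $m\in \M[x]=\M_{w(x)}[X-X(x)]$ is killed by $X-X(x)$ by definition. Hence $(\,,\,)_{w(x)}$ descends through $\pi_x\otimes\mathrm{id}$, producing the desired pairing $(\,,\,)_x$.

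The only step requiring any real care is the identification of $\ker(\pi_x)$, which rests on freeness of $\M'$ over $A$ and the explicit presentation recalled in \S2.2.2; everything else is a short manipulation with \textbf{(Adj)} and the definition of $\M[x]$. There is no substantive obstacle.
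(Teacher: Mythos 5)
Your proof is correct, and it is a cleaner, case-free version of the paper's argument. The paper splits into two cases: for $x\neq x_0$ it simply observes that $\M'_{x}$ is a direct summand of $\M'_{w(x)}$ and declares $(\,,\,)_x$ to be the restriction of $(\,,\,)_{w(x)}$ to $\M'_x\otimes\M[x]$; for $x=x_0$ it invokes \textbf{(Adj)} to show $(\,,\,)_{k_0}$ vanishes on $X\M'_{k_0}\otimes\M[x_0]$. You instead treat both cases simultaneously by identifying $\ker(\pi_x)=(X-X(x))\M'_{w(x)}$ from the explicit presentation $\M'_{w(x)}\cong(E[X]/(X^e-X(x)^e))^{d'}$ (valid whether or not $X^e-X(x)^e$ is separable) and then running \textbf{(Adj)} once. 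This is a genuine streamlining: the paper's $x\neq x_0$ case does not explicitly check that $(\,,\,)_{w(x)}$ kills $\ker(\pi_x)\otimes\M[x]$ — which is exactly what the factorization through $\pi_x\otimes\mathrm{id}$ requires, and which is not automatic from the direct-summand structure alone but still relies on \textbf{(Adj)} — whereas your uniform argument verifies this explicitly. The two proofs rest on the same essential inputs (\textbf{(Adj)} plus the $A$-module presentation of fibres), but yours eliminates the case distinction and fills the small gap in the $x\neq x_0$ branch.
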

\begin{proof}
If $x\neq x_0,$ the vector space $\M'_{x}$ is a direct summand
of $\M'_{\w (x)}$, and in this case, $(\,,\,)_{x}$ coincides with the restriction
of $(\,,\,)_{\w(x)}$ on $\M'_{x}\otimes_E \M [x].$

If $x=x_0,$ then $\M'_{x_0}=\M'/X\M'$ and 
$\M [x_0]=X^{e-1}\M_{k_0}.$ The property \ref{item_P1} shows that 
$(\,,\,)_{k_0}$ is trivial on $X\M'_{k_0}\otimes  \M [x_0],$ and therefore
it factorizes uniquely through $\M'_{x_0}\otimes_E \M[x_0].$
\end{proof}


\section{Exponential maps and triangulations }
\label{sec_triangulations}


\subsection{Cohomology of $(\varphi,\Gamma)$-modules}
\subsubsection{}
\label{subsubsection:notation phi-Gamma modules}

We set $K_n=\Qp(\zeta_{p^n}),$ $K_\infty=\underset{n=0}{\overset{\infty}\cup}K_n$
and $\Gamma=\Gal (\Qp (\zeta_{p^\infty})/\Qp)$. For any $n\geqslant 1$ we set $\Gamma_n=\Gal (K_\infty/K_n),$ and put $G_n=\Gal (K_n/\Qp)$. Let us fix topological generators $\gamma_n\in\Gamma_n$ 
such that $\gamma_{n}^p=\gamma_{n+1}$ for all $n\geqslant 1$ and $\gamma_{0}^{p-1}=
\gamma_1$.
For any group $G$ and left $G$-module $M$ we denote by $M^{\iota}$
the right $G$-module whose underlying group is $M$ and on which $G$ acts by $m\cdot g=g^{-1}m.$ 

If $G$ is a finite abelian group, we denote by $X(G)$
its   group of characters. If $E$ is a fixed field such that $\rho \in X(E)$ takes values in $E,$ we denote by 
\[
e_\rho=\frac{1}{\vert G\vert} \underset{g\in G}\sum \rho^{-1}(g) g
\] 
the corresponding indempotent of $E[G].$ For any $E[G]$-module $M$ we denote 
by $M^{(\rho)}=e_\rho M$ its $\rho$-isotypic component.  For any map $f\,:\, N\rightarrow M$ we denote by $f^{(\rho)}$ 
the compositum  
$$f^{(\rho)}: N\xrightarrow{f} M\xrightarrow{[e_\rho]} M^{(\rho)}.$$

\subsubsection{}
In this section, we review the construction of the Bloch--Kato exponential map
for crystalline $(\varphi,\Gamma)$-modules. 
Let $E$ be a finite extension of $\Qp.$ For each $n\geqslant 0,$ we denote by $\CR_E$ the Robba ring 
of formal power series $f(\pi)=\underset{m\in \ZZ}{\sum} a_m\pi^m$
coverging on some annulus of the form $r(f)\leqslant \vert \pi \vert_p <1.$
Recall that $\CR_E^+:=\CR_E\cap E[[\pi]]$ is the ring of formal power series
converging on the open unit disk. We equip $\CR_E$ with the usual $E$-linear actions of 
the Frobenius operator $\varphi$ and the cyclotomic Galois group $\Gamma$ given by
\begin{equation}
\nonumber
\begin{aligned}
&\varphi (\pi)=(1+\pi)^p-1,\\
&\gamma (\pi)=(1+\pi)^{\chi (\gamma)}-1,\qquad \gamma \in \Gamma.
\end{aligned}
\end{equation}
Let $\psi$ denote the left inverse of $\varphi$ defined by
\[
\psi (f (\pi))=\frac{1}{p}\varphi^{-1}\left (\underset{\zeta^p=1}\sum f(\zeta (1+\pi)-1)
\right ).
\]
Then $\left (\CR^+_E \right )^{\psi=0}$ is a $\CH (\Gamma)$-module of rank one, generated by 
$1+\pi.$ The differential operator $\partial =(1+\pi) \displaystyle \frac{d}{d\pi}$
is a bijection  from  $\left (\CR^+ \right )^{\psi=0}$ to itself.  Furthermore, we have
\begin{equation}
\nonumber
\partial \circ \gamma =\chi (\gamma) \gamma \circ \partial.
\end{equation}
Let $t=\log (1+\pi)$ denote the ``additive generator of $\Zp (1)$''.
Recall that 
\[
\begin{aligned}
\nonumber
&\varphi (t)=pt\quad,  \quad \gamma (t)=\chi (\gamma) t, \qquad\qquad\qquad \gamma\in \Gamma.
\end{aligned}
\]

Let $A=E\left < X/p^r\right >$ be a Tate algebra over $E$ as above.  We denote by 
$\CR_A=A\,\widehat\otimes_E \CR_E$ the Robba ring with coefficients in $A.$ 
The action of $\Gamma$, $\varphi,$ $\psi$ and $\partial $ can be extended to 
$\CR_A$ by linearity.

\subsubsection{} Recall that a $(\varphi,\Gamma )$-module over $\CR_A$ is a finitely generated projective module over $\CR_A$ equipped with commuting semilinear actions of $\varphi$ and $\Gamma$  and satisfying some additional technical properties which we shall not record here (see \cite{BergerColmez2008} and \cite{KPX2014} for details).  The cohomology $H^i(K_n,\bD)$ of $\bD$ over $K_n$ is defined as  the cohomology of the Fontaine--Herr complex
\begin{equation}
\nonumber
C^{\bullet}_{\varphi,\gamma_n}(\bD) \,:\,
0\lra \bD\xrightarrow{d_0} \bD\oplus \bD
\xrightarrow{d_1} \bD \lra 0,
\end{equation}
where $d_0(x):=((\varphi-1)x,(\gamma_n-1)x)$ and $d_1(y,z):=(\gamma_n-1)y-(\varphi-1)z.$

\subsubsection{} For any $(\varphi,\Gamma)$-module $\bD$ over $\CR_A$
we set $\DCc (\bD)=(\bD [1/t])^{\Gamma}.$ Recall that $\DCc (\bD)$ is a 
finitely generated free  $A$-module equipped with an $A$-linear frobenius $\varphi$ and a decreasing filtration $\left (\Fil^i\DCc (\bD) \right )_{i\in \ZZ}.$
In~\cite{Nakamura2014JIMJ}, Nakamura defined 
$A$-linear maps 
\begin{equation}
\nonumber
\exp_{\bD, K_n}\,:\, \DCc (\bD)\otimes_{\Qp}K_n \lra H^1(K_n,\bD),
\qquad n\geqslant 0
\end{equation}
which extends the definition of Bloch--Kato exponential maps
to $(\varphi,\Gamma)$-modules. 

\subsubsection{} Let $V$ be a $p$-adic representation of $G_{\Qp}$ with coefficients in $A.$
The theory of $(\varphi,\Gamma)$-modules associates to $V$ a $(\varphi,\Gamma)$-module $\DdagrigA (V)$ over $\CR_A .$ The functor $\DdagrigA$ is fully faithul and
we have functorial isomorphisms
\begin{equation}
\nonumber
H^i(K_n,V)\simeq H^i(K_n,\DdagrigA (V)).
\end{equation}
If $V$ is crystalline in the sense of \cite{BergerColmez2008}, we have a functorial isomorphism between the ``classical'' filtered Dieudonn\'e module   $\Dc (V)$ associated to $V$ and $\DCc (\DdagrigA (V)).$
Moreover, the diagram
\begin{equation}
\nonumber
\xymatrix{
\Dc (V)\otimes_{\Qp}K_n \ar[d]^{\simeq} \ar[rrr]^{\exp_{V,K_n}}& & &H^1(K_n,V) \ar[d]^{\simeq}\\
\DCc (\DdagrigA (V))\otimes_{\Qp}K_n \ar[rrr]^{\exp_{\DdagrigA (V),K_n}}& & &H^1(K_n,\DdagrigA (V)),
}
\end{equation}
where the upper row is the Bloch--Kato exponential map,  commutes.

\subsubsection{}  
\label{subsubsec:Iwasawa cohomology}
We refer the reader to  \cite[Section 4.4]{KPX2014}  and  \cite{jaycyclotmotives} for the proofs of the results reviewed in this subsection and for further details. 
Let $\CH_E$ denote the algebra of
 formal power series $f(z)=\underset{j=0}{\overset{\infty}\sum} a_jz^j$ with coefficients in $E$ that converges on the open unit disk. We put
\begin{equation}
\nonumber
\begin{aligned}
&\CH_E(\Gamma_1):=\left \{ f(\gamma_1-1) \mid f\in \CH_E\right \},
&&
\CH_E (\Gamma):=E[\Delta]\otimes_E \CH_E (\Gamma_1), \\
&\CH_A(\Gamma):=A\,\widehat\otimes_E \CH_E(\Gamma),
\end{aligned}
\end{equation}
{ where $\Delta=\Gal (\QQ_p (\zeta_p)/\QQ_p)$.} Note that $\CH_A(\Gamma)$ contains the Iwasawa algebra
$\Lambda_A :=A\otimes_{\ZZ_p}\ZZ_p[[\Gamma]].$

The Iwasawa cohomology $H^1_{\Iw}(\Qp, \bD)$ of a $(\varphi,\Gamma)$-module 
$\bD$ over $\CR_A$ is defined as the cohomology of the complex 
\begin{equation}
\nonumber
\bD \xrightarrow{\psi-1} \bD
\end{equation}
concentrated in degrees $1$ and $2$. In particular,
$H^1_{\Iw}(\Qp, \bD)=\bD^{\psi=1}.$ We have canonical projections 
\begin{equation}
\nonumber
\pr_n\,:\, H^1_{\Iw}(\Qp, \bD) \rightarrow H^1 (K_n,\bD), \qquad n\geqslant 0.
\end{equation}
If $\bD=\DdagrigA (V)$ is the $(\varphi,\Gamma)$-module associated to a $p$-adic
representation $V$ over $A$, we then  have a functorial isomorphism
\begin{equation}
\label{formula: isomorphism between Iwasawa cohomologies of V and  of the (phi-Gamma)-module}
 \CH_A(\Gamma)\otimes_{\Lambda_A}  H^1_{\Iw}(\Qp,V)\simeq H^1_{\Iw}(\Qp, \DdagrigA (V))\,,
\end{equation}
where $H^1_{\Iw}(\Qp,V)$ denotes the classical Iwasawa cohomology with coefficients 
in $V$.
The isomorphism  \eqref{formula: isomorphism between Iwasawa cohomologies of V and  of the (phi-Gamma)-module} composed with the projections $\pr_n$ coincide with the natural morphisms
\[
\CH_A(\Gamma)\otimes H^1_{\Iw}(\Qp,V) \lra H^1(K_n,V) 
\]
induced by the Iwasawa theoretic  descent maps $H^1_{\Iw}(\Qp,V) \rightarrow 
H^1(K_n,V).$ 

For each integer $m$, the cyclotomic twist 
\begin{equation}
\nonumber
\Tw_m \,:\, H^1_{\Iw}(\Qp,V) \rightarrow  H^1_{\Iw}(\Qp,V(m)),
\qquad 
\Tw_m(x)=x\otimes \varepsilon^{\otimes m}
\end{equation} 
can be extended to a map
\begin{equation}
\nonumber
\begin{aligned}\Tw_m \,:\,\CH_A(\Gamma)\otimes_{\Lambda_A} H^1_{\Iw}(\Qp,V) &\lra  
\CH_A(\Gamma)\otimes_{\Lambda_A} H^1_{\Iw}(\Qp,V(m))
\\
f(\gamma-1)\otimes x&\longmapsto f(\chi^m (\gamma)\gamma-1) \otimes \Tw_m(x)\,.
\end{aligned}
\end{equation}


\subsection{The Perrin-Riou exponential map}
\label{subsec_PR_map_general}
 
\subsubsection{}
\label{subsubsec_subsec_PR_map_general_1} In this subsection, we shall review fragments of Perrin-Riou's theory of large exponential maps. Define the operators
\begin{equation}
\nonumber
\begin{aligned}
&\nabla =\frac{\log (\gamma_1)}{\log \chi (\gamma_1)}, \qquad \gamma_1\in \Gamma_1,
\\
&\ell_j=j-\nabla, \qquad j\in \ZZ.
\end{aligned}
\end{equation}
Note that $\nabla$ does not depend on the choice of $\gamma_1\in \Gamma_1.$
It is easy to check by induction that
\begin{equation}
\nonumber
\underset{j=0}{\overset{h-1}\prod} \ell_j= (-1)^ht^h\partial^h \qquad
\text{\rm (on $\CR_E$)}.
\end{equation}

Let $V$ be a $p$-adic crystalline representation of $G_{\Qp}$ with coefficients in $E$ satisfying  the following condition:
\begin{itemize}
 \item[\mylabel{item_LE1}{\bf LE)}]
$\Dc (V)^{\varphi=p^i}=0$ for all $i\in \ZZ.$
\end{itemize}
Note that this assumption can be relaxed (see, for example, \cite{perrinriou94, BenoisJIMJ2014}), but it  simplifies the presentation. In particular, it implies that $H^0(K_\infty,V)=0$ and therefore also that $H^1_{\Iw}(\Qp, V)$ is torsion free over the Iwasawa algebra (c.f. \cite{perrinriou94}, Lemme~3.4.3 and Proposition~3.2.1).

Let us set
\begin{equation}
\nonumber 
\mathfrak D (V)=\left (\CR_E^+\right )^{\psi=0}
\otimes_E \Dc (V).
\end{equation}

For any $\alpha \in \mathfrak D (V),$
the equation 
\begin{equation}
\nonumber
(1-\varphi)(F)=\alpha
\end{equation}
has a unique solution $F\in \CR_E^+\otimes_E \Dc (V).$
We define the maps 
\begin{equation}
\nonumber
\Xi_{V,n}\,:\,\mathfrak D (V) \rightarrow \Dc (V)\otimes_{\Qp}K_n, \qquad n\geqslant 0
\end{equation}
on setting
\begin{equation}
\nonumber
\Xi_{V,n}(\alpha)= \begin{cases}
 p^{-n}(\mathrm{id} \otimes \varphi )^{-n}(F)(\zeta_{p^n}-1)
& \text{if $n\geqslant 1$},\\
\displaystyle\left (\frac{1-p^{-1}\varphi^{-1}}{1-\varphi}\right ) \alpha (0)
& \text{if $n=0.$}
\end{cases}
\end{equation}

The following is the main result of \cite{perrinriou94}. 

\begin{theorem}[Perrin-Riou] 
\label{thm:Perrin-Riou}
Let $V$ be a crystalline representation which satisfies the condition \ref{item_LE1} above. Then for integers $h\geqslant 1$ such that 
$\Fil^{-h}\Dc (V)=\Dc (V)$, there exists
a $\CH_E (\Gamma)$-homomorphism 
\begin{equation}
\nonumber
\Exp_{V,h} \,:\, \mathfrak D (V) \lra \CH_E (\Gamma)\otimes_{\Lambda_E} H^1_{\Iw}(\Qp,V)
\end{equation}
satisfying the following properties:

\item[i)] For all $n\geqslant 0$ the following diagram commutes:
\begin{equation}
\nonumber
\xymatrix{
\mathfrak D (V) \ar[rr]^-{\Exp_{V,h}} 
\ar[d]^{\Xi_{V,n}}& & \CH_E (\Gamma)\otimes_{\Lambda_E} H^1_{\Iw}(\Qp,V)
\ar[d]^{\pr_n}\\
\Dc (V)\otimes K_n \ar[rr]^{(h-1)!\exp_{V,K_n}} && H^1(K_n,V).
}
\end{equation}

\item[ii)] Let us denote by $e_{-1}:=\varepsilon^{-1}\otimes t$ the canonical generator of $\Dc (\Qp (-1))$. Then 
\begin{equation}
\nonumber
\Exp_{V(1),h+1}=-\Tw_1 \circ \Exp_{V,h} \circ (\partial \otimes e_{-1}).
\end{equation}

\item[iii)] We have
\begin{equation}
\nonumber 
\Exp_{V,h+1}=\ell_{h}\Exp_{V,h}.
\end{equation}
\end{theorem}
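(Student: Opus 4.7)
The plan is to construct $\Exp_{V,h}$ by an explicit formula built from the differential operators $\ell_j$ and the operator $(1-\varphi)^{-1}$, and then to verify each of the three listed properties by direct computation. The crucial technical input is Berger's comparison isomorphism $\CH_E(\Gamma)\otimes_{\Lambda_E} H^1_{\Iw}(\Qp,V)\cong \DdagrigE(V)^{\psi=1}$ recorded in \eqref{formula: isomorphism between Iwasawa cohomologies of V and  of the (phi-Gamma)-module}, which realizes the target of $\Exp_{V,h}$ concretely and reduces the construction to linear algebra inside Robba rings.

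Hypothesis \ref{item_LE1} guarantees that $1-\varphi$ is bijective on $\CR_E^+\otimes_E\Dc(V)$. Given $\alpha\in\mathfrak{D}(V)$, let $F\in\CR_E^+\otimes_E\Dc(V)$ be the unique solution of $(1-\varphi)F=\alpha$, and set $G:=\prod_{j=0}^{h-1}\ell_j(F)=(-1)^h t^h\partial^h(F)$. Since $\alpha\in(\CR_E^+)^{\psi=0}\otimes\Dc(V)$, the identity $\psi\circ(1-\varphi)=\psi-1$ gives $\psi F=F$, and one checks that $t^h\partial^h$ preserves the $\psi=1$ eigenspace; hence $G\in \DdagrigE(V)^{\psi=1}$, and $\Exp_{V,h}(\alpha)$ is defined as the image of $G$ under the above comparison isomorphism. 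The $\CH_E(\Gamma)$-linearity is immediate from the $\Gamma$-equivariance of $(1-\varphi)^{-1}$ and of $F\mapsto t^h\partial^h F$. Property (iii) is then tautological, since replacing $h$ by $h+1$ only appends the factor $\ell_h$ to the product of differential operators. Property (ii) follows by comparing the solutions of $(1-\varphi)F=\alpha$ and $(1-\varphi)F'=(\partial\alpha)\otimes e_{-1}$ in the twisted $(\varphi,\Gamma)$-module: the commutation $\partial\gamma=\chi(\gamma)\gamma\partial$ together with $\prod_{j=0}^{h}\ell_j=\ell_h\prod_{j=0}^{h-1}\ell_j$ yield the stated formula after absorbing one power of $t$ into the generator $e_{-1}$, with the sign $-1$ appearing as the Koszul sign from this absorption and the compatibility of $\Tw_1$ with the cyclotomic twist of $(\varphi,\Gamma)$-modules.

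The main obstacle is property (i), the interpolation with Bloch--Kato exponential maps, which is the content of Th\'eor\`eme 3.2.3 of \cite{perrinriou94} and occupies the bulk of the argument. The strategy is to unwind $\pr_n:\DdagrigE(V)^{\psi=1}\to H^1(K_n,V)$ via Herr's complex. For $n\geq 1$, the projection is represented by the value of $(\mathrm{id}\otimes\varphi)^{-n}(F)$ at $\pi=\zeta_{p^n}-1$, which — after the normalization $p^{-n}$ built into $\Xi_{V,n}$ — produces precisely the argument of $\exp_{V,K_n}$. The numerical factor $(h-1)!$ materializes when one evaluates $t^h\partial^h F$ at $\pi=\zeta_{p^n}-1$: since $t$ has a simple zero there, the operator reduces to $(h-1)!$ times the leading Taylor coefficient of $F$, delivering the prefactor $(h-1)!\exp_{V,K_n}(\Xi_{V,n}(\alpha))$ on the cohomology side. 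The separate $n=0$ prescription in $\Xi_{V,n}$, with its Euler-type factor $(1-p^{-1}\varphi^{-1})/(1-\varphi)$, encodes both the relation $\alpha(0)=(1-\varphi)F(0)$ and the Coleman-style correction for the corestriction from $K_1$ to $\Qp$. The commutative diagram immediately preceding the theorem, which identifies the Bloch--Kato exponential with its $(\varphi,\Gamma)$-module analogue of Nakamura, then yields the stated interpolation.
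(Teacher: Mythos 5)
Your plan is essentially Berger's explicit construction of the Perrin-Riou exponential, which the paper itself recalls in \S\ref{subsubsec_Berger_PR} (but also does not prove: the paper treats Theorem~\ref{thm:Perrin-Riou} as a cited result of Perrin-Riou and, for Berger's reformulation, merely states that ``an explicit computation shows'' properties (i)--(iii)). So the route you have chosen is the ``right'' one. However, there are two concrete problems.

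First, you have dropped the normalization constant. Berger's element is
\[
\Omega_{V,h}(\alpha) \;=\; -\frac{\log\chi(\gamma_1)}{p}\,\ell_{h-1}\cdots\ell_0\bigl(F(\pi)\bigr),
\]
whereas you put $\Exp_{V,h}(\alpha)$ equal to the image of $G=\prod_{j=0}^{h-1}\ell_j(F)$ under the comparison isomorphism $\CH_E(\Gamma)\otimes_{\Lambda_E}H^1_{\Iw}(\Qp,V)\simeq\DdagrigE(V)^{\psi=1}$. The factor $-\tfrac{\log\chi(\gamma_1)}{p}$ is not decorative: it cancels the $\gamma$-dependence built into the Fontaine--Herr description of $\pr_n$, and without it the diagram in (i) will only commute up to a nonzero constant depending on the chosen topological generator, not with the clean $(h-1)!\exp_{V,K_n}$ appearing in the statement. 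If you intend to absorb this constant into the comparison isomorphism, you must say so explicitly and then check that the normalization you choose for that isomorphism is consistent with the projections $\pr_n$ used in the statement.

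Second, and more seriously, your account of why the factorial $(h-1)!$ appears in (i) is wrong. You assert that evaluating $t^h\partial^h F$ at $\pi=\zeta_{p^n}-1$ gives ``$(h-1)!$ times the leading Taylor coefficient'' because $t$ has a simple zero there. Since $t(\zeta_{p^n}-1)=\log\zeta_{p^n}=0$ and $h\geqslant 1$, the \emph{value} of $G$ at $\pi=\zeta_{p^n}-1$ is in fact $0$; the descent map $\pr_n$ is not pointwise evaluation but is computed via the Fontaine--Herr complex, and the nontrivial information sits in the $(\gamma_n-1)$- and $(\varphi-1)$-components of the resulting cocycle. The factorial really arises from the iterated action of the operators $\ell_j=j-\nabla$ on the $h$-th Taylor jet of $F$ along $t$, and tying this to $\exp_{V,K_n}$ requires the Bloch--Kato exponential's description in $(\varphi,\Gamma)$-module terms (Berger's Theorem~II.13, or Nakamura's generalization). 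This computation is the mathematical content of the theorem; ``the commutative diagram immediately preceding the theorem'' only identifies $\exp_{V,K_n}$ with $\exp_{\DdagrigE(V),K_n}$ and does not carry the interpolation. Parts (ii) and (iii) are fine in spirit ((iii) is indeed tautological from the construction; (ii) needs the twist of Berger's formula worked out, which you gesture at plausibly), but (i) as written is a sketch with a wrong justification at the key step, not a proof.
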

Let us put 
\begin{equation}
\label{definition of finite level  projection of Perrin-Riou Exp }
\Exp_{V,h,n}=\pr_n\circ \Exp_{V,h}\,\,:\,\,
\mathfrak D (V)  \lra H^1(K_n, V).
\end{equation}

\subsubsection{}
\label{subsubsection: isotypic components}
 
 Set $G_n=\Gal (K_n/K).$  Without loss of generality, we can assume
that the characters of $G_n$ take values in $E.$
 Recall that Shapiro's lemma gives an isomorphism of $E[G_n]$-modules
\[
H^1(K_n, V)\stackrel{\sim}{\lra} H^1(\Qp, V\otimes_E E[G_n]^{\iota}).
\]
On taking the $\rho$-isotypic components, we obtain isomorphisms
\begin{equation}
\nonumber
H^1(K_n, V)^{(\rho)}  \stackrel{\sim}{\lra}       H^1(\Qp, V(\rho^{-1})),\qquad\qquad \rho \in X(G_n).
\end{equation} 
We shall make use of the following elementary lemma giving the $\rho$-isotypic component 
of the map $\Xi_{V,n}.$ 

\begin{lemma} 
\label{lemma from BenoisBerger2008}
Assume that $\rho \in X(G_n)$ is a primitive character. Then
for any $\alpha =f(\pi)\otimes d \in \mathfrak D (V)$
\begin{equation}
\nonumber
\Xi_{V,n}^{(\rho)}(\alpha)=
\begin{cases} e_{\rho}(f(\zeta_{p^n}-1))\otimes p^{-n}\varphi^{-n}(d)
& \textrm{if $n\geqslant 1$},\\
(1-p^{-1}\varphi^{-1})(1-\varphi)^{-1} (d)
 &\textrm{if $n=0.$}
 \end{cases}
 \end{equation}
\end{lemma}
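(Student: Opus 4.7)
The plan is to solve the defining equation $(1-\varphi)F=\alpha$ iteratively at the points $\pi=\zeta_{p^k}-1$, and then use primitivity of $\rho$ to annihilate all but one of the resulting terms.

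First I would view $F\in \CR_E^+\otimes_E\Dc(V)$ as a formal power series $F(\pi)$ with coefficients in $\Dc(V)$. Since the Frobenius appearing in the equation $(1-\varphi)F=\alpha$ is the diagonal one, $\varphi=\varphi_{\CR}\otimes\varphi_{\mathrm{cris}}$, this equation reads
\[
F(\pi)-(\mathrm{id}\otimes\varphi)\,F\bigl((1+\pi)^p-1\bigr)=f(\pi)\otimes d.
\]
Because $|\zeta_{p^k}-1|_p<1$, the series $F$ converges at each $\pi=\zeta_{p^k}-1$. Using $(1+\zeta_{p^k}-1)^p-1=\zeta_{p^{k-1}}-1$ and the convention $\zeta_{p^0}=1$, substitution gives
\[
F(\zeta_{p^k}-1)-(\mathrm{id}\otimes\varphi)\,F(\zeta_{p^{k-1}}-1)=f(\zeta_{p^k}-1)\otimes d,\qquad 1\leqslant k\leqslant n.
\]
Setting $a_k:=(\mathrm{id}\otimes\varphi)^{-k}F(\zeta_{p^k}-1)$ with $a_0=F(0)$, and applying $(\mathrm{id}\otimes\varphi)^{-k}$ to the above, one obtains the telescoping recursion $a_k-a_{k-1}=f(\zeta_{p^k}-1)\otimes\varphi^{-k}(d)$. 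Summing from $k=1$ to $n$ and multiplying by $p^{-n}$ yields
\[
\Xi_{V,n}(\alpha)=p^{-n}F(0)+\sum_{k=1}^{n}p^{-n}f(\zeta_{p^k}-1)\otimes\varphi^{-k}(d).
\]

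Next I would apply the idempotent $e_\rho$ for a primitive $\rho\in X(G_n)$ of conductor $p^n$ and argue that every summand except the one at $k=n$ is killed. The group $G_n$ acts only on the cyclotomic factor and fixes $\Dc(V)$ pointwise. For each $k<n$, the value $f(\zeta_{p^k}-1)$ lies in $K_k$ and is therefore fixed by the non-trivial subgroup $\Gal(K_n/K_k)\subset G_n$; since $\rho$ is primitive of conductor $p^n$, its restriction to this subgroup is non-trivial, and consequently $e_\rho\bigl(f(\zeta_{p^k}-1)\bigr)=0$. The same reasoning (or simply $G_n$-invariance of $F(0)\in\Dc(V)$) kills the $F(0)$ contribution. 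Only the $k=n$ summand survives, delivering
\[
\Xi_{V,n}^{(\rho)}(\alpha)=e_\rho\bigl(f(\zeta_{p^n}-1)\bigr)\otimes p^{-n}\varphi^{-n}(d),
\]
as asserted. The case $n=0$ is immediate: $G_0$ is trivial, its only primitive character is trivial, and the claim reduces to the very definition of $\Xi_{V,0}$.

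The hardest part will be purely administrative: carefully distinguishing the diagonal Frobenius $\varphi=\varphi_{\CR}\otimes\varphi_{\mathrm{cris}}$ appearing in $(1-\varphi)F=\alpha$ from the partial Frobenius $\mathrm{id}\otimes\varphi$ appearing in the definition of $\Xi_{V,n}$, so that the recursion is carried out with the correct factors. Once this bookkeeping is set straight, the argument reduces to a one-line Abel summation combined with the standard vanishing of $\rho$-isotypic projections of elements defined over proper subfields of $K_n$.
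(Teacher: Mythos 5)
Your proof is correct, and it reproduces the standard argument. The paper itself does not give a proof here but simply cites \cite[Lemma~4.10]{BenoisBerger2008}; the telescoping computation you give (substituting $\pi=\zeta_{p^k}-1$ into $(1-\varphi)F=\alpha$, applying $(\mathrm{id}\otimes\varphi)^{-k}$ to set up Abel summation, and then using primitivity of $\rho$ to annihilate every term coming from a proper subfield $K_k\subset K_n$) is exactly the computation that establishes this lemma, and your handling of the diagonal versus partial Frobenius is careful and accurate. One small wrinkle worth flagging is the $n=0$ case: as written, the lemma's right-hand side is $(1-p^{-1}\varphi^{-1})(1-\varphi)^{-1}(d)$, whereas the definition of $\Xi_{V,0}$ produces $(1-p^{-1}\varphi^{-1})(1-\varphi)^{-1}\bigl(f(0)\,d\bigr)$ since $\alpha(0)=f(0)\otimes d$; this is a (harmless) abuse in the statement itself rather than a gap in your argument, but when you say the $n=0$ case ``reduces to the very definition'' you should record the presence of the scalar $f(0)$.
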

\begin{proof} See \cite[Lemma~4.10]{BenoisBerger2008}.
\end{proof}

\subsubsection{}
\label{subsubsec_Berger_PR}
We recall the explicit construction of the Perrin-Riou exponential that was 
discovered by Berger in \cite{BergerExpKatoVol}.
Let, as before, $F\in \CR_E^+\otimes \Dc (V)$ denote the solution of the equation
$
(1-\varphi)\,F=\alpha .
$
Define 
\[
\Omega_{V,h} (\alpha) \,:=\,-\frac{\log \chi (\gamma_1)}{p}\,\ell_{h-1}\ell_{h-2} \cdots \ell_0 (F (\pi)).
\]
It is not difficult  to see that $\Omega_{V,h} (\alpha) \in
\DdagrigE (V)^{\psi=1}$ and an explicit computation shows that
$\Omega_{V,h}$ satisfies  properties i-iii) in Theorem~\ref{thm:Perrin-Riou}.  

See also \cite{Nakamura2014JIMJ} for a generalization of this approach to 
de Rham representations.

\subsubsection{}
In the remainder of \S\ref{subsec_PR_map_general}, we shall review the construction of the Perrin-Riou exponential maps for families of  $(\varphi,\Gamma)$-modules of rank one. 

For any continuous character 
$\delta \,:\,\Qp^\times \rightarrow A^\times$ we denote by $\bD_\delta=\CR_Ae_\delta$
the $(\varphi,\Gamma)$-module of rank one over the relative Robba ring $\CR_A$, generated by $e_\delta$
and such that
\begin{equation}
\nonumber
\varphi (e_\delta)=\delta (p)e_\delta \quad, \quad 
\gamma (e_\delta)= \delta (\chi (\gamma)) e_\delta \qquad \qquad \gamma \in \Gamma.
\end{equation}
Set $\bD_\delta^+:=\CR_A^+e_\delta.$
We say that $\bD_\delta$ is of Hodge--Tate weight $-m\in \ZZ$ (sic!) if 
\begin{equation}
\nonumber 
\delta (u)=u^{m}, \qquad\qquad \forall u\in \Zp^\times.
\end{equation}
If $\bD_\delta$ is of Hodge--Tate weight $-m,$ then  $\DCc (\bD_\delta)$ is the free $A$-module of rank one generated 
by $d_\delta=t^{-m}e_\delta.$ It has the unique filtration break at $-m,$ and $\varphi$ acts on $\DCc (\bD_\delta)$
as the multiplication by $p^{-m}\delta (p)$ map.

\subsubsection{}\label{subsubsec_PR_exp_map}  Let $\bD_\delta$ be a $(\varphi,\Gamma)$-module of rank one and Hodge--Tate weight $-m\in \ZZ$. The direct  analogue of the condition \ref{item_LE1} for families of $(\varphi,\Gamma)$-modules  is the following condition: 
\begin{itemize}
 \item[\mylabel{item_LEstar}{\bf LE*)}]
For any $i\in \ZZ$,\, $1-p^i\delta (p) \in A$ does not vanish  on
 $\mathcal X=\Spm (A)$\,.
\end{itemize}
For any point $x\in \Spm (A),$ let $\bD_{\delta,x}$ denote the  specialization 
of $\bD_\delta$ at $x.$ To facilitate a comparison with \ref{item_LE1}, we remark that the condition \ref{item_LEstar} implies that 
\[
\DCc (\bD_{\delta,x})^{\varphi=p^i}=0, \qquad \forall  x\in \Spm (A),\quad  \forall i \in \ZZ\,.
\]

Let us put $\mathfrak D (\bD_\delta)=\CR_A^{\psi=0}\otimes_A \DCc (\bD_\delta).$
We shall explain the construction of a family of maps (which we will call Perrin-Riou exponential maps)
\begin{equation}
\nonumber
\Exp_{\bD_\delta, h}\,:\,\mathfrak D (\bD_\delta) \lra H^1_{\Iw}(\Qp, \bD_\delta), \qquad h\geqslant m
\end{equation}
modelled on the discussion in \S\ref{subsubsec_Berger_PR}. Let us set $\alpha (\pi):=f(\pi)\otimes d_\delta \in \mathfrak D (\bD_\delta)
.$ The equation
\begin{equation}
\nonumber
(1-\delta (p)\varphi) F_{m}(\pi)=\partial^{m}f(\pi)
\end{equation}
has a unique solution in $\CR_A^+.$  It is easy to see that 
\[
F_{-m}(\pi)\otimes e_\delta \in \bD_\delta^{\psi=1}=H^1_{\Iw}(\Qp, \bD_\delta).
\] 
We define
\begin{equation}
\nonumber
\Exp_{\bD_\delta, m}(\alpha):=(-1)^{m-1} \frac{\log \chi (\gamma_1)}{p}F_{m}(\pi)
\otimes e_\delta,
\end{equation}
and set
\begin{equation}
\nonumber
\Exp_{\bD_\delta, h}(\alpha)= \left (\underset{j=m}{\overset{h-1} \prod}\ell_j \right )\circ
\Exp_{\bD_\delta, m}(\alpha), \qquad h\geqslant m+1.
\end{equation}

The following result can be extracted from   \cite[Section~4]{Nakamura2017ANT} or proved directly using Berger's arguments.

\begin{proposition}
\label{prop: large exponential map}
\item[i)] For any  $h\geqslant m$ we have 
\[
\Exp_{\bD_\delta, h+1}=\ell_h \Exp_{\bD_\delta, h}.
\]

\item[ii)] For any $h\geqslant m$ the following diagram commutes:
\begin{equation}
\nonumber
\xymatrix{
\mathfrak D (\bD_{\delta \chi})
\ar[d]_{e_1\otimes \partial}
\ar[rr]^-{\Exp_{\bD_{\delta \chi,h+1}}} &&H^1_{\Iw}(\Qp, \bD_{\delta \chi}) \\
\mathfrak D (\bD_\delta)
\ar[rr]^-{\Exp_{\bD_{\delta},h}} &&H^1_{\Iw}(\Qp, \bD_{\delta}) \ar[u]_{-\Tw_1}
}
\end{equation}
 
\item[iii)] Assume that  $ h\geqslant m\geqslant  1.$ For any   $\alpha (\pi)=f(\pi)\otimes d_\delta \in 
\CR_A^{\psi=0}\otimes_A \DCc (\bD_\delta)$, the equation 
\[
(1-\varphi)(F)=\alpha (\pi)
\]
has a unique solution $F\in \DCc (\bD_\delta)\otimes \CR_A^+$ and it verifies
\begin{equation}
\nonumber
\Exp_{\bD_\delta,h}(\alpha)=-\frac{\log \chi (\gamma_1)}{p} \underset{j=0}{\overset{h-1} \prod}\ell_j  (F).
\end{equation}
Moreover, 
\begin{equation}
\nonumber
\underset{j=0}{\overset{h-1} \prod}\ell_j  (F)=
(-1)^{h-1}t^h\partial^h(F)\in \left (t^{h-m} \bD_\delta^+ \right )^{\psi=1}.
\end{equation}

\item[iv)] Under the conditions and notation of iii), let us put 
\[
\Xi_{\bD_\delta,n}(\alpha):=
\begin{cases}\left (\frac{1-p^{-1}\varphi^{-1}}{1-\varphi} \right )\alpha (0)
&\text{ if $n=0$},\\
p^{(m-1)n}\delta (p)^{-n} F(\zeta_{p^n}-1) &\text{ if $n\geqslant 1.$}
\end{cases}
\]
Then the map $\Xi_{\bD_\delta,n}\,:\,\mathfrak D (\bD_\delta)
\rightarrow  \DCc (\bD_{\delta}) \otimes_{\Qp} K_n$ is surjective, and  the diagram
\begin{equation}
\nonumber
\xymatrix{
\mathfrak D (\bD_\delta) \ar[d]^{\Xi_{\bD_\delta,n}}
\ar[rrr]^-{\Exp_{\bD_{\delta},h}} & &&H^1_{\Iw}(\Qp, \bD_{\delta}) \ar[d]^{\pr_n} \\
\DCc (\bD_{\delta}) \otimes_{\Qp} K_n
\ar[rrr]^-{(h-1)!\exp_{\bD_\delta, K_n}}& &&H^1 (K_n,\bD_{\delta}) 
}
\end{equation}
commutes.

\end{proposition}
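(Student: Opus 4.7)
My plan is to verify the four assertions in order, using the explicit construction of $\Exp_{\bD_\delta, m}$ together with the commutation relation $\partial\varphi = p\varphi\partial$. Part (i) is immediate from the recursive definition: inserting the extra factor $\ell_h$ to the product $\prod_{j=m}^{h-1}\ell_j$ defining $\Exp_{\bD_\delta, h}$ yields the product $\prod_{j=m}^{h}\ell_j$ defining $\Exp_{\bD_\delta, h+1}$, so $\Exp_{\bD_\delta,h+1} = \ell_h \Exp_{\bD_\delta,h}$ on the nose.

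For (iii), I would first establish existence and uniqueness of $F \in \DCc(\bD_\delta)\otimes_A \CR_A^+$ solving $(1-\varphi)F = \alpha$. Writing $F = g \otimes d_\delta$ and using $\varphi(d_\delta) = p^{-m}\delta(p)d_\delta$, this reduces to inverting $1 - p^{-m}\delta(p)\varphi$ on $\CR_A^+$; since the residue at $\pi = 0$ of this operator is $1 - p^{-m}\delta(p) \in A^\times$ by \ref{item_LEstar}, and $\varphi$ raises $\pi$-adic orders on higher-order terms, the Neumann series for the inverse converges fibrewise and yields both existence and uniqueness. For the explicit formula, apply $\partial^m$ to $(1 - p^{-m}\delta(p)\varphi)g = f$ and use the iterated commutation $\partial^m\varphi = p^m\varphi\partial^m$ (from $\partial\varphi = p\varphi\partial$ by induction) to obtain $(1-\delta(p)\varphi)(\partial^m g) = \partial^m f$. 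By uniqueness of $F_m$, this forces $\partial^m g = F_m$. Combined with the identity $\prod_{j=0}^{h-1}\ell_j = (-1)^h t^h \partial^h$ on $\CR_A$ (a consequence of $\nabla = t\partial$ together with $\partial t = 1$), a direct computation in $\bD_\delta$ (where $d_\delta = t^{-m}e_\delta$) reproduces the claimed formula for $\Exp_{\bD_\delta, h}(\alpha)$ with prefactor $-\log\chi(\gamma_1)/p$. The final integrality assertion $\prod_{j=0}^{h-1}\ell_j(F) \in t^{h-m}(\bD_\delta^+)^{\psi=1}$ follows from $F_m \in \CR_A^+$ combined with $F_m \otimes e_\delta \in (\bD_\delta^+)^{\psi=1}$.

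For (ii), I would unwind both paths of the diagram on a test element $\alpha = f(\pi)\otimes d_{\delta\chi}$. Noting that $d_{\delta\chi} = t^{-(m+1)}e_{\delta\chi}$ and identifying $e_{\delta\chi}$ with $e_\delta$ via the canonical comparison $\bD_{\delta\chi} \cong \bD_\delta \otimes \bD_\chi$, the equation $(1-\delta\chi(p)\varphi)G = \partial^{m+1}f$ governing the top path coincides (using $\chi(p) = 1$ in our convention) with the equation $(1-\delta(p)\varphi)F = \partial^m(\partial f)$ governing the bottom path after applying $e_1\otimes\partial$. The cyclotomic twist $\Tw_1$ and the sign $-1$ absorb the Hodge--Tate weight shift and the change in prefactor from $(-1)^{(m+1)-1}$ to $(-1)^{m-1}$, yielding the commutativity of the diagram.

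For (iv), surjectivity of $\Xi_{\bD_\delta, n}$ for $n \geq 1$ follows because the solution $F$ depends linearly and surjectively on $f$ via $(1 - p^{-m}\delta(p)\varphi)^{-1}$, and Amice-type evaluation at $\zeta_{p^n} - 1$ is surjective onto $K_n$; for $n = 0$ the map is a nonzero scalar multiple of the identity on $\DCc(\bD_\delta)$, whose invertibility is guaranteed by \ref{item_LEstar}. The commutativity of the diagram, which is the defining property linking the Perrin-Riou map to the Bloch--Kato exponential, is the deepest point and is the main obstacle I anticipate. I would follow Berger's strategy from \cite{BergerExpKatoVol}: interpret $\exp_{\bD_\delta, K_n}$ as a connecting homomorphism in a suitable complex built from $1-\varphi$ and $\gamma_n - 1$, and then trace $\pr_n \circ \Exp_{\bD_\delta, h}(\alpha)$ through this identification using the explicit formula from (iii). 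The factor $(h-1)!$ emerges because $\prod_{j=0}^{h-1}\ell_j$ collapses to $(h-1)!$ on the appropriate graded piece modulo $t^h$. The extension from Berger's setting (coefficients in a finite extension of $\Qp$) to the affinoid $A$ is automatic, since all operators in sight are $A$-linear and the argument can be carried out fibrewise and then reassembled.
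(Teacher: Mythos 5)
The paper does not actually give a proof of this proposition: the text simply records that it ``can be extracted from \cite[Section~4]{Nakamura2017ANT} or proved directly using Berger's arguments.'' Your proposal is a fleshing-out of the second alternative (Berger's explicit description of the Perrin--Riou map), so you are following exactly the route the authors point to. The core mechanism you identify — inverting $1-p^{-m}\delta(p)\varphi$ on $\CR_A^+$ via \ref{item_LEstar}, the commutation $\partial^m\varphi = p^m\varphi\partial^m$ to transfer between the defining equation for $F$ and the defining equation for $F_m$, the operator identity $\prod_{j=0}^{h-1}\ell_j = (-1)^h t^h\partial^h$, and the reduction of (iv) to Berger's complex computing the Bloch--Kato exponential — is precisely what is meant by ``Berger's arguments.'' The observation that the argument passes from a finite extension of $\Qp$ to the affinoid $A$ because everything is $A$-linear and one can argue fibrewise is also correct and worth saying out loud.

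One point in (ii) deserves more care than you give it. You collapse the equations $(1-(\delta\chi)(p)\varphi)F_{m+1} = \partial^{m+1}f$ and $(1-\delta(p)\varphi)G_m = \partial^{m+1}f$ into a single one by ``using $\chi(p)=1$ in our convention.'' The paper never fixes the extension of $\chi$ from $\Gamma$ to a character of $\Qp^\times$, and both $\chi(p)=1$ and $\chi(p)=p$ occur in the literature, so this is not automatic. Your choice is the one for which the $\varphi$-eigenvalue of $\DCc(\bD_{\delta\chi})$ is $p^{-1}$ times that of $\DCc(\bD_\delta)$ (matching $\Dc(V(1))$ versus $\Dc(V)$), which is coherent with the rest of the framework; but you should state and justify this normalization explicitly, because with the other choice the two difference equations do \emph{not} literally coincide and one must instead perform a change of unknown ($G_m \mapsto$ suitable rescaling) to reconcile them, which changes how the sign and the $\Tw_1$ twist are bookkept. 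While you are being careful with signs, also note that the displayed identity $\prod_{j=0}^{h-1}\ell_j(F)=(-1)^{h-1}t^h\partial^h(F)$ in the statement of (iii) should be checked against the identity $\prod_{j=0}^{h-1}\ell_j=(-1)^h t^h\partial^h$ from \S\ref{subsubsec_subsec_PR_map_general_1}: since $\Gamma$ acts trivially on $d_\delta$, a direct computation gives $\prod_{j=0}^{h-1}\ell_j(g\otimes d_\delta)=(-1)^h t^{h-m}\partial^h(g)\otimes e_\delta$, so the exponent of $(-1)$ in your write-up (you use $(-1)^h$) appears to be the correct one and the paper's statement may carry a sign misprint; reconcile this before finalizing.
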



\subsection{Triangulations in families}
\label{subsec_triangulations}

\subsubsection{}
\label{subsubsec_triangulations_1}

In \S\ref{subsec_triangulations}, we shall work in the setting of \S\ref{sec_Bellaiche_axiomatized} and introduce various objects with which we shall apply the general constructions in \S\ref{sec_Bellaiche_axiomatized}. To that end, let us fix an integer $e\geqslant 1$ and put $A=R[X]/(X^e-Y)$ as before, where $R=E\left <Y/p^{re} \right >$ is a Tate algebra. 
Set ${\mathcal W}=\Spm (R),$  $\mathcal X=\Spm (A)$ and denote by 
$\w\,:\, \mathcal X \rightarrow {\mathcal W}$ the weight map. 
 We fix an integer $k_0\geqslant 2$ and 
identify  $\mathcal W$ with the closed disk $D(k_0,p^{re-1})$ as in 
Section~\ref{subsection:generalities about W and X}. 
Let $x_0\in \mathcal X$ denote the unique point such that $\w (x_0)=k_0.$
We put
\[
\mathcal X^{\mathrm{cl}}=\{x\in \mathcal X  \mid \w (x)\in \ZZ, {\w (x) \geqslant 0}\}. 
\]

We let $V$ denote a free $A$-module of rank $2$ which is endowed with a continuous action of the Galois 
group $G_{\QQ,S}$. 
In accordance with the notation of Section~\ref{sec_Bellaiche_axiomatized},
for any $y\in {\mathcal W}(E)$ and $x\in \mathcal X$, we set $V_{y}=V\otimes_{R,\w} E$\,, $V_{x}=V/\mathfrak m_x V$ and denote by 
$\pi_x\,:\,V_{\w (x)}\rightarrow V_x$ the canonical projection. 
Note that  $V_{k_0}=V/X^{e}V$\,, $V_{x_0}=V/XV$ and $V[x_0]=X^{e-1}V/ X^eV.$

\subsubsection{}
\label{subsubsecKPX}

We shall assume that $V$ verifies following conditions:

\begin{itemize}
 \item[\mylabel{item_C1}{\bf C1)}]
 For each $x\in \mathcal X^{\mathrm{cl}} $ of integer weight $\w (x)\geqslant 0$
the restriction of $V_x$ on the decomposition group at $p$ is semistable
of Hodge--Tate weights  $(0, \w (x)  +1).$

 \item[\mylabel{item_C2}{\bf C2)}]
 There exists $\alpha \in A$ such that for all $x\in \mathcal X^{\mathrm{cl}}$
 the eigenspace  $\Dst (V_x)^{\varphi=\alpha (x)}$ is one dimensional. 

 \item[\mylabel{item_C3}{\bf C3)}]
 For each $x\in \mathcal X^{\mathrm{cl}}-\{x_0\}$ 
\begin{equation}
\nonumber
\Dst (V_x)^{\varphi=\alpha (x)}\cap \Fil^{  \w (x) +1}\Dst (V_x)=0.
\end{equation}
\end{itemize}
On shrinking $\mathcal{X}$ as necessary, it follows from  \cite{KPX2014} that one can construct a unique $(\varphi,\Gamma)$-submodule $\bD\subseteq \DdagrigA (V)$ of rank one with the following properties:

\begin{itemize}
 \item[\mylabel{item_phiGamma1}{$\varphi\Gamma_1$)}]
 $\bD=\bD_\delta$ with $\delta\,:\,\Qp^\times\rightarrow A^\times$
such that $\delta (p)=\alpha (x)$ and $\left.\delta \right \vert_{\Zp^\times}=1.$ 

 \item[\mylabel{item_phiGamma2}{$\varphi\Gamma_2$)}]
 $\DCc (\bD_x)=\Dst (V_x)^{\varphi =\alpha (x)}$ for each $x\in \mathcal X^{\textrm{cl}}$, where $\bD_{x}:=\bD\otimes_{A,x}E$.

 \item[\mylabel{item_phiGamma3}{$\varphi\Gamma_3$)}]
 For each $x\in \mathcal X^{\textrm{cl}}-\{x_0\},$  
the $(\varphi,\Gamma )$-module $\bD_x$ is saturated in $\DdagrigE (V_x).$
\end{itemize}

Let $\bD_{x_0}^{\mathrm{sat}}$ denote the saturation of the specialization 
$\bD_{x_0}$ of the $(\varphi,\Gamma)$-module $\bD$ at $x_0.$ Then, as explained in the final section of \cite{KPX2014}, we have $\bD_{x_0}=t^m\bD_{x_0}^{\mathrm{sat}}$ for some $m\geqslant 0$. We will consider the following two scenarios\footnote{In the context of elliptic modular forms, the condition \ref{item_Sat_A} (resp., \ref{item_Sat_B}) translates into the requirement that the corresponding eigenform is non-$\theta$ critical (resp., is $\theta$-critical) in the sense of Coleman.} :
\begin{itemize}[leftmargin=*]
 \item[\mylabel{item_Sat_A}{${}^{\neg}\Theta$)}] $\Dst (V_{x_0})^{\varphi=\alpha (x_0)}\cap 
\Fil^{ k_0+1}\Dst (V_{x_0})=0.$ 
\end{itemize}
Since $\DCc (\bD_{x_0}^{\mathrm{sat}})=\Dst (V_{x_0})^{\varphi=\alpha (x_0)},$ the condition \ref{item_Sat_A} implies that the Hodge--Tate weight of 
$\Dst (V_{x_0})^{\varphi=\alpha (x_0)}$ is $0.$ Since the 
Hodge--Tate weight of $\bD_{x_0}$ is also $0$, we deduce that in this case
$m=0$ and $\bD_{x_0}$ is saturated in $\DdagrigE (V_x).$
\begin{itemize}[leftmargin=*]
 \item[\mylabel{item_Sat_B}{$\Theta$)}] $\Dst (V_{x_0})^{\varphi=\alpha (x_0)}= 
\Fil^{ k_0+1}\Dst (V_{x_0}).$ 
\end{itemize}
Suppose that \ref{item_Sat_B} holds.  Then $ \DCc (\bD_{x_0}^{\mathrm{sat}})   =
\Fil^{ k_0+1}\Dst (V_{x_0}).$ Therefore $\bD_{x_0}^{\mathrm{sat}}$ is of Hodge--Tate weight ${k_0+1}$ and $m={k_0+1}.$ Let $\beta (x_0)$ denote the other eigenvalue 
of $\varphi$ on $\Dst (V_{x_0}).$ By the weak admissibility of $\Dst (V_{x_0})$, we infer that $v_p(\alpha (x_0))\geqslant { k_0+1}$\,, $v_p(\beta (x_0))\geqslant 0$ and that
$v_p(\alpha (x_0))+v_p(\beta (x_0))= k_0+1$. Thence, 
$v_p(\alpha (x_0))=  k_0+1$ and $v_p(\beta (x_0))=0,$ and the eigenspaces 
$\Dst (V_{x_0})^{\varphi=\alpha (x_0)}$ and $\Dst (V_{x_0})^{\varphi=\beta (x_0)}$ 
are weakly admissible. They are therefore admissible and the restriction of
$V_{x_0}$ to the decomposition group at $p$ decomposes into a direct sum 
\begin{equation}
\label{eqn_p_local_decomposition_theta}
V_{x_0}=V_{x_0}^{(\alpha)}\oplus V_{x_0}^{(\beta)}
\end{equation}
of two one-dimensional $G_{\QQ_p}$-representations. 
Let us $\bD^{(\alpha)}=\DdagrigE (V_{x_0}^{(\alpha)})$
and $\bD^{(\beta)}=\DdagrigE (V_{x_0}^{(\beta)}).$
Then
\begin{equation}
\nonumber
\bD^{(\alpha)}=\CR_E e^{(\alpha)}\quad, \quad \bD^{(\beta)}=\CR_E e^{(\beta)},
\end{equation}
where
\begin{equation}
\nonumber
\begin{aligned}
&\gamma (e^{(\alpha)})=\chi^{{ -1-k_0}}(\gamma)e^{(\alpha)},\qquad
&&\varphi (e^{(\alpha)})= \alpha (x_0)p^{ -1-k_0}e^{(\alpha)}, &&&\\
&\gamma (e^{(\beta)})=e^{(\beta)},\qquad
&&\varphi (e^{(\beta)})= \beta (x_0)e^{(\beta)}, &&& \qquad\qquad\gamma\in \Gamma.
\end{aligned}
\end{equation}
We have 
\begin{equation}
\label{formula: formulas for bD_x_0 and bD_x_0^sat}
\begin{aligned}
& \Dc (V_{x_0}^{(\alpha)})=\Dst (V_{x_0})^{\varphi=\alpha (x_0)}, \\
&\Dc (V_{x_0}^{(\beta)})=\Dst (V_{x_0})^{\varphi=\beta (x_0)}, \\
& \bD^{(\alpha)}=\bD_{x_0}^{\mathrm{sat}}=t^{ -(k_0+1)}\bD_{x_0}.
\end{aligned}
\end{equation}
We remark that $\DCc (\bD_{x_0})$ and $\Dc (V_{x_0}^{(\alpha)})$ are isomorphic as
$\varphi$-modules but not as filtered modules: they have  Hodge--Tate weights
$0$ and $ k_0+1$, respectively.

\section{Two variable $p$-adic $L$-functions: the abstract definitions}
\label{sec_2varpadicLabstract}
Suppose that we are given another free $A$-module $V'$ of rank two which is equipped with a continuous $G_{\QQ,S}$-action, together with a Galois equivariant  $R$-linear pairing 
\begin{equation}
\label{eqn_the_pairing_abstract}
(\,,\,):  V' \otimes V\lra R
\end{equation}
satisfying the condition \ref{item_P1} of Section~\ref{sec_Bellaiche_axiomatized}\,: 
\begin{center}
For every  $v'\in V'$  and $v\in V$, we have 
 $(Xv',v)=(v',Xv)$. 
\end{center}

\subsection{Duality (bis)} 
\label{subsubsec_twovarprelim_objects}

We have a canonical $\CH_R(\Gamma)$-bilinear pairing
\begin{equation}
\label{eqn_PR_pairing_Iw}
\left <\,\,,\,\,\right > \,:\,\left(\CH_R(\Gamma)\otimes_R H^1_{\Iw}(\Qp,{ V'} (1))\right)
\otimes  \left(\CH_R(\Gamma)\otimes_R H^1_{\Iw}(\Qp,V)^\iota\right) \longrightarrow \CH_R(\Gamma),
\end{equation}
which has the following explicit description. Let 
\[
\left (\,\,,\,\,\right )_n\,:\, H^1(K_n,{ V'}(1))\times H^1(K_n,V) \longrightarrow R, \qquad n\geqslant 0
\]
denote the  cup-product pairings induced by  (\ref{eqn_the_pairing_abstract}). 
Recall from  \S\ref{subsubsec:Iwasawa cohomology} the projection maps
\begin{align*}\pr_n\,:\, \left(\CH_R(\Gamma)\otimes_R H^1_{\Iw}(\Qp,V)\right) \lra
H^1(K_n,V),&\\
{\pr'_n}\,:\, \left(\CH_R(\Gamma)\otimes_R 
H^1_{\Iw}(\Qp,{ V'} (1))\right) \lra
H^1(K_n, { V'}(1)),&
\qquad n\geqslant 0\,.
\end{align*}

We then have 
\begin{equation}
\label{formula: Perrin-Riou pairing}
\left <x,y\right >\equiv \underset{\tau\in \Gamma/\Gamma_n}\sum 
\left (\tau^{-1}{ \pr'_n}(x)\,,\,\pr_n (y)\right )_n \tau \quad \mod{(\gamma_n-1)}
\end{equation}
(see \cite{perrinriou94}, Section~3.6).  In particular, for any finite character $\rho \in X(\Gamma)$ of conductor $p^n$, we have
\begin{equation}
\nonumber
\rho \left (\left <x,y^\iota\right >\right )= \underset{\tau\in \Gamma/\Gamma_n}\sum 
\left (\tau^{-1}{\pr'_n}(x),\pr_n (y)\right )_n \rho (\tau)=
\left (e_\rho(x), e_\rho(y^\iota) \right )_{\rho,0}\,,
\end{equation}
where $\left ( \, , \,\right )_{\rho,0}$ stands for the cup-product pairing
$$H^1(\Qp, { V'}(\chi\rho^{-1}))\otimes_R H^1(\Qp, V(\rho)) \xrightarrow{\left ( \, , \,\right )_{\rho,0}} R.$$
Note that the pairing \eqref{eqn_PR_pairing_Iw} can be recast entirely in terms of the Iwasawa cohomology of associated  $(\varphi,\Gamma)$-modules; c.f. \cite[Section~4.2]{KPX2014}.

\subsubsection{}
We apply the formalism of Section~\ref{subsec:duality} to our situation. 
 Equip the tensor product
\[
A\otimes_R \left (\CH_R(\Gamma)\otimes_R 
H^1_{\Iw}(\Qp, V )\right )\simeq\CH_A(\Gamma)\otimes_R 
H^1_{\Iw}(\Qp, V )   
\]
 with the action of $A$ through the first factor.
We extend the pairing \eqref{eqn_PR_pairing_Iw} by linearity to the pairing
\begin{equation}
\label{eqn_PR_pairing_Iw_A}
\left <\,\,,\,\,\right >_A \,:\,\left(\CH_R(\Gamma)\otimes_R 
H^1_{\Iw}(\Qp,{ V'} (1))\right)
\otimes_{\CH_R(\Gamma)}  \left(\CH_A(\Gamma)\otimes_R H^1_{\Iw}(\Qp,V)^\iota\right) \longrightarrow \CH_A(\Gamma),
\end{equation}
which is $\CH_R(\Gamma)$-linear (respectively, $\CH_A(\Gamma)$-linear) with respect to the first (respectively second) factor.

For any $x\in \mathcal{X}(E)$, the functoriality of the cup-products gives rise to the following commutative diagram:
\begin{equation}
\label{eqn_PR_pairing_Iw_A_diagram_specialize}
\begin{aligned}
\xymatrix{
\left(\CH_R(\Gamma)\otimes_R H^1_{\Iw}(\Qp,{V'}(1))\right)
\ar[d]^{\w (x)} 
&\otimes_{\CH_R(\Gamma)} & \left(\CH_A(\Gamma) \otimes_R H^1_{\Iw}(\Qp,V)^\iota\right)\ar[d]^{{\rm sp}_x}\ar[r]^(.7){\left <\,\,,\,\,\right >_A}&\CH_A(\Gamma)\ar[d]^{x}\\
\CH_E(\Gamma)\otimes_E H^1_{\Iw}(\Qp,{V'_{\w(x)}}(1))  
&\otimes_{\CH_E(\Gamma)}&  \left(\CH_E(\Gamma) \otimes_E H^1_{\Iw}(\Qp,V_{{ \w(x)}})^\iota\right)\ar[r]^(.7){\left <\,\,,\,\,\right >_{{ \w(x)}}}&  \CH_E(\Gamma).
}
\end{aligned}
\end{equation}
where the left and the right vertical maps are the specializations at $\w (x)$ and 
$x$ respectively, and the middle   vertical map is the specialization  defined in 
Section~\ref{subsec:duality}.

By Lemma~\ref{lemma:factorization of pairing}, 
for any $x\in \mathcal X(E)$ the pairing $(\,,\,)$ induces a pairing 
\[
(\,,\,)_x\,:\, V_{x}\otimes_E V[x]  \lra  E.
\]
The pairing in the bottom row of \eqref{eqn_PR_pairing_Iw_A_diagram_specialize} therefore factors as 
\begin{equation}
\label{formula:factorization of Iwasawa pairing}
\begin{aligned}
\xymatrix{
\CH_E(\Gamma)\otimes_E H^1_{\Iw}(\Qp,{ V'_{ \w(x) }(1))}\ar[d]^x  &\otimes&  \left(\CH_E(\Gamma) \otimes_E H^1_{\Iw}(\Qp,V[x])^\iota\right)\ar[r]^(.72){\left <\,\,,\,\,\right >_{{ \w (x)}}}&  \CH_E(\Gamma)\\
\CH_E(\Gamma)\otimes_E H^1_{\Iw}(\Qp,{ V'_x}(1))  &\otimes&  \left(\CH_E(\Gamma) \otimes_E H^1_{\Iw}(\Qp,V[x])^\iota\right)\ar[r]^(.72){\left <\,\,,\,\,\right >_{x}}\ar@{=}[u]&  \CH_E(\Gamma)\,. \ar@{=}[u]}
\end{aligned}
\end{equation}

\subsection{Perrin-Riou-style two variable $p$-adic $L$-functions}
 Recall from \S\ref{subsubsec_PR_exp_map} the Perrin-Riou exponential  
\begin{equation}
\nonumber
\Exp_{\bD,h}\,:\,\mathfrak D(V)
\lra  H^1_{\Iw}(\Qp, \bD)\,.
\end{equation}
Note that we have a canonical injection
\begin{equation}
\nonumber
 H^1_{\Iw}(\Qp, \bD) \lra \CH_R(\Gamma)\otimes_R H^1_{\Iw}(\Qp,V)= \CH_R(\Gamma)\otimes_R H^1_{\Iw}(\Qp,V).
\end{equation}
Let us fix an $R$-module generator  $\eta \in \DCc (\bD)$.

Let us set $\mathfrak D(V)_A=\mathfrak D(V)\otimes_R A$. We have a canonical isomorphism
\[
\mathfrak D(V)_A\simeq
\DCc (\bD)\otimes_R  \left (\CR_A^+\right )^{\psi=0}
 \]
 The Perrin-Riou exponential map $\Exp_{\bD,h}$ can be extended by linearity to an $A$-linear map
\begin{equation}
\nonumber
\Exp_{\bD,h}^{A}\,:\,\mathfrak D(V)_A 
\xrightarrow{\Exp_{\bD,h}\otimes {\rm id}_A}  H^1_{\Iw}(\Qp, \bD)\otimes_R A\,.
\end{equation}
Note that we have a canonical injection
\begin{equation}
\nonumber
 H^1_{\Iw}(\Qp, \bD)\otimes_R A \lra A\otimes_R\left(
\CH_R(\Gamma)\otimes_R H^1_{\Iw}(\Qp,V)\right)= \CH_A(\Gamma)\otimes_R
H^1_{\Iw}(\Qp,V).
\end{equation}

Fix an $A$-module generator  $\eta \in \DCc (\bD)$  and set
\begin{equation}
\nonumber
\begin{aligned}
\label{eqn_bbeta_def}&\bbeta:=\sum_{i=0}^{e-1}X^{i}\eta\otimes X^{e-1-i}
\in \DCc (\bD) \otimes_RA,\\
&\widetilde \bbeta:=  \bbeta \otimes (1+\pi)
\in \mathfrak D(V)_A.
\end{aligned}
\end{equation}
We remark that  for each $x\in\mathcal X(E)$, the specialization $\bbeta_x:=\spe_x (\bbeta)$ is a generator of $\DCc (\bD)[x]$ by Lemma~\ref{lemma_bellaiche_4_14_general}. Let us put $\widetilde \bbeta_x:=\bbeta_x\otimes (1+\pi)$.

\begin{defn}
\label{defn_fat_eta_etatilde}

\item[i)] \label{defn_a_linear_PR_Log}  For each $ h\geqslant 0$ and cohomology class $\bz\in H^1_{\Iw}(\Qp, {  V'(1)})$,  we define
\begin{equation}
\nonumber
\begin{aligned}
&\Log^A_{\bD,\bbeta,1-h}\,:\,H^1_\Iw (\Qp, { V'(1)})\lra \CH_A(\Gamma)\,,\\
&\Log^A_{\bD,\bbeta,1-h} (\bz):=\left <\bz, c \circ \Exp^A_{\bD,h}(\widetilde\bbeta)^\iota \right >_{A}.
\end{aligned}
\end{equation}

\item[ii)] We define the two-variable $p$-adic $L$-function associated to $\bz$ on setting
\[
L_{p,\bbeta}(\bz):=\Log^A_{\bD,\bbeta,1}(\bz) \in \CH_A(\Gamma).
\]
For each $x\in \mathcal X^{\mathrm{cl}}(E)$, we similarly define the one-variable $p$-adic $L$-function 
\[
L_{p,\bbeta_x}(\bz_x):=\left <\bz_x, c \circ \Exp_{\bD[x],0}(\widetilde\bbeta_x)^\iota \right >_x \in \CH_E(\Gamma).
\]

\item[iii)] For any $x\in \mathcal X(E)$, finite character 
$\rho \in X(\Gamma)$, we denote by
\[
L_{p,\bbeta}(\bz,x,{ \rho}):={\rho }\circ x\circ L_{p,\bbeta}(\bz,X).
\]
the value of $L_{p,\bbeta}(\bz)$ at $(x, { \rho }).$
\end{defn}

\begin{proposition}
\label{prop_bellaiche_formal_step_1}
Suppose $e=1$. For any $x\in \mathcal X^{\mathrm{cl}}(E)$ we have
\begin{equation}
\nonumber
L_{p,\bbeta}(\bz)(x)=L_{p,\bbeta_x}(\bz_{x}).
\end{equation}
\end{proposition}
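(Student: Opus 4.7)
The plan is to reduce the identity to the compatibility of the Perrin-Riou pairing \eqref{eqn_PR_pairing_Iw_A} with specialization, combined with the functoriality of the Perrin-Riou exponential map with respect to the base-change $R\to E$ at $x$. When $e=1$ we have $A=R$, the weight map $w\colon \mathcal X\to \mathcal W$ is the identity, and by Remark~\ref{rem_sp_x_when_e_equals1} the specialization map $\spe_x\colon V_A\to V_{w(x)}$ coincides with the canonical projection $V\to V_x$. Moreover, $P_x(X)=1$, so that the generalized eigenspace $\bD[x]$ (resp.\ $V[x]$) coincides with the fiber $\bD_x$ (resp.\ $V_x$).

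First I would unwind the definition of $\bbeta$: when $e=1$, the sum in \eqref{eqn_bbeta_def} collapses to the single term $\eta\otimes 1$, so that $\widetilde{\bbeta}=\eta\otimes(1+\pi)\in\mathfrak D(V)$ and $\spe_x(\widetilde{\bbeta})=\eta_x\otimes(1+\pi)=\widetilde{\bbeta}_x$. Since $A=R$, the $A$-linear extension $\Exp^A_{\bD,0}$ tautologically agrees with $\Exp_{\bD,0}$. The key input is the compatibility of the Perrin-Riou exponential with specialization at $x$, namely
\[
\spe_x\bigl(\Exp_{\bD,0}(\widetilde{\bbeta})\bigr)\;=\;\Exp_{\bD_x,0}(\widetilde{\bbeta}_x)\,,
\]
which follows directly from the explicit construction recorded in Proposition~\ref{prop: large exponential map}(iii): the equation $(1-\delta(p)\varphi)F=\partial^m f(\pi)$ determining $F$ is $R$-linear with a unique solution in $\CR^+_A$, and the subsequent action of the operators $\ell_j$ (which are defined universally on $\CR$) commutes with the base-change $R\to E$ at $x$; the normalizing constant $-\log\chi(\gamma_1)/p$ is likewise insensitive to $x$.

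Finally, I would invoke the commutative diagram \eqref{eqn_PR_pairing_Iw_A_diagram_specialize} applied to the pair $\bigl(\bz,\, c\circ\Exp^A_{\bD,0}(\widetilde{\bbeta})^\iota\bigr)$: when $e=1$ its middle and left vertical maps both reduce to specialization at $x$, and the factorization \eqref{formula:factorization of Iwasawa pairing} is tautological since $V[x]=V_x$ and $\pi_x$ is the identity. Combining this with the displayed compatibility above yields
\[
L_{p,\bbeta}(\bz)(x)\;=\;\bigl\langle \bz_x,\, c\circ\Exp_{\bD_x,0}(\widetilde{\bbeta}_x)^\iota\bigr\rangle_x\;=\;L_{p,\bbeta_x}(\bz_x),
\]
which is the claim. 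The only step requiring genuine verification is the aforementioned compatibility of the Perrin-Riou exponential with specialization; all the remaining manipulations are formal unwindings of the definitions in \S\ref{subsubsec_twovarprelim_objects}.
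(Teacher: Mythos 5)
Your proof is correct and follows essentially the same route as the paper: the paper's one-line proof invokes diagram~\eqref{eqn_PR_pairing_Iw_A_diagram_specialize} together with Remark~\ref{rem_sp_x_when_e_equals1}, which is exactly what you do. You additionally spell out the implicit intermediate step $\spe_x\bigl(\Exp_{\bD,0}(\widetilde{\bbeta})\bigr)=\Exp_{\bD_x,0}(\widetilde{\bbeta}_x)$ via Proposition~\ref{prop: large exponential map}(iii); the paper leaves this compatibility of the large exponential with specialization to the reader, so your version is a slightly more complete unwinding of the same argument.
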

\begin{proof} 
This is an immediate consequence of the diagram \eqref{eqn_PR_pairing_Iw_A_diagram_specialize}, combined with the discussion in Remark~\ref{rem_sp_x_when_e_equals1}.
\end{proof}


\section{Local description of the eigencurve}
\label{sec_bellaiche_eigencurve}
In Section~\ref{sec_bellaiche_eigencurve}, we review Bella\"iche's results in \cite{bellaiche2012} on the local description of the eigencurve (most particularly, about a $\theta$-critical eigenform). We also follow his exposition very closely here, particularly of Sections 2.1.1, 2.1.3, 3.1, 3.2 and 3.4 in op. cit.; and rely also on the notation set therein for the most part (e.g., unless we declare otherwise).

\subsection{Modular symbols}
\label{subsec_bellaiche_eigencurve}

\subsubsection{} 
\label{subsubsec_bellaiche_eigencurve_1} Let  $\mathcal{WS}=\textrm{Hom} (\Zp^\times, \mathbb G_m)$ denote the weight space which  we consider as a rigid analytic space over a finite
extension $E$ of $\Qp$. If $y\in \mathcal{WS}(E),$ we denote by $\kappa_y\,:\, \Zp^\times \rightarrow E^\times$  the associated character. We consider $\ZZ$ as a subset of  $\mathcal{WS}(E)$ identifying $k\in \ZZ$ with the character $u\mapsto u^k .$
Set 
\[
\mathcal W^*=\left \{y\in \mathcal{WS} \mid v_p(\kappa_y(a)^{p-1}-1)>
\frac{1}{p-1} \quad \forall a\in \Zp^\times \right \}.
\]
Note that $\ZZ \subset \mathcal W^*.$ 
If $U\subset \mathcal W^*$ is  either an affinoid disk
or a wide open disk  we will write $O_U^\circ$ for  the ring of analytic functions on $U$ that are bounded by $1$ and set $O_U=O_U^\circ [1/p].$

 We denote by $\kappa_U  \,:\, \Zp^\times \rightarrow O_{U}^{\circ,\times}$  the universal weight  character characterized by the property 
\[
\kappa_y= y\circ \kappa, \qquad
\forall y\in U(E).
\]
We remark that our assumption that $U\subset \mathcal W^*$ is to guarantee that $\kappa_U$ lands in $O_{U}^{\circ,\times}$.

\subsubsection{} 

Let $\mathbb A (U)^\circ$ (resp., $\mathbb A'(U)^\circ$) denote the 
space of   functions $f\,:\,\Zp^\times \times \Zp \rightarrow O_U^\circ$ (resp., $f\,:\,p\Zp \times \Zp^\times  \rightarrow O_U^\circ$)
satisfying the following properties: 
\begin{itemize}

\item[$\bullet$]{} $f$ is homogeneous of weight $\kappa_U,$ namely
\[
f (ax,ay)= \kappa_U (a) f(x,y), \qquad \forall a\in \Zp^\times ;
\]
\item[$\bullet$]{} The one variable function 
$ f(1,z)$ (resp., $ f(pz,1)$  is analytic, namely it can be written in the form
\[
\underset{m=0}{\overset{\infty}\sum} c_m z^m, \qquad c_m \in O_V^\circ 
\]
where $c_m$ goes to zero when $m\to \infty.$ Note that in the scenario when $U$ is wide open, we work with the $\mathfrak m_U$-adic topology. 
\end{itemize}

Set $\DD(U)^\circ :=\Hom_{O_U^\circ, \mathrm{cont}}(\mathbb A(U)^\circ, O_U^\circ),$ 
 $\mathbb A(U)=\mathbb A(U)^\circ  [1/p]$ and 
 $\DD(U):=\DD(U)^\circ[1/p]$. Similarly, we set 
 $\DD'(U)^\circ :=\Hom_{O_U^\circ, \mathrm{cont}}(\mathbb A'(U)^\circ, O_U^\circ),$ 
 $\mathbb A'(U)=\mathbb A'(U)^\circ  [1/p]$ and 
 $\DD'(U):=\DD'(U)^\circ[1/p]$.
 
Let
$$
\Sigma_0(p):=\left\{\left(\begin{array}{cc}a& b\\ c&d \end{array}\right)\in M_{2\times 2}(\ZZ_p): p\nmid a, p\mid c, ad-bc\neq 0 \right\}
$$ 
and  
 $$
\Sigma'_0(p):=\left\{\left(\begin{array}{cc}a& b\\ c&d \end{array}\right)\in M_{2\times 2}(\ZZ_p): p\nmid d, p\mid c, ad-bc\neq 0 \right\}.
$$ 
The monoid $\Sigma_0(p)$ acts naturally on $\Zp^\times \times \Zp$
and therefore on $\mathbb A(U)$ and $\DD (U).$ Similarly, 
the monoid $\Sigma'_0(p)$ acts  on $\Zp \times \Zp^\times$
and therefore on $\mathbb A'(U)$ and $\DD' (U).$

 If $\mathcal W=\Spm (R)$ is an affinoid, our $\DD(\mathcal W)$
 corresponds to the space of distributions $\mathbf D (R)[0]$ in \cite{bellaiche2012}.  If $U$ is an open wide disk, our $\DD(U)$
 and $\DD'(U)$ correspond to $D_{U,0}(T_0)$ and $D_{U,0}(T_0')$   of \cite{LZ1}.  If $U$ reduces to one point $y\in \mathcal{WS}(E),$ we write $\DD_y(E)$ (resp., $\DD'_y(E)$) instead of $\DD(U)$  (resp., $\DD^\prime (U)$) and ${\mathbb A}_y(E)$ (resp., ${\mathbb A}^\prime_y(E)$) instead of $\mathbb A (U)$.

\subsubsection{}
If $\mathcal W\subset \mathcal{WS}$ is an affinoid disk we will also work with the space of overconvergent modular symbols  $\DD^\dagger (\mathcal W)$ which 
corresponds to the space $\bD^\dagger [0](R)$ in \cite{bellaiche2012}.
To be more precise, we let $\mathbb A[r](\mathcal W)$ denote the space of functions $f\,:\, \Zp \rightarrow O_{\mathcal W}$ that are analytic on the closed disk of radius $r$ and let $\DD [r] (\mathcal W) =\mathrm{Hom}_{O_{\mathcal W},\textrm{cpt}} (\mathbb A[r](\mathcal W), O_{\mathcal W})$ denote the space of compact continuous linear maps $\mathbb A[r](\mathcal W) \rightarrow  O_{\mathcal W}.$ 
In particular, $\mathbb A[1](\mathcal W)=\mathbb A(\mathcal W)$ 
and $\DD [1](\mathcal W)=\DD (\mathcal W).$ 
For any $r' >r$ the natural map
$\mathbb A[r'](\mathcal W) \rightarrow \mathbb A[r](\mathcal W)$ induces 
a map $\DD [r] (\mathcal W) \rightarrow \DD [r'] (\mathcal W),$ and one defines  
\[
\DD^\dagger (\mathcal W)=\varprojlim_{r>0} \DD [r] (\mathcal W).
\]
If $\mathcal W\subset U,$ where $U$ is a wide open, then we have 
the restriction map $\mathbb A_U \rightarrow \mathbb A (\mathcal W),$
which induces an isomorphism of $O_{\mathcal W}$-modules
\[
\DD  (\mathcal W) \simeq \DD (U)_{\mathcal W},
\qquad \DD (U)_{\mathcal W}:=\DD (U) \widehat\otimes_{O_U}O_{\mathcal W}
\]
(c.f. \cite{andreattaiovitastevens2015}, Lemma~3.8). In particular, we have an
injection 

\begin{equation}
\label{eqn_DdaggerWtoDUsubW}
\DD^\dagger (\mathcal W) \lra \DD (U)_{\mathcal W} \,
.
\end{equation}

For any $y\in \mathcal{W}(E)$,  the natural map 
\begin{equation}\label{eqn_bellaiche_lemma3_6}
    \DD^\dagger (\mathcal W)\otimes_{\cO_{\mathcal W},y}E\lra \DD^\dagger_y(E)
\end{equation}
is a  $\Sigma_0(p)$-equivariant isomorphism of $E$-Banach spaces
(c.f. \cite{bellaiche2012}, Lemma~3.6).

We refer the reader to \cite{andreattaiovitastevens2015}, \cite{bellaiche2012}
and   \cite[Section 4]{LZ1} for further 
details pertaining to these objects.

\subsubsection{}
Put $\Gamma_p=\Gamma_1(N)\cap \Gamma_0(p)\subset {\rm SL}_2(\ZZ)$; note that this congruence subgroup is denoted by $\Gamma$ in \cite{bellaiche2012}. We denote by $H^*(\Gamma_p,-)$ (resp., $H^*_c(\Gamma_p,-)$) the cohomology of $\Gamma_p$ (resp., the cohomology with compact support).
Note that $H^0_c(\Gamma_p, -)$ and $H^2(\Gamma_p, -)$ vanish since $\Gamma_p$ is the fundamental group of an open curve.

\begin{lemma} 
\label{lemma: properties of specialization}
Let 
$\mathcal W \subset \mathcal{W}^*$ be  an affinoid disk. Then:
\item[i)] One has
\[
H^2_c(\Gamma_p, \mathbb D^\dagger (\mathcal W))\,{\cong}\,
\begin{cases}
\nonumber 
\{0\} &\text{if $0\notin \mathcal W(E)$}, \\
E  & \text {if $0\in \mathcal W(E)$}.
\end{cases}
\]

\item[ii)] For any $y\in \mathcal W (E),$ the specialization at $y$ gives rise to the short exact sequence 
\begin{equation}
\label{eqn: specialization exact sequence for D}
0 \lra \mathbb D^\dagger (\mathcal W) \lra
\mathbb D^\dagger (\mathcal W) \stackrel{y}{\lra}
 \mathbb D_y^\dagger (E)  \lra 0
 \end{equation}

\item[iii)]  The exact sequence \eqref{eqn: specialization exact sequence for D}
 induces an isomorphism
\begin{equation}
\label{eqn: specialization for non-compact first cohomology}
H^1(\Gamma_p, \mathbb D^\dagger (\mathcal W))\otimes_{O_{\mathcal W}, y}E
\simeq H^1(\Gamma_p, \mathbb D_y^\dagger (E))
\end{equation}
and an injection
\begin{equation}
\label{eqn: specialization for compact first cohomology}
H^1_c(\Gamma_p, \mathbb D^\dagger (\mathcal W))\otimes_{O_{\mathcal W}, y}E
\hookrightarrow H^1_c(\Gamma_p, \mathbb D_y^\dagger (E))
\end{equation}

\item[iv)] If $y\neq 0,$ the map \eqref{eqn: specialization for compact first cohomology}
is an isomorphism. If $y=0,$ then  ${\rm coker}\eqref{eqn: specialization for compact first cohomology}$ is isomorphic to $E.$

\item[v)] The statements i-iv) remain true if $\mathbb D^\dagger (\mathcal W)$
is replaced by $\mathbb D (U),$ where $U\subset \mathcal{W}^*$ is a wide open.
 \end{lemma}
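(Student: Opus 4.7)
The plan is to treat part (i) as the main content and derive parts (ii)--(iv) formally from it via the long exact sequence in group cohomology applied to the specialization sequence; part (v) is then handled by the same arguments applied to $\mathbb D(U)$.

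\textbf{Parts (ii)--(iv).} Since $\mathcal W$ is a closed disk, $\mathcal O_{\mathcal W}$ is a PID whose maximal ideal at $y$ is principal, say $\mathfrak m_y = (t_y)$. For (ii) I would write $\mathbb D^\dagger(\mathcal W) = \varprojlim_r \mathbb D[r](\mathcal W)$ as a projective limit of continuous duals of orthonormalizable $\mathcal O_{\mathcal W}$-Banach modules $\mathbb A[r](\mathcal W)$. Each $\mathbb D[r](\mathcal W)$ is a projective (in particular, torsion-free) $\mathcal O_{\mathcal W}$-module, hence so is $\mathbb D^\dagger(\mathcal W)$; this makes multiplication by $t_y$ injective, and the cokernel is identified with $\mathbb D_y^\dagger(E)$ by the isomorphism (\ref{eqn_bellaiche_lemma3_6}). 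For (iii) and (iv), I feed (\ref{eqn: specialization exact sequence for D}) into the long exact sequence for (compactly supported) $\Gamma_p$-cohomology. On the non-compact side the sequence terminates at $H^2(\Gamma_p, -) = 0$, because $\Gamma_p$ is virtually free (the fundamental group of the open modular curve) with torsion of order $2, 3$ invertible in $E$ since $p \geq 5$, so it has cohomological dimension $\leq 1$ with coefficients in any $E$-vector space. This yields (\ref{eqn: specialization for non-compact first cohomology}). On the compact side the sequence continues:
\[
H^1_c(\Gamma_p, \mathbb D^\dagger(\mathcal W)) \xrightarrow{t_y} H^1_c(\Gamma_p, \mathbb D^\dagger(\mathcal W)) \to H^1_c(\Gamma_p, \mathbb D_y^\dagger(E)) \to H^2_c(\Gamma_p, \mathbb D^\dagger(\mathcal W)) \xrightarrow{t_y} H^2_c(\Gamma_p, \mathbb D^\dagger(\mathcal W)),
\]
so the cokernel of (\ref{eqn: specialization for compact first cohomology}) is the $t_y$-torsion of $H^2_c(\Gamma_p, \mathbb D^\dagger(\mathcal W))$, which by (i) is trivial if $0 \notin \mathcal W$ and is $E$ if $y = 0 \in \mathcal W$.

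\textbf{Part (i), the heart.} I would first establish the single-weight analogue: for $y \in \mathcal W^*(E)$, $H^2_c(\Gamma_p, \mathbb D_y^\dagger(E))$ is zero unless $y = 0$, in which case it is $E$. Using Poincar\'e--Lefschetz duality on the open modular curve $Y(\Gamma_p)$ (valid with $E$-coefficients because torsion in $\Gamma_p$ is invertible in $E$), this reduces to computing $(\mathbb A_y^\dagger(E))^{\Gamma_p}$ and dualizing. Invariance under the unipotent subgroup $\{\begin{pmatrix} 1 & n \\ 0 & 1\end{pmatrix} : n \in \mathbb Z\} \subset \Gamma_p$ applied to $f(x,z) = \kappa_y(x)\, g(z/x)$ forces the analytic function $g$ on $\mathbb Z_p$ to be $\mathbb Z$-periodic, hence constant by density of $\mathbb Z$ in $\mathbb Z_p$ together with analyticity; a subsequent diagonal invariance then forces $\kappa_y \equiv 1$, i.e. $y = 0$. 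The evaluation-at-$1$ map $\mathbb D_0^\dagger(E) \to E$ is a $\Sigma_0(p)$-equivariant surjection at weight $0$, producing the required class in $H^2_c$. To pass from points to the family $\mathcal W$, I would use the compact-support long exact sequence established above: it shows that $H^2_c(\Gamma_p, \mathbb D^\dagger(\mathcal W))$ is supported scheme-theoretically only at those $y \in \mathcal W$ where $H^2_c$ of the specialization is nonzero, hence is trivial when $0 \notin \mathcal W$ and, when $0 \in \mathcal W$, is killed by $t_0$ and has one-dimensional fiber there, whence equals $E$ by Nakayama.

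\textbf{Part (v) and the expected obstacle.} For $\mathbb D(U)$ with $U$ wide open, the analogous argument works verbatim: $\mathbb D(U)$ is torsion-free over $\mathcal O_U$, fits into the specialization sequence, and the analytic-invariants computation is identical. The main technical obstacle I anticipate is making Poincar\'e--Lefschetz duality precise with coefficients in infinite-dimensional $p$-adic Fr\'echet/Banach modules such as $\mathbb D_y^\dagger(E)$ or $\mathbb D(U)$. I would expect to handle this either by reducing to the finite-radius approximations $\mathbb D[r](\mathcal W)$ and passing to the limit, or alternatively by replacing the duality argument with a direct computation of $H^2_c$ through the boundary (cusp) exact sequence, where the parabolic contribution produces exactly the $(U \cap \Gamma_p)$-coinvariants that vanish away from weight $0$ by the same unipotent-invariance analysis.
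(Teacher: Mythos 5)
Your treatment of parts (ii)--(v) via the specialization exact sequence and the long exact sequence in $\Gamma_p$-cohomology matches the paper's strategy: the paper also derives (iii)--(iv) formally from (i) together with the vanishing of $H^2(\Gamma_p,-)$, and for (ii) it cites Bella\"iche (Lemma~3.6) and Andreatta--Iovita--Stevens (Proposition~3.11), which is precisely the torsion-freeness argument you sketch via \eqref{eqn_bellaiche_lemma3_6}.

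The genuinely divergent part is (i), which the paper settles in one line by citing Bella\"iche, Lemma~3.9, rather than re-proving it. Your proposed from-scratch route---duality/coinvariants plus unipotent invariance at a single weight, then Nakayama to pass to the family---is along the right conceptual lines (the interpretation of $H^2_c(\Gamma_p,-)$ as $\Gamma_p$-coinvariants and the constraint from the unipotent in $\Gamma_p$ are indeed the key inputs), but the passage from points to $\mathcal W$ has two gaps beyond the duality obstacle you already flag. First, the Nakayama/``supported scheme-theoretically'' step requires that $H^2_c(\Gamma_p, \mathbb D^\dagger(\mathcal W))$ be a finitely generated $\mathcal O_{\mathcal W}$-module; $\mathbb D^\dagger(\mathcal W)$ is far from finitely generated, and the specialization long exact sequence supplies no such finiteness. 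Without it, knowing $M/\mathfrak m_y M = 0$ for all $y\in\mathcal W(E)$ does not force $M=0$ (the fraction field of $\mathcal O_{\mathcal W}$ satisfies the former but not the latter). Second, your assertion that $H^2_c(\Gamma_p, \mathbb D^\dagger(\mathcal W))$ is killed by $t_0$ when $0\in\mathcal W$ is nowhere justified: the specialization sequence only gives $H^2_c/t_0 \cong E$, which is equally compatible with $H^2_c \cong \mathcal O_{\mathcal W}/(t_0^n)$ for any $n\ge 1$. Bella\"iche's Lemma~3.9 avoids both difficulties by computing the coinvariants $\mathbb D^\dagger(\mathcal W)_{\Gamma_p}$ directly over the whole family; as written, your route does not close (i).
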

 \begin{proof} 
  The part i) is  proved in \cite[Lemma~3.9]{bellaiche2012}
  and the same arguments compute $H^2_c$ with coefficients in $\DD (U).$
 
 The exactness of \eqref{eqn: specialization exact sequence for D} in part ii)  is proved in 
 \cite[Lemma~3.6]{bellaiche2012}, whereas its analogue for $\DD (U)$ in \cite[Proposition~3.11]{andreattaiovitastevens2015}.
 
 The isomorphism \eqref{eqn: specialization for non-compact first cohomology} in part iii)  follows from the long exact sequence associated to the specialization exact sequence and the vanishing
 of $H^2(\Gamma_p,  \mathbb D^\dagger (\mathcal W)).$ 
The inclusion \eqref{eqn: specialization for compact first cohomology} for the cohomology with compact support and the point iv) 
follow formally from i) (c.f. \cite{bellaiche2012}, Theorem~3.10), and
the proofs of  these statements for cohomology with coefficients in $\DD (U)$
are the same.

 \end{proof}

 \subsubsection{}
 \label{subsubsec_wideopen_vs_bellaiche}
  Consider the spaces of modular symbols
\[
{\rm Symb}_{\Gamma_p}(\DD):=
\mathrm{Hom}_{\Gamma_p} (\mathrm{Div}^0({\mathbb P}^1(\QQ)),\DD),
\qquad 
\DD= \DD (U), \DD^\dagger_y,  \DD^\dagger (\mathcal W),
\]
where $\mathrm{Div}^0({\mathbb P}^1(\QQ))$ is the group of divisors
of degree $0$ on ${\mathbb P}^1(\QQ).$
Recall   the functorial Hecke equivariant isomorphism of Ash and Stevens~\cite[Proposition~4.2]{ashstevens1986}
\begin{equation}
\label{eqn_AshStevesComparison}
{\rm Symb}_{\Gamma_p}(\DD )\stackrel{\sim}{\lra} H^1_c(\Gamma_p, \DD).
\end{equation}

For  a positive real number $\nu$, we  denote by 
$ {\rm Symb}_{\Gamma_p}(\mathbb{D})^{\leq \nu}$
 the space of modular symbols  of slope bounded by $\nu$ and by 
$$ {\rm Symb}_{\Gamma_p}^{\pm}(\mathbb{D})^{\leq \nu}\subset {\rm Symb}_{\Gamma_p}(\DD )^{\leq \nu}$$ 
 the $\pm1$-eigenspace for the involution $\iota$ given by the action of the matrix $\left(\begin{array}{cr} 1 &0 \\ 0&-1 \end{array}\right)$
(see for example, \cite[Section 3.2.4]{bellaiche2012}).

 \begin{lemma} 
\label{lemma: shrinking wide open neighborhood}
Let $U$ be a wide open and $\mathcal W\subset U$
an affinoid disk. 
Then we have  canonical isomorphisms
\[
\begin{aligned}
\mathrm{Symb}_{\Gamma_p}(\DD (U)_{\mathcal W}) &\simeq \mathrm{Symb}_{\Gamma_p}(\DD (U))_{\mathcal W}\\
H^1(\Gamma_p, \DD (U)_{\mathcal W}) &\simeq H^1(\Gamma_p, \DD (U))_{\mathcal W}
\end{aligned}
\]
where $\DD (U)_{\mathcal W}=\DD (U) \widehat\otimes_{O_U}O_{\mathcal W}$
and  $\mathrm{Symb}_{\Gamma_p}(\DD (U))_{\mathcal W}=
 \mathrm{Symb}_{\Gamma_p}(\DD (U))\widehat\otimes_{O_U}O_{\mathcal W}.$
\end{lemma}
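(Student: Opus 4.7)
My plan is to reduce both isomorphisms to a single base-change statement for the group cohomology of $\Gamma_p$, and then to invoke flatness of $O_\mathcal{W}$ over $O_U$. First, I would use the Ash--Stevens identification \eqref{eqn_AshStevesComparison} to rewrite the modular-symbols statement as the analogous claim for $H^1_c(\Gamma_p, -)$; in this way, both claims reduce to showing that for any $i \geq 0$,
$$H^i(\Gamma_p, \DD(U)) \widehat{\otimes}_{O_U} O_\mathcal{W} \xrightarrow{\sim} H^i(\Gamma_p, \DD(U) \widehat{\otimes}_{O_U} O_\mathcal{W}),$$
and the analogous statement with $H^i$ replaced by $H^i_c$.

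To establish this, I would invoke two ingredients. The first is that $O_\mathcal{W}$ is flat over $O_U$, which is a standard property of affinoid subdomains of a wide open. The second is a finite model for group cohomology: a $\Gamma_p$-equivariant CW decomposition of the Borel--Serre compactification of $Y(\Gamma_p) = \Gamma_p \backslash \mathbb{H}$ produces a bounded complex $C^\bullet$ (respectively $C^\bullet_c$) of finitely generated $\ZZ_p[\Gamma_p]$-modules whose cohomology computes $H^*(\Gamma_p, -)$ (respectively $H^*_c(\Gamma_p, -)$). Since $p \geq 5$, the cellular stabilizers have order prime to $p$, so the terms of $C^\bullet$ are projective as $\ZZ_p[\Gamma_p]$-modules, and each $\mathrm{Hom}_{\Gamma_p}(C^i, \DD(U))$ is a direct summand of a finite free sum $\DD(U)^{r_i}$.

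With these in hand, the argument would proceed formally. Applying $\widehat{\otimes}_{O_U} O_\mathcal{W}$ termwise to the complex $\mathrm{Hom}_{\Gamma_p}(C^\bullet, \DD(U))$ commutes with the evaluation $\mathrm{Hom}_{\Gamma_p}(C^i, -)$ because $C^i$ is finitely presented, producing termwise the complex $\mathrm{Hom}_{\Gamma_p}(C^\bullet, \DD(U)_\mathcal{W})$. By the flatness of $O_\mathcal{W}$ over $O_U$, passing to cohomology commutes with this base change, yielding the claimed isomorphisms.

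The hard part will be to make rigorous the interaction between the completed tensor product $\widehat{\otimes}_{O_U} O_\mathcal{W}$ and the cohomology of a complex whose terms are infinite dimensional Banach modules. I would rely on the orthonormalizability of $\DD(U)$ as a Banach $O_U$-module -- it is topologically dual to a module of analytic functions admitting a monomial orthonormal basis -- together with the flatness of $O_U \to O_\mathcal{W}$: together these should guarantee that $\widehat{\otimes}_{O_U} O_\mathcal{W}$ is exact on $\mathrm{Hom}_{\Gamma_p}(C^\bullet, \DD(U))$, permitting the interchange of cohomology and base change.
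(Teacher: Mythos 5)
Your proposal takes a genuinely different route from the paper, and while the broad strategy is sensible, there is a real gap at exactly the place you flag as the ``hard part.''

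The paper does not invoke flatness of $O_U\to O_{\mathcal W}$ at all. Instead, it works \emph{integrally modulo $p^n$}: choosing $C^\bullet$ to be a complex computing $H^*_c(\Gamma_p, \DD(U)^\circ)$, it applies the universal-coefficient Tor spectral sequence with $A=O^\circ_U$ and $B=O^\circ_{\mathcal W}/p^n$ to get
\[
0\to H^1_c(\Gamma_p,\DD(U)^\circ)\otimes_{O^\circ_U}O^\circ_{\mathcal W}/p^n
\to H^1_c(\Gamma_p,\DD(U)^\circ\otimes_{O^\circ_U}O^\circ_{\mathcal W}/p^n)
\to \mathrm{Tor}_1^{O^\circ_U}\!\bigl(H^2_c(\Gamma_p,\DD(U)^\circ),\,O^\circ_{\mathcal W}/p^n\bigr),
\]
then uses Lemma~\ref{lemma: properties of specialization}(i,v) to show $H^2_c(\Gamma_p,\DD(U)^\circ)$ is a finitely generated $O_E$-module, so the Tor error term is killed by a uniform power $p^N$. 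Passing to the limit (via \cite[Lemma~3.13]{andreattaiovitastevens2015}) and inverting $p$ then gives the isomorphism, with the case of ordinary cohomology being easier since $H^2(\Gamma_p,\DD(U)^\circ)=0$. The point of working modulo $p^n$ is that the tensor products appearing are honest algebraic tensor products, so no exactness of $\widehat\otimes$ on Banach modules is ever needed.

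Your route instead wants to base-change a termwise-$\Gamma_p$-projective Borel--Serre complex along a flat map $O_U\to O_{\mathcal W}$ and then invoke flatness to commute cohomology with base change. The flatness of $O_U\to O_{\mathcal W}$ for a wide open $U$ and affinoid subdomain $\mathcal W$ is plausible (Fr\'echet--Stein theory) but is not as ``standard'' as you suggest and merits a reference; more seriously, flatness only gives you that \emph{ordinary} tensor product $\otimes_{O_U}O_{\mathcal W}$ commutes with cohomology. The functor $\widehat\otimes_{O_U}O_{\mathcal W}$ on infinite-rank Banach modules is not exact in general, and it does not automatically agree with $\otimes_{O_U}O_{\mathcal W}$ on the cohomology groups or the kernels/images inside the complex (those need not be closed, orthonormalizable, or even complete). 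Your appeal to orthonormalizability of $\DD(U)$ addresses the termwise behaviour of $\widehat\otimes$ but does not by itself resolve the interchange of completed tensor product with the passage to cohomology; that is precisely the nontrivial content, and it is where the paper's mod-$p^n$ strategy plus the bounded Tor-term does the actual work. To make your approach rigorous you would either need to prove the required exactness of $\widehat\otimes_{O_U}O_{\mathcal W}$ on the specific complex $\mathrm{Hom}_{\Gamma_p}(C^\bullet,\DD(U))$ (for instance by showing the images are closed and the cohomology is potentially orthonormalizable, with uniform control over the tower defining $\widehat\otimes$), or fall back on the integral, mod-$p^n$ argument of the paper.
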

\begin{proof}
Only in this proof, $A$ shall denote a general ring. For any complex $C^\bullet$ of $A$-modules over 
a ring $A$ and a morphism of rings $A\rightarrow B,$  one has the spectral  sequence
\[
E_2^{ij}=\mathrm{Tor}_{-i}^A(H^j(C^\bullet), B )\, \implies \, H^{i+j}(C^\bullet\otimes_A B)\,.
\]
Let us choose $C^\bullet$ as the complex computing the cohomology with compact  support with coefficients in $\DD (U)^\circ ,$ $A=O_U^\circ$ and $B=O_{\mathcal W}^\circ /p^n$.  Then our spectral sequence induces an exact sequence
\begin{equation}
\label{eqn:shrinking lemma exact sequence}
0\lra  H^1_c(\Gamma_p, \DD (U)^\circ )\otimes_{O_U^\circ} O^\circ_{\mathcal W}/p^n
 \lra 
H^1_c(\Gamma_p, \DD (U)^\circ\otimes_{O_U^\circ} O^\circ_{\mathcal W}/p^n)
\lra 
\mathrm{Tor}_{1}^{O_U^\circ} (H^2_c (\Gamma_p, \DD (U)^\circ ) , O_{\mathcal W}^\circ /p^n ).
\end{equation}
It follows from  Lemma~\ref{lemma: properties of specialization}(i) (combined with (v) of the same lemma) that $H^2_c(\Gamma_p, \DD(U)^{\circ})$ is a finitely generated $O_E$-module and therefore, there exists a natural number $N$ such that for any $n\geqslant 1$, the $O_E$-module 
\[
\mathrm{Tor}_{1}^{O_U^\circ} (H^2_c (\Gamma_p, \DD (U)^\circ ) , O_{\mathcal W}^\circ /p^n )
\]
is annihilated by $p^N.$ On passing to projective limits in \eqref{eqn:shrinking lemma exact sequence} and  using \cite[Lemma~3.13]{andreattaiovitastevens2015}, we infer that 
\[
0\lra 
 H^1_c(\Gamma_p, \DD (U)^\circ )\widehat \otimes_{O_U^\circ} O^\circ_{\mathcal W}
 \lra 
H^1_c(\Gamma_p, \DD (U)^\circ\widehat \otimes_{O_U^\circ} O^\circ_{\mathcal W})
\lra  C
\]
 where $C$ is a finitely generated module that is annihilated by $p^N$. This concludes the proof of the first isomorphism. The proof of the 
 second isomorphism is analogous (an even simpler because $H^2 (\Gamma_p, \DD (U)^\circ )$ vanishes).
\end{proof}

\subsubsection{}
For a non-negative integer $k$, we let $\bP_k(E)\subset E[Z]$ denote the space of polynomials of degree less or equal to $k$, which is equipped with a left ${\rm GL}_2(\ZZ_p)$-action; see \cite[3.2.5]{bellaiche2012} for a description of this action. We let $\bV_k(E)$ denote the $E$-linear dual of $\bP_k(E)$, which also carries an induced ${\rm GL}_2(\ZZ_p)$-action.

Regarding elements $\mathscr{P}_k(E)$ as analytic functions, we have an induced map 
$$\rho_k^*: \DD^\dagger_k (E) \lra \mathscr{V}_k(E)$$
which is a $\Sigma_0(p)$-equivariant surjection. By Stevens' control theorem  (c.f. \cite{PollackStevensJLMS}, Theorem 5.4) it induces a Hecke equivariant 
isomorphism of $E$-vector spaces 
\begin{equation}
\label{eqn:stevens control thm}
\rho_k^* \,:\, \mathrm{Symb}_{\Gamma_p}(\DD^\dagger_k(E))^{< k+1}
\xrightarrow{\sim} 
\mathrm{Symb}_{\Gamma_p}(\bV_k (E))^{< k+1}
\end{equation} 
We also have the non-compact version of this isomorphism, proved 
by Ash and Stevens \cite[Theorem 1]{AshStevens2008Unpublished} :
\begin{equation}
\label{eqn_AshStevens2008Unpublished_1}
H^1(\Gamma_p, \mathbb{D}^\dagger_k(E))^{< k+1}\xrightarrow{\,\,\sim\,\,} H^1(\Gamma_p, \mathscr{V}_k(E))^{<k+1}.
\end{equation}

\subsection{Local description of the Coleman--Mazur--Buzzard eigencurve}
\label{subsec_BuzzardColemanMazur}

Our main objective in this subsection is to record 
Proposition~\ref{prop_bellaiche_4_11} and    Theorem~\ref{thm_bellaiche} (which is due to Bella\"iche; see the second paragraph in \cite[\S 1.5]{bellaiche2012}).

\subsubsection{}
\label{defn_Bellaiche_2_1_1_3}
We now briefly recall the Coleman--Mazur--Buzzard construction of the eigencurve. We retain the notation from Section~\ref{subsec_bellaiche_eigencurve} and continue following the exposition in \cite[\S2.1]{bellaiche2012}.
Let ${\mathcal H}$ denotes the Hecke algebra generated over $\ZZ$ by the Hecke operators $\{T_\ell\}_{\ell\nmid Np}$, the Atkin--Lehner operator $U_p$ and diamond operators $\{\langle d \rangle\}_{d\in (\ZZ/N\ZZ)^\times}.$ For any affinoid
$\mathcal W$ we set $\mathcal H_{\mathcal W}=\mathcal H \otimes_{\ZZ}O_{\mathcal W}.$ 

We fix a nice affinoid disk $\mathcal W=\Spm (O_{\mathcal W})$ (in the sense of \cite{bellaiche2012}, Definition 3.5) of the weight space $\mathcal{WS}$ adapted to slope $\nu$ in the sense of \cite{bellaiche2012}, \S 3.2.4.
Let us denote by  $M^\dagger(\Gamma_p,\mathcal{W})$
   Coleman's space of overconvergent modular forms  of level $\Gamma_p$ and weight in  $\mathcal{W}$; and let $M^\dagger(\Gamma_p,\mathcal{W})^{\leq \nu}$ denote its $O_{\mathcal W}$-submodule on which $U_p$ acts with slope at most $\nu$. We similarly let $S^\dagger(\Gamma_p,\mathcal{W})$ denote the space of cuspidal overconvergent modular forms of level $\Gamma_p$ and weight in the affinoid disk $\mathcal{W}$; and $S^\dagger(\Gamma_p,\mathcal{W})^{\leq \nu}\subset S^\dagger(\Gamma_p,\mathcal{W})$. 

Let $\mathbb{T}_{\mathcal{W},\nu}$ (resp., $\mathbb{T}^\cusp_{\mathcal{W},\nu}$)
denote the image of $\mathcal H_{\mathcal W}$ in  $ {\rm End}_{O_{\mathcal W}}\left( M^\dagger(\Gamma_p,\mathcal{W}) \right)$ (resp.,  ${\rm End}_{O_{\mathcal W}}\left( S^\dagger(\Gamma_p,\mathcal{W}) \right)$).  
Then   $\mathcal{C}_{\mathcal{W},\nu}:={\rm Spm}(\mathbb{T}_{\mathcal{W},\nu})$
(resp., $\mathcal{C}^\cusp_{\mathcal{W},\nu}:={\rm Spm}(\mathbb{T}^\cusp_{\mathcal{W},\nu})$ is an open affinoid subspace of the Coleman--Mazur--Buzzard eigencurve $\mathcal{C}$  (resp, cuspidal eigencurve $\mathcal{C}^\cusp$) 
that lies over $\mathcal{W}$.


We let $\TT_{\mathcal{W},\nu}^\pm$ denote the 
image of $\mathcal H_{\mathcal W}$ in  $\mathrm{End}_{O_{\mathcal W}} ( {\rm Symb}^{\pm}_{\Gamma_p}(\mathbb{D}(\mathcal W))^{\leq \nu})$.  We then define the open affinoids $\mathcal{C}_{\mathcal{W},\nu}^\pm:={\rm Spm}(\TT_{\mathcal{W},\nu}^\pm)$.
By  \cite[Theorem 3.30]{bellaiche2012}, there exist canonical closed immersions
\begin{equation}
\label{eqn: inclusions for eigencurves}
\mathcal{C}_{\mathcal{W},\nu}^{\cusp} \hookrightarrow \mathcal{C}_{\mathcal{W},\nu}^\pm \hookrightarrow \mathcal{C}_{\mathcal{W},\nu}, \qquad
\mathcal{C}_{\mathcal{W},\nu}=\mathcal{C}_{\mathcal{W},\nu}^+ \cup
\mathcal{C}_{\mathcal{W},\nu}^- .
\end{equation}
 The open affinoids $\mathcal{C}_{\mathcal{W},\nu}^\pm$ admissibly cover the Coleman--Mazur--Buzzard eigencurve $\mathcal{C}$ as $(\mathcal W,\nu)$ varies (see \cite{bellaiche2012}, \S 3).
 
 \subsubsection{} In the remainder of this subsection, we fix a $p$-stabilization of a newform  $f_0$ and denote by $x_0$ the corresponding point
 of $\mathcal C.$  The following proposition will allow us to apply the formalism
 of Sections~\ref{sec_triangulations}-\ref{sec_2varpadicLabstract} in the context of eigencurves. We denote by $\mathcal W$ an affinoid neighborhood
of $k_0=\w(x_0)$ in the weight space, where $O_{\mathcal W}$ is of the form 
$O_{\mathcal W}=E\left <Y/p^m\right >$.

\begin{proposition}[Bella\"iche]
\label{prop_bellaiche_4_11}
Up to shrinking   $\mathcal W$ and enlarging $E$, there exists an affinoid neighborhood $\mathcal X$ of $x_0\in \mathcal{C}_{\mathcal{W},\nu}^{\mathrm{cusp}}$ such that 
\item[a)] $\mathcal X$ is a connected component of $\mathcal C_{\mathcal W,\nu}$;
\item[b)]There exist  integers $r, e\geqslant 1$ and  an element   $a\in O_{\mathcal X}$ such that $O_{\mathcal W}=E\left <Y/p^{re} \right >$  and the map $X \mapsto a$ induces  an isomorphism  
 of $R$-algebras
\[
O_{\mathcal W}[X]/(X^e-Y)\simeq O_{\mathcal X}.
\] 
\end{proposition}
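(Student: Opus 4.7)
The plan is to shrink $\mathcal W$ so that the connected component $\mathcal X$ of $x_0$ inside $\mathcal C_{\mathcal W,\nu}$ becomes a totally ramified cover of a small weight disc, and then to construct an element $a\in O_{\mathcal X}$ whose $e$-th power equals $Y$ globally. First I would shrink $\mathcal W$ about $k_0$ and enlarge $E$ so that (i) $E$ contains the residue field of $x_0$; (ii) the connected component $\mathcal X$ of $x_0$ in $\mathcal C_{\mathcal W,\nu}$ meets $w^{-1}(k_0)$ only at $x_0$; (iii) $\mathcal X\subset \mathcal C_{\mathcal W,\nu}^{\mathrm{cusp}}$, which is possible because the cuspidal and Eisenstein loci are disjoint near classical cuspidal points; and (iv) $\mathcal W$ remains nice and adapted to $\nu$. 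This gives (a). The weight map $w\colon\mathcal X\to\mathcal W$ is finite and flat (flatness following from the construction of the eigencurve as the spectrum of a Hecke algebra acting on the locally free $O_{\mathcal W}$-module $S^\dagger(\Gamma_p,\mathcal W)^{\le\nu}$), and since $R:=O_{\mathcal W}$ is a Tate algebra in one variable (hence a PID), $A:=O_{\mathcal X}$ is a free $R$-module of some rank $e\geq 1$. The fibre $A/YA$ is then a local Artinian $E$-algebra of length $e$ with residue field $E$.

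The technical heart is to show that $O_{\mathcal X,x_0}$ is a discrete valuation ring, i.e.\ that $\mathcal X$ is smooth at $x_0$. This is Bellaïche's smoothness theorem for the cuspidal eigencurve at classical cuspidal points, established via a tangent-space analysis using Stevens' control theorem together with the modular-symbol machinery of \S\ref{subsec_bellaiche_eigencurve}; the $\theta$-critical case is significantly more delicate and is the content of Bellaïche's analysis in \cite{bellaiche2012}. Granting smoothness, pick a uniformiser $t$ of the DVR $O_{\mathcal X,x_0}$; after further shrinking $\mathcal X$ we may take $t\in A$. Since $A/YA$ has length $e$, the image of $Y$ in $O_{\mathcal X,x_0}$ has valuation exactly $e$, so $Y = t^e\cdot u$ in $A$ for some unit $u\in A^{\times}$ with $u(x_0)\in E^{\times}$.

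To absorb $u$ I would enlarge $E$ to contain $u(x_0)^{1/e}$ and shrink $\mathcal X$ so that $u/u(x_0)$ is close enough to $1$ for the binomial series $u^{1/e}$ to converge in $A^{\times}$. Setting $a:=t\cdot u^{-1/e}\in A$ gives $a^e=Y$, and hence an $R$-algebra homomorphism $\varphi\colon R[X]/(X^e-Y)\to A$ sending $X\mapsto a$. The powers $1,a,\dots,a^{e-1}$ reduce modulo $Y$ to a basis of $A/YA\cong O_{\mathcal X,x_0}/(t^e)$ over $E$, so Nakayama's lemma shows they form an $R$-basis of $A$; thus $\varphi$ is a surjection between free $R$-modules of the same rank $e$ and is therefore an isomorphism, which is (b). Rescaling so that $\mathcal W$ has radius $p^{-re}$ with $r$ chosen so that $|a|\le p^{-r}$ on $\mathcal X$ identifies $\mathcal X$ with $\Spm\,E\langle X/p^r\rangle$ via $X\mapsto a$, as required.

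The principal obstacle is the smoothness of $\mathcal X$ at $x_0$ in the $\theta$-critical case (where $e>1$): without it, $A/YA$ need not be a truncated polynomial ring $E[T]/(T^e)$, and no global $a\in A$ with $a^e=Y$ need exist. Everything else is a Weierstrass-style manipulation once the smoothness and isolated-fibre inputs are in hand.
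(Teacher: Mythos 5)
The paper's own proof of this proposition is a two-line citation: part~(b) is \cite[Proposition~4.11]{bellaiche2012} verbatim, and part~(a) is obtained from \cite[Corollary~2.17]{bellaiche2012} (local isomorphism of $\mathcal C$ and $\mathcal C^{\mathrm{cusp}}$ at $x_0$) together with the inclusions \eqref{eqn: inclusions for eigencurves}. Your proposal instead unpacks the \emph{content} of Bella\"iche's proposition: starting from smoothness of the cuspidal eigencurve at classical cuspidal points (which you correctly flag as the deep input, proved by Bella\"iche via modular symbols, Stevens' control theorem, and a tangent-space computation in the $\theta$-critical case), you run the Weierstrass-style manipulation to produce a global $e$-th root of $Y$ and hence the presentation $O_{\mathcal X}\simeq O_{\mathcal W}[X]/(X^e-Y)$. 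This is an acceptable and more self-contained route, and it correctly isolates where the real difficulty lies (smoothness when $e>1$, which occurs precisely at $\theta$-critical points); the paper's approach buys brevity, yours buys transparency about what is being used.

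Two small defects in the write-up should be fixed. First, you want $a := t\cdot u^{1/e}$ (not $t\cdot u^{-1/e}$) so that $a^e = t^e u = Y$; with your sign the identity $a^e=Y$ fails. Second, the appeal to Nakayama is not literally available: $R = E\langle Y/p^{re}\rangle$ is a Tate algebra, not a local ring, and $(Y)$ is merely one maximal ideal, not the Jacobson radical, so ``reduces modulo $Y$ to a basis'' does not by itself yield an $R$-basis. The correct formulation is the one you use elsewhere in the argument: the map $\varphi\colon R[X]/(X^e-Y)\to A$, $X\mapsto a$, is a morphism of free $R$-modules of rank $e$ whose determinant (in fixed bases) is a function on $\mathcal W$ that is nonzero at $k_0$ because $\bar\varphi\colon E[X]/(X^e)\to A/YA$ is an isomorphism; shrink $\mathcal W$ so that this determinant is nowhere vanishing, and then $\varphi$ is an isomorphism. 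With these repairs the proposal is sound.
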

\begin{proof}
By \cite[Proposition 4.11]{bellaiche2012}, there exists an affinoid neighborhood $\mathcal X$ of $\mathcal{C}_{\mathcal{W},\nu}^\pm$ which satisfies 
the condition (b). By \cite[Corollary 2.17]{bellaiche2012}, the eigencurves 
$\mathcal C$ and $\mathcal C^\cusp$ are locally isomorphic at $x_0.$ Together with 
\eqref{eqn: inclusions for eigencurves}, this concludes the proof of (a).
\end{proof}

\subsubsection{}
We set 
\[
 {\rm Symb}_{\Gamma_p}^{\pm}(\mathcal X)^{\leq \nu}:= {\rm Symb}_{\Gamma_p}^\pm(\mathbb{D}^\dagger (\mathcal W))^{\leq \nu}\otimes_{\TT_{\mathcal{W},\nu}^{\pm}}O_{\mathcal X},
\]
where $\mathcal X$ is as in Proposition~\ref{prop_bellaiche_4_11}. 

\begin{theorem}[Bella\"iche]
\label{thm_bellaiche}
\item[i)] The $O_{\mathcal X}$-module $ {\rm Symb}_{\Gamma_p}^{\pm}(\mathcal X)^{ \leq \nu}$ is free of rank one. 
\item[ii)] The $O_{\mathcal X}$-module $ {\rm Symb}_{\Gamma_p}(\mathcal X)^{\leq \nu}$ is free of rank $2$.
\end{theorem}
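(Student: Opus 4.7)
My plan is to prove (i) first and then deduce (ii) from it via the involution $\iota=\mathrm{diag}(1,-1)$. For (i) I will show $M^\pm_{\mathcal X}:=\mathrm{Symb}^{\pm}_{\Gamma_p}(\mathcal X)^{\leq \nu}$ is projective over the PID $O_{\mathcal X}$ (hence free) and of generic rank one: projectivity via a Hecke idempotent cutting out the connected component $\mathcal X$, and the rank by specialization at a dense family of non-$\theta$-critical classical points.

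\textbf{Proof of (i).} Write $M^\pm:=\mathrm{Symb}^{\pm}_{\Gamma_p}(\mathbb D^\dagger(\mathcal W))^{\leq \nu}$. The slope decomposition theory underlying the construction of $\TT^\pm_{\mathcal W,\nu}$ in \S\ref{subsec_BuzzardColemanMazur} shows that $M^\pm$ is a finitely generated projective $O_{\mathcal W}$-module; since $O_{\mathcal W}=E\langle Y/p^{re}\rangle$ is a PID, it is free of finite rank. Since $\mathcal X$ is a connected component of $\mathcal C_{\mathcal W,\nu}$ and lies inside the closed subspace $\mathcal C^\pm_{\mathcal W,\nu}$ via the chain $\mathcal X\subset \mathcal C^{\mathrm{cusp}}_{\mathcal W,\nu}\hookrightarrow \mathcal C^\pm_{\mathcal W,\nu}$ of \eqref{eqn: inclusions for eigencurves}, $\mathcal X$ is a clopen piece of the affinoid $\mathcal C^\pm_{\mathcal W,\nu}$. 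This yields a Hecke idempotent $e_{\mathcal X}^\pm\in \TT^\pm_{\mathcal W,\nu}$ inducing an isomorphism $e_{\mathcal X}^\pm\cdot\TT^\pm_{\mathcal W,\nu}\xrightarrow{\sim}O_{\mathcal X}$ of $O_{\mathcal W}$-algebras (by Proposition~\ref{prop_bellaiche_4_11}), so that
\[
M^\pm_{\mathcal X}\;=\;e_{\mathcal X}^\pm M^\pm
\]
is a direct summand of the free $O_{\mathcal W}$-module $M^\pm$, and in particular a finitely generated torsion-free module over $O_{\mathcal X}\cong E\langle X/p^r\rangle$. The latter is a one-variable Tate algebra over $E$, hence a PID, so $M^\pm_{\mathcal X}$ is free of some rank $d$ over $O_{\mathcal X}$.

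To pin down $d$, I would specialize at a classical point $x\in \mathcal X$ of non-critical slope. After shrinking $\mathcal W$ one arranges that all but finitely many integer weights $w(x)\in \mathcal W$ satisfy $v_p(\alpha(x))\leqslant \nu<w(x)+1$, so that such $x$ are Zariski dense in $\mathcal X$. Combining the freeness of $M^\pm_{\mathcal X}$ with the specialization isomorphism \eqref{eqn_bellaiche_lemma3_6}, Stevens' control theorem \eqref{eqn:stevens control thm}, and Lemma~\ref{lemma: properties of specialization}(iii), one identifies the fiber $M^\pm_{\mathcal X}\otimes_{O_{\mathcal X},x}E$ with the $f_x$-isotypic component of $\mathrm{Symb}^{\pm}_{\Gamma_p}(\bV_{w(x)}(E))$. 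This component is one-dimensional by the classical multiplicity-one theorem for modular symbols (equivalently, by the Amice--V\'elu--Vi\v{s}ik construction of the $\pm$-parts of the one-variable $p$-adic $L$-function of $f_x$); hence $d=1$.

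\textbf{Part (ii) and the main difficulty.} Since $p$ is odd, $\iota$ acts semisimply with eigenvalues $\pm 1$ on $\mathrm{Symb}_{\Gamma_p}(\mathbb D^\dagger(\mathcal W))^{\leq \nu}$ and commutes with $\mathcal H$, yielding a $\TT_{\mathcal W,\nu}$-equivariant decomposition $\mathrm{Symb}_{\Gamma_p}(\mathcal X)^{\leq \nu}=\mathrm{Symb}_{\Gamma_p}^+(\mathcal X)^{\leq \nu}\oplus\mathrm{Symb}_{\Gamma_p}^-(\mathcal X)^{\leq \nu}$, so that (ii) follows from (i). The genuine difficulty lies at a $\theta$-critical $x_0$, where $\mathcal X$ fails to be \'etale over $\mathcal W$ and the fiber of $M^\pm_{\mathcal X}$ at $x_0$ cannot itself be used to compute $d$; this forces the detour through a dense set of \emph{nearby non-critical} classical points, and is also why Proposition~\ref{prop_bellaiche_4_11} is invoked at the outset to produce the local uniformizer $X$ identifying $O_{\mathcal X}$ with a PID before the freeness argument goes through.
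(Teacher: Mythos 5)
Your proof is correct and essentially reproduces Bella\"iche's argument, which the paper simply cites for part (i), and you derive (ii) from (i) via the $\pm$-decomposition exactly as the paper does. Two small refinements: the step deducing $O_{\mathcal X}$-torsion-freeness from the direct-summand-of-a-free-$O_{\mathcal W}$-module property tacitly uses that $O_{\mathcal X}$ is a domain finite flat over $O_{\mathcal W}$ (so that $N_{O_{\mathcal X}/O_{\mathcal W}}(a)\in aO_{\mathcal X}\setminus\{0\}$ for any $a\neq 0$, by Cayley--Hamilton), and since ${\rm Symb}_{\Gamma_p}\cong H^1_c$, the specialization \emph{isomorphism} you invoke at a nonzero weight is Lemma~\ref{lemma: properties of specialization}(iv), not the injection of part (iii).
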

\begin{proof}
The first assertion follows as a consequence of the discussion in Section 4.2.1 in op. cit.; see also the first paragraph following the proof of \cite[Proposition 4.11]{bellaiche2012}. The second assertion follows from the first one 
and the decomposition 
\[
{\rm Symb}_{\Gamma_p}(\mathcal X)^{\leq \nu}= 
{\rm Symb}^+_{\Gamma_p}(\mathcal X)^{\leq \nu} \oplus 
{\rm Symb}^-_{\Gamma_p}(\mathcal X)^{\leq \nu}.
\]
\end{proof}

\subsection{A variant of Bella\"iche's construction}
\label{subsec_Bellaiche_revised}

\subsubsection{}


Fix an affinoid disk  $\mathcal  W$ and  a wide open affinoid $U$  such that 
\[
\mathcal W \subset U\subset \mathcal W^*
\]
where $\mathcal W^*$ is given as in \S\ref{subsubsec_bellaiche_eigencurve_1}. See  \cite{andreattaiovitastevens2015} as well as \S\ref{subsubsec_bellaiche_eigencurve_1} for an explanation of this assumption. Note that this condition translates to the requirement that  $U$ is $0$-accessible in the sense of \cite[Definition 4.1.1]{LZ1}. We then have a natural (restriction) map $\LLU\to R$.
Recall that $\kappa_U \,:\, \Zp^\times \rightarrow O_U^{\circ, \times}$
denotes the restriction of the universal weight character to $U$. 
In what follows, we will allow ourselves to shrink both $U$ and $\mathcal W$ as necessary.

\subsubsection{}
Recall that  ${\mathcal H}_{\mathcal W}$ denotes the Hecke algebra over $O_{\mathcal W}.$ In this subsection, we work with the spaces $\DD (U)$ and 
$\mathcal A^\prime (U).$ Set 
\begin{equation}
\nonumber
\begin{aligned}
&H^1_c(\Gamma_p,\DD (U))^{\pm,\leq \nu}_{\mathcal W}:=
H^1_c(\Gamma_p,\DD (U))^{\pm,\leq \nu}\otimes_{\LLUp}O_{\mathcal W}\,,\\
&H^1_c(\Gamma_p,\mathbb A^\prime (U))^{\pm,\leq \nu}_{\mathcal W}:=
H^1_c(\Gamma_p, \mathbb A^\prime (U) )^{\pm,\leq \nu}\otimes_{\LLUp}O_{\mathcal W}
\end{aligned}
\end{equation}
and  let
\begin{equation}
\nonumber 
r_1^\pm\,:\, \mathcal H_{\mathcal W} \rightarrow  {\rm End}_{O_{\mathcal W}}
( H^1_c(\Gamma_p,\DD (U))^{\pm,\leq \nu}_{\mathcal W} ) \quad, \quad 
r_2^\pm\,:\, \mathcal H_{\mathcal W} \rightarrow  {\rm End}_{O_{\mathcal W}}
(H^1_c(\Gamma_p,\mathbb A^\prime (U))^{\pm,\leq \nu}_{\mathcal W})
\end{equation}
denote the canonical representations of $\cH_{\mathcal W}$. 
We define the ideals $I^\pm_1=\ker (r_1^\pm)$ and $I^\pm_2=\ker (r_2^\pm)$.

 \begin{lemma}
\label{prop_bellaiche_vs_AIS}
Let $k_0\geqslant 0$ be an integer such that $k_0\in \mathcal W\subset U$. For sufficiently small $\mathcal W$ and $U$ 
the map \eqref{eqn_DdaggerWtoDUsubW} together with  Lemma~\ref{lemma: shrinking wide open neighborhood}
induce   natural Hecke equivariant isomorphisms
 \begin{equation}
\label{eqn_prop_bellaiche_vs_AIS}
\begin{aligned}
H^1_c(\Gamma_p, \mathbb{D}^\dagger (\mathcal W))^{\pm,\leq \nu} &\simeq  H^1_c(\Gamma_p,\DD (U))^{\pm,\leq \nu}_{\mathcal W},\\
H^1(\Gamma_p, \DD^\dagger (\mathcal W))^{\pm,\leq \nu} &\simeq
H^1(\Gamma_p, \DD (U))^{\pm,\leq \nu}_{\mathcal W}
.
\end{aligned}
\end{equation}
In particular, the morphism
$$\TT_{\mathcal W,\nu}^{\pm}  \lra \mathrm{im} (r^\pm_1)$$
induced from \eqref{eqn_prop_bellaiche_vs_AIS} is an isomorphism as well.
\end{lemma}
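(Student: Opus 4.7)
The plan is to obtain the claimed isomorphisms by combining the injection \eqref{eqn_DdaggerWtoDUsubW} with Lemma~\ref{lemma: shrinking wide open neighborhood}, restricting to slope-$\leq \nu$ parts, and then verifying bijectivity by a specialization argument at classical weights. First, I would form the natural Hecke-equivariant map: the $\Sigma_0(p)$-equivariant injection $\DD^\dagger(\mathcal W) \hookrightarrow \DD(U)_{\mathcal W}$ from \eqref{eqn_DdaggerWtoDUsubW} induces Hecke-equivariant morphisms on $H^1$ and $H^1_c$ of $\Gamma_p$; composing with the isomorphisms of Lemma~\ref{lemma: shrinking wide open neighborhood} and restricting to the $\pm$-eigenspaces for the involution of \cite[\S3.2.4]{bellaiche2012} produces the maps whose bijectivity on slope-$\leq \nu$ parts is to be established. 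The existence of the slope-$\leq \nu$ projector on either side is standard once one knows that $U_p$ acts compactly on $\DD^\dagger(\mathcal W)$ and on $\DD(U)_{\mathcal W}$, which is ensured by the assumption $U \subset \mathcal W^*$.

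Next I would reduce bijectivity to a pointwise verification. By the Stevens--Coleman--Buzzard--Ash slope theory recalled in \cite{bellaiche2012} and \cite{andreattaiovitastevens2015}, both source and target of the map are finitely generated projective $\cO_{\mathcal W}$-modules after slope truncation. Consequently, to show the map is an isomorphism it suffices, by Nakayama's lemma, to verify bijectivity after specialization at every point of a Zariski-dense subset of $\mathcal W(E)$. A convenient choice is the set of classical weights $y \in \ZZ_{\geq 0} \cap \mathcal W(E)$ with $y + 1 > \nu$, which is Zariski-dense after possibly shrinking $\mathcal W$. At each such $y$, the specialization isomorphism \eqref{eqn_bellaiche_lemma3_6} for $\DD^\dagger(\mathcal W)$ together with Lemma~\ref{lemma: properties of specialization}(iii,iv,v) identifies both specializations with the same $E$-vector space $H^1_{?}(\Gamma_p, \DD_y^\dagger(E))^{\pm,\leq \nu}$ (with $? \in \{c, \emptyset\}$), and the comparison map specializes to the identity. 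The final assertion about Hecke algebras is then immediate: the Hecke-equivariant isomorphism identifies $\TT^{\pm}_{\mathcal W,\nu}$ with $\mathrm{im}(r^{\pm}_1)$ as subalgebras of the (common) endomorphism algebra.

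The main obstacle I anticipate is the technical bookkeeping of the specialization diagrams. In particular, one must accommodate Lemma~\ref{lemma: properties of specialization}(iv), which shows that the specialization on $H^1_c$ carries a one-dimensional cokernel when $y = 0$; the cleanest resolution is to shrink $\mathcal W$ so as to exclude the point $y = 0$, but this choice has to remain compatible with retaining $k_0 \in \mathcal W \subset U$ and the nice/adapted property of $\mathcal W$. A secondary point to verify is that the slope decomposition commutes with the base change $\widehat{\otimes}_{\cO_U}\cO_{\mathcal W}$; this is standard from the compactness of $U_p$ on $\DD(U)$ together with the flatness of $\cO_U \to \cO_{\mathcal W}$, but it has to be spelled out carefully because the isomorphisms of Lemma~\ref{lemma: shrinking wide open neighborhood} are established only modulo a finitely generated $p^N$-torsion module before inverting $p$.
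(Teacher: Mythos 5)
Your plan reduces to a pointwise check at classical weights $y$ with $y+1>\nu$, but this misses the one point that actually matters. Both $M_1 := H^1_c(\Gamma_p, \mathbb{D}^\dagger(\mathcal W))^{\pm,\leq\nu}$ and $M_2 := H^1_c(\Gamma_p, \DD(U))^{\pm,\leq\nu}_{\mathcal W}$ are free $\cO_{\mathcal W}$-modules, and the comparison map $g\colon M_1\to M_2$ is an isomorphism iff its determinant $G\in\cO_{\mathcal W}$ is a unit. Knowing $G$ is non-vanishing on a Zariski-dense set only tells you $G\neq 0$; it does not tell you $G(k_0)\neq 0$, and $G(k_0)\neq 0$ is exactly what one needs in order to shrink $\mathcal W$ to a disk around $k_0$ on which $G$ is non-vanishing (the lemma requires keeping $k_0\in\mathcal W$ after shrinking). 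In the critical or $\theta$-critical situation --- the case this paper is written to handle --- one has $k_0 + 1\leq \nu$, so $k_0$ is precisely excluded from your set of test weights, and you never verify the determinant at $k_0$.

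The paper's proof instead specializes directly at $y=k_0$: it uses Lemma~\ref{lemma: properties of specialization} to write the two specialization exact sequences into a commuting $3\times 3$ ladder at $k_0$, invokes Pollack--Stevens~\cite[Lemma~5.3]{PollackStevensJLMS} for the isomorphism $H^1_c(\Gamma_p,\mathbb{D}^\dagger_{k_0}(E))^{\pm,\leq\nu}\simeq H^1_c(\Gamma_p,\mathbb{D}_{k_0}(E))^{\pm,\leq\nu}$ in the middle column, and deduces that $M_1/\mathfrak m_{k_0}M_1 \to M_2/\mathfrak m_{k_0}M_2$ is an isomorphism, whence $G(k_0)\neq 0$. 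Note also that the paper does \emph{not} shrink $\mathcal W$ to exclude $y=0$: if $k_0=0$, that exclusion is impossible while keeping $k_0\in\mathcal W$. Instead, the commuting ladder handles the $H^2_c$-cokernel (the $[k_0]$-torsion of $H^2_c$ appearing in Lemma~\ref{lemma: properties of specialization}(iv)) uniformly: the middle vertical isomorphism forces the right vertical map between the two cokernels to be onto, hence an isomorphism since both are either $E$ or $0$, and the five lemma then gives the left isomorphism. You should replace your Nakayama/dense-set argument by this single-point determinant argument at $k_0$, with Pollack--Stevens Lemma~5.3 as the crucial input for the middle comparison, and delete the proposal to exclude $y=0$.
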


\begin{proof}
Let us put $M_1:= H^1_c(\Gamma_p, \mathbb{D}^\dagger (\mathcal W))^{\pm,\leq \nu}$ and $M_2:=H^1_c(\Gamma_p,\DD (U))^{\pm,\leq \nu}_{\mathcal W} $
and denote    by  $g\,:\, M_1 \rightarrow M_2$ the map induced from  \eqref{eqn_DdaggerWtoDUsubW} together with  Lemma~\ref{lemma: shrinking wide open neighborhood}. 
It follows from \eqref{eqn_AshStevesComparison} and  general results about the slope decomposition that $M_1$ and $M_2$ are finitely generated free 
$O_{\mathcal W}$-modules.  
By Lemma~\ref{lemma: properties of specialization}, we have a commutative diagram
\[
\xymatrix{
0 \ar[r] &M_1/\mathfrak m_{k_0} M_1 \ar@{.>}[d] \ar[r]  &H^1_c(\Gamma_p,\mathbb{D}^\dagger_{k_0}(E))^{\pm,\leq \nu} \ar[d] \ar[r] & {H^2_c(\Gamma_p, \mathbb D^\dagger  (\mathcal W))^{\pm,\leq \nu}[k_0]} \ar[d] \ar[r]& 0\\
0 \ar [r] & M_2/\mathfrak m_{k_0} M_2 \ar[r] &H^1_c(\Gamma_p,\mathbb{D}_{k_0}(E))^{\pm,\leq \nu}\ar[r] & {H^2_c(\Gamma_p, \mathbb,\mathbb{D} (U))_{\mathcal W}^{\pm,\leq \nu}[k_0]} \ar[r] &0
}
\]
where $\mathfrak m_{k_0} \subset \cO_{\mathcal W}$ is the maximal ideal determined by the weight $k_0$. By \cite[Lemma~5.3]{PollackStevensJLMS}, the middle vertical map of the diagram is an isomorphism.  According to Lemma~\ref{lemma: properties of specialization}, both $H^2_c(\Gamma_p, \mathbb D^\dagger (\mathcal W))^{\pm,\leq \nu}[k_0]$ and $H^2_c(\Gamma_p, \mathbb,\mathbb{D} (U))_{\mathcal W}^{\pm,\leq \nu}[k_0]$ are both either isomorphic to $E$ or else vanish (depending on whether $k_0=0$ or not). Moreover, since the middle vertical arrow is an isomorphism, the right vertical arrow is surjective and therefore an isomorphism as well. We conclude that the left vertical map induced from the vertical isomorphism in the middle is an isomorphism:
\begin{equation}
\label{eqn: isomorphism of M} 
M_1/\mathfrak m_{k_0} M_1 \stackrel{\sim}{\lra} M_2/\mathfrak m_{k_0} M_2\,.
\end{equation}
In particular, $M_1$ and $M_2$ are free $O_{\mathcal W}$-modules with the same rank. Let $G\in O_{\mathcal W}$ denote the determinant of $g$ in some bases of $M_1$ and $M_2$. The isomorphism 
\eqref{eqn: isomorphism of M} shows that 
$G(k_0)\neq 0$ and hence, on shrinking the neighborhood $\mathcal W$ as necessary, $G$ is non-vanishing on $\mathcal{W}$.  This concludes the proof that $g$ is an isomorphism for sufficiently small $\mathcal W$ and $U$,  as we have asserted in the statement of our lemma. 

The second isomorphism in  \eqref{eqn_prop_bellaiche_vs_AIS}
can be established by the same argument, using the fact that the map $H^1(\Gamma_p, \DD^\dagger_{k_0}(E))^{\leq \nu}=H^1(\Gamma_p,\DD_{k_0}(E))^{\leq \nu}$ by \cite[Theorem~5.5.3]{AshStevens2008Unpublished}.
\end{proof}
 
\subsubsection{} Let us set  
\[
\TT_{\mathcal W,\nu}^{\prime, \pm} = \mathrm{im} (r^\pm_2),
\qquad \qquad 
{\mathcal C}_{\mathcal W,\nu}^{\prime,\pm}:=\Spm (\TT_{\mathcal W,\nu}^{\prime, \pm}).
\]
Note then that we have canonical isomorphisms $\TT_{\mathcal W,\nu}^{\pm}\simeq  \cH_{\mathcal W}/I^\pm_1$
and $\TT_{\mathcal W,\nu}^{\prime,\pm}\simeq  \cH_{\mathcal W}/I^\pm_2$.

\begin{theorem} 
\label{thm_eigencurve_variant}
\item[i)] We have $I_1^\pm=I_2^\pm.$ 
In particular, there exist  canonical isomorphisms
$\TT_{\mathcal W,\nu}^{\pm}\simeq \TT_{\mathcal W,\nu}^{\prime,\pm}$ and   
${\mathcal C}_{\mathcal W,\nu}^{\pm} \simeq {\mathcal C}_{\mathcal W,\nu}^{\prime,\pm}$\,.

\item[ii)]  Suppose $x_0\in \Spm ({\mathcal H}_{\mathcal W}/I^\pm)$, where $I^\pm :=I_1^\pm=I_2^\pm.$ For a sufficiently small wide open neighborhood $U$ of $k_0=\w (x_0)$ and an affinoid 
$\mathcal W \subset U$ containing $k_0$ the following two assertions hold true.
\begin{itemize}
\item[a)] $x_0$ belongs to a unique connected component $\mathcal X^{\pm}$ of
$\Spm ({\mathcal H}_{\mathcal W}/I^\pm)$\,. 
\item[b)] Both  $O_{{\mathcal X}^\pm}$-modules 
$H^1_c(\Gamma_p,\DD (U))^{\pm,\leq \nu}_{\mathcal W}\otimes_{ \cH_{\mathcal W}}O_{{\mathcal X}^\pm}$ and  $H^1_c(\Gamma_p,{\mathbb A}^\prime (U))^{\pm,\leq \nu}_{\mathcal W}\otimes_{ \cH_{\mathcal W}}O_{{\mathcal X}^\pm} $ are free of rank one. 
\end{itemize}
\item[iii)] Fix a wide open $U$ as well as an affinoid neighborhood $\mathcal{W}\subset U$ of $x_0$ chosen so as to ensure that the conclusions of Part ii) are verified. 
If $x_0$ is cuspidal, then $x_0\in \mathcal C_{\mathcal W,\nu}^+ \cap \mathcal C_{\mathcal W,\nu}^- $  and $\mathcal X^+=\mathcal X^-$.
\end{theorem}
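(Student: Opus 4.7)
The plan is to prove parts (i), (ii), and (iii) in order, with (i) forming the technical heart and (ii)--(iii) following formally.

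For part (i), my approach is to compare the two Hecke algebras $\cH_\mathcal{W}/I_1^\pm$ and $\cH_\mathcal{W}/I_2^\pm$ by specializing both cohomology groups at classical weights $k\in \mathcal{W}(E)\cap \ZZ_{\geqslant 0}$ satisfying $\nu < k+1$. Slope decomposition makes both $H^1_c(\Gamma_p, \mathbb{D}(U))^{\pm,\leqslant \nu}_\mathcal{W}$ and $H^1_c(\Gamma_p, \mathbb{A}'(U))^{\pm,\leqslant \nu}_\mathcal{W}$ finitely generated over $\cO_\mathcal{W}$, so both Hecke quotients are finite $\cO_\mathcal{W}$-algebras. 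On the $\mathbb{D}$ side, Lemma~\ref{lemma: properties of specialization}(v) identifies the specialization of the first module at $k$ with $H^1_c(\Gamma_p, \mathbb{D}_k(E))^{\pm,\leqslant \nu}$, which Stevens' control theorem \eqref{eqn:stevens control thm} then identifies with the classical space $\mathrm{Symb}_{\Gamma_p}(\bV_k(E))^{\pm,\leqslant \nu}$. On the $\mathbb{A}'$ side, I would use the natural $\Sigma_0'(p)$-equivariant inclusion $\bP_k(E)\hookrightarrow \mathbb{A}'_k(E)$ (polynomials are analytic) together with the $\mathbb{A}'$-analog of Lemma~\ref{lemma: properties of specialization}(ii)--(v), proved by exactly the same two-term-complex arguments, to identify the specialization of the second module at $k$ with the small-slope part of $H^1_c(\Gamma_p, \bP_k(E))^{\pm,\leqslant \nu}$.

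The $E$-vector spaces $H^1_c(\Gamma_p, \bV_k(E))$ and $H^1_c(\Gamma_p, \bP_k(E))$ carry the \emph{same} systems of Hecke eigenvalues: they are related by Poincar\'e duality on the open modular curve $Y(\Gamma_p)$, which intertwines the double-coset Hecke operators for the conjugate monoids $\Sigma_0(p)$ and $\Sigma_0'(p)$. Since the classical weights in question are Zariski dense in the rigid disk $\mathcal{W}$ and both $\cH_\mathcal{W}/I_i^\pm$ are finite $\cO_\mathcal{W}$-algebras, the agreement of their $E$-point spectra on a dense subset forces $\sqrt{I_1^\pm} = \sqrt{I_2^\pm}$; the full equality $I_1^\pm = I_2^\pm$ then follows by matching multiplicities at the classical points, which reduces to the fact that both specializations are free of rank one on the non-critical-slope $f_x$-isotypic pieces (Theorem~\ref{thm_bellaiche}(i) on the $\mathbb{D}$-side, and its $\mathbb{A}'$-counterpart via the inclusion $\bP_k\hookrightarrow \mathbb{A}'_k$).

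For part (ii)(a), the equality in (i) yields ${\mathcal C}^\pm_{\mathcal{W},\nu}\cong {\mathcal C}^{\prime,\pm}_{\mathcal{W},\nu}$, and Proposition~\ref{prop_bellaiche_4_11} (after shrinking $\mathcal W$ and $U$) then supplies the unique connected component $\mathcal{X}^\pm$ of $\mathrm{Spm}(\cH_\mathcal{W}/I^\pm)$ containing $x_0$. For (ii)(b), the freeness of rank one of the $\mathbb{D}(U)$-cohomology follows by combining Theorem~\ref{thm_bellaiche}(i) with Lemma~\ref{prop_bellaiche_vs_AIS}. For the $\mathbb{A}'(U)$-cohomology I would first establish the $\mathbb{A}'$-analog of Lemma~\ref{prop_bellaiche_vs_AIS} by the identical $\mathrm{Tor}^1$-control argument (now using the $\mathbb{A}'$-version of Lemma~\ref{lemma: properties of specialization} which was already needed in (i)), and then transport Bella\"iche's argument for $\mathbb{D}^\dagger(\mathcal W)$ along the isomorphism $\TT^{\pm}_{\mathcal W,\nu}\cong \TT^{\prime,\pm}_{\mathcal W,\nu}$ to obtain the freeness on the $\mathbb{A}'$-side. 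Part (iii) is then formal: since $f_0^{\alpha_0}$ is cuspidal, its associated modular symbol has nonvanishing $+$ and $-$ components (a standard consequence of the nonvanishing of twisted central $L$-values for suitably chosen quadratic characters), so $x_0\in {\mathcal C}^+_{\mathcal W,\nu}\cap {\mathcal C}^-_{\mathcal W,\nu}$; both $\mathcal{X}^+$ and $\mathcal{X}^-$ are then connected neighborhoods of $x_0$ inside $\mathcal C_{\mathcal W,\nu}$ via \eqref{eqn: inclusions for eigencurves}, and uniqueness of such a neighborhood (applied to $\mathcal C_{\mathcal W,\nu}$ itself via Proposition~\ref{prop_bellaiche_4_11}) forces $\mathcal{X}^+=\mathcal{X}^-$.

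The main obstacle is step (i), more precisely the careful formulation of the control theorem for $\mathbb{A}'$ with the correct normalization of the Hecke action and the verification that the specialized eigensystems really coincide with those of $H^1_c(\Gamma_p, \bV_k(E))$ under the conjugation $\Sigma_0(p)\leftrightarrow \Sigma_0'(p)$. Once this classical matching is established at a Zariski-dense set of weights, the equality $I_1^\pm=I_2^\pm$ and everything thereafter is essentially formal transport of Bella\"iche's framework via the analog of Lemma~\ref{prop_bellaiche_vs_AIS} on the $\mathbb{A}'$-side.
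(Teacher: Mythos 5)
The structure of your proposal mirrors the paper's, but two of the key steps rest on incorrect or incomplete mechanisms.

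In part (i), the crucial comparison at an integral weight $k$ is between $H^1_c(\Gamma_p, \mathscr{V}_k(E))$ and $H^1_c(\Gamma_p, \mathscr{P}_k(E))$, both with \emph{compact} support. You assert these ``are related by Poincar\'e duality on the open modular curve,'' but Poincar\'e duality on an open curve $Y$ pairs $H^1_c(Y,\mathscr{F})$ against $H^1(Y,\mathscr{F}^\vee(1))$ --- it trades compact support for ordinary cohomology --- so by itself it does not produce the required $H^1_c\leftrightarrow H^1_c$ isomorphism. What is actually needed, and what the paper uses, is the $\GL_2(\ZZ_p)$-equivariant self-duality isomorphism $\mathscr{V}_k(E)\otimes\det^k \xrightarrow{\sim} \mathscr{P}_k(E)$ induced by the explicit perfect pairing on $\mathscr{V}_k$, together with the observation that $\det$ restricts trivially to $\Gamma_p\subset \mathrm{SL}_2(\ZZ)$; this gives a sheaf isomorphism on $Y(\Gamma_p)$ and hence a Hecke-compatible $H^1_c\xrightarrow{\sim}H^1_c$ identification directly. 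Relatedly, you pass from agreement of the $E$-point spectra at a Zariski-dense set of classical weights (plus ``matching multiplicities'') to the equality $I_1^\pm=I_2^\pm$; agreement of radicals plus pointwise multiplicities is not enough in general, and the paper formalizes this jump by appealing to Chenevier's interpolation criterion (his Proposition 3.7), which is a genuine input rather than a formality.

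In part (ii)(b), the plan to ``transport Bella\"iche's argument for $\mathbb{D}^\dagger(\mathcal{W})$ along the isomorphism $\TT^\pm_{\mathcal{W},\nu}\simeq \TT^{\prime,\pm}_{\mathcal{W},\nu}$'' is a gap: an isomorphism of Hecke \emph{algebras} does not by itself furnish an isomorphism between the Hecke \emph{modules} $H^1_c(\Gamma_p,\mathbb{D}(U))^{\pm,\leq\nu}_{\mathcal{W}}$ and $H^1_c(\Gamma_p,\mathbb{A}'(U))^{\pm,\leq\nu}_{\mathcal{W}}$ (these are related by a duality, not an isomorphism over the Hecke algebra, and the shrinking Lemma~\ref{prop_bellaiche_vs_AIS} of the paper has no $\mathbb{A}'$-analogue because $\mathbb{A}'$ has no $\dagger$-version). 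The paper instead proves the $\mathbb{A}'$-freeness from scratch: one shows $H^1_c(\Gamma_p,\mathbb{A}'(U))$ is $O_U$-torsion-free by the long exact sequence of specialization and the vanishing of $H^0_c$, then applies Bella\"iche's Lemma~4.1 to get local freeness over $O_{\mathcal{X}^\pm}$, then pins down rank one at a non-critical classical point using the isomorphism from step (i) together with Bella\"iche's Lemma~2.8, and finally invokes local constancy of rank. You will need to carry out this computation for $\mathbb{A}'(U)$ rather than transport a conclusion you only have on the $\mathbb{D}$ side. Your part (iii), while longer than the paper's (which simply cites Bella\"iche's Theorem~3.30), is not wrong.
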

\begin{proof} 
\item[i)] We first show that for  every integer  $k\in U$ with $k\geq \nu$
there exists a Hecke equivariant\footnote{Note here and below that we are not specifying the covariance or the contravariance of the Hecke action that ensures Hecke compatibilities.} isomorphism
\begin{equation}
\label{formula: theorem about cohomology analytic functions}
 H^1_c(\Gamma_p,\DD_{k}(E))^{\pm,\leq \nu} \stackrel{\sim}{\lra}
  H^1_c(\Gamma_p,\mathbb A_{k}^{\prime} (E))^{\pm,\leq \nu} .
 \end{equation}
For such $k$, the natural injection 
$$H^1_c(\Gamma_p,\mathscr{P}_{k}(E))^{\leq \nu}\lra H^1_c(\Gamma_p,{\mathbb A}_{k}^{\prime} (E))^{\leq \nu}$$
as well as the natural surjection 
$$H^1_c(\Gamma_p,\DD_{k}(E))^{\leq \nu}\lra H^1_c(\Gamma_p,\mathscr{V}_{k}(E))^{\leq \nu}$$
of Hecke modules are isomorphisms thanks to the control theorem of Ash and Stevens (c.f. \cite{BSV2020}, Proposition 4.2.2).

 We are therefore reduced to proving that we have an Hecke equivariant isomorphism
$$  H^1_c(\Gamma_p,\mathscr{V}_{k}(E)) \stackrel{\sim}{\lra} H^1_c(\Gamma_p,\mathscr{P}_{k}(E)) \,.$$ 
This follows from the isomorphism
$$\mathscr{V}_{k}(E)\otimes \det{}^{k}\xrightarrow[(\det^*)^{-1}]{\sim}\Hom(\mathscr{V}_{k}(E),E)=:\mathscr{P}_{k}(E)$$
of $\GL_2(\ZZ_p)$-modules (where $\det$ stands for the determinant character of $\GL_2(\ZZ_p)$), together with the fact that $\det_{\vert_{\Gamma_p}}=1$. Here, $\det^*$ is the isomorphism induced from the perfect pairing 
\begin{align*}
\mathscr{V}_{k}(E)\otimes \mathscr{V}_{k}(E) &\lra E \otimes\det{}^{-k}\\
v_1\otimes v_2&\longmapsto \sum_{j=0}^k {k \choose j}(-1)^jv_1(Z^j)v_2(Z^{k-j})\,;
\end{align*}
see the paragraph following \cite[Proposition 3.2]{BSV2020} as well as Remark 3.3.2 in op. cit. This proves \eqref{formula: theorem about cohomology analytic functions}.

In  view of \cite{BSV2020}, Proposition 4.2.1, it follows from the isomorphisms \eqref{formula: theorem about cohomology analytic functions} that the representations  $r_1$ and $r_2$ verify the conditions of \cite[Proposition 3.7]{Chenevier2005Duke}. This implies that $I^\pm_1=I^\pm_2$ and the remaining assertions are immediate from this fact and Lemma~\ref{prop_bellaiche_vs_AIS}.

\item[ii)]

We will only prove the assertion when the coefficients are $\mathbb A^\prime (U)$, since the proof in the case of $\mathbb D(U)$ is similar. We remark that one may alternatively deduce our claim that the $O_{\mathcal X^\pm}$-module $H^1_c(\Gamma_p,\DD (U))^{\pm,\leq \nu}\otimes_{\cH_{\mathcal W}}O_{\mathcal X^\pm}$ is free of rank one from \cite[Proposition~4.5]{bellaiche2012}, using the isomorphism \eqref{eqn_AshStevesComparison} of Ash--Stevens and Proposition~\ref{prop_bellaiche_vs_AIS}.

We will prove that for $U$ sufficiently small the $O_{\mathcal X^\pm}$-module $H^1_c(\Gamma_p,\mathbb A^\prime (U))^{\pm,\leq \nu}\otimes_{\cH_{\mathcal W}} O_{\mathcal X^\pm}$ is free of rank one.  It is clear that one may choose $U$ so as to ensure that the condition a) holds. Let us fix such $U$. According to  \cite[Proposition 4.2.1]{BSV2020}, the $\TT_{U,\nu}^{\prime,\pm}$-module $H^1_c(\Gamma_p,{\mathbb A}^\prime (U))^{\pm,\leq \nu}_{O_{\mathcal W}}$ is of finite rank as an $O_{\mathcal W}$-module (therefore also as a $\TT_{ O_{\mathcal W},\nu}^{\prime,\pm}$-module). For any $y\in U,$  the long exact sequence $\Gamma_p$-cohomology induced from the short exact sequence of specialization 
at $y$
$$0\lra \mathbb A^\prime (U) {\lra} \mathbb A^\prime (U) \xrightarrow{y} 
{\mathbb A}^\prime_{y}(E)\lra 0$$
shows that 
$$H^1_c(\Gamma_p,{\mathbb A}^\prime (U))[\mathfrak m_y]={\rm im}(H^0_c(\Gamma_p, {\mathbb A}^\prime_{y}(E))\lra H^1_c(\Gamma_p,{\mathbb A}^\prime (U))).
$$
Since   $H^0_c(\Gamma_p, {\mathbb A}^{\prime}_{y}(E))=0,$ this proves that $H^1_c(\Gamma_p,{\mathbb A}^\prime (U))$ is torsion-free
over $\LLUp$ and therefore, the $O_{\mathcal W}$-module $H^1_c(\Gamma_p,{\mathbb A}^\prime (U))\otimes_{\LLUp}O_{\mathcal W}$ is torsion-free as well. We have proved that  the $O_{\mathcal W}$-modules $H^1_c(\Gamma_p,{\mathbb A}^\prime (U))^{\pm, \leq \nu}_{\mathcal W}$ are finitely generated and torsion-free (therefore free, since $O_{\mathcal W}$ is a PID). 

Set $N^\pm=H^1_c(\Gamma_p,{\mathbb A}'(U))^{\pm, \leq \nu}\otimes_{\cH_{\mathcal W}}{O_{\mathcal X^\pm}}.$ 
Now the arguments of Bella\"{\i}che apply verbatim. More precisely, since ${O_{\mathcal X^\pm}}$ are PIDs, \cite[Lemma~4.1]{bellaiche2012} tells us that for any $x\in \mathcal X^{\pm}$, the localization ${N_{(x)}^\pm}$ is a 
free $O_{\mathcal X^\pm, x}$-module of finite rank. Moreover, for any classical 
point $x\neq x_0$ of weight $k\geq \nu$, the isomorphism \eqref{formula: theorem about cohomology analytic functions} together with 
\cite[Lemma~2.8]{bellaiche2012} show  that ${ N_{(x)}^\pm}$ is of rank one over $O_{\mathcal X^\pm, x}.$
By the local constancy of the rank, the same also holds true for ${ N_{(x_0)}^\pm}.$
We conclude that $N^\pm$ is a free $O_{\mathcal X^\pm}$-module of of rank one, when $U$ and $\mathcal W$ sufficiently small. 

\item[iii)] This portion follows directly from \cite[Theorem~3.30]{bellaiche2012}.
\end{proof}

\begin{corollary}
\label{cor_main_Bellaiche_BCM_eigencurve_variant}
Let us fix $\mathcal{W}$ that ensures the validity of the conclusions of Theorem~\ref{thm_eigencurve_variant}(ii) and suppose $x_0$ is cuspidal, so that 
$ \mathcal X^+=\mathcal X^-$ according to  Theorem~\ref{thm_eigencurve_variant}(iii). Put $\mathcal X=\mathcal X^\pm$. Then both $O_{\mathcal X}$-modules $H^1_c(\Gamma_p,\DD (U))^{\leq \nu}_{\mathcal W}\otimes_{ \cH_{\mathcal W}}O_{\mathcal X}$ and $H^1_c(\Gamma_p,{\mathbb A}^\prime (U))^{\leq \nu}_{\mathcal W}\otimes_{ \cH_{\mathcal W}}O_{\mathcal X}$ are free of rank $2$.
\end{corollary}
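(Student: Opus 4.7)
The plan is to obtain the rank-two statement by combining the eigenspace decomposition under the Atkin--Lehner-type involution $\iota$ with Theorem~\ref{thm_eigencurve_variant}(ii)--(iii). Since $p\geqslant 5$ (in particular $2$ is a unit in $\cO_{\mathcal W}$), the idempotents $\frac{1\pm \iota}{2}$ yield a canonical splitting
\[
H^1_c(\Gamma_p,\DD(U))^{\leq\nu}_{\mathcal W} \;=\; H^1_c(\Gamma_p,\DD(U))^{+,\leq\nu}_{\mathcal W}\,\oplus\,H^1_c(\Gamma_p,\DD(U))^{-,\leq\nu}_{\mathcal W},
\]
and likewise for $\mathbb A'(U)$ in place of $\DD(U)$. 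This decomposition is Hecke equivariant, hence survives tensoring over $\cH_{\mathcal W}$ with any Hecke module.

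Next I would invoke Theorem~\ref{thm_eigencurve_variant}(iii): since $x_0$ is cuspidal, $\mathcal X^+=\mathcal X^-$, so writing $\mathcal X:=\mathcal X^{\pm}$ we may tensor both $\pm$-summands with the \emph{same} ring $\cO_{\mathcal X}$. Applying Theorem~\ref{thm_eigencurve_variant}(ii)(b) to each sign separately, the $\cO_{\mathcal X}$-modules
\[
H^1_c(\Gamma_p,\DD(U))^{\pm,\leq\nu}_{\mathcal W}\otimes_{\cH_{\mathcal W}}\cO_{\mathcal X}\qquad\text{and}\qquad H^1_c(\Gamma_p,\mathbb A'(U))^{\pm,\leq\nu}_{\mathcal W}\otimes_{\cH_{\mathcal W}}\cO_{\mathcal X}
\]
are each free of rank one. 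Taking the direct sum of the $+$ and $-$ summands above then exhibits both
\[
H^1_c(\Gamma_p,\DD(U))^{\leq\nu}_{\mathcal W}\otimes_{\cH_{\mathcal W}}\cO_{\mathcal X}\quad\text{and}\quad H^1_c(\Gamma_p,\mathbb A'(U))^{\leq\nu}_{\mathcal W}\otimes_{\cH_{\mathcal W}}\cO_{\mathcal X}
\]
as free $\cO_{\mathcal X}$-modules of rank $2$, which is the desired conclusion.

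There is essentially no real obstacle here --- the work has already been done in Theorem~\ref{thm_eigencurve_variant}. The only point that deserves some care is to confirm that the $\pm$-decomposition is genuinely compatible with the $\cH_{\mathcal W}$-module structure used to form the tensor product with $\cO_{\mathcal X}$; this is transparent because the involution $\iota$ commutes with the Hecke operators $\{T_\ell\}_{\ell\nmid Np}$, $U_p$, and the diamond operators, so the idempotents $\frac{1\pm\iota}{2}$ are central in the relevant endomorphism algebra. Thus the decomposition descends to $\cO_{\mathcal X}$-modules and the result follows immediately.
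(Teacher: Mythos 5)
Your argument is correct and is precisely the intended one: the paper's proof is simply ``Clear, thanks to Theorem~\ref{thm_eigencurve_variant}'', and the mechanism — decompose into $\pm$-eigenspaces for $\iota$, use that $\mathcal X^+=\mathcal X^-$ when $x_0$ is cuspidal, apply part (ii)(b) to get rank one on each summand, then sum — is exactly what the authors use (compare also the proof of Theorem~\ref{thm_bellaiche}(ii), which assembles rank two from the two rank-one $\pm$-summands in the same way). Your remark that the idempotents $\tfrac{1\pm\iota}{2}$ commute with $\cH_{\mathcal W}$, so the splitting descends through $\otimes_{\cH_{\mathcal W}}\cO_{\mathcal X}$, is the only point that needs checking, and you handle it correctly.
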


\begin{proof}
Clear, thanks to Theorem~\ref{thm_eigencurve_variant}. 
\end{proof}

\subsubsection{} 
We record the following  proposition which will shall use in the proof of Proposition~\ref{prop_andreattaiovitastevens2015_Prop318}. 
It is well known to experts, but we prove it for reader's convenience.   

\begin{proposition}
\label{claim}
 Let $x_0$ be a classical cuspidal point on the eigencurve $\mathcal C$
 and $k_0=w (x_0)$. Then 
the following hold true. 
\item[i)] The natural map
\begin{equation}
\label{eqn_AISvsASbisbisbis}
H^1_c(\Gamma_p, \mathscr{V}_{{ k_0}}(E))\otimes_{\cH,{ x_0}}E\lra H^1(\Gamma_p, \mathscr{V}_{{ k_0}}(E))\otimes_{\cH,{ x_0}}E
\end{equation}
is an isomorphism of $E$-vector spaces of dimension $2$.

\item[ii)] There exists an affine neighborhood $\mathcal W$ of ${ k_0}$
and $\nu >0$  
such that for the connected component $\mathcal X\subset \mathcal C_{\mathcal W, \nu}$ of $x_0\in \mathcal C_{\mathcal W, \nu}$  the map
\begin{equation}
\label{eqn_AISvsAS}
H^1_c(\Gamma_p, \mathbb{D}^\dagger(\mathcal W))^{\leq \nu}\otimes_{\cH_{\mathcal W}}O_{\mathcal X} \xrightarrow{\frak{j}\otimes {\rm id}} H^1(\Gamma_p, \mathbb{D}^\dagger(\mathcal W))^{\leq \nu}\otimes_{\cH_{\mathcal W}} O_{\mathcal X}
\end{equation}
induced from  the natural $\mathcal{H}_{\mathcal W}$-equivariant map $H^1_c(\Gamma_p, \mathbb{D}^\dagger (\mathcal W))^{\leq \nu}\xrightarrow{\frak{j}} H^1(\Gamma_p, \mathbb{D}^\dagger (\mathcal W))^{\leq \nu}$, is an isomorphism.
\end{proposition}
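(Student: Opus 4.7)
The plan is to establish (i) via the Borel--Serre comparison between $H^1_c$ and $H^1$ for the open modular curve attached to $\Gamma_p$, and then to bootstrap (i) to (ii) by combining Nakayama's lemma with Lemma~\ref{lemma: properties of specialization}(iii) and the control theorems \eqref{eqn:stevens control thm} and \eqref{eqn_AshStevens2008Unpublished_1}.

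For part (i), let $\partial$ denote the boundary of the Borel--Serre compactification of the modular curve attached to $\Gamma_p$. The resulting Hecke equivariant long exact sequence
\begin{equation}
\nonumber
H^0(\partial,\mathscr V_{k_0}(E))\lra H^1_c(\Gamma_p,\mathscr V_{k_0}(E))\lra H^1(\Gamma_p,\mathscr V_{k_0}(E))\lra H^1(\partial,\mathscr V_{k_0}(E))
\end{equation}
reduces the claim to the well-known fact that $H^i(\partial,\mathscr V_{k_0}(E))$ is \emph{Eisenstein} for $i=0,1$: every system of Hecke eigenvalues arising on the boundary is attached to an Eisenstein series. Since $x_0$ corresponds to a cuspidal eigenform, the maximal ideal $\mathfrak m_{x_0}\subset \cH$ is not in the support of the boundary cohomology, which therefore vanishes after tensoring with $E$ via $x_0$. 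The displayed map in \eqref{eqn_AISvsASbisbisbis} is consequently an isomorphism. The dimension count then follows from the classical Eichler--Shimura decomposition: the $f_0^{\alpha_0}$-isotypic part of the cuspidal cohomology splits under the involution $\iota$ of \S\ref{subsubsec_wideopen_vs_bellaiche} into two lines, one for each $\pm 1$-eigenspace.

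For part (ii), set
\begin{equation}
\nonumber
M_c:=H^1_c(\Gamma_p,\mathbb{D}^\dagger(\mathcal W))^{\leq \nu}\otimes_{\cH_{\mathcal W}}O_{\mathcal X}, \quad M:=H^1(\Gamma_p,\mathbb{D}^\dagger(\mathcal W))^{\leq \nu}\otimes_{\cH_{\mathcal W}}O_{\mathcal X}.
\end{equation}
Both are finitely generated $O_{\mathcal X}$-modules via the slope decomposition. Combining Lemma~\ref{lemma: properties of specialization}(iii), the Ash--Stevens comparison \eqref{eqn_AshStevesComparison} and the control theorems \eqref{eqn:stevens control thm}, \eqref{eqn_AshStevens2008Unpublished_1}, the fiber of $\frak j\otimes \mathrm{id}$ at $x_0$ is canonically identified with the map \eqref{eqn_AISvsASbisbisbis} of part (i), hence is an isomorphism. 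Let $K$ and $C$ denote the kernel and cokernel of $\frak j\otimes \mathrm{id}$; both are finitely generated $O_{\mathcal X}$-modules whose fibers at $x_0$ vanish. Nakayama's lemma applied in the local ring $O_{\mathcal X,x_0}$ forces $K$ and $C$ to be supported on a closed subset of $\mathcal X$ avoiding $x_0$, and shrinking $\mathcal W$ around $k_0$ (and hence $\mathcal X$ around $x_0$) one achieves $K=C=0$ on the whole of $\mathcal X$.

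The main technical obstacle I anticipate is the bookkeeping required to verify that all the identifications above are Hecke equivariant and compatible with the slope decomposition, together with the exceptional weight $k_0=0$, where Lemma~\ref{lemma: properties of specialization}(iv) records a one-dimensional discrepancy between the specialization of $H^1_c$ in families and the pointwise cohomology. Since this discrepancy is Eisenstein in nature, it should be annihilated upon passage to the cuspidal eigencomponent $\mathcal X$, so the argument above should extend uniformly in $k_0$.
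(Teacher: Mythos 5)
Your part (i) is essentially the same as the paper's: Borel--Serre boundary, Eisenstein cohomology on the boundary, and cuspidality of $x_0$; the paper spells out the references (Harder plus strong multiplicity one via Jacquet--Shalika) but the argument is the same.

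Part (ii) has a genuine gap. You pass from an isomorphism on the fiber at $x_0$ to an isomorphism on a neighborhood by claiming that the kernel and cokernel of $\frak j\otimes\mathrm{id}$ both have vanishing fiber at $x_0$. Right-exactness of $\otimes_{O_{\mathcal X}}E$ handles the cokernel, but injectivity of the fiber map does \emph{not} imply $K\otimes_{O_{\mathcal X}} E=0$: one needs $\mathrm{Tor}_1^{O_{\mathcal X}}(M,E)=0$, i.e.\ freeness (flatness) of $M=H^1(\Gamma_p,\mathbb D^\dagger(\mathcal W))^{\leq\nu}\otimes_{\cH_{\mathcal W}}O_{\mathcal X}$ over $O_{\mathcal X}$ near $x_0$. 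You assert finite generation ``via the slope decomposition'' but never establish freeness. The paper makes this the heart of the argument: it shows $M$ is free of rank $2$ by evaluating at a classical point $x$ of weight $w(x)>\nu-1$, where the Ash--Stevens control theorem, Eichler--Shimura and multiplicity one apply, and then runs a determinant argument rather than Nakayama. Since $M_c$ and $M$ are then both free of rank $2$, a nonzero determinant at $x_0$ gives an isomorphism after shrinking. In your scheme, freeness of $M$ is exactly what closes the Nakayama argument for the kernel, so it cannot be skipped.

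A second, smaller point: you identify the fiber of $\frak j\otimes\mathrm{id}$ at $x_0$ with \eqref{eqn_AISvsASbisbisbis} ``by the control theorems'' \eqref{eqn:stevens control thm} and \eqref{eqn_AshStevens2008Unpublished_1}, but those are stated for slope strictly less than $k_0+1$, whereas the paper needs $\nu\geq k_0+1$ precisely to see critical-slope and $\theta$-critical points. The paper circumvents this by instead using the map $\rho_{k_0}^*$ on compactly supported cohomology, which is surjective by \cite[Lemma~5.1]{PollackStevensJLMS}, and then using a dimension count (all four spaces in the final commutative square are two-dimensional) to upgrade to isomorphisms. You should replace the bare invocation of the control theorems at $x_0$ with this diagram-and-dimension argument.
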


\begin{proof}
\item[i)] Let us denote by $\widetilde{Y}$ the Borel--Serre compactification of 
the modular curve $Y=\Gamma_p \setminus \mathbf{H}$ (and $\partial \widetilde{Y}$ its boundary). The sheaf $ \mathscr{V}_{k_0}$ on $Y$ extends to a sheaf on $\widetilde{Y}$. It follows from \cite[Proposition 4.2]{ashstevens1986} that 
$${\rm coker}\eqref{eqn_AISvsASbisbisbis}\hookrightarrow{\rm im} \left(H^1(Y, \mathscr{V}_{k_0})\stackrel{\partial^1_Y}{\lra} H^1(\partial \widetilde{Y}, \mathscr{V}_{k_0})\right)[\mathfrak m_{x_0}]$$ 
and the surjectivity of \eqref{eqn_AISvsASbisbisbis} follows once we verify that
$${\rm im} \left(H^1(Y, \mathscr{V}_{k_0})\xrightarrow{\partial^1_Y} H^1(\partial \widetilde{Y}, \mathscr{V}_{k_0})\right)[\mathfrak m_{x_0}]=0\,.$$ 
This follows from the  properties\footnote{The required vanishing statement follows from \cite[Corollary 4.7]{Harder1975} (see also \cite{Harder1987}, Theorem 1) combined with the strong multiplicity one for $\GL2$, which follows as a consequence of the classification result due to Jacquet--Shalika \cite{JacquetShalika_1,JacquetShalika_2} (c.f. the proof of \cite{FrankeSchwermer}, Proposition 4.1).  In brief terms, Corollary 4.7 of \cite{Harder1975} tells us that the systems of Hecke eigenvalues occurring in the image of the map $\partial^1_Y$ are those of Eisenstein series. By the classification result of Jacquet--Shalika, the set of systems of Hecke eigenvalues of Eisenstein series is disjoint from those of cusp forms. (We are grateful to Fabian Januszewski for indicating these references.)} of the Eisenstein cohomology, since $x_0$ is cuspidal.

The proof that the map $\eqref{eqn_AISvsASbisbisbis}$ is injective is similar, where one instead relies on the vanishing of the $x_0$-isotypic subspace of ${\rm im} (H^0(\partial \widetilde{Y}, \mathscr{V}_{k_0})\rightarrow H^1_c(Y, \mathscr{V}_{k_0}))$\,. 

Our assertion on the dimension of these vector spaces is standard, c.f. \cite[Proposition 3.18]{bellaiche2012}.

\item[ii)] Fix $\nu \geqslant k_0+1.$ Let $\mathcal W$ be an affinoid neighborhood 
of $k_0$ such that the conclusion of  Theorem~\ref{thm_bellaiche} hold for $\mathcal W.$
To simplify notation, set $M_c=H^1_c(\Gamma_p, \mathbb{D}^\dagger(\mathcal W))^{\leq \nu}\otimes_{\cH_{\mathcal W}}\cO_{\mathcal{X}}$ and 
$M =H^1(\Gamma_p, \mathbb{D}^\dagger(\mathcal W))^{\leq \nu}\otimes_{\cH_{\mathcal W}}\cO_{\mathcal{X}}.$
Then the $\cO_{\mathcal X}$-module $M_c$ is free of rank $2.$ The same argument (that Bella\"iche utilizes to prove $M_c$ is free of rank $2$) shows that $M$ is also free of rank $2$ over $\cO_{\mathcal X}.$ Namely,  on shrinking $\mathcal W$ if necessarly, we can assume that 
$M$ is a finitely generated free $O_{\mathcal X}$-module.
For any point $x$ of weight $w(x)$ we have  
\begin{equation}
\label{eqn: computation specialization of M}
M/
\mathfrak m_{x}M
\simeq 
\left (H^1 (\Gamma_p, \mathbb{D}^\dagger (\mathcal W))^{\leq \nu}\otimes_{O_{\mathcal W}, w(x)} E \right )\otimes_{\cH, x} E,
\end{equation}
Together with Lemma~\ref{lemma: properties of specialization} this gives 
\begin{equation}
\label{eqn: computation specialization of M-2}
M/\mathfrak m_{x}M \simeq H^1(\Gamma_p, \DD^\dagger_{w(x)}(E))^{\leq \nu}\otimes_{\cH,x}E.
\end{equation}
The control theorem of Ash--Stevens 
\eqref{eqn_AshStevens2008Unpublished_1} reads
\begin{equation}
\nonumber
H^1(\Gamma_p, \mathbb{D}^\dagger_k(E))^{\leq \nu}\xrightarrow{\,\,\sim\,\,} H^1(\Gamma_p, \mathscr{V}_k(E))^{\leq \nu}
\end{equation}
for each positive integer $k>\nu-1$. Choosing $x\in \mathcal X$
such that $w(x)>\nu-1,$ we deduce that
\[
M/\mathfrak m_{x}M \simeq H^1(\Gamma_p, \mathscr{V}_k(E))^{\leq \nu}\otimes_{\cH,x}E.
\]
Now it follows from the classical Eichler--Shimura isomorphism and multiplicity-one that $M/\mathfrak m_{x}M$ has dimension $2$ over $E$, and  the $\cO_\mathcal{X}$-module $M$ is free of rank $2$ as well. 

Let $\det_{\frak j}\in \cO_\mathcal{X}$ denote the determinant of $\frak j \otimes {\rm id}\,:\, M_c \rightarrow M$ with respect to fixed bases of 
$M_c$ and $M.$ If we knew that $\det_{\frak j}(x_0)\neq 0$, we could shrink  
$\mathcal W$ and the neighborhood $\mathcal X$ of $x_0$) to ensure that $\det_{\frak j}$ is non-vanishing on $\mathcal{X}$ and thereby conclude that $\frak{j}\otimes {\rm id}$ is an isomorphism, as required. We have therefore reduced to proving that $\det_{\frak j}(x_0)\neq 0.$

Recall the isomorphism \eqref{eqn: computation specialization of M-2}  for 
$x=x_0$:
\begin{equation}
M/\mathfrak m_{x_0}M \simeq H^1(\Gamma_p, \DD^\dagger_{k_0}(E))^{\leq \nu}\otimes_{\cH,x_0}E.
\end{equation}
The analogous isomorphism \eqref{eqn: computation specialization of M} for $M_c$ together with Lemma~\ref{lemma: properties of specialization} shows that we have an injection
\begin{equation}
\nonumber
M_c/\mathfrak m_{x_0}M_c \hookrightarrow H^1_c(\Gamma_p, \DD^\dagger_{k_0}(E))^{\leq \nu}\otimes_{\cH,x_0}E,
\end{equation}
which is an isomorphism if $k_0\neq 0.$ 
As a matter of fact, it follows from \cite[Proposition~3.14]{bellaiche2012} that  this map is an isomorphism since $x_0$ is cuspidal. We therefore have the following commutative diagram:
\begin{equation}
\nonumber
\xymatrix{
M_c/\mathfrak m_{x_0}M_c \ar[r]^-{\sim}  \ar[d] &H^1_c(\Gamma_p, \DD^\dagger_{k_0}(E))^{\leq \nu}\otimes_{\cH,x_0}E \ar[d]\\
M/\mathfrak m_{x_0}M  \ar[r]^-{\sim} &H^1(\Gamma_p, \DD^\dagger_{k_0}(E))^{\leq \nu}\otimes_{\cH,x_0}E.
}
\end{equation}
To conclude with the proof of the asserted isomorphism in our proposition with sufficiently small $\mathcal{W}$ and $\mathcal{X}$, it suffices to prove that the right vertical map is an isomorphism. Observe that the following diagram commutes:
\begin{equation}\label{eqn_last_eqn_hopefully}
\begin{aligned}\xymatrix{
H^1_c(\Gamma_p, \DD^\dagger_{k_0}(E))^{\leq \nu}\otimes_{\cH,x_0}E
\ar[r] \ar[d]_{\rho_{k_0}^*}^{\sim}
& H^1(\Gamma_p, \DD^\dagger_{k_0}(E))^{\leq \nu}\otimes_{\cH,x_0}E \ar[d]^{\rho_{k_0}^*}_{\sim}\\
H^1_c(\Gamma_p, \mathscr{V}_{k_0}(E))^{\leq \nu}\otimes_{\cH,x_0}E \ar[r]^{\sim}_{{\rm (i)}}& H^1(\Gamma_p, \mathscr{V}_{k_0}(E))^{\leq \nu}\otimes_{\cH,x_0}E
} 
\end{aligned}
\end{equation}
All objects of this diagram have dimension $2$ over $E.$ 
The surjectivity of the vertical arrow in \eqref{eqn_last_eqn_hopefully} on the left follows from \cite[Lemma 5.1]{PollackStevensJLMS}, and
we infer that the vertical arrow on the left is an isomorphism.  
The bottom map is an isomorphism by part i). This implies that all the maps of the diagram are isomorphisms.  The proof of our proposition is now complete.

\end{proof}


\section{Interpolation of Beilinson--Kato elements}
\label{sec_interpolateBK_elements}

\subsection{Modular curves and Iwasawa sheaves }
\label{subsec_mod_curves_Iwasawa_sheaves}
\subsubsection{} For a pair of positive integers $M_1$ and $M_2$, we recall the modular curves $Y(M_1,M_2)$ from \cite[\S2]{kato04}. We then have $Y(N,N)=Y(N)$ and $Y(1,N)=Y_1(N)$, modular curves of level $\Gamma(N)$ and $\Gamma_1(N)$, respectively. Recall from \S2.8 of op. cit. also the modular curves denoted by $Y(M_1,M_2(B))$ and $Y(M_1(B),M_2)$, where $B$ is also a positive integer. 
In Sections~
\ref{Overconvergent etale sheaves}-
\ref{subsec_BK_families}, we will be working over the modular curve $Y(1,N(p))$ of $\Gamma_1(N)\cap \Gamma_0(p)$-level. We note that the modular curve $Y(1,N(p))$ is denoted by $Y(N,p)$ in \cite{andreattaiovitastevens2015}.

If $Y$ denotes any one of the modular curves above, we denote by  $\lambda:\mathscr{E}\to Y$ the universal elliptic curve with the appropriate level structure (which depends on $Y$, but we suppress this dependence from our notation).
We let $\umT :=R^1{\lambda_*}\ZZ_p(1)$ denote the 
pro-system $\left(\mathscr{T}_{n}\right)_{n\geq 1}$ 
of \'etale lisse sheaves  $\mathscr{T}_{n}:=R^1 \lambda _*\bbmu_{p^n}$ 
on the open modular curves $Y[1/p]$ given as in \cite[\S\S1--2]{kato04}. 
The sheaf $\mathscr{T}$ has rank $2$ and the Poincar\'e duality 
identifies it  with the $p$-adic Tate module $\underline{T}_p (\mathscr E)$ of
$\mathscr{E}$. We write $\mathscr{T}_{\QQ_p}$ for the associated sheaf of $\QQ_p$-vector spaces.

For each non-negative integer $k$ and a locally free sheaf $\mathscr{F}$ over $Y[1/p]$, we let ${\rm TSym}^k\,\mathscr{F}$ denote the locally free sheaf on $Y$ of symmetric $k$-tensors, given as in \cite[\S2.2]{KLZ2}.

\subsubsection{}
Let  $i_D\,:\,D \hookrightarrow \mathscr E$ be a subscheme. Assume that $D$
is etale over $Y$ and consider the diagram
\[
\xymatrix{
\mathscr E [p^n] \left <D\right > \ar[r] \ar[d]^{{p}_{D,n}}
& \mathscr E \ar[d]^{[p^n]}\\
D \ar[r]^{i_D} \ar[d]^{\lambda_D} & \mathscr E \ar[dl]^{\lambda}\\
Y &
}
\]
where $\mathscr E [p^n] \left <D\right >$ is the fiber product of $\mathscr E $
and $D$ over $\mathscr E.$ We define the pro-etale sheaf $\uLambda (\umT \left <D\right >)= (\uLambda_n (\umT \left <D\right >))_{n\geqslant 1}$ on setting
\[
\uLambda_n (\umT \left <D\right >)= \lambda_{D,*} p_{D,n,*} (\ZZ/p^n\ZZ).
\]
We refer the reader to \cite{Kings2015} where Kings develop these notions in a general framework.

If $D=Y$ and $i_D=s \,:\, Y \rightarrow \mathscr E$ is a section of 
$\lambda,$ we will write $\uLambda (\umT \left <s \right >)$ instead
$\uLambda (\umT \left <D\right >).$ In particular, we denote by 
$\uLambda (\umT)=\uLambda (\umT \left <0\right >)$ the sheaf associated to the identity section $0$. 
For any section $s,$ the sheaf $\uLambda (\mathscr{T}\langle s \rangle)$  
is a sheaf of modules  of   rank one over the sheaf of Iwasawa algebras $\uLambda (\mathscr{T})$ (c.f. \cite{Kings2015}, \S2.4).

\subsubsection{}  In the remainder of \S\ref{subsec_mod_curves_Iwasawa_sheaves}, we recall a number of notation and constructions from \cite{KLZ2,LZ1}. 

Fix a positive integer  $N.$  We denote by  $\lambda_{N}\,:\, \mathscr E_{N} \rightarrow Y_1(N)$ the universal elliptic curve
and by $\umT_N$  the associated etale sheaf.
Recall that  $Y_1(N)$ is the moduli space  of pairs $(E,\beta_{N})$, where $E$ is an elliptic curve and $\beta_{N}\,:\, \ZZ/N\ZZ \rightarrow E[N]$ is an injection. Then the map $(E,\beta_{N}) \mapsto \beta_{N} (1)$ defines a section $s_{N} \,:\, Y_1(N) \rightarrow \mathscr E_{N}$
of $\lambda_{N},$ and we denote by $\uLambda (\umT_N \left <s_N\right >)$ 
the associated sheaf. 

Let $g_n\,:\, Y_1(Np^n) \rightarrow Y_1(N)$ denote the canonical projection. 
We define the pro-etale sheaf $\uLambda_N$ on $Y_1(N)_{\textup{\'et}}$
on setting 
\[
\begin{aligned}
&\uLambda_N= \left (\uLambda_{N,n}\right )_{n\geqslant 1},
&& \uLambda_{N,n}=g_{n,*}(\ZZ/p^n\ZZ)\,.
\end{aligned}
\]
If $p \mid N,$ the moduli description of $Y_1(Np^n)$
shows that there exists a canonical isomorphism $Y_1(Np^n) \simeq \mathscr E[p^n] \left <s_N \right >,$ and therefore we have
\begin{equation}
\label{eqn: isomorphism of Lambda sheaves}
\uLambda (\umT_N \left <s_N\right >) \simeq \uLambda_N\,.
\end{equation}
(c.f. the proof of \cite{KLZ2}, Theorem 4.5.1). In particular, we can apply the formalism developed in \cite{Kings2015} with the sheaf $\uLambda_N$.

\subsubsection{}
Assume that $N$ is coprime to $p$ and consider the modular curve $Y(1,N(p))$ equipped with the universal elliptic curve $\mathscr{E}_{N(p)}$ and the  associated sheaf $\mathscr{T}_{N(p)}.$ Recall that $Y(1,N(p))$ is the moduli space for the triples $(E, \beta_N, C),$ where $E$ is an elliptic curve,  $\beta_N\,:\, \ZZ/N\ZZ \rightarrow E$ is an injection and $C\subset E$ is a (cyclic) subgroup of order $Np$ { that contains $\beta_N(1+N\ZZ)$}. Denote by $\mathscr C \subset \mathscr{E}_{N(p)}[p]$ the canonical subgroup of order $p$ and set $D:=\mathscr{E}_{N (p)}[p]-\mathscr C$ and $D^\prime:=\mathscr C-\{0\}$. Note that both $D$ and $D^\prime$ are finite \'etale over $Y(1,N(p))$, of degrees $p^2-p$ and $p-1$, respectively, and we denote by  $\uLambda (\mathscr{T}_{N(p)}\langle D \rangle)$ and $\uLambda (\mathscr{T}_{N(p)}\langle D' \rangle)$ the associated sheaves.

Since both $D$ and $D^\prime$ are contained in $\mathscr{E}_{N(p)}[p]$,  the ``multiplication-by-$p$'' morphism induces the trace map
\begin{equation}
\label{eqn_mult_by_p_map_LZ}
\uLambda (\mathscr{T}_{N(p)}\langle D^?\rangle)\xrightarrow{[p]_*}\uLambda (\mathscr{T}_{N(p)}),
\qquad D^?\in \{D,D'\}
\end{equation}
of sheaves on $Y(1,N(p))$.

The rule $(E, \beta_{Np}) \mapsto (E,\beta_{Np}(p), \mathrm{im}(\beta_{Np}))$
defines a morphism ${\pr}^\prime: Y(1,Np)\to Y(1,N(p))$, which in turn induces the cartesian square
$$\xymatrix{\mathscr{E}_{Np}\ar[d] \ar[r] &Y_1(Np)\ar[d]^{\pr^\prime}\\
\mathscr{E}_{N(p)} \ar[r] &Y(1,N(p))
}$$ 
For each positive integer $r$, we have a natural morphism
\begin{equation}
\label{eqn_BenoisHorte_70_1}
\mathscr{E}_{Np}[p^r]\langle s_{Np}\rangle \xrightarrow{[N]_*\circ\, {\pr^\prime}} \mathscr{E}_{N(p)}[p^r]\langle D^\prime\rangle\,.
\end{equation}	
where the schemes $\mathscr{E}_{?}[p^r]\langle\, \cdot \,\rangle$ are given as in \cite[Definition 4.1.4]{KLZ2}. 
The composition of this map with \eqref{eqn: isomorphism of Lambda sheaves}
gives a map
\begin{equation}
\label{eqn_BenoisHorte_70}
H^*_{\textup{\'et}}(Y_1(Np),\underline{\LL}_{Np})\lra H^*_{\textup{\'et}}(Y(1,N(p)),\underline{\LL}(\mathscr{T}_{N(p)}\langle D^\prime\rangle))\,,
\end{equation}
(see also  \cite[\S4.2.5]{BenoisHorte2020} for further details).


\subsection{Big Beilinson--Kato elements}
\label{subsec:Beilinson--Kato}
\subsubsection{}
Let us fix positive integers $M$ and $N$ such that $M+N\geqslant 5.$ We consider the following objects:
\begin{itemize}
\item[$\bullet$]{} The modular curves $Y(M,N)$ and $Y_1(N)$, which come equipped with the universal elliptic curves  $\lambda_{M,N}\,:\,\mathscr E_{M,N} \rightarrow Y(M,N)$ and  $\lambda_{N}\,:\,\mathscr E_{N} \rightarrow Y_1(N)$, as well as the sheaves $\umT_{M,N}=R^1\lambda_{M,N,*} \Zp(1)$ and $\umT_N=R^1\lambda_{N,*} \Zp(1)$ on $Y (M,N)_{\mathrm{\acute et}}$
and $Y_1 (N)_{\mathrm{\acute et}}$\,, respectively.

\item[$\bullet$]{} The morphisms $Y(M,N) \rightarrow Y_1(N) \times \mu_M^{\circ} \rightarrow Y_1(N),$ where $\mu_M^{\circ}=\mathrm{Spec}(\ZZ [\zeta_{M}]).$ These morphisms induce a canonical map $\mathscr T_{M,N} \rightarrow \mathscr T \vert_{Y(M,N)}.$

\item[$\bullet$]{} The canonical projections 
\begin{equation}
\nonumber 
\begin{aligned}
&f_n\,:\,Y(Mp^n,Np^n)
\rightarrow Y(M,N),    && h_n \,:\,\mu_{Mp^n}^{\circ}  \rightarrow \mu_{M}^{\circ} 
\\
&g_n\,:\, Y_1(Np^n)  \rightarrow Y_1(N), &&g^{\cyc}_n\,:\, Y_1(Np^n) \times \mu_{Mp^n}^{\circ} \rightarrow Y_1(N) \times \mu_{M}^{\circ} 
\end{aligned}
\end{equation}
which give rise to the inverse systems of sheaves
\begin{equation}
\nonumber
\begin{aligned}
&\uLambda_{M,N}=(\uLambda_{M,N,n})_{n\geqslant 1}, && 
\uLambda_{M,N,n}=f_{n,*}(\ZZ/p^n\ZZ),\\
&\uLambda_{\mu_M^\circ}^{\mathrm{cyc}}=(\uLambda_{\mu_M^\circ,n}^{\mathrm{cyc}})_{n\geqslant 1}, &&
\uLambda_{\mu_M^\circ,n}^{\mathrm{cyc}}=h_{n,*} (\ZZ/p^n\ZZ),\\
&\uLambda_N=(\uLambda_{N,n})_{n\geqslant 1}, 
&&\uLambda_{N,n}=g_{n,*} (\ZZ/p^n \ZZ), \\
&\uLambda_{N,\mu_M^\circ}^{\cyc}=(\uLambda_{N,\mu_M^\circ,n}^{\cyc})_{n\geqslant 1}, 
&&
\uLambda_{N,\mu_M^\circ,n}^{\cyc}=g^{\cyc}_{n,*} (\ZZ/p^n \ZZ).
\end{aligned}
\end{equation}
\end{itemize}
By the flatness of our sheaves,  it follows that 
\begin{equation}
\uLambda^{\cyc}_{N,\mu_M^\circ} \simeq 
\uLambda_N \boxtimes \uLambda^{\cyc}_{\mu_M^\circ}\,.
\end{equation}

For any integer $L$,  let us denote by $\text{prime} (L)$ the set of primes dividing $L.$ For any integer $m\geqslant 1,$ set 
$\Gamma_{\QQ (\zeta_m)}=\Gal (\QQ (\zeta_{mp^\infty})/\QQ (\zeta_m))$
and $\Lambda (\Gamma_{\QQ (\zeta_m)})=\Zp [[\Gamma_{\QQ (\zeta_m)}]].$

\subsubsection{}
{ Poincar\'e duality} gives a canonical isomorphism between  $\umT_{M,N}$ and the Tate module  $\underline T_p(\mathcal E_{M,N})$ of the universal elliptic curve $\mathcal E_{M,N}.$ The interpretation 
of $Y(Mp^n,Np^n)$ as the moduli space of pairs $(E, \alpha ),$
where $E$ is an elliptic curve and  $\alpha  \,:\, (\ZZ/Np^n\ZZ )^2 \simeq  E[Np^n]$ 
is an isomorphism, shows that the restriction of 
$\underline{\mathcal E}_{M,N}[p^n]\simeq \underline T_p(\mathcal E_{M,N})/p^n$
on  $Y(Mp^n,Np^n)$ has a canonical basis $\{e_{1,n}, e_{2,n}\}$ given by 
$e_{1,n}=\alpha (M,0)$ and $e_{2,n}=\alpha (0,N).$ Analogously, the interpretation 
of $Y(Mp^n,Np^n)$ as the moduli space of pairs $(E, \beta ),$
where $E$ is an elliptic curve and  $\beta \,:\, \ZZ/Np^n\ZZ  \rightarrow  E[Np^n]$ 
is an injection, shows that the restriction  of the sheaf $\umT_N$ on $Y_1(Np^n)$
has the  canonical section $\beta (N)$ which we also denote by $e_{2,n}$ to simplify notation.

\subsubsection{}

One has
an isomorphism of continuous  Galois modules
\begin{equation}
\nonumber
\label{eqn: cohomology of Lambda_N^cyc}
H^1_{\textup{\'et}}\left ((Y_1(N)\times \mu_M^\circ \right)_{\overline \QQ}, 
\uLambda_{N,\mu_M^\circ}^{\cyc} ) 
\simeq H^1_{\textup{\'et}} (Y_1(N)_{\overline \QQ},\uLambda_{N, \mu_M^\circ}^{\cyc} )
\otimes \Zp [[\Gal (\QQ (\zeta_{M})/\QQ) ]].
\end{equation}
Since $\uLambda_{\mu_M^\circ}^{\cyc}$ is  a constant sheaf on $Y_1(N)_{\overline \QQ},$ one also has
\[
H^1_{\textup{\'et}} (Y_1(N)_{\overline \QQ}, \uLambda_{N,\mu_M^\circ}^{\cyc} ) 
\simeq H^1_{\textup{\'et}} (Y_1(N)_{\overline \QQ}, \uLambda_N)
\widehat\otimes \Lambda (\Gamma_{\QQ (\zeta_M)}).
\]
The commutative diagrams (for each $n$)
\[
\xymatrix{
Y(Mp^n,Np^n) \ar[d]^{f_n} \ar[r] &Y_1(Np^n) \times \mu^\circ_{Mp^n} 
\ar[d]^{g^\cyc_n}\\
Y(M,N) \ar[r] &Y_1(N)\times \mu^\circ_{M} \ar[d]\\
& Y_1(N) \times \mu_{m}^\circ
} 
\] 
show that we have the trace map  
\begin{align}
\label{eqn:trace for modular curves}
\begin{aligned}
H^1_{\textrm{\textup{\'et}}}(Y_1(M,N)_{\overline \QQ}, \mathrm{TSym}^j(\umT_{M,N}) \otimes\uLambda_{M,N}) \lra 
H^1_{\textrm{\textup{\'et}}} \left ((Y_1(N)\times \mu_M^\circ)_{\overline\QQ}, \mathrm{TSym}^j(\umT_{N}) \otimes\uLambda^{\cyc}_{N, \mu_M^\circ}\right )\\
\lra H^1_{\textrm{\textup{\'et}}} \left ((Y_1(N)\times \mu_m^\circ)_{\overline\QQ}, \mathrm{TSym}^j(\umT_{N}) \otimes \uLambda^{\cyc}_{N, \mu_M^\circ}\right )\\
{}^{}\qquad \xrightarrow{\sim} H^1_{\textrm{\textup{\'et}}} \left (Y_1(N)_{\overline\QQ}, \mathrm{TSym}^j(\umT_{N}) \otimes \uLambda^{\cyc}_{N,\mu_M^\circ}\right )\otimes \Zp[\Gal (\QQ (\zeta_m)/\QQ].
\end{aligned}
\end{align}
for each $m$ dividing $M$.

\subsubsection{}
More generally, let  $(a,B,m)$ be a triple of integers such that $mB \mid M.$ Let us choose $L\geqslant 1$ such that 
$M\mid L,$ $N\mid L$ and 
\[
\text{prime}(L)=
\text{prime} (MN).
\]
The construction of \cite[\S5.2]{kato04} provides us with a map 
\begin{multline}
\label{eqn: generalized trace for modular curves}
\begin{aligned}
t_{m,a(B)} \,:\, 
H^1_{\textrm{\'et}}(Y_1(M,L)_{\overline \QQ}, \mathrm{TSym}^j(\umT_{M,L}) \otimes\uLambda_{M,L}) \lra 
H^1_{\textrm{\'et}} \left ((Y_1(N)\times \mu_m^\circ )_{\overline\QQ}, \mathrm{TSym}^j(\umT_{N}) \otimes \uLambda^{\cyc}_{N, \mu_M^\circ}\right )\\
\simeq 
H^1_{\textrm{\'et}} \left (Y_1(N)_{\overline\QQ}, \mathrm{TSym}^j(\umT_{N}) \otimes\uLambda^{\cyc}_{N,\mu_M^\circ}\right )\otimes \Zp[\Gal (\QQ (\zeta_m)/\QQ].
\end{aligned}
\end{multline}
Note that this map coincides with \eqref{eqn:trace for modular curves} if
$m=M,$ $a=0,$ $B=1$ and $L=N.$

\subsubsection{}  We write 
\begin{equation}
\nonumber
\partial_n \, : \, \cO(Y(Mp^n,Np^n))^\times 
\lra \varprojlim_r H^1_{\textup{\'et}}(Y(Mp^n,Np^n),{\bbmu}_{p^r})=H^1_{\textup{\'et}}(Y(Mp^n,Np^n),\ZZ_p(1))
\end{equation} 
for the Kummer map. 
For any integer $j\geqslant 0,$ consider the chain of maps 

\begin{align}
\label{eqn: map from K_2 to etale}
\begin{aligned}
\mathrm{Ch}_{M,N,j} \,:\,\varprojlim_n  K_2(Y(Mp^n,Np^n)) &\lra 
\varprojlim_n H^2_{\textup{\'et}}(Y(Mp^n,Np^n),\ZZ_p(2))\\ 
&\xrightarrow{\cup e_{1,n}^{\otimes j}}
\varprojlim_n H^2_{\textup{\'et}}(Y(Mp^n,Np^n), f_n^*(\mathrm{TSym}^{j} (\umT_{M,N,n}))\otimes \bbmu_{p^n}^{\otimes 2}) \\
&\stackrel{\sim}{\lra}
 H^2_{\textup{\'et}}(Y(M,N), \mathrm{TSym}^{j} (\umT_{M,N})\otimes 
 \uLambda_{M,N} (2)) \\
& \lra H^1 \left (\ZZ [1/MNp], H^1_{\textup{\'et}}(Y(M,N)_{\overline \QQ}, \mathrm{TSym}^{j} (\umT_{M,N})\otimes \uLambda_{M,N} (2))\right ),
\end{aligned}
\end{align}
where the first map is  induced by the composition of $\partial_n$ with the cup product in \'etale cohomology, and the very last map is deduced from the Hochschild--Serre spectral sequence.  

Let ${}_cg_{1/Mp^n,0}\in \cO (Y(Mp^n,1)^\times$ and ${}_dg_{0,1/Np^n}\in \cO(Y(1,Np^n)^\times$ 
denote Kato's Siegel units (see \cite[\S\S1--2]{kato04} for their definition). As in op. cit., we put  
\[
{}_{c,d}z_{Mp^n,Np^n}:={}_cg_{1/Mp^n,0} \cup {}_dg_{0,1/Np^n}\in
K_2(Y(Mp^n,Np^n)).
\]
We recall that
\[
({}_{c,d}z_{Mp^n,Np^n})_{n\geqslant 1} \in \varprojlim_n K_2(Y(Mp^n,Np^n))
\]
belongs to the source of the map $\mathrm{Ch}_{M,N,j}$ (c.f. \cite{kato04}, Proposition~2.3).

\subsubsection{} Let us fix a positive integer $N$. As in \cite[\S5]{kato04}, let $\xi$ denote either the symbol $a(B)$ with $a,B\in\ZZ$ and $B\geqslant 1$ or an element of $\mathrm{SL}_2(\ZZ)$.  For each integer $m\geqslant 1,$ 
we denote by $S$ the following set of primes: 
\begin{equation}
\nonumber
S=\begin{cases} \text{primes} (mBp),
& \textrm{if $\xi=a(B)$}\\
\text{primes} (mNp),
& \textrm{if $\xi\in \mathrm{SL}_2(\ZZ)$}.
\end{cases}
\end{equation} 
Let $(c,d)$ be a pair of positive integers satisfying the following conditions:
\[
\begin{aligned}
\nonumber
& (cd,6)=1, && (d,N)=1, 
&&&\text{prime} (cd) \cap S=\emptyset .
\end{aligned}
\]

If $\xi= a(B),$ we choose $M$ and $L\geqslant 1$ such that 
\begin{equation}
\label{eqn:conditions on M,N}
\begin{aligned}
\nonumber
&mB \mid M, 
& M \mid L,
&\qquad N\mid L,\\
&\text{prime}(M)=S,
&&\text{prime}(L)=S\cup \text{prime} (N)
\end{aligned}
\end{equation}
and denote by 
\begin{equation}
\label{eqn: the map ch first case}
\begin{aligned}
\mathrm{Ch}_{M,L,j,\xi}\,:\,  \varprojlim_n  & \,K_2(Y(Mp^n,Lp^n)) \lra H^1\left (\ZZ[1/S], H^1_{\textup{\'et}}  (Y_1(M,L)_{\overline \QQ}, \mathrm{TSym}^j(\umT_{M,L})\otimes \uLambda_{M,L}(2))
\right )
\\
&\quad\lra H^1\left (\ZZ[1/S],
H^1_{\textrm{\'et}} \left (Y_1(N)_{\overline\QQ}, \mathrm{TSym}^j(\umT_{N}) \otimes\uLambda_{N,\mu_m^\circ}^\cyc  (2)\right )\otimes \Zp[\Gal (\QQ (\zeta_m)/\QQ] \right )\\
&\qquad\qquad\stackrel{\sim}{\lra} H^1\left (\ZZ [{1}/{S}, \zeta_m ],
 H^1_{\textup{\'et}} \left (Y_1(N)_{\overline \QQ}, \mathrm{TSym}^{j} (\umT_{N})
\otimes
\uLambda_{N,\mu_m^\circ}^\cyc  (2) \right )
\right )
\\
&\qquad\qquad\qquad\stackrel{\sim}{\lra} H^1\left (\ZZ [{1}/{S}, \zeta_m ],
 H^1_{\textup{\'et}} \left (Y_1(N)_{\overline \QQ}, \mathrm{TSym}^{j} (\umT_{N})
\otimes
\uLambda_N (2) \right )
\widehat \otimes \Lambda (\Gamma_{\QQ (\zeta_m)})^\iota
  \right )
\end{aligned}
\end{equation}
the composition of the maps \eqref{eqn: map from K_2 to etale} and 
\eqref{eqn: generalized trace for modular curves}. Here, the symbol $\iota$ on the final line means that the target cohomology group is equipped 
with a continuous action of 
$\Gamma_{\QQ (\zeta_m)}=\Gal (\QQ (\zeta_{mp^\infty})/\QQ (\zeta_m))$
induced by the right action of this group on $\Lambda (\Gamma_{\QQ (\zeta_m)})$ via the canonical involution $\iota (g)=g^{-1}$
(c.f. \S\ref{subsubsection:notation phi-Gamma modules}).

If $\xi \in \mathrm{SL}_2(\ZZ),$ we fix $L\geqslant 3$ such that 
\begin{equation}
\nonumber
\begin{aligned}
&m\mid L,
&& N\mid L,
&&&\text{prime} (L)=S.
\end{aligned}
\end{equation}
The element $\xi$ induces  an automorphism of $Y(Lp^n)=Y(Lp^n,Lp^n).$ We 
consider the map

\begin{equation}
\label{eqn: the map ch second case}
\begin{aligned}
\mathrm{Ch}_{L,j,\xi}\,:\,\varprojlim_n &\, K_2(Y(Lp^n))
\lra H^1\left (\ZZ[1/S], H^1_{\textup{\'et}}  (Y(L)_{\overline \QQ}, \mathrm{TSym}^j(\umT_{L,L})\otimes \uLambda_{L,L}(2))\right )\\
&\qquad\lra 
H^1\left (\ZZ[1/S], H^1_{\textrm{\textup{\'et}}} \left (Y_1(N)_{\overline\QQ}, \mathrm{TSym}^j(\umT_{N}) \otimes\uLambda_{N,\mu_m^\circ}^\cyc  (2)\right )\otimes \Zp[\Gal (\QQ (\zeta_m)/\QQ] \right ) 
\\
&\qquad\qquad\lra
H^1\left (\ZZ[1/S, \zeta_m], H^1_{\textrm{\textup{\'et}}} \left (Y_1(N)_{\overline\QQ}, \mathrm{TSym}^j(\umT_{N}) \otimes\uLambda_{N,\mu_m^\circ}^\cyc (2)\right )\right )
\\
&\qquad\qquad\qquad\stackrel{\sim}{\lra}
H^1\left (\ZZ [{1}/{S}, \zeta_m ],
 H^1_{\textup{\'et}} \left (Y_1(N)_{\overline \QQ}, \mathrm{TSym}^{j} (\umT_{N})
\otimes
\uLambda_N (2) \right )
\widehat \otimes \Lambda (\Gamma_{\QQ (\zeta_m)})^\iota
  \right )\,.
\end{aligned}
\end{equation}

\begin{defn} 
\label{defn:Kato big-zeta}
\item[i)] If $\xi=a(B)$   we set
\[
{}_{c,d}\mathbb{BK}_{N,m}(j,\xi)= \mathrm{Ch}_{M,L,j,\xi}
\left ( ({}_{c,d}z_{Mp^n,Lp^n})_{n\geqslant 1}\right ),
\]
where $\mathrm{Ch}_{M,L,j,\xi}$ is the map \eqref{eqn: the map ch first case}.

\item[ii)] If $\xi \in \mathrm{SL}_2(\ZZ)$, we set 
\[
{}_{c,d}\mathbb{BK}_{N,m}(j,\xi)= \mathrm{Ch}_{L,j,\xi}
\left ( (\xi^*({}_{c,d}z_{Lp^n,Lp^n}))_{n\geqslant 1}\right ),
\]
where $\mathrm{Ch}_{L,j,\xi}$ is the map \eqref{eqn: the map ch second case}.
\end{defn}

\begin{lemma} The elements ${}_{c,d}\mathbb{BK}_{N,m}(j,\xi)$ do not depend
on the choice of $M$ and $L.$ 
\end{lemma}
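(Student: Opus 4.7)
The plan is to reduce the independence claim to the norm-compatibility properties of Kato's Siegel units. First, I would observe that any two valid choices of the auxiliary integer(s) admit a common refinement that still satisfies the imposed constraints: in the case $\xi = a(B)$, if $(M,L)$ and $(M',L')$ both satisfy \eqref{eqn:conditions on M,N}, then $(\mathrm{lcm}(M,M'),\mathrm{lcm}(L,L'))$ does as well, since the prime-support conditions are preserved under taking lcm's. Thus it suffices to show that if $(M,L)$ and $(M',L')$ are both admissible and $M \mid M'$, $L \mid L'$, then the resulting classes coincide. The $\xi \in \mathrm{SL}_2(\ZZ)$ case is handled identically with a single parameter $L$.

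Next, I would compare the two construction chains. The natural projection $\pi : Y(M'p^n, L'p^n) \to Y(Mp^n, Lp^n)$ is finite \'etale, and one can factor the map $\mathrm{Ch}_{M',L',j,\xi}$ of \eqref{eqn: the map ch first case} through $\mathrm{Ch}_{M,L,j,\xi}$ by inserting the trace $\pi_*$ at the level of $K_2$ (before applying the Kummer map and Hochschild--Serre) and verifying that the subsequent steps — the Kummer boundary, cup product with $e_{1,n}^{\otimes j}$, the trace maps defining $\uLambda_{M,N}$ and $\uLambda_{N,\mu^\circ_m}^{\cyc}$, and the transition \eqref{eqn: generalized trace for modular curves} — factor accordingly by functoriality. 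The key compatibility is that the canonical generators $e_{1,n}, e_{2,n}$ of $\mathscr{T}_{M',L'}/p^n$ restrict to the analogous generators on $Y(Mp^n, Lp^n)$ (up to the rescaling built into the definition of $\mathrm{TSym}^j$ and the map \eqref{eqn: generalized trace for modular curves}), so the cup products match. What remains is the comparison
\[
\pi_* \bigl({}_{c,d}z_{M'p^n,L'p^n}\bigr) = {}_{c,d}z_{Mp^n,Lp^n} \qquad \text{in } K_2(Y(Mp^n,Lp^n)).
\]

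This is precisely the norm compatibility of Kato's Siegel units recorded in \cite[Proposition~2.3 and \S2.3]{kato04}: the norm maps for ${}_c g_{\alpha,\beta}$ (and likewise ${}_d g$) along primes $\ell$ that already divide the level act trivially, while primes $\ell \nmid Mp$ (respectively $\ell \nmid Lp$) contribute an Euler-type factor. Since by \eqref{eqn:conditions on M,N} every prime introduced when passing from $M$ to $M'$ or from $L$ to $L'$ lies in $S$ and therefore already divides $Mp$ and $Lp$, all the norm relations are trivial, and the asserted identity of $K_2$-classes follows by iterating the norm one prime at a time. The twist by $\xi \in \mathrm{SL}_2(\ZZ)$ is $G_\QQ$-equivariant and commutes with $\pi_*$, handling the second case.

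The main technical point — and the only place where care is required — is matching the trace maps in the target of \eqref{eqn: generalized trace for modular curves}. Specifically, the map $t_{m,a(B)}$ is defined with respect to a specific choice of $M$ and $L$, and one must check that composing with $\pi_*$ on the source reproduces $t_{m,a(B)}$ built from $(M',L')$; this is a direct consequence of the diagram of projections $Y(M'p^n,L'p^n) \to Y_1(N)\times \mu^\circ_{M'} \to Y_1(N)\times \mu^\circ_M$ being cartesian over $Y_1(N)$ after base change, together with the explicit formula for $t_{m,a(B)}$ in terms of Hecke correspondences given in \cite[\S5.2]{kato04}. Assembling these ingredients, both ${}_{c,d}\mathbb{BK}_{N,m}(j,\xi)$ computed from $(M,L)$ and from $(M',L')$ are seen to be the image of the same inverse system in $\varprojlim_n K_2(Y(M'p^n,L'p^n))$, proving the lemma.
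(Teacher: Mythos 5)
Your proposal is correct and takes essentially the same route as the paper: both arguments reduce (implicitly or, in your case, explicitly via an lcm step) to the case where one admissible pair divides the other, then combine the commutativity of trace maps with cup products with Kato's norm-compatibility of Siegel units \cite[Proposition~2.3]{kato04} to identify the inputs of the two chains, with the $\xi \in \mathrm{SL}_2(\ZZ)$ case handled analogously. The extra paragraph on matching generators $e_{1,n}, e_{2,n}$ and the trace maps $t_{m,a(B)}$ is a useful unpacking of what the paper summarizes as ``the trace map commutes with cup products,'' but it does not change the structure of the argument.
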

\begin{proof}  The proof is similar to that of \cite[Proposition~8.7]{kato04}. Let $(M',L')$ be another pair of integers that satisfy \eqref{eqn:conditions on M,N} and such that $M\mid M',$ $N\mid N'.$ Since the trace map commutes with cup products, we have the following commutative diagram:
\[
\xymatrix{
\varprojlim_n  K_2(Y(M'p^n,L'p^n))
\ar[r] \ar[d]&H^1\left (\ZZ[1/S], H^1_{\textup{\'et}}  (Y_1(M',L')_{\overline \QQ}, \mathrm{TSym}^j(\umT_{M',L'})\otimes \uLambda_{M',L'}(2))
\right ) \ar[d]\\
\varprojlim_n  K_2(Y(Mp^n,Lp^n))
\ar[r] &H^1\left (\ZZ[1/S], H^1_{\textup{\'et}}  (Y_1(M,L)_{\overline \QQ}, \mathrm{TSym}^j(\umT_{M,L})\otimes \uLambda_{M,L}(2))
\right )
}
\]
On the other hand, the left vertical map sends $ ({}_{c,d}z_{M'p^n,L'p^n})_{n\geqslant 1}$ to $ ({}_{c,d}z_{Mp^n,Lp^n})_{n\geqslant 1}$ by \cite[Proposition~2.3]{kato04}. This proves the lemma in the case $\xi= a(B)$; the case $\xi \in \mathrm{SL}_2(\ZZ)$ is analogous. 
\end{proof}

\subsubsection{} 
\label{subsec:moments}
The isomorphism \eqref{eqn: isomorphism of Lambda sheaves}
allows us to consider  the moment maps
\begin{equation}
\label{eqn:definition moment map}
\mom_N^{[k]} \,:\, \uLambda_N \rightarrow \TSym^k (\umT_N)
\end{equation}
given as in \cite[Section~~2.5]{Kings2015} (see also \cite{KLZ2}, \S4.4).  
Let $\mom_{N,n}^{[k]}$ denote the map 
$\mom_N^{[k]}$ modulo $p^n.$ 
Then  the map $\mom_{N,n}^{[k]}$  can be described  as the composition
\[
g_{n,*}(\ZZ/p^n\ZZ) \xrightarrow{\cup e_{2,n}^{\otimes k}} g_{n,*}\left (\ZZ/p^n\ZZ \otimes g_{n}^*(\mathrm{TSym}^k(\umT_{N,n}) )\right )\simeq 
g_{n,*}\circ g_{n}^*(\mathrm{TSym}^k(\umT_{N,n}) ) 
\xrightarrow{\mathrm{trace}} \mathrm{TSym}^k(\umT_{N,n}).
\]
On the level of cohomology, one then obtains the commutative diagram
\begin{equation}
\nonumber
\xymatrix{
H^1_{\textup{\'et}}(Y_1(N)_{\overline \QQ}, \uLambda_{N,n }) \ar[r]^{\simeq}
\ar[d]^{\mom_{N,n}^{[k]}}
&
H^1_{\textup{\'et}}(Y_1(Np^n)_{\overline \QQ}, \ZZ/p^n)
\ar[d]^{\cup e_{2,n}^{\otimes k}}\\
H^1_{\textup{\'et}}(Y_1(N)_{\overline \QQ}, \mathrm{TSym}^k(\umT_{N,n}) )
& H^1_{\textup{\'et}}(Y_1(Np^n)_{\overline \QQ}, g_n^*(\mathrm{TSym}^k(\umT_{N,n})) ),
\ar[l]
} 
\end{equation}
where the bottom horizontal map is the trace map (c.f. \cite{Kings2015}, Proposition~2.6.8).

\subsubsection{}
\label{subsubsection:big Kato elements}
For each $r\in \ZZ,$ we have the moment maps of sheaves on $\mu_m^\circ$
\[
\mom^{[r,n]}_{\mu_m^\circ}\,:\,\uLambda_{\mu_m^\circ}^\cyc \lra 
h_{n,*}\circ h_n^*(\Zp (-r)), \qquad n\geqslant 0.
\]
The stalks of $\uLambda_{\mu_m^\circ}^\cyc$ are isomorphic to the $\Gamma$-module  $\Lambda (\Gamma_{\QQ (\zeta_m)})^\iota$  
at geometric points (c.f. \cite{KLZ2}, \S6.3) and the moment  map coincides
with the map
\[
\begin{aligned}
\Lambda (\Gamma_{\QQ (\zeta_m)}) &\lra \Zp [G_n] (-r),
\qquad\qquad G_n:=\Gal (\QQ (\zeta_{mp^n})/\QQ (\zeta_m)),
\\ g&\longmapsto \chi^{-r} (g)\bar{g}_n\otimes \chi^{-r}
\end{aligned} 
\]
where $\bar{g}_n\in G_n$ denotes the image of $g \in \Gamma_{\QQ (\zeta_m)}$
under the natural projection $\Gamma_{\QQ (\zeta_m)}\rightarrow G_n.$ 
Let us define
\[
\mom_{N, \mu_m^\circ}^{[k,r,n]} := \mom_N^{[k]} \boxtimes \mom^{[r,n]}_{\mu_m^\circ}
\,:\, \uLambda_{N,\mu_m^\circ}^\cyc \lra 
\mathrm{TSym}^k (\umT_N) (-r).
\]
Recall the elements ${}_{c,d} z^{(p)}_{1,N,m}(k,r,r',\xi,S)$ which Kato has constructed in \cite[\S8.9]{kato04}. Set
\begin{align*}
{}_{c,d}\mathrm{BK}_{N,m}(k,j,r,\xi)&:={}_{c,d} z^{(p)}_{1,N,m}(j+k+2, r,j+1,\xi,S)\\
&\qquad\qquad{\in H^1 \left (\ZZ [1/S, \zeta_{mp^n}], H^1_{\textup{\'et}} \left (Y_1(N)_{\overline \QQ}, \mathrm{Sym}^{j+k} (\umT_{N}) (2-r) \right )\right )}.
\end{align*}

Note that in his construction Kato makes use of the dual sheaf $\umT_{N}^{\vee}$ together with the canonical isomorphism $\umT_{N}^{\vee}\simeq \umT_{N}(-1)$ provided by duality.

\begin{proposition} 
\label{prop:big Kato elements}
For all integers $j,k \geqslant 0$ and $r\in \ZZ$, the induced map
\begin{align}
\nonumber
\mom_{N,j, \mu_m^\circ}^{[k,r,n]} \,:\,
H^1 &\left (\ZZ [1/S, \zeta_m], H^1_{\textup{\'et}} \left (Y_1(N)_{\overline \QQ}, \mathrm{TSym}^{j} (\umT_{N})
\otimes
\uLambda_{N,\mu_m^\circ}^\cyc  (2) \right )
\right ) 
\\
\nonumber
&\qquad\qquad\qquad\lra 
H^1 \left (\ZZ [1/S, \zeta_{mp^n}], H^1_{\textup{\'et}} \left (Y_1(N)_{\overline \QQ}, \mathrm{TSym}^{j+k} (\umT_{N}) (2-r) \right )
\right )
\end{align}
sends ${}_{c,d}\mathbb{BK}_{N,m}(j,\xi)$ to  ${}_{c,d}\mathrm{BK}_{N,mp^n}(k,j,r,\xi).$
\end{proposition}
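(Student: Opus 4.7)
The plan is to prove the proposition by unwinding both sides of the asserted equality and matching them against Kato's original construction in \cite[\S8.9]{kato04}. Concretely, the element ${}_{c,d}\mathbb{BK}_{N,m}(j,\xi)$ is defined as the image of a compatible system of $K_2$-classes ${}_{c,d}z_{\ast,\ast}$ under the map $\mathrm{Ch}_{M,L,j,\xi}$ (respectively $\mathrm{Ch}_{L,j,\xi}$), which is itself a composition of: (i) the Kummer/cup map that produces $\cup\,e_{1,n}^{\otimes j}$, (ii) the Hochschild--Serre edge map, and (iii) pushforward/trace along the projections $f_n$ and $g_n^{\mathrm{cyc}}$. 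On the other hand, Kato's element ${}_{c,d}\mathrm{BK}_{N,mp^n}(k,j,r,\xi)$ is obtained by the very same chain of operations, except that one cups with $e_{1,n}^{\otimes j}\otimes e_{2,n}^{\otimes k}$ (yielding a class in $\mathrm{TSym}^{j+k}(\umT_N)$ after symmetrization) and twists cyclotomically by $\chi^{-r}$ before taking traces only down to level $mp^n$ on the cyclotomic side. Matching the two therefore reduces to an identification of operations at the level of sheaf cohomology.

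The central step is the compatibility of the moment map $\mom_N^{[k]}$ with the trace maps $f_{n,\ast}$, $g_{n,\ast}^{\mathrm{cyc}}$ and with cup product by $e_{1,n}^{\otimes j}$. The commutative diagram recalled in \S\ref{subsec:moments} describes $\mom_{N,n}^{[k]}$ exactly as ``cup with $e_{2,n}^{\otimes k}$ followed by trace along $g_n$''. Combining this with the projection formula, one sees that applying $\mom_N^{[k]}\boxtimes\mom_{\mu_m^\circ}^{[r,n]}$ after the chain that defines $\mathbb{BK}_{N,m}(j,\xi)$ is the same as first cupping the classes $\partial_n({}_{c,d}z_{Mp^n,Lp^n})$ with $e_{1,n}^{\otimes j}\otimes e_{2,n}^{\otimes k}$, twisting by $\chi^{-r}$ on the cyclotomic factor, and only then taking traces down to $Y_1(N)$ (respectively to level $mp^n$ on the $\mu^\circ$-side). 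But this latter recipe is precisely Kato's definition of ${}_{c,d}z^{(p)}_{1,N,m}(j+k+2,r,j+1,\xi,S)$, up to the standard identification $\umT_N^\vee\simeq \umT_N(-1)$ used in op.\ cit.

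The main technical obstacle is bookkeeping of the various trace compatibilities. In particular, one must verify: (a) that $\mom_N^{[k]}$ commutes with the pushforward along $f_n: Y(Mp^n,Lp^n)\to Y(M,L)$ factored through $Y_1(Np^n)$, which follows from functoriality of trace and the description of $\mom_N^{[k]}$ via $e_{2,n}$; (b) that cup product with $e_{1,n}^{\otimes j}$, performed at level $Mp^n$, descends correctly under the trace to the prescribed cup product on $\mathrm{TSym}^j(\umT_{M,L})$ (here one invokes the projection formula together with Kato's description in \cite[\S2, \S5]{kato04} of the isomorphism $e_{1,n}: (\ZZ/p^n)^{\oplus}\xrightarrow{\sim}\mathscr{E}_{M,N}[p^n]/\langle e_{2,n}\rangle$); and (c) that the cyclotomic moment $\mom_{\mu_m^\circ}^{[r,n]}$ is compatible with the trace $h_n{}_\ast$, which is immediate from its explicit description in \S\ref{subsubsection:big Kato elements}.

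With these three compatibilities in hand, the two resulting classes in
\[
H^1\!\left(\ZZ[1/S,\zeta_{mp^n}],\, H^1_{\textup{\'et}}(Y_1(N)_{\overline\QQ},\mathrm{TSym}^{j+k}(\umT_N)(2-r))\right)
\]
are built from the same pair of Siegel units via the same cup product and trace operations, so they coincide. For $\xi\in\mathrm{SL}_2(\ZZ)$ the argument is identical after applying $\xi^\ast$ throughout; the independence of $L$ already verified in the preceding lemma ensures this causes no ambiguity. The proposition follows.
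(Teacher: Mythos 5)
Your proposal is correct and follows essentially the same approach as the paper: the paper's own proof of Proposition~\ref{prop:big Kato elements} is simply the one-line remark that the claim is ``clear by the construction,'' and you have spelled out the bookkeeping that justifies that remark — identifying the moment maps with cup-products by $e_{2,n}^{\otimes k}$ followed by trace, invoking the projection formula to commute these operations with the pushforwards along $f_n$, $g_n^{\cyc}$, $h_n$, and matching the resulting composite against Kato's definition of ${}_{c,d}z^{(p)}_{1,N,m}(j+k+2,r,j+1,\xi,S)$ via the identification $\umT_N^\vee\simeq\umT_N(-1)$. Your point (b) is slightly underspecified — the cup with $e_{1,n}^{\otimes j}$ is already part of the chain $\mathrm{Ch}_{M,L,j,\xi}$, so the verification there is really about compatibility of that $e_1$-cup with the subsequent $e_2$-moment rather than a separate descent, but this is again routine by the same projection-formula argument. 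As a cross-check, note that the detailed diagram chase that implements your three compatibilities at the level of modular curves $Y(1,N(p))$ is essentially carried out in the proof of Theorem~\ref{thm_control_thm_interpolation_BK}(i) via the squares labelled $\refsymbolone$--$\refsymbolfour$; Proposition~\ref{prop:big Kato elements} is the analogous but easier statement at level $Y_1(N)$.
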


\begin{proof}
This is clear by the construction.
\end{proof}

\subsection{Overconvergent \'etale sheaves}
\label{Overconvergent etale sheaves}
Until the end of this article, we  set $Y=Y(1,N(p))$ unless we state otherwise  and assume that $N$ is coprime to $p$. 
\subsubsection{} 
We denote by $\cF$ the canonical sheaf on $Y.$
Throughout  this section, $\mathcal W$ and $U$ denote an affinoid 
and a wide open disk such that 
\[
\mathcal W \subset U \subset W^*.
\] 
In what follows, we will allow ourselves to shrink both $U$ and $\mathcal W$ as necessary. We adopt  the notation and conventions of Section~\ref{sec_bellaiche_eigencurve}. In particular, $O_U^\circ$ denotes 
the ring of analytic functions on $U$ that are bounded by $1.$
We let  
\[
\chi_U:\ZZ_p^\times\lra O_U^{\circ, \times}
\]
 denote the composition of the cyclotomic character $\chi \,:\,G_{\QQ}\rightarrow \Zp^\times$ with the canonical weight character $\kappa_U\,:\, \ZZ_p^\times \hookrightarrow O_U^{\circ, \times}.$

We review the theory of overconvergent sheaves introduced in \cite{andreattaiovitastevens2015}. See also \cite[\S4]{LZ1} 
and \cite[\S4]{BSV2020} for further details concerning the material in this subsection.

\begin{defn}
\label{defn_DH_2}
\item[i)] We let $\mathscr{D}^\circ_U(\mathscr{T})$ (resp., $\mathscr{D}^\circ_U(\mathscr{T}^\prime)$, resp., $\mathscr{A}^0_{U}({\mathscr{T}^\prime})$ ) denote the pro-\'etale sheaf of $\LLU$-modules on $Y$, whose pullback to the pro-scheme $Y(p^\infty,Np^\infty)$ is the constant pro-sheaf  $\DD (U)^\circ$ (resp., $\DD^\prime (U)^\circ$, resp., ${\mathbb A}^\prime (U)^0$ ). 

\item[ii)] 
We set 
\begin{align*}
M_{U}^\circ({\mathscr{T}^\prime)}:=H^1_{\textup{\'et}}\left(Y_{\overline{\QQ}},\mathscr{D}^\circ_{U}({\mathscr{T}^\prime})\right)(1),\quad
M_{U}^\circ(\mathscr{T}):=H^1_{\textup{\'et}}\left(Y_{\overline{\QQ}},\mathscr{D}^\circ_{U}(\mathscr{T})\right)(\chi_U^{-1}),\quad
N_U^\circ (\mathscr{T}^\prime) :=H^1_{\textup{\'et},c}\left(Y_{\overline{\QQ}},\mathscr{A}_{U}^0({\mathscr{T}^\prime})\right)
\end{align*}
and 
\begin{equation}
\nonumber
\label{defn_nu}
M_{U}({\mathscr{T}^\prime}):=M_{U}^\circ({\mathscr{T}^\prime})[1/p]\,,
\qquad 
{ M_{U}({\mathscr{T}}):=M_{U}^\circ({\mathscr{T}})[1/p]}\,,
\qquad 
N_U(\mathscr{T}^\prime):=N_U^\circ (\mathscr{T}^\prime)[1/p]\,.
\end{equation}
\end{defn}

The evaluation map 
\[
\mathrm{ev}\,:\, {\mathbb A}^\prime (U)^\circ \otimes \DD^\prime (U)^\circ\lra \LLU
\]
induces a pairing 
\[
\mathscr{A}^0_{U}({\mathscr{T}^\prime}) \otimes \mathscr{D}^\circ_U(\mathscr{T}^\prime)
\lra \sheafLLU ,
\]
where $\sheafLLU$ is the sheaf associated to $\LLU$ as in \cite[\S4.2]{BSV2020}.
Together with the trace map
$H^2_{\textup{\'et},c}(Y_{\overline{\QQ}}, \sheafLLU  (1)) \stackrel{\sim}{\to} \LLU$, this pairing induces the pairing
\begin{equation}
\label{formula: pairing for overconvergent cohomology over R}
N_U^\circ (\mathscr{T}^\prime) \otimes_{\Lambda_U} M_{U}^0({\mathscr{T}^\prime}) \lra \LLU.
\end{equation}
We finally recall that Proposition 4.4.5 of \cite{LZ1} supplies us with a morphism of sheaves on $Y$:
\begin{equation}
\label{eqn_Lambda_to_distributions_sheafified}
 \uLambda (\umT \langle D^\prime\rangle) \lra \mathscr{D}^\circ_{U}(\umT^\prime).
\end{equation}

\subsubsection{}
\label{subsec:slope decomposition}

By GAGA, we have the canonical isomorphisms 
\begin{equation}
\label{eqn:GAGA iso}
 M_{U} ({\mathscr{T}})(\chi_U) \simeq H^1(\Gamma_p, \DD (U)),
\qquad  
 M_{U} ({\mathscr{T}^{\prime}})(-1) \simeq H^1(\Gamma_p, \DD^{\prime}(U)),
\qquad 
N_{U}({\mathscr{T}^\prime})  \simeq H^1_c(\Gamma_p, \AA^{\prime}(U))
 \end{equation}
(c.f. \cite{andreattaiovitastevens2015}, Proposition~3.18).
The isomorphisms  in \eqref{eqn:GAGA iso} allow us to define a Hecke action on the spaces on the left of each isomorphism. The submodules $M_{U}({\mathscr{T}^\prime})^{\leq \nu},$ $M_{U}({\mathscr{T}})^{\leq \nu}$ and $N_U (\mathscr{T}^\prime)^{\leq \nu}$ are stable under the action of $G_\QQ$ and the actions of prime-to-$p$ Hecke operators. 
 
 For an affinoid $\mathcal{W}$ contained in a wide open $U$ in the weight space, we set 
$M_{\mathcal W}({\mathscr{T}^\prime})^{\leq \nu}:=M_{U}({\mathscr{T}^\prime})^{\leq \nu}\otimes_{\LLUp} O_{\mathcal W}$. We similarly define $M_{\mathcal W}({\mathscr{T}})^{\leq \nu}$ and $N_{\mathcal W}({\mathscr{T}^\prime})^{\leq \nu}$.

\begin{proposition}
\label{prop_andreattaiovitastevens2015_Prop318}
Let $x_0$ denote a cuspidal point of the eigencurve $\mathcal C.$ 
There exist  a sufficiently small wide open $U,$ an affinoid $\mathcal{W} \subset U$ and a slope $\nu$ such that $x_0\in \mathcal C_{\mathcal{W},\nu}$ and  the following hold true.

\item[i)] Let $\mathcal X$  denote the connected component of $x_0$ in $\mathcal C_{\mathcal{W},\nu}.$ There is an isomorphism of $O_{\mathcal X}$-modules
$${\rm comp}: {\rm Symb}_{\Gamma_p}(\mathbb{D}^\dagger(\mathcal W))^{\leq \nu}{\otimes_{\cH_{\mathcal W}}O_{\mathcal X}}\stackrel{\sim}{\lra} M_{\mathcal W}({\mathscr{T}})^{\leq \nu}{\otimes_{\cH_{\mathcal W}}O_{\mathcal X}}$$
which interpolates Artin's comparison isomorphisms between Betti and \'etale cohomology. 
\item[ii)] All three $O_{\mathcal X}$-modules  
\begin{equation}
\begin{aligned}
\label{eqn_defn_prop_andreattaiovitastevens2015_Prop318}
 M_{\mathcal W}(\mathscr{T})^{\leq \nu}\otimes_{\cH_{\mathcal W}}
 O_{\mathcal X}\,,\qquad
 M_{\mathcal W}({\mathscr{T}^\prime})^{\leq \nu}\otimes_{\cH_{\mathcal W}}O_{\mathcal X}\,,\qquad
N_{\mathcal W}({\mathscr{T}^\prime})^{\leq \nu}\otimes_{\cH_{\mathcal W}}O_{\mathcal X}
\end{aligned}
\end{equation}
are free of rank $2$.
\end{proposition}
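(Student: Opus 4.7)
The plan is to assemble Proposition~\ref{prop_andreattaiovitastevens2015_Prop318} from the machinery established in \S\ref{subsec_Bellaiche_revised}: the GAGA isomorphisms \eqref{eqn:GAGA iso} translate everything into group cohomology, after which Lemma~\ref{lemma: shrinking wide open neighborhood}, Lemma~\ref{prop_bellaiche_vs_AIS}, Proposition~\ref{claim}, Theorem~\ref{thm_eigencurve_variant}, Corollary~\ref{cor_main_Bellaiche_BCM_eigencurve_variant}, and Bella\"iche's Theorem~\ref{thm_bellaiche} will compose to yield both the structural statement and the interpolating comparison map. Since all these tools require $\mathcal W\subset U\subset \mathcal W^*$ to be sufficiently small and $\nu\geqslant \nu(x_0)$ adequate, I would fix first a slope $\nu$ adapted to $x_0$ and then shrink $\mathcal W$ and $U$ as dictated by each invoked statement, using that $x_0$ is cuspidal so that Theorem~\ref{thm_eigencurve_variant}(iii) identifies $\mathcal X^+=\mathcal X^-$ with the relevant connected component on the cuspidal eigencurve $\mathcal X$ of part~(a) of Proposition~\ref{prop_bellaiche_4_11}.

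For part~(ii), I would first handle $N_{\mathcal W}(\mathscr{T}^\prime)^{\leq \nu}\otimes_{\cH_{\mathcal W}}O_{\mathcal X}$: the GAGA isomorphism $N_U(\mathscr{T}^\prime)\simeq H^1_c(\Gamma_p,\AA^\prime(U))$ combined with Lemma~\ref{lemma: shrinking wide open neighborhood} and Corollary~\ref{cor_main_Bellaiche_BCM_eigencurve_variant} gives freeness of rank~$2$ directly. For $M_{\mathcal W}(\mathscr{T})^{\leq \nu}\otimes_{\cH_{\mathcal W}}O_{\mathcal X}$, I would chain
\[
M_{\mathcal W}(\mathscr{T})^{\leq \nu}(\chi_U)\otimes O_{\mathcal X}
\;\cong\; H^1(\Gamma_p,\DD(U))^{\leq \nu}_{\mathcal W}\otimes O_{\mathcal X}
\;\cong\; H^1(\Gamma_p,\DD^\dagger(\mathcal W))^{\leq \nu}\otimes O_{\mathcal X},
\]
where the first isomorphism uses GAGA and the second non-compact part of Lemma~\ref{prop_bellaiche_vs_AIS}; then Proposition~\ref{claim}(ii) gives an isomorphism with $H^1_c(\Gamma_p,\DD^\dagger(\mathcal W))^{\leq \nu}\otimes O_{\mathcal X}$, which the Ash--Stevens functorial identification \eqref{eqn_AshStevesComparison} identifies with $\mathrm{Symb}_{\Gamma_p}(\DD^\dagger(\mathcal W))^{\leq\nu}\otimes O_{\mathcal X}$; Theorem~\ref{thm_bellaiche}(ii) then supplies freeness of rank~$2$. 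For $M_{\mathcal W}(\mathscr{T}^\prime)^{\leq\nu}\otimes O_{\mathcal X}$, I would either repeat the same scheme (using the $\DD^\prime$-versions of the control theorem, whose Hecke-compatible existence follows from Theorem~\ref{thm_eigencurve_variant}(i) applied to the pair $(\DD^\prime,\AA)$ in place of $(\DD,\AA^\prime)$), or invoke the Poincar\'e duality pairing \eqref{formula: pairing for overconvergent cohomology over R} against the already-treated $N_{\mathcal W}(\mathscr{T}^\prime)$, verifying that after $O_{\mathcal X}$-base change the duality respects the slope decomposition and the connected component $\mathcal X$.

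Part~(i) is then obtained by reading the last composite chain of isomorphisms in reverse, defining $\mathrm{comp}$ as the composition. The remaining point is the statement that $\mathrm{comp}$ interpolates Artin's comparison. For this I would specialize at a classical non-critical $x\in\mathcal X^{\mathrm{cl}}(E)$ of large enough weight and chase the diagram through the control theorems of Stevens and Ash--Stevens ($\mathrm{Symb}_{\Gamma_p}(\DD^\dagger_k)^{<k+1}\cong\mathrm{Symb}_{\Gamma_p}(\mathscr V_k)^{<k+1}$ and its non-compact analogue), reducing $\mathrm{comp}$ at $x$ to the classical comparison isomorphism between $\mathrm{Symb}_{\Gamma_p}(\mathscr V_k(E))$ and $H^1_{\textup{\'et}}(Y_{\overline\QQ},\mathscr V_k)$; that classical comparison is Artin's, up to the standard twist by $\chi_U$ which accounts for the appearance of $(\chi_U)$ in the GAGA identification.

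The main obstacle is the verification in part~(i) that the map $\mathrm{comp}$ one builds abstractly from the chain of identifications genuinely interpolates Artin's comparison at every classical point, rather than merely coinciding up to an eigencurve-theoretic automorphism; this boils down to checking the Hecke-compatibility of each intermediate arrow (in particular of the non-classical step provided by Proposition~\ref{claim}(ii) and of the passage between the wide-open sheaf $\DD(U)$ and $\DD^\dagger(\mathcal W)$ in Lemma~\ref{prop_bellaiche_vs_AIS}) and then invoking the multiplicity-one result implicit in the fact that the $O_{\mathcal X}$-modules in question are free of rank~$2$, so that an $O_{\mathcal X}$-linear Hecke-equivariant map which specializes to Artin's isomorphism at a Zariski-dense set of classical points is unique up to an $O_{\mathcal X}^{\times}$-scalar and hence interpolates Artin's comparison throughout $\mathcal X$.
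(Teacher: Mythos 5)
Your treatment of $N_{\mathcal W}(\mathscr{T}^\prime)^{\leq\nu}\otimes_{\cH_{\mathcal W}}O_{\mathcal X}$ and $M_{\mathcal W}(\mathscr{T})^{\leq\nu}\otimes_{\cH_{\mathcal W}}O_{\mathcal X}$, and the construction of $\mathrm{comp}$ in part~(i), follow the paper's route exactly: GAGA \eqref{eqn:GAGA iso}, Lemma~\ref{lemma: shrinking wide open neighborhood}, Lemma~\ref{prop_bellaiche_vs_AIS}, Proposition~\ref{claim}(ii), Ash--Stevens \eqref{eqn_AshStevesComparison}, and Theorem~\ref{thm_bellaiche} (resp.\ Corollary~\ref{cor_main_Bellaiche_BCM_eigencurve_variant}), and the multiplicity-one remark is a reasonable way to pin down the interpolation claim at classical points.

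The one place you genuinely diverge from the paper --- the freeness of $M_{\mathcal W}(\mathscr{T}^\prime)^{\leq\nu}\otimes_{\cH_{\mathcal W}}O_{\mathcal X}$ --- is also where your proposal has a gap, since neither of your two alternatives is self-contained as stated. Rerunning Theorem~\ref{thm_eigencurve_variant}(i) for the pair $(\DD^\prime,\AA)$ in place of $(\DD,\AA^\prime)$ is not a mere change of notation: the control theorems \eqref{eqn:stevens control thm}, \eqref{eqn_AshStevens2008Unpublished_1} and \cite[Proposition~4.2.2]{BSV2020} are only quoted on the $\DD$-side, and the key step in the proof of Theorem~\ref{thm_eigencurve_variant}(i) hinges on the $\GL_2(\ZZ_p)$-module isomorphism $\mathscr{V}_k(E)\otimes\det{}^k\simeq\mathscr{P}_k(E)$ combined with triviality of $\det$ on $\Gamma_p$, so the mirror-image version would have to be re-derived from scratch. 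Your Poincar\'e-duality alternative would settle the rank only after one shows the pairing \eqref{formula: pairing for overconvergent cohomology over R} remains perfect after inverting $p$, slope truncation, and $\otimes_{\cH_{\mathcal W}}O_{\mathcal X}$; you flag this as ``verifying,'' but that perfectness \emph{is} the nontrivial content, and is not established anywhere in the paper. The paper sidesteps both difficulties with a single citation: by \cite[Proposition~4.4.8.4]{LZ1}, $M_{\mathcal W}(\mathscr{T}^\prime)^{\leq\nu}$ is isomorphic to $M_{\mathcal W}(\mathscr{T})^{\leq\nu}$ as an $O_{\mathcal W}$-module, which transfers the rank-$2$ statement from the $\mathscr{T}$-side you have already handled. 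You should replace your paragraph on $M_{\mathcal W}(\mathscr{T}^\prime)$ with this reference, or else actually supply the perfectness argument you defer.
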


We remark that it is absolutely crucial that the affinoid neighborhood $\mathcal{X}$ of $x_0$ falls within the cuspidal eigencurve. 

\begin{proof}

\item[i)] This portion follows from \eqref{eqn:GAGA iso}, 
Lemma~\ref{lemma: shrinking wide open neighborhood} and Proposition~\ref{claim}.

\item[ii)] It follows from (i) and Theorem~\ref{thm_bellaiche} that  $M_{\mathcal W}(\mathscr{T})^{\leq \nu}\otimes_{\cH_{\mathcal W}}O_{\mathcal X}$ is free of rank $2$ over $O_{\mathcal X}.$ By \cite[Proposition~4.4.8.4]{LZ1}, $M_{\mathcal W}({\mathscr{T}^\prime})^{\leq \nu}\otimes_{\cH_{\mathcal W}}O_{\mathcal X}$ is isomorphic to $ M_{\mathcal W}(\mathscr{T})^{\leq \nu}\otimes_{\cH_{\mathcal W}}O_{\mathcal X}$ as $O_{\mathcal W}$-modules and therefore, they both have the same $\cO_{\mathcal X}$-rank. Finally, $N_{\mathcal W}({\mathscr{T}^\prime})^{\leq \nu}\otimes_{\cH_{\mathcal W}}O_{\mathcal X}$ have rank $2$ by the isomorphism \eqref{eqn:GAGA iso}
and Corollary~\ref{cor_main_Bellaiche_BCM_eigencurve_variant}.
 \end{proof}
 
\subsubsection{}
Let $j\geq 1$ be a positive integer. 
The map $F (x)\mapsto F(x-j)$ defines an isomorphism
$t_j\,:\,O^0_{U-j} \simeq O^0_U$. If $F\in \AA^\prime (U-j)^0$ and $G\in \mathscr{P}_j(O_E)$ is a homogeneous polynomial 
of degree $j,$ then $t_j\circ (FG)\in \AA^\prime (U)^0$, and we have a well defined 
map $\AA^\prime (U-j)^0 \otimes \mathscr{P}_j(O_E) \rightarrow  \AA^\prime (U)^0.$
Passing to the duals, we obtain a map 
\begin{equation}
\nonumber
\beta_j^* \,:\, \DD^{\prime}(U)^\circ \lra \DD^{\prime}(U-j)^\circ \otimes \TSym^j(O_E^2)
\end{equation}
and the associated morphism of sheaves:
\begin{equation}
\label{eqn: definition of beta}
\beta^*_j \,:\, \mathscr D_U^\circ (\mathscr T^{\prime})\lra \mathscr D_{U-j}^\circ (\mathscr T^{\prime})\otimes \TSym^j(\mathscr{T}).
\end{equation}
Let us denote by
\begin{equation}
\nonumber
\delta_j\,:\,
\AA^{\prime} (U)^\circ \lra \AA^{\prime}(U-j)^\circ \otimes \mathscr P_j(O_E)
\end{equation}
the map given by $\delta_j (F):=\dfrac{1}{j!}\underset{i+m=j}\sum
\dfrac{\partial^j F(x,y)}{\partial x^i\partial y^m}\otimes x^iy^m$. On transposing this map, we obtain
\begin{equation}
\nonumber
\delta_j^* \,:\, \DD^{\prime} (U-j)^\circ  \otimes \TSym^j(O_E^2) 
\lra \DD^{\prime} (U)^\circ  
\end{equation}
and the induced morphism of sheaves
\begin{equation}
\label{definition of delta}
\delta_j^* \,:\, \mathscr{D}_{U-j}^\circ (\mathscr{T}^\prime ) \otimes \TSym^j(\mathscr{T}) \lra \mathscr{D}_{U}^\circ (\mathscr{T}^\prime ).
\end{equation}

\subsection{Big Galois representations}

\subsubsection{} 
\label{subsubsec_621}
For the convenience of the reader, we review Deligne's construction of $p$-adic representations associated  to eigenforms. Let  $\mathscr{T}_{\QQ_p}^\vee$ be  the dual sheaf  of $\mathscr{T}_{\QQ_p}$. Let $f$ denote a $p$-stabilized eigenform of weight  $k+2\geqslant 2$ and level $\Gamma_p$, which is new away from $p$ (but could be new or old at $p$). Deligne's representation associated to $f$ is defined as the $f$-isotypic Hecke submodule $V[f]\subset H^1_{\textup{\'et},c}(Y_{\overline{\QQ}},{\rm Sym}^{k}\mathscr{T}_{\QQ_p}^\vee)$  for the Hecke operators $\{T_\ell\}_{\ell\nmid Np}$ and $\{U_\ell\}_{\ell\mid Np}$, which are given as in \cite[\S4.9]{kato04} (but denoted in op. cit. by $T(\ell)$ for all $\ell$). Recall that this is a two dimensional vector space equipped with a continuous  action of $G_{\QQ}$, which is de Rham at $p$ (crystalline iff $f$ is $p$-old) with Hodge--Tate weights  $(0,k+1)$.  By the theory of newforms, the Hecke module $H^1_{\textup{\'et}, c}(Y_{\overline{\QQ}},{\rm Sym}^{k}\mathscr{T}_{\QQ_p}^\vee)$ is $f$-semisimple. 
In particular, $V[f]$  coincides 
with the \emph{generalized} Hecke eigenspace $V[[f]]\subset H^1_{\textup{\'et},c}(Y_{\overline{\QQ}},{\rm Sym}^{k}\mathscr{T}_{\QQ_p}^\vee)$ in the sense of \cite[\S1.2]{bellaiche2012}. We denote  by $V_f$ the  $f$-isotypic quotient
of  $H^1_{\textup{\'et},c}(Y_{\overline{\QQ}},{\rm Sym}^{k}\mathscr{T}_{\QQ_p}^\vee)$. Then the natural projection $V[f]\rightarrow V_f$ is an isomorphism.

Dually, consider $H^1_{\textup{\'et}}(Y_{\overline{\QQ}},{\rm TSym}^{k}\mathscr{T}_{\QQ_p})(1)$. The Hecke algebra $\mathcal H$ acts on this space via the dual Hecke operators  the dual Hecke operators $\{T_\ell'\}_{\ell\nmid Np}$ and $\{U_\ell'\}_{\ell\mid Np}$, which are given as in \cite[\S4.9]{kato04} (denoted by $T'(\ell)$ for all $\ell$). Let $V^{\prime}[f]$ (respectively, $V^{\prime}_f$)  denote the $f$-isotypic Hecke submodule (respectively, the $f$-isotypic quotient)  of $H^1_{\textup{\'et}}(Y_{\overline{\QQ}},{\rm TSym}^{k}\mathscr{T}_{\QQ_p})(1)$. Paralleling the discussion in the previous paragraph, the Hecke module $H^1_{\textup{\'et}}(Y_{\overline{\QQ}},{\rm TSym}^{k}\mathscr{T}_{\QQ_p})(1)$ is semisimple at $f$ and $V^{\prime}[f]$ coincides with the generalized Hecke eigenspace $V^{\prime}[[f]]$ inside $H^1_{\textup{\'et}}(Y_{\overline{\QQ}},{\rm TSym}^{k}\mathscr{T}_{\QQ_p})(1).$
The canonical projection $V^{\prime}[f]\rightarrow V^{\prime}_f$ is an isomorphism.  Poincar\'e duality induces a perfect Galois-equivariant pairing 
\begin{equation}
\label{formula:Poincare for modular forms}
(\,,\,)_f\,:\,V_{f}^{\prime}\otimes V[f] \lra E .
\end{equation} 

\subsubsection{}
For any normalized eigenform $f=\sum_{n\geq 1}a_nq^n \in S_{k+2}(\Gamma_p)$,  we set
\[
f^c=\sum_{n\geq 1}\overline{a}_nq^n .
\]
Since  $f$ is an eigenform  for the Hecke operators $\{T_\ell\}_{\ell\nmid Np}$ and $\{U_\ell\}_{\ell\mid Np}$ with eigenvalues $\{a_\ell\}$, it is also an eigenform for the dual operators $\{T'_\ell\}_{\ell\nmid Np}$ and $\{U'_\ell\}_{\ell\mid Np}$, with eigenvalues $\{\overline{a}_\ell\}.$
The isomorphism $\mathscr{T}_{\QQ_p}^\vee(1) \simeq \mathscr{T}_{\QQ_p}$ induces a Hecke equivariant map
\[
H^1_{\textup{\'et},c}(Y_{\overline{\QQ}},{\rm Sym}^{k}\mathscr{T}_{\QQ_p}^\vee)(k+1)
\lra H^1_{\textup{\'et}}(Y_{\overline{\QQ}},{\rm TSym}^{k}\mathscr{T}_{\QQ_p})(1),
\]
which gives rise to a canonical isomorphism
\begin{equation}
\label{eq: duality V'[f] vs V[f*]}
V[f^c](k+1)\simeq V'[f]
\end{equation}
on the $f$-isotypic subspaces for the dual operators $\{T'_\ell\}_{\ell\nmid Np}$ and $\{U'_\ell\}_{\ell\mid Np}$\,. 
With obvious modifications, all these constructions also make sense for modular forms of level $\Gamma_1(N)$.

\subsubsection{}
\label{subsubsec_622}
We maintain  the notation and the conventions of Section~\ref{subsec:slope decomposition}. Fix a $p$-stabilized eigenform $f$ as in \S\ref{subsubsec_621} and denote by $x_0$ the corresponding point of the eigencurve $\mathcal C$. Let $\alpha_0$ denote the $U_p$-eigenvalue on $f$. 

Fix a slope  $\nu \geqslant v_p(\alpha_0)$ a wide open affinoid $U$ and an affinoid $\mathcal W$  as in Proposition~\ref{prop_andreattaiovitastevens2015_Prop318}. Let $\mathcal X$ denote the connected component of $x_0$ in $\mathcal C_{\mathcal W,\nu}.$

In the remainder of this paper we will work with the $O_{\mathcal X}$-adic Galois representations 
\begin{equation}
\begin{aligned}
\label{eqn_defn_prop_andreattaiovitastevens2015_Prop318}
&V_{\mathcal X}:=  N_{\mathcal W}({\mathscr{T}^\prime})^{\leq \nu}\otimes_{\cH_{\mathcal W}}O_{\mathcal X},\\
&V_{\mathcal X}^{\prime}:= M_{\mathcal W}({\mathscr{T}^\prime})^{\leq \nu}\otimes_{\cH_{\mathcal W}}O_{\mathcal X}.
\end{aligned}
\end{equation}
By Proposition~\ref{prop_andreattaiovitastevens2015_Prop318}, both $\cO_{\mathcal X}$-modules have rank two.

We denote by 
\begin{equation}
\label{formula: pairing for overconvergent cohomology over A}
(\,,\,)_{\mathcal X}\,: \,V_{\mathcal X}^{\prime}\otimes V_{\mathcal X} \lra O_{\mathcal W}
\end{equation}
the $G_{\QQ,S}$-equivariant cup-product pairing induced by 
\eqref{formula: pairing for overconvergent cohomology over R}.

\subsubsection{} 
\label{subsubsection_623}

For any $E$-valued classical point $x\in \mathcal{X}^{\rm cl}(E)$ of  $\mathcal{X}$, we let $f_x$ denote the corresponding $p$-stabilized cuspidal eigenform of weight $w(x)+2$. In particular, $f_{x_0}$ is our fixed $p$-stabilized eigenform $f$. By Proposition~\ref{prop_bellaiche_4_11} we have $O_{\mathcal X}\simeq O_{\mathcal W}[X]/(X^e-Y)$ for some $e\geqslant 1,$ so the formalism  of Section~\ref{sec_Bellaiche_axiomatized} applies in this context.

We define the representations $V_{\mathcal X,x},$ $V_{\mathcal X, w(x)},$ 
$ V_{\mathcal X}[x]$ and $V_{\mathcal X}[[x]]$
(respectively $V^{\prime}_{\mathcal X,x},$ $V^{\prime}_{\mathcal X, w(x)},$ 
$ V^{\prime}_{\mathcal X}[x]$ and $V^{\prime}_{\mathcal X}[[x]]$) by plugging in { ${M}=V_{\mathcal X}$ (respectively ${M}=V_{\mathcal X}^{\prime}$)}  in Definition~\ref{defn_PhiGamma_Big}. In particular, $V_{\mathcal X, x}:=V_{\mathcal X}\otimes_{O_{\mathcal X},x}E$
and $V_{\mathcal X, w(x)}:=V_{\mathcal X}\otimes_{O_{\mathcal W},w (x)}E$. We recall that $e=1$ except when the $p$-stabilized form $f_{x_0}$  is $\theta$-critical. 

Let $\mathscr P_{k}(O_E)$ denote the space of homogeneous polynomials
in two variables  of  degree $k$ with coefficients in $E$. If $k\in U,$ we have a canonical  injection $\mathscr P_{k}(O_E) \hookrightarrow A^{\prime}(U)^\circ,$ which induces 
a map ${\mathrm{Sym}}^{k}(O_E^{\oplus 2}) \hookrightarrow A^{\prime}(U)^\circ$
and the dual map $D^{\prime}(U)^\circ \rightarrow {\mathrm{TSym}}^{k}(O_E^{\oplus 2}).$
We have the associated morphisms of sheaves
\begin{equation}
\label{eqn:evaluation of distributions on polynomials} 
\mathscr{D}_U^\circ (\mathscr{T}^\prime) \lra \mathrm{Sym}^{k}\mathscr{T},
\qquad \mathrm{Sym}^{k}\mathscr{T}^\vee \lra \mathscr{A}_U^\circ (\mathscr{T}^\prime)
\end{equation} 
which  give rise to the pair of Hecke equivariant morphisms
\begin{align}
\begin{aligned}
&H^1_{\textup{\'et}, c}(Y_{\overline{\QQ}},{\rm Sym}^{k}\mathscr{T}_{\QQ_p}^\vee) \lra  H^1_{\textup{\'et}, c}(Y_{\overline{\QQ}},\mathscr{A}_U (\mathscr{T}^\prime) )\,,
\\
\label{eqn:projection of cohomology on k-weight}
&H^1_{\textup{\'et}}(Y_{\overline{\QQ}},\mathscr{D}_U (\mathscr{T}^\prime) )
\lra H^1_{\textup{\'et}}(Y_{\overline{\QQ}},{\rm TSym}^{k}\mathscr{T}_{\QQ_p})\,.
\end{aligned}
\end{align}
We obtain the following morphisms by the definitions of $M_U (\mathscr{T}^{\prime})$ and $N_U (\mathscr{T}^{\prime})$: 
\[
\begin{aligned}
&H^1_{\textup{\'et}, c}(Y_{\overline{\QQ}},{\rm Sym}^{k}\mathscr{T}_{\QQ_p}^\vee) \lra N_U (\mathscr{T}^{\prime})\,,\\
&M_U (\mathscr{T}^{\prime})\lra 
H^1_{\textup{\'et}}(Y_{\overline{\QQ}},{\rm TSym}^{k}\mathscr{T}_{\QQ_p}(1))\,.
\end{aligned}
\]
On tensoring with $O_{\mathcal X}$ and taking slope $\leq \nu$ submodules,
we deduce for any $x\in \mathcal X^{\mathrm{cl}}(E)$ that there are canonical 
morphisms
\begin{align}
\begin{aligned}
& V[f_x] \xrightarrow{j_x} V_{\mathcal X} [[x]]
\hookrightarrow V_{\mathcal X}\, ,\\
\label{eqn: specialization of big modular representation}
&V^{\prime}_{\mathcal X}  \lra    V^{\prime}_{\mathcal X, w(x)} 
\xrightarrow{\pi_x} V^{\prime}_{f_x}\,. 
\end{aligned}
\end{align}
It follows from definitions that 
\begin{itemize}
\item $V_{\mathcal X,w(x)}$ and $V_{\mathcal X,w (x)}^{\prime}$  are $E$-vector spaces of dimension $2e$, 
where $e$ denotes the ramification degree of the weight map $w\,:\,\mathcal X\rightarrow \mathcal W$ at $x_0$\,;
\item $V_{\mathcal X}{[x]}$ and $V^{\prime}_{\mathcal X}{[x]}$  are the $f_x$-isotypic Hecke eigenspaces inside $V_{\mathcal X,w (x)}$ and $V^{\prime}_{\mathcal X,w(x)}$ (in alternative wording, $V_{\mathcal X}^?{[x]}$ is the largest subspace of $V_{\mathcal X,w(x)}^?$ annihilated by $X-X(x)\in O_{\mathcal X}$);  and these are both $E$-vector spaces of dimension $2$.
\end{itemize}

For any $x\in \mathcal{X}^{\rm cl}(E)$, let us also write $(\,,\,)_{w (x)}$ for the pairing 
$$(\,,\,)_{w (x)}\,:\, V^{\prime}_{\mathcal X, w (x)}\otimes_E V_{\mathcal X,w (x)}\lra E$$
induced from the pairing \eqref{formula: pairing for overconvergent cohomology over A} by $O_{\mathcal W}$-linearity. We summarize the basic properties of these objects (that we shall make use of in what follows) in Proposition~\ref{lemma_Adj_diagram_eigencurve} below.

\begin{proposition}
\label{lemma_Adj_diagram_eigencurve} Suppose $x\in \mathcal X^{\mathrm cl}(E)$ is an $E$-valued classical point.

\item[i)]  The Hecke-equivariant map 
$$V^{\prime}_{\mathcal X} [[x]]\hookrightarrow V^{\prime}_{\mathcal X, w(x)}\xrightarrow{\pi_x} V^{\prime}_{f_x}$$
is surjective. In particular, it induces an isomorphism
\[
V^{\prime}_{\mathcal X,x}\stackrel{\sim}{\lra} V^{\prime}_{f_x}
\]
\item[ii)] The Hecke-equivariant map 
$$
V[f_x]\xrightarrow{j_x} V_{\mathcal X}[[x]],
$$
is injective. In particular, it induces an isomorphism
\[
V[f_x]\stackrel{\sim}{\lra} V_{\mathcal X}[x].
\]

\item[iii)] The pairing $(\,,\,)_{\mathcal X}$ verifies the property \ref{item_P1}.

\item[iv)] The restriction of $(\,,\,)_{w (x)}$ to $V^{\prime}_{\mathcal X,w(x)}\otimes V_{\mathcal X}[x]$ factors as
$$\xymatrix@C=.1cm{V^{\prime}_{\mathcal X,w (x)}\ar[d]_{\pi_x}&\otimes &V_{\mathcal X}[x]\ar[rrrr]^(.6){(\,,\,)_{w (x)}}&&&& E \\
V^{\prime}_{\mathcal X,x}&\otimes &V_{\mathcal X}[x]\ar@{=}[u]\ar[rrrr]&&&& E\ar@{=}[u]}$$
We shall denote the induced pairing on $V^{\prime}_{\mathcal X,x}\otimes V_{\mathcal X}[x]$ by $(\,,\,)_{x}$.
\item[v)] The diagram 
$$\xymatrix@C=.1cm{V^{\prime}_{\mathcal X,x}\ar[d]^{\sim}_{\pi_x}&\otimes &V_{\mathcal X}[x]\ar[rrrr]^(.6){(\,,\,)_{x}}&&&& E\\
V^{\prime}_{f_x}&\otimes &V[f_x]\ar^{\sim}[u]_{j_x}\ar[rrrr]_(.6){(\,,\,)_{f_x}}&&&& E
\ar@{=}[u]}$$
where the bottom pairing is the Poincar\'e duality \eqref{formula:Poincare for modular forms}, commutes. Namely, for all $v'\in V^{\prime}_{\mathcal X,x}$ and $v\in V[f_x]$ one has
\[
(\pi_x(v'),v)_{f_x}=(v', j_x(v))_x.
\]

\end{proposition}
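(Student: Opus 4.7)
The proof proceeds in the order (iii), (iv), (i), (v), (ii), since each part rests on the preceding ones.

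For part (iii), the cup-product pairing of \eqref{formula: pairing for overconvergent cohomology over R} between $N_U(\mathscr T^\prime)$ and $M_U(\mathscr T^\prime)$ is $\cH_{\mathcal W}$-bilinear by construction (the dual-Hecke convention being absorbed into the setup so that the same Hecke operator acts on both factors and the cup-product is $\cH$-bilinear rather than sesquilinear). Tensoring both factors with $\cO_{\mathcal X}$ over $\cH_{\mathcal W}$, the induced pairing $(\,,\,)_{\mathcal X}\colon V^{\prime}_{\mathcal X}\otimes V_{\mathcal X}\to \cO_{\mathcal W}$ therefore satisfies $(tv^\prime,v)=(v^\prime,tv)$ for all $t\in\cO_{\mathcal X}$; specializing to $t=X$ gives property \ref{item_P1}. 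Given (iii) and the freeness of $V^{\prime}_{\mathcal X}$ and $V_{\mathcal X}$ as $\cO_{\mathcal X}$-modules of rank $2$ from Proposition~\ref{prop_andreattaiovitastevens2015_Prop318}, part (iv) is then a direct application of Lemma~\ref{lemma:factorization of pairing} with $\M^\prime=V^{\prime}_{\mathcal X}$ and $\M=V_{\mathcal X}$.

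For part (i), since $V^{\prime}_{f_x}$ is the $f_x$-isotypic quotient of a classical cohomology space, Hecke operators act on $V^{\prime}_{f_x}$ via the system of eigenvalues of $f_x$, which factors through $\cO_{\mathcal X}\twoheadrightarrow\cO_{\mathcal X}/\mathfrak m_x=E$. Thus $\pi_x$ kills $(X-X(x))V^{\prime}_{\mathcal X,w(x)}$ and factors as $V^{\prime}_{\mathcal X,w(x)}\twoheadrightarrow V^{\prime}_{\mathcal X,x}\to V^{\prime}_{f_x}$, where both the source and target of the latter arrow have $E$-dimension $2$. It therefore suffices to prove surjectivity, which follows from the \'etale analog of the Ash--Stevens control theorem \eqref{eqn_AshStevens2008Unpublished_1} together with the identifications \eqref{eqn:GAGA iso}--\eqref{eqn:projection of cohomology on k-weight} and the Hecke semisimplicity of classical cohomology at the cusp form $f_x$ recalled in \S\ref{subsubsec_621} (giving $V^{\prime}[f_x]=V^{\prime}[[f_x]]$). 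The surjection $V^{\prime}_{\mathcal X}[[x]]\twoheadrightarrow V^{\prime}_{\mathcal X,x}$ obtained from the analysis in \S\ref{subsec_specializations} (automatic for $x\neq x_0$ via the direct-summand property, and the canonical projection when $x=x_0$) then yields the first assertion of (i).

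Part (v) follows from the compatibility of the overconvergent cup-product pairing with the classical Poincar\'e pairing $(\,,\,)_{f_x}$ under the sheaf morphisms \eqref{eqn:evaluation of distributions on polynomials}: the restriction of $(\,,\,)_{w(x)}$ to the $f_x$-isotypic subfactors identified in (i) and the image of $j_x$ agrees with $(\,,\,)_{f_x}$ by the naturality of cup-products with respect to Hecke-equivariant morphisms of coefficient sheaves. Combined with (iv), this gives the asserted factorization through $V^{\prime}_{\mathcal X,x}$. For part (ii), we prove the injectivity of $j_x$ by a duality argument: if $v\in V[f_x]\setminus\{0\}$ had $j_x(v)=0$, non-degeneracy of $(\,,\,)_{f_x}$ would furnish some $v^\prime\in V^{\prime}_{f_x}$ with $(v^\prime,v)_{f_x}\neq 0$; lifting $v^\prime$ to $\widetilde v^\prime\in V^{\prime}_{\mathcal X,x}$ via the isomorphism of (i) and applying (v) would give $(v^\prime,v)_{f_x}=(\widetilde v^\prime,j_x(v))_x=0$, a contradiction. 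Since Hecke operators act on $V[f_x]$ by the eigenvalues of $f_x$ (in particular $X$ acts as multiplication by $X(x)$), we have $j_x(V[f_x])\subset V_{\mathcal X}[x]$; injectivity combined with $\dim_E V[f_x]=2=\dim_E V_{\mathcal X}[x]$ yields the asserted isomorphism. The main obstacle throughout is the careful tracking of Hecke-action conventions (standard versus dual) on the various cohomology theories involved, particularly in establishing the $\cH$-bilinearity needed for (iii) and the compatibility of cup-products in (v); a secondary concern is that the surjectivity in (i) must remain valid in the $\theta$-critical case ($e\geq 2$), where the control theorem is on the boundary of applicability at $x_0$, and the argument must leverage cuspidality of $x_0$ to extract the required information on the $f_{x_0}$-isotypic part.
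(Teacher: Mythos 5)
Your parts (iii), (iv) and (v) match the paper in substance: (iv) is a direct instance of Lemma~\ref{lemma:factorization of pairing}, (v) is functoriality of cup products, and for (iii) the paper cites \cite[\S4.2]{BSV2020} for the Hecke-bilinearity of the overconvergent cup-product pairing, which is the content you spell out via the dual-Hecke adjunction. Your part (ii) is a genuinely different route: the paper declares injectivity of $j_x$ clear by definitions, whereas you deduce it from non-degeneracy of the Poincar\'e pairing together with (i) and (v). This is logically sound provided you invoke only the commutativity statement of (v) (which does not presuppose $j_x$ is an isomorphism), and the reordering (iii), (iv), (i), (v), (ii) introduces no circularity.

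The real gap is in your part (i), and you flag it yourself. Your surjectivity argument rests on the Ash--Stevens control theorem \eqref{eqn_AshStevens2008Unpublished_1}, which requires slope strictly less than $w(x)+1$ and therefore gives nothing at $x_0$ when $f_{x_0}$ has critical slope --- precisely the situation this paper is designed to treat, and necessarily the situation whenever $e\geq 2$. Observing that the argument ``must leverage cuspidality of $x_0$'' is an accurate diagnosis of the obstruction, not a proof of it. The paper avoids the control theorem entirely: it cites \cite[Corollary 3.19]{bellaiche2012}, where Bella\"iche proves the corresponding surjectivity onto the $f_x$-isotypic quotient for the modular-symbol side (valid for cuspidal $x_0$ including critical slope), and then transports this to the \'etale side through the comparison isomorphism of Proposition~\ref{prop_andreattaiovitastevens2015_Prop318}(i). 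Without importing that input, your proof of (i) is established only at specializations of non-critical slope, and since (ii) and (v) in your ordering depend on (i), the whole chain is incomplete exactly where it matters.
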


\begin{proof} \item[i)] The surjectivity of  $V^{\prime}_{\mathcal X}[[x]] \rightarrow V^{\prime}_{f_x}$ follows from  \cite[Corollary 3.19]{bellaiche2012}, where the analogous statement is proved for the spaces of modular symbols, combined with Proposition~\ref{prop_andreattaiovitastevens2015_Prop318}(i).  
\item[ii)] This is clear by definitions.
\item[iii)] This portion follows from the \cite[\S4.2]{BSV2020} (see in particular the discussion following (69)). 
\item[iv)] This is a particular case of the factorization \eqref{eqn_prop_221_atempt_1_0}.
\item[v)] This statement follows from the functoriality  of the  cup products.
\end{proof}

\subsection{Beilinson--Kato elements over the eigencurve}
\label{subsec_BK_families}

\subsubsection{}
We maintain previous notation and conventions in \S\ref{sec_bellaiche_eigencurve} and \S\ref{sec_interpolateBK_elements}. Let  $x_0\in \mathcal C^{\mathrm{cusp}}(E)$ be a classical cuspidal point on the eigencurve. Fix a wide open $U$, an affinoid $\mathcal W$ and a slope $\nu \geqslant 0$ such that  $x_0\in\mathcal{W} \subset U$ 
and  the conditions of Proposition~\ref{prop_andreattaiovitastevens2015_Prop318}
are satisfied. Let $\mathcal X$ denote the connected
component of $x_0$ in $\mathcal C_{\mathcal W,\nu}.$
Consider the following maps:
\begin{itemize}
\item[$\bullet$]{} For each non-negative integer $k\in U,$ the map \eqref{eqn:projection of cohomology on k-weight} which we notate as
\begin{equation}
\nonumber
\rho_{U}^{[k]}\,:\, H^1_{\mathrm{\acute et}}(Y_{\overline{\QQ}}, \mathscr{D}_{U}(\mathscr T^{\prime}_{\Qp}))
\lra 
H^1_{\mathrm{\acute et}} (Y_{\overline{\QQ}}, \TSym^{k}(\mathscr{T}_{\Qp})).
\end{equation}

\item[$\bullet$]{} For each classical $x\in \mathcal X,$ the map 
\eqref{eqn: specialization of big modular representation},
which we shall notate as 
\[
\rho_{\mathcal X}^{[x]}\,:\, V^\prime_{\mathcal X} \rightarrow V'_{f_x}.
\]
Moreover, we have the following commutative diagram, which follows from the slope decomposition:
\begin{equation}
\label{eqn:compatibility of representations vs slopes}
\begin{aligned}
\xymatrix
{
M_{\mathcal W}(\mathscr{T}^\prime) \ar[d]
\ar[r]&H^1_{\mathrm{\acute et}}(Y_{\QQ}, \TSym^{w(x)}\mathscr T_{\Qp}(1))
\ar[d]\\
V^\prime_{\mathcal X}\ar[r] \ar[r] &V^\prime_{f_x} 
.
}
\end{aligned}
\end{equation}
Here, by definition, 
$M_{\mathcal W}(\mathscr{T}^\prime):= H^1_{\mathrm{\acute et}}(Y_{\overline{\QQ}}, \mathscr{D}_U(\mathscr{T}^{\prime}))_{\mathcal W}.$ 

\item[$\bullet$]{} The cyclotomic moment maps 
 \[
 \mathrm{mom}^{[r,n]}_{\mu_m^\circ}\,:\,\Lambda (\Gamma_{\QQ (\zeta_m)})^\iota 
\rightarrow E[G_n] (-r),\qquad r\in \ZZ,  n\geqslant 0,
\] 
which have defined defined in \S\ref{subsubsection:big Kato elements}. Together with the  maps $\rho_{U}^{[k]}$ and $\rho_{\mathcal X}^{[x]}$, the cyclotomic moment maps give rise to the morphisms
\begin{equation}
\label{eqn_Sec651_1}
\rho_{U,m}^{[k,r,n]}\,:\,
H^1(\ZZ [1/S, \zeta_m], H^1_{\mathrm{\acute et}}(Y_{\overline{\QQ}}, \mathscr{D}_{U}^\circ(\mathscr T^{\prime})\widehat{\otimes} \underline{\LL}_{\mu_m^\circ}^\cyc (2)))
\lra
H^1(\ZZ [1/S, \zeta_{mp^n}], H^1_{\mathrm{\acute et}} (Y_{\overline{\QQ}}, \TSym^{k}(\mathscr{T})(2-r) ))
\end{equation}
and 
\begin{equation}
\label{eqn_Sec651_2}
\rho_{\mathcal X,m}^{[x,r,n]}\,:\, H^1(\ZZ [1/S, \zeta_m] , V^\prime_{\mathcal X}\widehat{\otimes}\,
\Lambda (\Gamma_{\QQ (\zeta_m)})^\iota (1))
\lra 
H^1(\ZZ [1/S, \zeta_{mp^n}] , V^\prime_{f_x} (1-r))\,.
\end{equation}
It follows from the commutativity of the diagram \eqref{eqn:compatibility of representations vs slopes} that the maps \eqref{eqn_Sec651_1} and \eqref{eqn_Sec651_2} are compatible in the evident sense.
 
\end{itemize}

\subsubsection{}
The map \eqref{eqn_Lambda_to_distributions_sheafified} induces a morphism
\[
H^1_{\textrm{\textup{\'et}}} (Y_{\overline{\QQ}}, \underline{\Lambda} (\mathscr{T}\left < D'\right >))
\lra H^1_{\textrm{\textup{\'et}}} (Y_{\overline{\QQ}}, \mathscr{D}_U^\circ (\mathscr{T}')).
\]
The composition of this map with \eqref{eqn_BenoisHorte_70} gives rise to the map
\begin{equation}
\label{eqn:map from cohomology of Y1 to Y}
H^1_{\textrm{\textup{\'et}}} (Y_1(Np)_{\overline{\QQ}}, \underline{\Lambda}_{Np}(2))
\lra H^1_{\textrm{\textup{\'et}}} (Y_{\overline{\QQ}}, \mathscr{D}_U^\circ (\mathscr{T}')(2))
=: M_U^\circ (\mathscr{T}^{\prime}) (1).
\end{equation}
More generally, for each integer $j\geqslant 0$, the morphism  \eqref{definition of delta} together with the trace map induce 
\begin{align}
\label{eqn:map from cohomology of Y1 to Y with j}
\begin{aligned}
H^1_{\textrm{\textup{\'et}}} (Y_1(Np)_{\overline{\QQ}}, \TSym^j (\mathscr{T}_{Np})\otimes \underline{\Lambda}_{Np}(2))
\lra H^1_{\textrm{\textup{\'et}}} (Y_{\overline{\QQ}}, \TSym^j (\mathscr{T})\otimes \mathscr{D}_{U-j}^\circ (\mathscr{T}')(2))\\
\xrightarrow{\delta_j^*}
 H^1_{\textrm{\textup{\'et}}} (Y_{\overline{\QQ}}, \mathscr{D}_{U}^\circ (\mathscr{T}')(2))=: M_U^\circ (\mathscr{T}^{\prime}) (1)\,. 
 \end{aligned}
\end{align}

\begin{defn}
\label{defn_big_BK_lambdaadicsheaf}
Let $S, \xi, j, c, d, m$ be as in Definition~\ref{defn:Kato big-zeta}.

\begin{itemize}
\item[i)]{}
We let
$${}_{c,d}{\mathbb{BK}}^{[U]}_{N,m} (j, \xi) \in 
H^1_{\mathrm{\acute et}}\left (\ZZ [1/S, \zeta_m],  M_U^\circ (\mathscr{T}^{\prime})\widehat{\otimes}\Lambda (\Gamma_{\QQ (\zeta_m)})^\iota (1) \right )
$$
denote the image of the Beilinson--Kato element ${}_{c,d}{\mathbb{BK}}_{N,m} (j, \xi)$ under the map 
\begin{multline}
\nonumber
H^1_{\mathrm{\acute et}}\left (\ZZ [1/S, \zeta_m],
H^1_{\mathrm{\acute et}} (Y_1(Np), \TSym^j (\mathscr{T}_{Np})\otimes \underline{\Lambda}_{Np}\widehat{\otimes}\Lambda (\Gamma_{\QQ (\zeta_m)})^\iota(2)
\right )
\\ 
\lra H^1_{\mathrm{\acute et}}\left (\ZZ [1/S, \zeta_m],  M_U^\circ (\mathscr{T}^{\prime})\widehat{\otimes}\Lambda (\Gamma_{\QQ (\zeta_m)})^\iota (1) \right )
\end{multline}
induced by the map \eqref{eqn:map from cohomology of Y1 to Y with j}.

\item[ii)]{} Let $x_0$ be a cuspidal point of the eigencurve $\mathcal C.$
Let  $\mathcal{W} \subset U$ be an affinoid disk centered at $x_0$ such that 
$\mathcal W,$ $U$ and $\nu \geqslant 0$ satisfy the conditions of Proposition~\ref{prop_andreattaiovitastevens2015_Prop318}. We denote by 
$${}_{c,d}{\mathbb{BK}}^{[\mathcal{W},\leq \nu]}_{N,m} (j, \xi)   \in 
H^1\left (\ZZ [1/S, \zeta_m], M_{\mathcal W}({\mathscr{T}^\prime})^{\leq \nu}\widehat{\otimes}\,
\Lambda (\Gamma_{\QQ (\zeta_m)})^\iota (1)\right )$$
the image of ${}_{c,d}{\mathbb{BK}}^{[U]}_{N,m} (j, \xi) $ under the map
induced by the natural projection $M_U^\circ (\mathscr{T}^{\prime})
\rightarrow M_{\mathcal W}({\mathscr{T}^\prime})^{\leq \nu}.$

\item[iii)]{} Let $\mathcal X$ denote the connected component of $x_0.$
We denote by 
$${}_{c,d}{\mathbb{BK}}^{[\mathcal{X}]}_{N,m} (j, \xi)   
\in H^1(\ZZ [1/S, \zeta_m] , V^\prime_{\mathcal X}\widehat{\otimes}\,
\Lambda (\Gamma_{\QQ (\zeta_m)})^\iota (1))$$
the image of ${}_{c,d}{\mathbb{BK}}^{[\mathcal{W},\leq \nu]}_{N,m} (j, \xi)$ under the map
induced by the natural projection $M_{\mathcal W}({\mathscr{T}^\prime})^{\leq \nu}\rightarrow V^{\prime}_{\mathcal X}.$
\end{itemize}
\end{defn}

\subsubsection{} The following is the interpolation property for $O_{\mathcal X}$-adic Beilinson--Kato elements ${}_{c,d}{\mathbb{BK}}^{[\mathcal{X}]}_{N,m} (j, \xi)$.
\begin{theorem}
\label{thm_control_thm_interpolation_BK} 
In the setting of Definition~\ref{defn_big_BK_lambdaadicsheaf}, the following are valid:
\item[i)]{} For any $k\geqslant 0,$ the diagram
\begin{equation}
\label{eqn:diagram from interpolation thm}
\begin{aligned}
\xymatrix
{H^1_{\mathrm{\acute et}}(Y_1(Np)_{\overline \QQ}, \TSym^j(\mathscr{T}_{Np})\otimes \underline{\LL}_{Np}) \ar[rr]^{\mathrm{mom}_{Np,j}^{[k]}}
\ar[d]_{\eqref{eqn:map from cohomology of Y1 to Y with j}} &
&H^1_{\mathrm{\acute et}} (Y_1(Np)_{\overline{\QQ}}, \TSym^{j+k}(\mathscr{T}_{Np}) )
\ar[d]^{\mathrm{pr}'_*}
\\
H^1_{\mathrm{\acute et}}(Y_{\overline{\QQ}}, \mathscr{D}_{U}^\circ(\mathscr T^{\prime}))
\ar[rr]^{\rho_{U}^{[j+k]}}
&
&H^1_{\mathrm{\acute et}} (Y_{\overline{\QQ}}, \TSym^{j+k}(\mathscr{T}) )
}
\end{aligned}
\end{equation}
commutes. Here, $\mathrm{mom}_{Np,j}^{[k]}$ is the moment map induced from \eqref{eqn:definition moment map}, and the right vertical map $\mathrm{pr}'_*$ is the trace map associated to the projection $\mathrm{pr}'\,:\, Y_1(Np) \rightarrow Y$.

\item[ii)]{} For any $m\geqslant 1$ and $k\geqslant 0,$ the diagram \eqref{eqn:diagram from interpolation thm} induces the commutative diagram
\begin{equation}
\label{eqn:diagram 2 from interpolation thm}
\begin{aligned}
\resizebox{15.5cm}{!}
{
\xymatrix
{
H^1(\ZZ [1/S, \zeta_m], H^1_{\mathrm{\acute et}}(Y_1(Np)_{\overline \QQ}, \TSym^j(\mathscr{T}_{Np})\otimes \underline{\LL}_{Np,\mu_m^\circ}^{\cyc}(2)) \ar[rr]^-{\mathrm{mom}_{Np,j,\mu_m^\circ}^{[k,r,n]}}
\ar[d]^{\mathrm{pr}'_*} &
&H^1(\ZZ [1/S, \zeta_{mp^n}], H^1_{\mathrm{\acute et}} (Y_1(Np)_{\overline{\QQ}}, \TSym^{j+k}(\mathscr{T}_{Np})(2-r) ))
\ar[d]^{\mathrm{pr}'_*}
\\
H^1(\ZZ [1/S, \zeta_m], H^1_{\mathrm{\acute et}}(Y_{\overline{\QQ}}, \mathscr{D}_{U}^\circ(\mathscr T^{\prime})\widehat{\otimes} \underline{\LL}_{\mu_m^\circ}^\cyc (2)))
\ar[rr]^{\rho_{U,m}^{[j+k,r,n]}}
&
&H^1(\ZZ [1/S, \zeta_{mp^n}], H^1_{\mathrm{\acute et}} (Y_{\overline{\QQ}}, \TSym^{j+k}(\mathscr{T})(2-r) )),
}
}
\end{aligned}
\end{equation}
where $\mathrm{mom}_{Np,j,\mu_m^\circ}^{[k,r,n]}$ is the map defined in Proposition~\ref{prop:big Kato elements}.

\item[iii)]{} For any $x\in \mathcal X^{\mathrm{cl}}(E),$ the diagram \eqref{eqn:diagram 2 from interpolation thm} induces the following  commutative diagram: 
\begin{equation}
\nonumber
\resizebox{16.5cm}{!}
{
\xymatrix
{
H^1(\ZZ [1/S, \zeta_m], H^1_{\mathrm{\acute et}}(Y_1(Np)_{\overline \QQ}, \TSym^j(\mathscr{T}_{Np})\otimes \underline{\LL}_{Np,\mu_m^\circ}^{\cyc}(2)) \ar[rr]^-{\mathrm{mom}_{Np,j,\mu_m^\circ}^{[x,r,n]}}
\ar[d]^{\mathrm{pr}'_*} &
&H^1(\ZZ [1/S, \zeta_{mp^n}], H^1_{\mathrm{\acute et}} (Y_1(Np)_{\overline{\QQ}}, \TSym^{w(x)} \mathscr{T}(2-r) ))_{f_x}
\ar[d]^{\simeq}
\\
H^1(\ZZ [1/S, \zeta_m], V_{\mathcal X}^\prime \widehat{\otimes} \LL (\Gamma_{\QQ (\zeta_m)})^{\iota}{(1)})
\ar[rr]^{\rho_{\mathcal X,m}^{[x,r,n]}}
&
&H^1(\ZZ [1/S, \zeta_{mp^n}],  V^\prime_{f_x}{(1-r) } ),
}
}
\end{equation}
where the right vertical isomorphism is induced by the canonical isomorphism
between $f_x$-isotypic components of $H^1_{\mathrm{\acute et}}(Y_1(Np)_{\overline \QQ},
\TSym^{w(x)}(\mathscr{T}_{Np, \Qp}))$ and  $H^1_{\mathrm{\acute et}}(Y_{\overline \QQ},
\TSym^{w(x)}(\mathscr{T_{\Qp}}))$ and the horizontal map $\mathrm{mom}_{Np,j,\mu_m^\circ}^{[x,r,n]}$ on the first row is the composition of $\mathrm{mom}_{Np,j,\mu_m^\circ}^{[w(x),r,n]}$ with the projection to the $f_x$-component. 

\item[iv)]{}  For any  $x\in \mathcal X^{\mathrm{cl}}(E),$ one has 
\[
\rho_{\mathcal X,m}^{[x,r,n]} \left ({}_{c,d}{\mathbb{BK}}^{[\mathcal X]}_{N,m} (j, \xi)\right )=  {}_{c,d}{\mathrm{BK}}_{Np,mp^n} (f_x^c,j, r, \xi) \]
where 
$${}_{c,d}{\mathrm{BK}}_{Np,mp^n} (f_x^c,j, r, \xi) \in H^1(\ZZ[1/S,\zeta_{mp^n}], V_{f_x^c}(w(x)+2-r))
$$
is the projection of the Beilinson--Kato element ${}_{c,d}{\mathrm{BK}}_{N p,mp^n} (w(x)-j,j, r, \xi)$ introduced in \S\ref{subsubsection:big Kato elements} to the $f_x^c$-isotypic eigenspace for the action of Hecke operators $\{T_\ell\}_{\ell\nmid Np} and \{U_\ell\}_{\ell\mid Np}$. Here we have used the canonical isomorphism \eqref{eq: duality V'[f] vs V[f*]} to identify ${}_{c,d}{\mathrm{BK}}_{N,mp^n} (f_x^c,j, r, \xi)$ with an element of $H^1(\ZZ[1/S,\zeta_{mp^n}], V_{f_x}'(1-r))$. 
\end{theorem}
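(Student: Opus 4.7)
The plan is to establish parts (i)--(iv) in order, with (i) being the fundamental sheaf-theoretic compatibility and (ii)--(iv) propagating it through cyclotomic twists, slope/Hecke-isotypic descent, and explicit tracing through the definitions.

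First I would prove (i) by verifying the compatibility at the level of pro-\'etale sheaves on $Y=Y(1,N(p))$. The morphism \eqref{eqn_Lambda_to_distributions_sheafified}, after composition with the evaluation of distributions on $\TSym^k(\mathscr{T})$ through the natural inclusion $\mathscr{P}_k(O_E)\hookrightarrow \mathbb{A}'(U)^\circ$ in \eqref{eqn:evaluation of distributions on polynomials}, should agree with the $k$-th moment map $\mom^{[k]}$ of \cite{Kings2015} under the identification \eqref{eqn: isomorphism of Lambda sheaves} and the trace \eqref{eqn_BenoisHorte_70_1}; concretely, evaluating a measure $\mu\in\DD'(U)^\circ$ against the monomial recovers the $k$-th moment. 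This compatibility is, in effect, the content of the Loeffler--Zerbes construction of \eqref{eqn_Lambda_to_distributions_sheafified}. The additional factor $\TSym^j(\mathscr{T}_{Np})$ is absorbed via the morphism $\delta_j^*$ of \eqref{definition of delta}, which is dual to the multiplication $\mathbb{A}'(U-j)^\circ\otimes \mathscr{P}_j(O_E)\to \mathbb{A}'(U)^\circ$ sending $F\otimes G\mapsto t_j\circ(FG)$; composing with the evaluation map to $\TSym^{j+k}(\mathscr{T})$ recovers the natural multiplication $\TSym^j\otimes\TSym^k\to\TSym^{j+k}$. Applying Kato's trace $\mathrm{pr}'_*$ on one side and the composition $\delta_j^*\circ (\mathrm{id}\otimes\rho_U^{[k]})$ on the other then gives \eqref{eqn:diagram from interpolation thm}.

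For (ii), I would tensor the diagram of (i) with the cyclotomic Iwasawa sheaf $\underline{\LL}_{\mu_m^\circ}^{\cyc}$, pass to Galois cohomology over $\ZZ[1/S,\zeta_m]$ via the Hochschild--Serre machinery, and compose with the cyclotomic moment map $\mom^{[r,n]}_{\mu_m^\circ}$ from \S\ref{subsubsection:big Kato elements} together with corestriction down to $\ZZ[1/S,\zeta_{mp^n}]$; the commutativity is formal because all maps involved (cup product, moment maps, trace) respect the external tensor product on $Y\times\mu_m^\circ$. For (iii), I would factor the right vertical map of (ii) through the $f_x$-isotypic quotient: the Hecke-equivariance of $\mathrm{pr}'_*$, combined with the fact that for a $p$-stabilized eigenform $f_x$ on $\Gamma_p$ the $f_x$-isotypic component of $H^1_{\textup{\'et}}(Y_1(Np)_{\overline\QQ},\TSym^{w(x)}\mathscr{T}_{Np,\Qp})$ maps isomorphically onto the corresponding component of $H^1_{\textup{\'et}}(Y_{\overline\QQ},\TSym^{w(x)}\mathscr{T}_{\Qp})$ (by the multiplicity-one and newform theory used in \S\ref{subsubsec_621}), yields the right vertical isomorphism. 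On the left, I factor through the slope-$\leq\nu$ quotient $M_{\mathcal W}(\mathscr{T}')^{\leq\nu}$ and then through $V'_{\mathcal X}$; the compatibility \eqref{eqn:compatibility of representations vs slopes} together with Definition~\ref{defn_big_BK_lambdaadicsheaf} ensures $\rho^{[x,r,n]}_{\mathcal X,m}$ agrees with the composition of (ii) followed by $f_x$-projection.

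Finally, for (iv), I would trace ${}_{c,d}\mathbb{BK}_{N,m}(j,\xi)\in H^1(\ZZ[1/S,\zeta_m], H^1_{\textup{\'et}}(Y_1(Np)_{\overline\QQ}, \TSym^j(\mathscr{T}_{Np})\otimes\underline{\LL}_{Np,\mu_m^\circ}^{\cyc}(2)))$ through the diagram of (iii). By Definition~\ref{defn_big_BK_lambdaadicsheaf}, the vertical $\mathrm{pr}'_*$ on the left sends it to ${}_{c,d}\mathbb{BK}^{[\mathcal X]}_{N,m}(j,\xi)$, and by Proposition~\ref{prop:big Kato elements} the top horizontal map sends it to ${}_{c,d}\mathrm{BK}_{Np,mp^n}(w(x)-j,j,r,\xi)$, whose projection onto the $f_x$-component is by definition ${}_{c,d}\mathrm{BK}_{Np,mp^n}(f_x^c,j,r,\xi)$ once we identify $f_x$-isotypic and $f_x^c$-isotypic components via the duality \eqref{eq: duality V'[f] vs V[f*]}. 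The commutativity established in (iii) then gives the desired equality. The main obstacle will be the sheaf-level verification in (i): one must carefully pin down the universal-measure interpretation of \eqref{eqn_Lambda_to_distributions_sheafified} and verify its compatibility with Kings' moment maps and with the morphism $\delta_j^*$ simultaneously; once (i) is settled, (ii)--(iv) reduce to functorial bookkeeping.
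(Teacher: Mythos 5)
Your proposal is correct and follows essentially the same route as the paper's. The paper proves (i) by writing an explicit three-row commutative diagram decomposed into four commuting subsquares: two functoriality squares for $\mathrm{pr}'_*$, a triangle whose commutativity is exactly the moment-map compatibility of the Loeffler--Zerbes morphism \eqref{eqn_Lambda_to_distributions_sheafified} (their reference is \cite[Proposition 4.2.10]{LZ1}), and a square expressing $\rho_U^{[j+k]}\circ\delta_j^*$ via the multiplication $\TSym^j\otimes\TSym^k\to\TSym^{j+k}$; you identify the same three ingredients (the LZ compatibility with Kings' moments, the role of $\delta_j^*$ as the dual of the multiplication on functions, and functoriality of the trace), so the content is the same, only presented as prose rather than as a labelled diagram. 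Your treatment of (ii)--(iv) — tensoring by the cyclotomic sheaf and passing through Hochschild--Serre, descending via Hecke-equivariance to the $f_x$-isotypic quotient, and then tracing the class with Proposition~\ref{prop:big Kato elements} — also matches the paper, which disposes of (ii)--(iii) as immediate consequences of (i) and derives (iv) from (iii) plus Proposition~\ref{prop:big Kato elements}.
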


\begin{proof}
\item[i)] Consider the following diagram:
\begin{equation}
\nonumber
\resizebox{16.5cm}{!}
{\xymatrix{
&H^1_{\textrm{\textup{\'et}}}(Y_1(Np)_{\overline{\QQ}}, \TSym^j(\mathscr{T}_{Np})\otimes \underline{\LL}_{Np})
\ar `[rr]^{\mathrm{mom}_{Np,j}^{[k]}}  [drr]
\ar    `l[dl]  `d[dddd] `r[dddrr]_{\eqref{eqn:map from cohomology of Y1 to Y with j}} [dddrr]
\ar[d] & &\\
&H^1_{\textrm{\textup{\'et}}}(Y_1(Np)_{\overline{\QQ}}, \TSym^j(\mathscr{T}_{Np})\otimes {\LL}(\mathscr{T}_{Np}\left <s_{Np}\right >)) \ar[r]^-{[Np]_*}  \ar[d]^{\mathrm{pr}^{\prime}_*}
\ar @{}  [dr]|{\refsymbolone} 
 &H^1_{\textrm{\textup{\'et}}}(Y_1(Np)_{\overline{\QQ}}, \TSym^j(\mathscr{T}_{Np})\otimes {\LL}(\mathscr{T}_{Np})) \ar[r]^-{\textrm{mom}^{[k]}}  \ar[d]^{\mathrm{pr}^{\prime}_*}
\ar @{}  [dr]|{\refsymboltwo}  
& H^1_{\textrm{\textup{\'et}}}(Y_1(Np)_{\overline{\QQ}}, \TSym^{j+k}(\mathscr{T}_{Np}))
\ar[d]^{\mathrm{pr}^{\prime}_*}
\\
&H^1_{\textrm{\textup{\'et}}}(Y_{\overline{\QQ}}, \TSym^j(\mathscr{T})  \otimes 
{\LL}(\mathscr{T}\left <D'\right >)  ) \ar[d] \ar[r]^{[p]_*}
&H^1_{\textrm{\textup{\'et}}}(Y_{\overline{\QQ}}, \TSym^{j}(\mathscr{T})\otimes {\LL}(\mathscr{T}) )  \ar[r]^-{\textrm{mom}^{[k]}} 
\ar @{} [dr]|(.65){\refsymbolfour} 
&H^1_{\textrm{\textup{\'et}}}(Y_{\overline{\QQ}}, \TSym^{j+k}(\mathscr{T}) ) 
\\
&H^1_{\textrm{\textup{\'et}}}(Y_{\overline{\QQ}}, \TSym^j(\mathscr{T})\otimes \mathscr{D}_{U-j}^\circ(\mathscr T^{\prime}))
\ar @{}  [uur]|(.3){\refsymbolthree}
 \ar[rr]_{\delta_j^*} \ar[urr]^(.4){\rho_{U}^{[k]}}
& &
H^1_{\textrm{\textup{\'et}}}(Y_{\overline{\QQ}}, \mathscr{D}_{U}^\circ(\mathscr T^{\prime}))
\ar[u]^{\rho_{U}^{[j+k]}}\\
& & &
}
}
\end{equation}
It follows from definitions that the map \eqref{eqn:map from cohomology of Y1 to Y with j} decomposes as explained in the diagram. The factorization of the  moment map $\mathrm{mom}_{Np,j}^{[k]}$ follows from its definition (c.f. \cite{KLZ2}, \S4) and the fact that it agrees with the description given in Section~\ref{subsec:moments} proved in \cite[Theorem~4.5.2.2]{KLZ2}. Therefore, it suffices to prove that this diagram commutes. The commutativity of the squares $\ensuremath{\circled{$1$}}$ and $\ensuremath{\circled{$2$}}$ follow from the functoriality of moment maps. 
The commutativity of the triangle $\ensuremath{\circled{$3$}}$ follows from
\cite[Proposition 4.2.10]{LZ1}. The commutativity of $\ensuremath{\circled{$4$}}$
is clear from definitions. 
\item[ii-iii)] These assertions follow immediately from (i). 
\item[iv)] This portion follows from (iii) and Proposition~\ref{prop:big Kato elements}.

\end{proof}

\begin{corollary}
\label{cor_thm_control_thm_interpolation_BK} We retain the notation and hypotheses of Theorem~\ref{thm_control_thm_interpolation_BK}. We let 
$${}_{c,d}{\mathrm{BK}}_{Np,{\rm Iw}} ( f_x^c,j, \xi) \in H^1(\ZZ[1/S], V_ {f_x^c}(w(x)+2)\otimes \LL^{ \iota})= H^1(\ZZ[1/S], V_{f_x}'\otimes \LL^{\iota}(1))$$
denote the unique class which specializes to ${}_{c,d}{\mathrm{BK}}_{Np,p^n} ( f_x^c,j, r, \xi)\in  H^1(\ZZ[1/S,\zeta_{p^n}], V_{ f_x^c}(w(x)+2-r))$  for all $n\in \mathbb{N}$ under the evident morphisms. Then,
$${}_{c,d}{\mathbb{BK}}^{[\mathcal X]}_{N,1} (j, \xi,x)={}_{c,d}{\mathrm{BK}}_{Np,{\rm Iw}} (f_x^c,j, \xi)$$
where ${}_{c,d}{\mathbb{BK}}^{[\mathcal X]}_{N,1} (j, \xi,x)$ is the specialization of ${}_{c,d}{\mathbb{BK}}^{[\mathcal X]}_{N,1} (j, \xi)$ to $x$.
\end{corollary}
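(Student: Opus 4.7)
The plan is to reduce the corollary to Theorem~\ref{thm_control_thm_interpolation_BK}(iv) via a standard uniqueness principle for Iwasawa-theoretic classes. First I would use the isomorphism $V'_{\mathcal X, x} \xrightarrow{\sim} V'_{f_x}$ from Proposition~\ref{lemma_Adj_diagram_eigencurve}(i), together with the identification \eqref{eq: duality V'[f] vs V[f*]}, to place both ${}_{c,d}{\mathbb{BK}}^{[\mathcal X]}_{N,1}(j,\xi,x)$ and ${}_{c,d}\mathrm{BK}_{Np,\mathrm{Iw}}(f_x^c,j,\xi)$ into the same Iwasawa cohomology group $H^1(\ZZ[1/S], V'_{f_x}\widehat{\otimes}\Lambda(\Gamma)^\iota(1))$.

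Next I would verify that the composite morphism $\rho_{\mathcal X,1}^{[x,r,n]}$ factors, via the specialization-at-$x$ map on $V'_{\mathcal X}$, through the classical ``descent-and-twist'' map
\[
H^1(\ZZ[1/S], V'_{f_x}\widehat{\otimes}\Lambda(\Gamma)^\iota(1)) \lra H^1(\ZZ[1/S,\zeta_{p^n}], V'_{f_x}(1-r)).
\]
This is a direct consequence of the functoriality of the cyclotomic moment maps recalled in \S\ref{subsubsection:big Kato elements} and the definition of $\rho_{\mathcal X,1}^{[x,r,n]}$ in \eqref{eqn_Sec651_2}. Granting this, Theorem~\ref{thm_control_thm_interpolation_BK}(iv) with $m=1$ tells us that ${}_{c,d}{\mathbb{BK}}^{[\mathcal X]}_{N,1}(j,\xi,x)$ and ${}_{c,d}\mathrm{BK}_{Np,\mathrm{Iw}}(f_x^c,j,\xi)$ have equal images $ {}_{c,d}{\mathrm{BK}}_{Np,p^n}(f_x^c,j,r,\xi)$ in every finite-level cohomology group.

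The conclusion then follows from the injectivity of the natural map
\[
H^1(\ZZ[1/S], V'_{f_x}\widehat{\otimes}\Lambda(\Gamma)^\iota(1)) \lra \varprojlim_n H^1(\ZZ[1/S,\zeta_{p^n}], V'_{f_x}(1)),
\]
which follows from Shapiro's lemma and the standard identification of Iwasawa cohomology with this inverse limit (c.f. the discussion preceding the proof of \cite[Theorem~12.5]{kato04}); indeed, even restricting to the specializations at $r=0$ as $n$ varies suffices to pin down the class. The only step that requires a bit of care — rather than a real obstacle — is the functoriality check for the factorization of $\rho_{\mathcal X,1}^{[x,r,n]}$ asserted above; this is a routine diagram chase that runs along the same lines as the commutativity verifications in the proof of Theorem~\ref{thm_control_thm_interpolation_BK}(i).
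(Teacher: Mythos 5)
Your proposal is correct and is essentially the argument the paper has in mind: the paper states the corollary without proof, treating it as an immediate consequence of Theorem~\ref{thm_control_thm_interpolation_BK}(iv) with $m=1$, and your elaboration simply makes explicit the two routine ingredients — that $\rho_{\mathcal X,1}^{[x,r,n]}$ factors through the specialization-at-$x$ map followed by the finite-level descent-and-twist map (a direct consequence of the definitions in \eqref{eqn_Sec651_2} and the commutativity of the maps in \S\ref{subsubsection:big Kato elements}), and that the Iwasawa cohomology group injects into the inverse limit of finite-level groups (guaranteed since $H^0(\ZZ[1/S,\zeta_{p^n}], V'_{f_x}(1-r))=0$ by irreducibility of $V'_{f_x}$). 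Nothing further is needed.
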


\begin{remark}
\label{remark_compare_with_classical_Kato_newforms}
We retain the notation and hypotheses of Theorem~\ref{thm_control_thm_interpolation_BK}. Suppose that $x\in \cX^{\rm cl}(E)$ is a classical point and $f_x$ is $p$-old, arising as the $p$-stabilization of a newform $g$ of level $N$ coprime to $p$, with respect to the root $a_p(f_x)$ of the Hecke polynomial of $g$ at $p$. Then $f_x^c$ is the $p$-stabilization of the newform $g^c$  with respect to the root $\overline{a}_p(f_x)$ of the Hecke polynomial of $g^c$ at $p$. In this remark, we shall explain the relation between ${}_{c,d}{\mathrm{BK}}_{Np,\Iw} (f_x^c,j, \xi)$ and the Beilinson--Kato element ${}_{c,d}\mathrm{BK}_{N,\Iw}(g^c,j,\xi)$ that one associates to the newform $g^c$. 
The natural projection $Y(1,N(p))=:Y\xrightarrow{\mathrm{Pr}} Y_1(N)$ induces the trace map 
$$\mathrm{Pr}_*: H^1_{\textup{ \'et},c} (Y_{\overline{\QQ}},{\rm Sym}^{k}\mathscr{T}_{\QQ_p}^{\vee})\lra  H^1_{\textup {\'et},c} (Y_1(N)_{\overline{\QQ}},{\rm Sym}^{k}\mathscr{T}_{\QQ_p}^{\vee}),
$$
which restricts to an isomorphism $($since this map commutes with the action of Hecke operators $\{T_\ell'\}_{\ell\nmid Np}$$)$
\begin{equation}
\label{eqn_trace_map_fxctofx_1} 
\mathrm{Pr}_*: \quad V_{f_x^c}\simeq  V_{g^c} \,.
\end{equation}
Then by construction, the image of the Beilinson--Kato element ${}_{c,d}\mathrm{BK}_{N,\Iw}(g^c,j,\xi)$ under the natural morphism induced from the isomorphism \eqref{eqn_trace_map_fxctofx_1} coincides with ${}_{c,d}\mathrm{BK}_{N{p},\Iw}(f_ x^c,j,\xi)$ (c.f. \cite{kato04}, Proposition~8.7).
\end{remark}

\subsection{Normalized Beilinson--Kato elements}
\label{subsec_BK_big_normalized}
In \S\ref{subsec_BK_big_normalized}, we normalize the $\cO_{\mathcal X}$-adic Beilinson--Kato elements (in a weak sense). These normalized elements interpolate Kato's normalized elements in \cite[\S13]{kato04}. 

\subsubsection{} We recall that $\Lambda (\Gamma_1):=\Zp [[\Gamma_1]],$
where $\Gamma_1=\Gal (\Qp (\zeta_{p^\infty})/\Qp (\zeta_p))$ and $\Lambda=\Zp [\Delta]\otimes_{\Zp}\Lambda (\Gamma_1),$
where $\Delta=\Gal (\QQ (\zeta_p)/\QQ).$ One then has 
\[
\LL =\underset{\eta \in X(\Delta)}\bigoplus  \Lambda (\Gamma_1) e_{\eta}.
\]
Let us set $\Lambda_{\mathcal X}(\Gamma_1)= O_{\mathcal X}\widehat \otimes_{\Zp}\LL (\Gamma_1)$ and put $\Lambda_{\mathcal X}= O_{\mathcal X}\widehat \otimes_{\Zp}\LL. $

We start with the following auxiliary lemma.

\begin{lemma} 
\label{lemma: elimination of torsion}
Let $M$ be a finitely generated $\Lambda_{\mathcal X}(\Gamma_1)$-module. Suppose that $M_{x_0}:=M/XM$ is torsion-free as a $\LL (\Gamma_1) [1/p]$-module and that $M[X]=0$.
Then there exist an affinoid disk $\mathcal X'\subset \mathcal X$ such that $M_{O_{\mathcal X'}}:=M\otimes_{O_{\mathcal X}}O_{\mathcal X'}$ is torsion-free as a $\LL_{\mathcal X'} (\Gamma_1)$-module.
\end{lemma}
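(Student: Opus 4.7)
The plan is to isolate the torsion submodule, kill it by a determinant-trick element of the form $1+X\beta$, and then invert that element on a sufficiently small subdisk. Let $T\subseteq M$ denote the $\LL_{\mathcal X}(\Gamma_1)$-torsion submodule; by Noetherianity of $\LL_{\mathcal X}(\Gamma_1)$ (a domain isomorphic to $O_{\mathcal X}^{\circ}[[\gamma_1-1]][1/p]$) together with the finite generation of $M$, the submodule $T$ is itself finitely generated. The key structural step I would carry out first is to prove the identity $T=XT$ inside $M$. For any $t\in T$, the annihilator $\mathrm{Ann}(t)$ is nonzero; since nonzero elements of $\LL_{\mathcal X}(\Gamma_1)$ have finite $X$-adic order and the hypothesis $M[X]=0$ permits cancellation of factors of $X$ from any annihilator of $t$ (the implication $Xa't=0\Rightarrow a't=0$ being immediate), one extracts some $a\in\mathrm{Ann}(t)$ with nonzero image $\bar a\in \LL(\Gamma_1)[1/p]=\LL_{\mathcal X}(\Gamma_1)/(X)$. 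Reducing $at=0$ modulo $X$ yields $\bar a\cdot\bar t=0$ in $M_{x_0}$, and the torsion-freeness hypothesis forces $\bar t=0$, so $t\in XM$. Writing $t=Xm$, the identity $aXm=Xam=0$ combined with $M[X]=0$ gives $m\in T$, so $T=XT$ as claimed.

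Next I would apply the determinant trick to finite generators $t_1,\dots,t_n$ of $T$. The relation $T=XT$ yields a matrix $B=(b_{ij})\in\mathrm{Mat}_n(\LL_{\mathcal X}(\Gamma_1))$ with $t_i=X\sum_j b_{ij}t_j$; multiplying the corresponding matrix identity $(I_n-XB)(t_1,\dots,t_n)=0$ by the adjugate shows that $D:=\det(I_n-XB)$ annihilates $T$, and a direct expansion gives $D=1+X\beta$ for some $\beta\in\LL_{\mathcal X}(\Gamma_1)$. I would then shrink $\mathcal X$ to an affinoid subdisk $\mathcal X'\subseteq\mathcal X$ around $x_0$ on which $\|X\|_{\mathcal X'}$ is small enough that $\|X\beta\|_{\mathcal X'}<1$ in the natural Banach norm on $\LL_{\mathcal X'}(\Gamma_1)$. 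This is possible because $\|\beta\|_{\mathcal X'}\leq \|\beta\|_{\mathcal X}$ by the maximum modulus principle, so only $\|X\|_{\mathcal X'}$ need be shrunk, and it can be made arbitrarily small by taking $\mathcal X'$ of sufficiently small radius. The geometric series $\sum_{k\geq 0}(-X\beta)^k$ then converges in the Banach $E$-algebra $\LL_{\mathcal X'}(\Gamma_1)$ and supplies $D^{-1}$.

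Finally I would conclude via flat base change. The map $\LL_{\mathcal X}(\Gamma_1)\to \LL_{\mathcal X'}(\Gamma_1)$ is flat, since $\mathcal X'\subset\mathcal X$ is a rational affinoid subdomain. Tensoring the exact sequence $0\to T\to M\to M/T\to 0$ along this map yields $0\to T_{O_{\mathcal X'}}\to M_{O_{\mathcal X'}}\to (M/T)_{O_{\mathcal X'}}\to 0$; as $D$ annihilates $T_{O_{\mathcal X'}}$ and is a unit in $\LL_{\mathcal X'}(\Gamma_1)$, one obtains $T_{O_{\mathcal X'}}=0$, hence $M_{O_{\mathcal X'}}\simeq (M/T)_{O_{\mathcal X'}}$. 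Torsion-freeness of the finitely generated module $M/T$ over the domain $\LL_{\mathcal X}(\Gamma_1)$ yields an embedding $M/T\hookrightarrow \mathrm{Frac}(\LL_{\mathcal X}(\Gamma_1))^d$ for some $d$; flat base change preserves this injection, and the target embeds into $\mathrm{Frac}(\LL_{\mathcal X'}(\Gamma_1))^d$, whence $M_{O_{\mathcal X'}}$ is $\LL_{\mathcal X'}(\Gamma_1)$-torsion-free. The principal obstacle in this approach is the Banach-algebra inversion of $D$ in the second step, which requires carefully quantifying the sup norms of $\beta$ and $X$ upon restriction; everything else reduces to standard commutative algebra over Noetherian domains.
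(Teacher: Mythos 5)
Your proof is correct, and it follows a genuinely different route from the paper's. The paper first shrinks $\mathcal X$ so that $O_{\mathcal X}$ becomes a PID, hence $\Lambda_{\mathcal X}(\Gamma_1)$ a UFD, and then argues by contradiction: a nonzero torsion element $m$, chosen in $M\setminus XM$ using $M[X]=0$ and annihilated by an irreducible $f$ which is not an associate of $X$, reduces modulo $X$ to a nonzero $\bar m\in M_{x_0}$ killed by $\bar f$; torsion-freeness of $M_{x_0}$ forces $\bar f=0$, i.e.\ $X\mid f$, contradicting the choice of $f$. You instead isolate the full torsion submodule $T$, prove the identity $T=XT$ directly from the two hypotheses, invoke the Cayley--Hamilton determinant trick to manufacture a single annihilator $D=1+X\beta$ of $T$, and invert $D$ after restricting to a small enough subdisk. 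Both arguments exploit the same mechanism --- the hypotheses force torsion to be $X$-divisible, and a finiteness input then kills it --- but yours is constructive (an explicit annihilator of $T$ is exhibited) and makes no appeal to factoriality of $\Lambda_{\mathcal X}(\Gamma_1)$. The price is the Banach-analytic bookkeeping in the inversion step: you must verify that the sup-of-coefficients norm on $\Lambda_{\mathcal X'}(\Gamma_1)$ is sub-multiplicative, that $\|\beta\vert_{\mathcal X'}\|\leq\|\beta\|_{\mathcal X}$ while $\|X\|_{\mathcal X'}$ can be made arbitrarily small by shrinking the radius, and that $\Lambda_{\mathcal X}(\Gamma_1)\to\Lambda_{\mathcal X'}(\Gamma_1)$ is flat and injective for the base-change conclusion; these are all true but take some care for the completed tensor product, whereas the UFD route sidesteps them. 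One point worth making explicit in your write-up is that the cancellation step producing $a\in\mathrm{Ann}(t)$ with $\bar a\neq 0$ terminates because $\Lambda_{\mathcal X}(\Gamma_1)$ is a Noetherian domain in which $(X)$ is a proper ideal, so $\bigcap_n(X)^n=0$; this plays the role that the paper's reduction to $m\in M\setminus XM$ plays in their argument.
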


\begin{proof} 
Suppose on the contrary that $0\neq m\in M_{\mathrm{tor}}$ is a torsion element. For sufficiently small $\mathcal X'\subset \mathcal X$, the affinoid domain $\cO_{\mathcal X'}$ is a PID. Let us choose $\mathcal X'$ with that property and assume until the end of our proof that $\mathcal X=\mathcal X'$.   Since $\LL_{\mathcal X}$ is a UFD, we may choose $m$ so that $m$ us annihilated by an irreducible element $0\neq f\in \LL_{\mathcal X}$. Note that since $M[X]=0$, the irreducible element $f$ cannot be an associate of $X$ and moreover, we can assume without loss of generality that $m\in M\setminus XM$. 

Let us write $f=\sum_{n=0}^\infty a_n(X)(\gamma_1-1)^n$ where $a_n(X)\in \cO_{\mathcal X}$. Since $M_{x_0}$ is torsion-free as a  $\LL (\Gamma_1) [1/p]$-module, we infer that $a_0(0)=0$. Indeed, since we have $f\cdot (m+XM)=a_0(0)(m+XM)\in (M_{x_0})_{\rm tor}=\{0\}$ and as $m+XM\neq 0$ as an element of the torsion-free module $M/XM=M_{x_0}$, the required conclusion that $a_0(0)=0$ follows. This in turn shows that $X\mid f$. Since $f$ is irreducible, this in turn implies that $f$ is an associate of $X$, contrary to the discussion in the previous paragraph. This concludes the proof of our lemma.
\end{proof}

\subsubsection{} \label{subsubsec_662_CK1} Fix a $p$-stabilized normalized eigenform $f\in S_{k_0+2}(\Gamma_1(N)\cap \Gamma_0(p))$ where   $(N,p)=1$. Let us denote by $x_0$ the corresponding point of the cuspidal eigencurve and set
\[
\begin{aligned}
&
H^1_{\Iw} \left (\ZZ [1/S],V^\prime_{\mathcal X} (1) \right ):= H^1 \left (\ZZ [1/S],V^\prime_{\mathcal X}\widehat\otimes_{\Zp} \Lambda^{\iota} (1) \right ),\\
&
H^1_{\Iw} \left (\ZZ [1/S],V^\prime_{f} (1) \right ):= H^1 \left (\ZZ [1/S],V^\prime_{f}\widehat\otimes_{\Zp} \Lambda^{\iota} (1) \right )\,.
\end{aligned}
\]

\begin{lemma} 
\label{lemma_H1_Iw_torsion_free_small_X}
There exists a neighborhood $\mathcal X$ of $x_0$ such that $H^1_{\Iw} \left (\ZZ [1/S],V^\prime_{\mathcal X}(1)\right )$ is torsion-free  as a $\Lambda_{\mathcal X}(\Gamma_1)$-module. 
\end{lemma}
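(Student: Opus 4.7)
The plan is to apply Lemma~\ref{lemma: elimination of torsion} with $M:=H^1_{\Iw}(\ZZ[1/S],V'_{\mathcal X}(1))$, regarded as a $\Lambda_{\mathcal X}(\Gamma_1)$-module by restriction from its natural $\Lambda_{\mathcal X}$-structure. Finite generation of $M$ over $\Lambda_{\mathcal X}(\Gamma_1)$ follows from the perfectness of the complex of continuous cochains computing the Galois cohomology over $\ZZ[1/S]$ with coefficients in the $\Lambda_{\mathcal X}$-module $V'_{\mathcal X} \widehat\otimes_{\ZZ_p}\Lambda^\iota(1)$. It then remains to verify that (a) $M[X]=0$ and (b) $M/XM$ is torsion-free as a $\Lambda(\Gamma_1)[1/p]$-module.

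The key device is the short exact sequence of $G_{\QQ,S}$-modules
\[
0 \longrightarrow V'_{\mathcal X} \xrightarrow{\times X} V'_{\mathcal X} \longrightarrow V'_{\mathcal X,x_0} \longrightarrow 0,
\]
which is exact because $V'_{\mathcal X}$ is free over $\mathcal O_{\mathcal X}$ and $(X)=\mathfrak m_{x_0}$ in $\mathcal O_{\mathcal X}$ (as $X^e=Y$ lies in $(X)$, while $\mathcal O_{\mathcal X}/(X)=E$). Proposition~\ref{lemma_Adj_diagram_eigencurve}(i) provides a canonical $G_{\QQ,S}$-equivariant identification $V'_{\mathcal X,x_0}\simeq V'_f$ for $f=f_{x_0}$. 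Since $V'_{\mathcal X}$ is $\mathcal O_{\mathcal X}$-free, the sequence remains exact after $\widehat\otimes_{\ZZ_p}\Lambda^\iota(1)$ (everything becoming free over $\Lambda_{\mathcal X}$), and the long exact sequence in $G_{\QQ,S}$-cohomology yields
\[
\cdots \longrightarrow H^0_{\Iw}(\ZZ[1/S],V'_f(1)) \longrightarrow M \xrightarrow{\times X} M \longrightarrow H^1_{\Iw}(\ZZ[1/S],V'_f(1)) \longrightarrow \cdots.
\]

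Two classical inputs concerning $V'_f$ then close the argument. First, the absolute irreducibility of $V'_f$ as a $G_\QQ$-representation (a standard property of cuspidal eigenforms of weight $\geqslant 2$) forces $(V'_f(1))^{G_{\QQ(\zeta_{p^\infty})}}=0$: a nontrivial subspace of invariants would be $G_\QQ$-stable since $\QQ(\zeta_{p^\infty})/\QQ$ is Galois, contradicting irreducibility. Hence $H^0_{\Iw}(\ZZ[1/S],V'_f(1))=0$; this gives (a) directly and produces an embedding
\[
M/XM \hookrightarrow H^1_{\Iw}(\ZZ[1/S],V'_f(1)).
\]
Second, the classical global Iwasawa-theoretic torsion statement (in the spirit of \cite{perrinriou94}) identifies the $\Lambda$-torsion of $H^1_{\Iw}(\ZZ[1/S],T)$ for any $G_{\QQ,S}$-stable $\ZZ_p$-lattice $T\subset V'_f(1)$ with $T^{G_{\QQ(\zeta_{p^\infty}),S}}$, which vanishes by the same normal-subgroup argument. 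Thus $H^1_{\Iw}(\ZZ[1/S],V'_f(1))$ is $\Lambda$-torsion-free; and since $\Lambda$ is free of rank $|\Delta|$ over $\Lambda(\Gamma_1)$, torsion-freeness persists after restriction to $\Lambda(\Gamma_1)[1/p]$. The embedding above now transfers the property to $M/XM$, verifying (b). Lemma~\ref{lemma: elimination of torsion} then produces the required $\mathcal X'\subset \mathcal X$.

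The main technical obstacles, as I expect them, are not any of the individual homological moves but rather the bookkeeping: securing the finite generation of $M$ over $\Lambda_{\mathcal X}(\Gamma_1)$, justifying that completed tensoring with $\Lambda^\iota(1)$ preserves the short exact sequence (which is available precisely because $V'_{\mathcal X}$ is a free $\mathcal O_{\mathcal X}$-module), and reconciling the definitions of Iwasawa cohomology as continuous-cochain cohomology versus the inverse-limit description along the cyclotomic tower, so that the classical torsion-free statement applies in the form used here.
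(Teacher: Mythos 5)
Your proof is correct, and it follows the paper's own argument almost line for line: same reduction to Lemma~\ref{lemma: elimination of torsion}, same short exact sequence $0\to V'_{\mathcal X}\xrightarrow{X}V'_{\mathcal X}\to V'_{f}\to 0$ (using $V'_{\mathcal X,x_0}\simeq V'_f$), same long exact sequence giving $M[X]=0$ via $H^0_{\Iw}(\ZZ[1/S],V'_f(1))=0$ and the embedding $M/XM\hookrightarrow H^1_{\Iw}(\ZZ[1/S],V'_f(1))$.

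The one place where you genuinely diverge from the paper is the input used to show that $H^1_{\Iw}(\ZZ[1/S],V'_f(1))$ is $\Lambda$-torsion-free. The paper invokes Kato's Theorem~12.4, which asserts that this module is \emph{free of rank one} over $\Lambda[1/p]$ --- a deep theorem whose proof runs through the Euler system of Beilinson--Kato elements. You instead appeal to the elementary structural fact that the $\Lambda$-torsion submodule of $H^1_{\Iw}(\ZZ[1/S],T)$ is isomorphic to $T^{G_{\QQ(\zeta_{p^\infty}),S}}$, and kill that by irreducibility of $V'_f$ (the same normal-subgroup argument that gives the vanishing of $H^0_{\Iw}$). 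Both routes suffice for the lemma, since only torsion-freeness of $M/XM$ is needed; your version is more self-contained and does not presuppose the Euler-system machinery, while the paper's citation is shorter and the freeness of $H^1_{\Iw}(\ZZ[1/S],V'_f(1))$ is anyway used elsewhere in the article (e.g.\ in the proof of Theorem~\ref{thm_interpolative_properties_bis}).

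Two minor remarks. First, you are right to flag finite generation of $M$ over $\Lambda_{\mathcal X}(\Gamma_1)$ as a hypothesis that needs securing; the paper does not spell it out, but it is the standard perfectness/finiteness statement for continuous Galois cohomology over $\ZZ[1/S]$ with coefficients in a Noetherian ring, and your justification is the expected one. Second, your argument that $(V'_f(1))^{G_{\QQ(\zeta_{p^\infty})}}=0$ needs, strictly speaking, to also rule out the possibility that the \emph{entire} space is invariant (so that one cannot simply cite ``a proper $G_\QQ$-stable subspace''); that degenerate case is excluded because a two-dimensional representation trivial on $G_{\QQ(\zeta_{p^\infty})}$ would factor through the abelian quotient $\Gal(\QQ(\zeta_{p^\infty})/\QQ)$, which is incompatible with the known image of a weight $\geqslant 2$ cuspidal Galois representation (or simply with irreducibility over $\overline{E}$). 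This is implicit in your argument but worth making explicit.
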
 
\begin{proof}
We apply Lemma~\ref{lemma: elimination of torsion} to 
$M=H^1_{\Iw} \left (\ZZ [1/S],V^\prime_{\mathcal X}(1)\right )$.  The long exact sequence of Galois cohomology associated to the  short exact sequence 
\[
0\rightarrow  V^\prime_{\mathcal X} \widehat\otimes_{\Zp} \Lambda^{ \iota}   (1) \xrightarrow{X} 
V^\prime_{\mathcal X} \widehat\otimes_{\Zp} \Lambda^{ \iota} (1) \rightarrow V^\prime_{f}\widehat\otimes_{\Zp} \Lambda^{ \iota}  (1)
\rightarrow 0
\]
shows that we have an injection 
$$M\big{/}XM\hookrightarrow H^1_{\Iw} (\ZZ [1/S],V^\prime_{f} (1) )\,.$$
By Kato's fundamental result (c.f. \cite{kato04}, Theorem~12.4), the $\LL [1/p]$-module $H^1_{\Iw}  (\ZZ [1/S],V^\prime_{f} (1)  )$ is free of rank one. Therefore $M_{x_0}=M\big{/}XM$ is also torsion-free.
The same long exact sequence together with the fact that 
$ H^0_{\Iw} (\ZZ [1/S],V^\prime_{f} (1))=0$ gives 
$$
M[X]=H^1_{\Iw} \left (\ZZ [1/S],V^\prime_{\mathcal X} (1) \right )[X]={\rm im}\left( H^0_{\Iw} (\ZZ [1/S],V^\prime_{f}(1) )\lra H^1_{\Iw}  (\ZZ [1/S],V^\prime_{\mathcal X} (1)  )\right)=0\,.$$
Thus the hypotheses of Lemma~\ref{lemma: elimination of torsion} are satisfied and one can shrink $\mathcal X$ so as to ensure that $M$ is torsion-free.
\end{proof}

Let $Q(\LL_{\mathcal X})$ (resp., $Q(\LL)$) denote the total quotient  field  of $\LL_{\mathcal X}$ (resp., of $\LL$). It follows from Lemma~\ref{lemma: elimination of torsion} that on shrinking $\mathcal X$ as necessary, one can ensure that the natural $$H^1(\ZZ [1/S],V^\prime_{\mathcal X} (1) )\lra H^1  (\ZZ [1/S],V^\prime_{\mathcal X}(1) )\otimes_{\LL_{\mathcal X}} Q(\LL_{\mathcal X})$$
is an injection.

\subsubsection{}
We consider the product 
\begin{equation}
\label{eqn: E(x)-factor}
{\mathcal E}_N(x)=\underset{\ell \mid  N}\prod (1- a_\ell(\f) \sigma_\ell^{-1})  \in \LL_{\cX}
\end{equation}
as a two variable function in the  weight and cyclotomic variables. 

\begin{lemma}\label{lemma_bad_factors_monodromy_conj}
For each classical point $x$ and integer $m$, we have ${\mathcal E}_N(x,\chi^m)\neq 0$ provided that $m\not\in\{\frac{w(x)}{2}, \frac{w(x)+1}{2}\}$. In particular, we have ${\mathcal E}_N(x,\chi^{\frac{w(x)}{2}+1})\neq 0$.
\end{lemma}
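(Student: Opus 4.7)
The plan is to expand ${\mathcal E}_N(x,\chi^m)$ as an explicit product of bad Euler factors and reduce its vanishing to equalities of the form $a_\ell(f_x) = \ell^m$ at primes $\ell \mid N$. Such equalities will then be ruled out by the classical structure theory of bad Hecke eigenvalues of cuspidal newforms, which amounts to invoking the weight--monodromy theorem for modular Galois representations.

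Concretely, I would first observe that $\sigma_\ell \in \Gamma$ acts on the cyclotomic variable via $\chi^m(\sigma_\ell) = \ell^m$, so that specialization at $(x,\chi^m)$ yields
\[
{\mathcal E}_N(x,\chi^m) \,=\, \prod_{\ell \mid N} \bigl(1 - a_\ell(f_x)\,\ell^{-m}\bigr).
\]
Thus ${\mathcal E}_N(x,\chi^m)=0$ if and only if $a_\ell(f_x) = \ell^m$ for some $\ell \mid N$; since $p\nmid N$, every such prime is distinct from $p$, and $a_\ell(f_x)$ equals the $\ell$-th Hecke eigenvalue of the newform underlying the $p$-stabilization $f_x$.

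The crucial input is then the following consequence of the local Langlands correspondence for $\mathrm{GL}_2(\mathbb Q_\ell)$ together with the weight--monodromy theorem (known for modular forms thanks to Deligne, Langlands--Carayol, and Saito): whenever $\ell$ divides the tame conductor of $f_x$, the Frobenius eigenvalue on the inertia invariants $V_{f_x}^{\prime,I_\ell}$ (which, up to the standard normalization, coincides with $a_\ell(f_x)$) is either zero, or a Weil number whose archimedean absolute values equal $\ell^{w(x)/2}$ (Steinberg-type case) or $\ell^{(w(x)+1)/2}$ (ramified principal series with exactly one unramified twisting character). In particular, if $a_\ell(f_x) = \ell^m$ for some integer $m$, then the rational number $\ell^m$ has absolute value in $\{\ell^{w(x)/2},\ell^{(w(x)+1)/2}\}$, forcing $m \in \{w(x)/2,(w(x)+1)/2\}$. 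The contrapositive yields the first assertion of the lemma.

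For the ``in particular'' statement with $m = w(x)/2+1$ (which is an integer precisely when $w(x)$ is even, the relevant case for the applications in this paper), I would simply verify $w(x)/2+1 \neq w(x)/2$ (trivial) and $w(x)/2+1 \neq (w(x)+1)/2$ (the latter reducing to $w(x)+2 \neq w(x)+1$). Hence this integer lies outside the excluded set and the first part applies. There is no substantial technical obstacle beyond citing the classical dichotomy for the absolute values of bad Hecke eigenvalues; the only point requiring mild care is the identification of $a_\ell(f_x)$ with a Frobenius eigenvalue on $V_{f_x}^{\prime,I_\ell}$ up to the usual sign/normalization convention.
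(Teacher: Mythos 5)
Your proof is correct and follows essentially the same route as the paper: expand $\mathcal{E}_N(x,\chi^m)$ as a product of local factors $(1-a_\ell(f_x)\ell^{-m})$ and invoke the weight–monodromy dichotomy for the bad Hecke eigenvalues (the paper cites Saito's weight–monodromy result directly) to rule out $a_\ell(f_x)=\ell^m$ outside the excluded set. If anything, you are slightly more careful than the paper in explicitly noting the case $a_\ell(f_x)=0$, which the paper's phrasing glosses over even though it is harmless.
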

\begin{proof}
According to \cite[Corollary 2]{SaitoWeightMonodromySupplement} (the Weight-Monodromy Conjecture for modular forms), $a_\ell(f_x)$ is an algebraic integer with complex absolute value either 
${ \ell^{\frac{w(x)}{2}}}$ or ${ \ell^{\frac{w(x)+1}{2}}}$. In particular, we have ${\mathcal E}_N(x,\chi^m)=(1-a_\ell(f_x)\ell^{-m})\neq 0$ whenever $m\not\in\{\frac{w(x)}{2}, \frac{w(x)+1}{2}\}$.
\end{proof}

\subsubsection{}
For an integer $c$, we let $\sigma_c \in \Gamma$ denote the element defined by the requirement that $\chi (\sigma_c)=c$.  We shall consider $\sigma_c$ also as an element of $\Lambda$ in the obvious manner. 
Following Kato,  we define the partial normalization factors 
\begin{equation}
\label{eqn_normalization_factor_in_families_bis}
\begin{aligned}
&\mu_1(c,j)=c^2-c^{-j} \sigma_c,\\
&\mu_2(d,j,x)=d^2-d^{j-w(x)}\sigma_d,\\
&\mu_0 (c,d,j,x):={\mu_1(c,j)}{\mu_2(d,j,x)}  \in \LL_{\mathcal X}\,.&&
\end{aligned}
\end{equation}
Here,  for any $x\in \mathcal{X}$, we define $d^{w(x)}:=d^{k_0}\langle d \rangle^{w(x)-k_0}$\,, where $\langle d\rangle^{w(x)-k_0}:=\exp ((w(x)-{k_0})\log\langle m\rangle )$. We remark that $\langle d\rangle^{w(x)-k_0} = d^{w(x)-k_0}$ whenever $x\in \mathcal{X}^{\rm cl}$, since $w(x)-k_0$ is divisible by $p-1$.
We also define the full normalization factor:  
\begin{equation}
\label{eqn_normalization_factor_in_families}
\mu (c,d,j,x):=\mu_0 (c,d,j,x)  {\mathcal E}_N(x) \in \LL_{\mathcal X}\,.
\end{equation}

\subsubsection{}
To simplify notation, we shall write $ {}_{c,d}\mathbb{BK}_{N}^{[\mathcal X]}(j,\xi)$ in place of  ${}_{c,d}\mathbb{BK}_{N,1}^{[\mathcal X]}(j,\xi).$  When $c,d$ and $j$ are understood, we will write $\mu_?$ in place of $\mu_?(c,d,j,x)$ for $?\in\{0,\emptyset\}$.  We denote by $\LL_{\mathcal X}[\mu^{-1}_0]\subset Q(\Lambda_{\mathcal X})$ the $\LL_{\mathcal X}$-subalgebra generated by $\mu_0(c,d,j,x)^{-1}$. We also denote by $\LL[\mu_{x_0}^{-1}]\subset Q(\LL)$ the $\LL[\frac{1}{p}]$-subalgebra generated by $\mu_0(c,d,j,x_0)^{-1}$.

\begin{defn}
\label{defn: normalized BK elements}
Let us fix $\mathcal W$ so that $H^1_{\Iw} \left (\ZZ [1/S],V^\prime_{\mathcal X} (1)\right )$ is torsion-free as a $\LL_{\mathcal X}$-module (we can choose such $\cW$ thanks to our discussion in \S\ref{subsubsec_662_CK1}). Let us fix integers $c\equiv 1\equiv d\,\,  {\rm mod\, }{N}$ with  $(cd,6p)=1$ and $c^2\neq 1\neq d^2$, and an integer $j\in [0,k_0]$. 
We define the partially normalized Beilinson--Kato elements on setting 
 \[
\mathbb{BK}_{N}^{[\mathcal X]}(j,\xi):=\mu_0(c,d,j,x)^{-1}{}_{c,d}\mathbb{BK}_{N}^{[\mathcal X]}(j,\xi)\in  H^1_{\Iw} \left (\ZZ [1/S],V^\prime_{\mathcal X} (1) \right ) \otimes_{\LL_{\mathcal X}} \LL_{\mathcal X}[\mu^{-1}_0]\,, \qquad \xi \in \mathrm{SL}_2(\ZZ)\,. 
\]
\end{defn}

 The integrality properties of the partial normalizations of the Beilinson--Kato elements are established in Proposition~\ref{prop_partial_normalization_of_BK_elements}, whose proof has been relegated to Appendix~\ref{sec_appendix_integrality}. 

\begin{proposition}
\label{prop_partial_normalization_of_BK_elements} 
Suppose $c,d$ and $j$ are as in Definition~\ref{defn: normalized BK elements}.
\item[i)] The partially normalized Beilinson--Kato element $\mathbb{BK}_{N}^{[\mathcal X]}(j,\xi):=\mu_0(c,d,j,x)^{-1}{}_{c,d}\mathbb{BK}_{N}^{[\mathcal X]}(j,\xi)$ is independent of the choice of $c$ and $d$. 
\item[ii)]  There exists an affinoid neighborhood $\mathcal W$ of $k_0$ so that  $\mathbb{BK}_{N}^{[\mathcal X]}(j,\xi)\in  H^1_{\Iw} \left (\ZZ [1/S],V^\prime_{\mathcal X} (1) \right ) $.
\end{proposition}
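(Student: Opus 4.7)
The plan for (i) is a specialization/density argument. I would reduce the required identity
\[
\mu_0(c',d',j,x)\cdot{}_{c,d}\mathbb{BK}^{[\mathcal X]}_N(j,\xi)\;=\;\mu_0(c,d,j,x)\cdot{}_{c',d'}\mathbb{BK}^{[\mathcal X]}_N(j,\xi)
\]
in $H^1_{\Iw}(\ZZ[1/S],V'_{\mathcal X}(1))$ to classical specializations via Theorem~\ref{thm_control_thm_interpolation_BK}(iv), which identifies the image of ${}_{c,d}\mathbb{BK}^{[\mathcal X]}_N(j,\xi)$ at $x\in\mathcal X^{\rm cl}(E)$ with Kato's element ${}_{c,d}\mathrm{BK}_{Np,\Iw}(f_x^c,j,\xi)$. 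At the classical level, and after further twisting by a finite-order character $\rho$ and an integral power $\chi^r$, the desired equality is exactly Kato's independence-of-$(c,d)$ relation from \cite[\S 13]{kato04} (compare the proof of \cite[Proposition~8.7]{kato04}). Because $H^1_{\Iw}(\ZZ[1/S],V'_{\mathcal X}(1))$ is torsion-free over $\Lambda_{\mathcal X}$ by Lemma~\ref{lemma_H1_Iw_torsion_free_small_X}, and the pairs $(x,\rho\chi^r)$ with $x\in\mathcal X^{\rm cl}(E)$, $\rho\in X(\Gamma)$ of finite order, and $r\in\ZZ$ are Zariski dense in $\mathrm{Spec}\,\Lambda_{\mathcal X}$, the classical identity lifts to the asserted identity over $\Lambda_{\mathcal X}$.

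Granted (i), the plan for (ii) is the following. The partially normalized class $\mathbb{BK}^{[\mathcal X]}_N(j,\xi)$ a priori lies in $H^1_{\Iw}(\ZZ[1/S],V'_{\mathcal X}(1))\otimes_{\Lambda_{\mathcal X}}\Lambda_{\mathcal X}[\mu_0(c,d,j,x)^{-1}]$, but (i) tells us that this element is represented by the \emph{same} element of $H^1_{\Iw}\otimes_{\Lambda_{\mathcal X}}Q(\Lambda_{\mathcal X})$ for every admissible pair $(c,d)$. Using torsion-freeness once more, it therefore suffices to exhibit a finite collection of admissible pairs $(c_i,d_i)$ for which the elements $\{\mu_0(c_i,d_i,j,x)\}_i$ generate the unit ideal of $\Lambda_{\mathcal X}$. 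I would produce such a collection by analyzing the two linear factors separately: $\mu_1(c,j)=c^2-c^{-j}\sigma_c$ is a unit of $\Lambda$ away from the closed subscheme $\{\sigma_c=c^{j+2}\}$, while $\mu_2(d,j,x)=d^2-d^{j-w(x)}\sigma_d$ is a unit of $\Lambda_{\mathcal X}$ away from $\{\sigma_d=d^{w(x)-j+2}\}$; varying $c,d$ through admissible residue classes moves these zero loci around $\mathrm{Spec}\,\Lambda_{\mathcal X}$ and, after finitely many choices, covers every height-one prime.

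The hard part will be the coprimality assertion in (ii). The factor $\mu_2(d,j,x)$ entangles the cyclotomic variable $\sigma_d$ with the weight variable through the expression $\langle d\rangle^{w(x)-k_0}$, so one must check that no ``vertical'' prime of $\Lambda_{\mathcal X}$ coming purely from the weight direction persists across every admissible choice of $d$. The freedom to shrink $\mathcal X$ about $x_0$ --- which is built into the statement --- together with the exclusions $c^2\neq 1\neq d^2$ and the congruences $c\equiv 1\equiv d\pmod{N}$ imposed in Definition~\ref{defn: normalized BK elements}, are what ultimately guarantee that the degenerate weights (of the sort flagged by Lemma~\ref{lemma_bad_factors_monodromy_conj}) can be avoided. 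Once the coprimality of the $\mu_0(c_i,d_i,j,x)$'s is secured, the integrality of $\mathbb{BK}^{[\mathcal X]}_N(j,\xi)$ is automatic and (ii) follows.
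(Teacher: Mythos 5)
Your plan for part (i) is essentially the paper's: reduce the identity to specializations at classical points (the paper only specializes the weight variable $X$, not the cyclotomic one, but both work), invoke Kato's independence-of-$(c,d)$ from~\cite[\S13]{kato04} at the specialized level, and lift using the torsion-freeness furnished by Lemma~\ref{lemma_H1_Iw_torsion_free_small_X} together with the density of classical weights (Lemma~\ref{lemma_tor_free_means_finitely_divisible}). That part is fine.

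Your plan for part (ii), however, rests on a claim that is false. You propose to find finitely many admissible pairs $(c_i,d_i)$ for which the $\mu_0(c_i,d_i,j,x)$ generate the unit ideal of $\Lambda_{\mathcal X}$, by arguing that ``varying $c,d$ through admissible residue classes moves these zero loci around.'' But the zero loci do \emph{not} move: by the computation recorded in Lemma~\ref{lemma_vanishing_locus_mu}, we have
\[
\mu_1(c,j)=-c^{-j}\bigl(\sigma_c-\chi^{j+2}(\sigma_c)\bigr),
\]
so the vanishing locus of $\mu_1(c,j)$ always contains the point $\{\chi^{j+2}\}$, for \emph{every} admissible $c$ (and is exactly this point when $c$ is a primitive root mod $p^2$; for other $c$ it is strictly larger). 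Likewise the vanishing locus of $\mu_2(d,j,x)$ always contains $\{\bbchi_{\rm wt}\chi^{-j+2}\}$. Consequently the ideal generated by \emph{all} the $\mu_0(c,d,j,x)$ is contained in the height-one prime cut out by $\gamma-\chi^{j+2}(\gamma)$, and can never be the unit ideal. The moving-lemma strategy has no traction here: varying $(c,d)$ only enlarges or fixes the zero locus, never shifts it off the common factor.

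What the paper actually does in Proposition~\ref{prop_prop_partial_normalization_of_BK_elements_APPENDIX} is a \emph{direct} divisibility argument for a single fixed choice of primitive roots $c,d$. One specializes ${}_{c,d}\mathbb{BK}^{[\mathcal X]}_N(j,\xi)$ along the exact sequence obtained by quotienting $V'_{\mathcal X}(1)\widehat\otimes\Lambda^\iota$ by $\gamma-\chi^{j+2}(\gamma)$, and proves the resulting class in $H^1(\ZZ[1/S],V'_{\mathcal X}(-j-1))$ vanishes: Kato's integrality at each classical $x$ (in~\cite[\S13.12]{kato04}, which already gives divisibility by $\mu_1(c,j)\,\dot=\,\gamma-\chi^{j+2}(\gamma)$ at each $x$) forces the specialized class into $\mathfrak m_x H^1(\ZZ[1/S],V'_{\mathcal X}(-j-1))$ for all classical $x$, and then the finite-freeness of this $\cO_{\mathcal X}$-module (proved by shrinking $\cX$ so the relevant $L$-values are non-central) kills it via Lemma~\ref{lemma_tor_free_means_finitely_divisible}. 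This exhibits divisibility by $\mu_1(c,j)$. One then repeats the argument for the quotient class, now specializing along $\gamma-\bbchi_{\rm wt}\chi^{-j+2}(\gamma)$ into $H^1(\ZZ[1/S],V'_{\mathcal X}(\bbchi_{\rm wt}^{-1}\chi^{j-1}))$, to obtain divisibility by $\mu_2(d,j,x)$. You would need to replace your coprimality strategy with this sort of direct specialization-and-vanishing argument.
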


\begin{proof}
The first assertion is  Proposition~\ref{prop_independce_on_c_d_j_bis}, whereas the second is Proposition~\ref{prop_prop_partial_normalization_of_BK_elements_APPENDIX}  combined with (i).
\end{proof}

\subsubsection{} We check that $\mu (c,d,j,x)$ agrees with the normalization factor considered in \cite[\S13.9]{kato04}, whenever $x\in \cX^{\rm cl}$. For each newform $g$ of weight $k$ Kato  normalizes the elements that he denotes by 
$$\left({}_{c,d}{\bf z}_{p^n}^{(p)}(g,k,j,\xi,{{\rm prime}(pN)})\right)_{n\geq 1} \in  H^1_{\rm Iw}(\ZZ[1/S], V_{g}).$$ 
These elements are given as the Hecke equivariant projection of the elements
$\left({}_{c,d}{\bf z}_{p^n}^{(p)}(k,k,j,\xi,{{\rm prime}(pN)})\right)_{n\geq 1}$
recalled at the end of Section~\ref{subsec:Beilinson--Kato} to the $g$-isotypic Hecke eigenspace.
He introduces the normalization factor 
\[
\mu_{\rm Kato}(c,d,j,g):=\underbrace{(c^2-c^{k+1-j} \sigma_c) (d^2-d^{j+1}\sigma_d)}_{\mu_{\rm Kato}^{(0)}} \underbrace{\underset{\ell \mid  N}\prod (1- \overline{a}_\ell(g) \ell^{-k} \sigma_\ell^{-1})}_{\mathcal{E}_{\rm Kato}}\,,
\]
see \cite[\S13.9]{kato04}.

Suppose $x\in \mathcal{X}^{\rm cl}(E)$ is a classical point. As we have explained in Corollary~\ref{cor_thm_control_thm_interpolation_BK} (see also Remark~\ref{remark_compare_with_classical_Kato_newforms}), our big Beilinson--Kato element ${}_{c,d}\mathbb{BK}_{N,1}^{[\mathcal X]}(j,\xi)$ interpolates the elements 
$${}_{c,d}\mathrm{BK}_{Np,\Iw}(f_ x^c,j,\xi)\in H^1_{\rm Iw}(\ZZ[1/S], V_{ f_x^c}(w(x)+2))\,,\qquad x\in {\cX}^{\rm cl}(E)$$ 
which coincide with the $f_x^c$-isotypic projection of Kato's element $\left({}_{c,d}{\bf z}_{p^n}^{}(k+j+2,0,j+1,\xi,{{\rm prime}(pN)})\right)_{n\geq 1}$, where $w(x)=k+j$.
Kato's element  $\left({}_{c,d}{\bf z}_{p^n}^{}(k+j+2,0,j+1,\xi,{{\rm prime}(pN)})\right)_{n\geq 1}$ differs from 
\begin{equation}
\label{eqn_kato_wants_to_normalize_0}
\left({}_{c,d}{\bf z}_{p^n}^{}(k+j+2,k+j+2,j+1,\xi,{{\rm prime}(pN)})\right)_{n\geq 1}
\end{equation}
by the twist by $\chi^{w(x)+2}=\chi^{k+j+2}.$
Recall that we denote by  $\Tw_{m} \,:\, \LL\to \LL$ the twisting morphism which is given on the group like elements by  $\gamma\mapsto \chi^{-m}(\gamma)\gamma$. 

Since $f$ is new away from $p$, it follows from \cite[Lemma 2.7]{bellaiche2012} that one can shrink $\mathcal X$ as necessary to ensure that $f_x$ is new away from $p$ and $p$-old for all $x\in \mathcal{X}^{\rm cl}(E) \setminus\{x_0\}$. There are two scenarios:
\begin{itemize}
\item[i)] $f_x$ is a newform (so $x=x_0$ and $f_x=f$, owing to our choice of $\cX$). Then $f_x^c$ is a newform too.

\item[ii)] $f_x$ is the $p$-stabilization of a newform $f_x^\circ=:g$ of level $N$ coprime to $p$ with respect to the root $a_p(f_x)$ of the Hecke polynomial of $g$ at $p$. As we have explained in Remark~\ref{remark_compare_with_classical_Kato_newforms}, $f_x^c$ is a $p$-stabilization of the newform $g^c$ and one naturally identifies ${}_{c,d}\mathrm{BK}_{Np,\Iw}(f_ x^c,j,\xi)$ with ${}_{c,d}\mathrm{BK}_{N,\Iw}(g^c,j,\xi)$. In this scenario, we have $\mu_{\rm Kato}(c,d,j,f_x^c)=\mu_{\rm Kato}(c,d,j,g^c)$.
\end{itemize}
We then have, in both scenarios,
\begin{align*}
\Tw_{k+j+2}(\mu_{\rm Kato}(c,d,j,f_x^c))&=
\Tw_{k+j+2}\left ( {(c^2-c^{k+2} \sigma_c)} {(d^2-d^{j+2}\sigma_d)} 
{\underset{\ell \mid  N}\prod (1- {a}_\ell(f_x)\ell^{-k-j-2}\sigma_\ell^{-1})}\right )\\
&{=}{(c^2-c^{-j} \sigma_c)} {(d^2-d^{j-w(x)}\sigma_d)} 
{\underset{\ell \mid  N}\prod (1-  a_\ell(f_x)\sigma_\ell^{-1})}=:\mu(c,d,j,x)\,.
\end{align*}
In \cite[Section~13]{kato04}, Kato shows that there exists an element 
$\bz (g^c,j,\xi) \in H^1_{\Iw}\left (\ZZ [1/S],V_{g^c}(w(x)+2)\right )$ such that 
\[
{}_{c,d}\mathrm{BK}_{N,\Iw}( g^c,j,\xi)= \mu (c,d,j,x) \,\bz (g^c,j,\xi).
\]
We remark that in Kato's notation,  we have  $\bz (g^c,j,\xi)={\rm Tw}_{w(x)+2}(\bz_\gamma^{(p)})$, where 
$\gamma$ is the  cohomology class associated to $j$ and $\xi.$ Combining this fact with the preceding remarks, we deduce that
\begin{equation}
\label{eqn:comparision with Kato normalized element}
\mathbb{BK}_{N}^{[\mathcal X]}(j,\xi,x)= \mathcal E_N(x) \,\bz (g^c,j,\xi).
\end{equation}


\section{``\'Etale'' construction of $p$-adic $L$-functions}
\label{sec_geom_critical_padicL} 

Our objective in \S\ref{sec_geom_critical_padicL} is to use the $\cO_{\mathcal X}$-adic Beilinson--Kato element we have introduced in Definition~\ref{defn_big_BK_lambdaadicsheaf}(iii) and \ref{defn: normalized BK elements} to give an ``\'etale'' construction of Stevens' two-variable $p$-adic $L$-function in neighborhoods of non-$\theta$-critical cuspidal points on the eigencurve. In particular, we assume throughout \S\ref{sec_geom_critical_padicL} that $\mathcal{X}$ is \'etale over $\mathcal W$ (i.e. $e=1$); see our companion article \cite{BB_CK1_B} for the treatment of the complementary case (which concerns the neighborhoods of $\theta$-critical points on the eigencurve). 

Recall the representations $V_{\mathcal X}$ and $V_{\mathcal X}'$ given as in (\ref{eqn_defn_prop_andreattaiovitastevens2015_Prop318}). We will always assume that $\mathcal X$ is sufficiently small to satisfy  the conditions of Proposition~\ref{prop_partial_normalization_of_BK_elements}.  For each classical $x\in \mathcal X^{\rm cl}(E)$, we denote by $f_x$ the corresponding eigenform. The $\cO_{\mathcal X}$-adic Beilinson-Kato elements ${\mathbb{BK}}^{[\mathcal{X}]}_{N} (j, \xi)$ take coefficients in the cohomology of the universal cyclotomic twist of $V_{\mathcal X}^\prime$. 

Recall also that these representations are equipped with the $\cO_{\mathcal X}$-linear pairing
\[
(\,,\,)\,:\,V_{\mathcal X}'\otimes V_{\mathcal X} \lra \cO_{\mathcal X}
\]
(c.f. \eqref{formula: pairing for overconvergent cohomology over A}, bearing in mind that we have assumed $e=1$), which satisfies the formalism of Section~\ref{sec_Bellaiche_axiomatized}. We will use repeatedly the properties of this pairing summarized in Proposition~\ref{lemma_Adj_diagram_eigencurve}.

Let $f=f_0^{\alpha_0}$ be a $p$-stabilized non-$\theta$-critical cuspidal eigenform of weight $ k_0+2\geqslant 2$ and as before, which corresponds to the point $x_0$ on the cuspidal eigencurve. The eigencurve is \'etale at $x_0$ over the weight space (c.f. \cite{bellaiche2012}, Lemma 2.8).  In other words, $e=1$ and $\cO_{\mathcal X}=\cO_{\mathcal W}$. As we have explained in \S\ref{subsubsecKPX}, the simplification of the local behaviour of the eigencurve in this scenario exhibits itself also in the the $p$-local study (namely, the properties of the triangulation over the eigencurve). The reader might find it convenient to identify ${\mathcal X}$ with a Coleman family $\f$ through $f_0^{\alpha_0}$, so that the overconvergent eigenform $f_x$ that corresponds to a point $x\in \mathcal{X}(E)$ is simply the specialization $\f(x)$ of the Coleman family $\f$.

To simplify notation, we write $\mathscr{R}_{\mathcal X}$ for the relative  Robba ring $\mathscr{R}_{O_{\mathcal X}}$ and $\mathscr{H}_{\mathcal X}(\Gamma)$ for the relative  Iwasawa algebra $\mathscr{H}_{O_{\mathcal X}}(\Gamma)$. The $p$-adic representation $V_{\mathcal X}$ (given as in \eqref{eqn_defn_prop_andreattaiovitastevens2015_Prop318}) verifies the properties \ref{item_C1}--\ref{item_C3} and therefore comes equipped with a triangulation $\bD\subset \bD_{\rm rig,\cO_\mathcal{X}}^\dagger({V_{\mathcal{X}}})$ over the relative Robba ring $\mathscr{R}_{\mathcal X}$, verifying the properties \ref{item_phiGamma1}--\ref{item_phiGamma3}. Fix an $\cO_{\mathcal X}$-basis $\{\eta\}$ of $\DCc(\bD)$ and let $\eta_x\in \DCc(\bD_x)=\DCc(\bD[x])$ denote its specialization at $x\in \mathcal{X}(E)$ (equivalently, at weight $w(x)$).

Since $\cO_{\mathcal X}=\cO_{\mathcal W}$  in this setting (therefore also $\mathscr{H}_{{\mathcal X}}(\Gamma)= \mathscr{H}_{\cO_{\mathcal W}}(\Gamma)$), the $G_{\QQ,S}$-equivariant pairing $(\,,\,)$ in 
{(\ref{formula: pairing for overconvergent cohomology over A})}
is $\cO_{\mathcal X}$-linear. Recall from \eqref{eqn_PR_pairing_Iw} that this pairing induces a canonical $\mathscr{H}_{{\mathcal X}}(\Gamma)$-linear pairing
\begin{equation}
\label{eqn_PR_pairing_Iw_bis}
\left <\,\,,\,\,\right > \,:\,\left(\CH_{{\mathcal X}}(\Gamma)\otimes_{\cO_{\mathcal X}} H^1_{\Iw}(\Qp,{V_{\mathcal X}'(1)})\right)
\otimes  \left(\CH_{{\mathcal X}}(\Gamma)\otimes_{\cO_{\mathcal X}} H^1_{\Iw}(\Qp,{V_{\mathcal X}})^\iota\right) \longrightarrow \CH_{{\mathcal X}}(\Gamma)\,.
\end{equation}
Recall the Perrin-Riou exponential map 
$$\Exp_{\bD,0}: \CR_{{\mathcal X}}^{\psi=0}\otimes_A \DCc (\bD)\lra H^1_{\Iw}(\Qp,\bD)$$
as well as the canonical embedding $H^1_{\Iw}(\Qp,\bD)\hookrightarrow \CH_{{\mathcal X}}(\Gamma)\otimes_{\cO_{\mathcal X}} H^1_{\Iw}(\Qp,V_{\mathcal X})$ that the Fontaine--Herr complex gives rise to, through which we treat the image of $\Exp_{\bD,0}$ as a submodule of $ \CH_{{\mathcal X}}(\Gamma)\otimes_{\cO_{\mathcal X}} H^1_{\Iw}(\Qp,V_{\mathcal X})$. 

Consider the action of the complex conjugation on $H^1_{\Iw}(\ZZ [1/S],V^{\prime}_{\mathcal X}(1))$. For all integers $c,d$ as in Definition~\ref{defn: normalized BK elements}, $0\leqslant j\leqslant k_0$ and $\xi\in \mathrm{SL}_2(\ZZ)$, we denote by ${}_{c,d}\mathbb{BK}_N^{[\mathcal X], \pm}(j, \xi)\in H^1_{\Iw}(\Qp,V_{\mathcal X}(1))$  the $\pm$-parts of the Beilinson--Kato element ${}_{c,d}\mathbb{BK}_N^{[\mathcal X]}(j, \xi)$. We similarly define $\mathbb{BK}_N^{[\mathcal X], \pm}(j, \xi)\in H^1_{\Iw}(\Qp,V_{\mathcal X}(1))$ (c.f. Proposition~\ref{prop_partial_normalization_of_BK_elements}(ii)) for the partially normalized Beilinson--Kato element $\mathbb{BK}_N^{[\mathcal X]}(j, \xi):=\mu_0(c,d,j,x)^{-1}{}_{c,d}\mathbb{BK}_N^{[\mathcal X]}(j, \xi)$

Let us also denote by $\epsilon(j)$ the sign of $(-1)^j$.

\begin{defn}[Arithmetic non-$\theta$-critical $p$-adic $L$-function in two-variables]
\label{def_two_var_padicL_function_bis} 
We set (sic!)
\[
\begin{aligned}
&L_{p,\eta}^{+}(\f;j, \xi):= \left <\res_p\left (\mathbb{BK}_N^{[\mathcal X], \epsilon(k_0-1)}(j, \xi)\right )\,,\, c \circ \Exp_{\bD,0}( \widetilde\eta)^\iota \right >\in \mathscr{H}_{\mathcal X}(\Gamma)\\
&L_{p,\eta}^{-}(\f;j, \xi):=\left <\res_p\left ( \mathbb{BK}_N^{[\mathcal X], \epsilon(k_0)}(j, \xi)\right )\,,\, c \circ \Exp_{\bD,0}( \widetilde\eta)^\iota \right >\in \mathscr{H}_{\mathcal X}(\Gamma)\,,
\end{aligned}
\]
where $\widetilde\eta =\eta\otimes (1+\pi).$
\end{defn}



\begin{defn}
\label{def_two_var_padicL_function_bis_bis} 
For any $x\in \mathcal{X}(E)$, we put  $L_{p,\eta}^{\pm}(\f; j,\xi ,x)=x\circ {}L_{p,\eta}^{\pm}(\f;  j,\xi ,x)  \in \mathscr{H}_E(\Gamma)$. 
For a character $\rho\in X(\Gamma)$, we set $L_{p,\eta}^{\pm}(\f; j,\xi ,x,\rho):=\rho \circ {}L^{\pm}_{p,\eta}(\f;  j,\xi ,x)$.

For $x$ above, we let $f_x^\circ$ denote the newform associated to $f_x$. We write $\alpha(x)$ for the $U_p$-eigenvalue on $f_x$, so that when $f_x$ is $p$-old, it is the $p$-stabilization of the newform $f_x^\circ$ with respect to the root $\alpha(x)$ of the Hecke polynomial of $f_x^\circ$ at $p$. We let $L_{p,\alpha(x)}(f_x^\circ)$ 
denote the Manin--Vi\v{s}ik $p$-adic $L$-function associated to an unspecified choice of $($a pair of$)$ Shimura periods $($see, however, item~\ref{item_Kato_c} in the proof of Theorem~\ref{thm_interpolative_properties_bis} below, where this choice is made explicit$)$.
\end{defn}

The following result (in varying level of generality) has been previously announced in the independent works of Hansen, Ochiai and Wang.

\begin{theorem}
\label{thm_interpolative_properties_bis}
 There exists a neighborhood $\mathcal X$ of $x_0$ such that the following hold true. 

\item[i)] There exist { $(j_+,\xi_+)$ and $(j_-,\xi_-)$ such that $L_{p,\eta}^{\pm}(\f;j_\pm, \xi_\pm, x_0)$} are nonzero elements of $\CH_E(\Gamma)$.  

\item[ii)] Assume that { $(j_\pm,\xi_\pm)$} satisfy the conditions in (i). Then for each 
$x \in \mathcal{X}^{\rm cl}(E)$ such that $v_p(\alpha (x))<w(x)+1$,  the $p$-adic $L$-functions { $L_{p,\eta}^{\pm}(\f;j_\pm, \xi_\pm, x)$} agree with the Manin--Vi{\v{s}}ik  $p$-adic $L$-function $L_{p,\alpha(x)}(f_x^\circ)$, up to multiplication by $D^\pm {\mathcal E}_N(x)$ where $D^\pm\in E^\times$ and ${\mathcal E}_N(x)$ is the product of Euler factors at bad primes, given as in  \eqref{eqn: E(x)-factor}. 

\item[iii)] Assume that $v_p(\alpha_0)=k_0+1,$ but $f=\f(x_0)$ is not $\theta$-critical. Then { $L_{p,\eta}^{\pm}(\f;j_\pm, \xi_\pm, x_0)$} agree with the one variable $p$-adic $L$-functions of  Pollack--Stevens \cite{PollackStevensJLMS} up to multiplication by $D^\pm {\mathcal E}_N(x)$,  where $D^\pm\in E^\times$ is a constant.

\item[iv)] Let $L_p(\f,\Phi^{\pm}) $ denote the two-variable $p$-adic $L$-functions of Bella\"{\i}che and Stevens associated to modular symbols $\Phi^{\pm}\in \mathrm{Symb}_{\Gamma_p}^\pm(\mathcal X)$ (c.f. \cite{bellaiche2012}, Theorem~3). Then there exist  functions $u^\pm (x)\in O_{\mathcal X}^\times$
such that 
\[
{ L_{p,\eta}^{\pm}(\f;j_\pm, \xi_\pm)=u^\pm(x){{\mathcal E}_N(x)}L_p(\f,\Phi^{\pm})\,.}
\]
\end{theorem}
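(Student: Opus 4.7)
The plan is to reduce (ii) and (iii) to Kato's explicit reciprocity law, deduce (i) from the classical nonvanishing of the $p$-adic $L$-function of $f_{x_0}$, and finally obtain (iv) by density and nonvanishing. Throughout I use that $e=1$ on $\mathcal X$: by Remark~\ref{rem_sp_x_when_e_equals1} the specialization map is simply the canonical projection, so Proposition~\ref{prop_bellaiche_formal_step_1} applies. For (ii), fix $x \in \mathcal X^{\rm cl}(E)$ with $v_p(\alpha(x)) < w(x)+1$. Proposition~\ref{prop_bellaiche_formal_step_1} combined with \eqref{eqn:comparision with Kato normalized element} yields
$$L_{p,\eta}^{\pm}(\f;j,\xi,x) \,=\, \mathcal E_N(x)\,\big\langle \res_p\,\bz(g^c,j,\xi)^{\pm},\, c \circ \Exp_{\bD[x],0}(\widetilde\eta_x)^{\iota} \big\rangle_x\,,$$
where $g^c$ is the antiholomorphic newform associated with $f_x^c$. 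By Proposition~\ref{prop: large exponential map}(iv), the finite-level projection of $\Exp_{\bD[x],0}$ coincides with the Bloch--Kato exponential on $\bD[x]$, so via the duality \eqref{formula: Perrin-Riou pairing} the right-hand side unwinds to the Bloch--Kato dual-exponential evaluation of $\bz(g^c,j,\xi)^{\pm}$ against $\eta_x$. Kato's explicit reciprocity law (\cite[Theorem~16.6]{kato04}) then identifies this with the Manin--Vi\v{s}ik $p$-adic $L$-function $L_{p,\alpha(x)}(f_x^\circ)$ up to a nonzero constant $D^{\pm} \in E^\times$ encoding the Shimura-period conventions; the pairs $(j_\pm,\xi_\pm)$ are chosen so that the $\pm$-eigenspace pairings involved are nondegenerate.

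Part (iii) follows by rerunning the same computation at $x_0$ in the setting $v_p(\alpha_0)=k_0+1$: although the Manin--Vi\v{s}ik construction fails at this slope, the pairing on the left is still defined, and via the comparison isomorphism of Proposition~\ref{prop_andreattaiovitastevens2015_Prop318}(i) it represents a canonical functional on $\mathrm{Symb}_{\Gamma_p}^{\pm}(\bV_{k_0}(E))$ that characterizes the Pollack--Stevens $p$-adic $L$-function of \cite{PollackStevensJLMS} (again up to a constant multiple of $\mathcal E_N(x_0)$). Part (i) is then immediate: the Manin--Vi\v{s}ik (respectively Pollack--Stevens) $p$-adic $L$-function of $f_{x_0}$ is classically nonzero, while $\mathcal E_N(x_0)\in \CH_E(\Gamma)$ is nonzero by Lemma~\ref{lemma_bad_factors_monodromy_conj}.

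For (iv), Bella\"iche--Stevens (\cite[Theorem~3]{bellaiche2012} together with Theorem~\ref{thm_bellaiche}) supply $L_p(\f,\Phi^{\pm})\in\mathscr H_{\mathcal X}(\Gamma)$ interpolating $L_{p,\alpha(x)}(f_x^\circ)$ up to a period ratio at classical non-critical $x$. Comparing with (ii), the ratio $u^{\pm}(x) := L_{p,\eta}^{\pm}(\f;j_\pm,\xi_\pm)\big/\big(\mathcal E_N(x)\,L_p(\f,\Phi^{\pm})\big)$ is determined at each classical non-critical $x$ by the two period conventions alone, so it defines a function in $\Frac(\cO_{\mathcal X})$; its nonvanishing at $x_0$ (granted by (i)) together with the integrality statement of Proposition~\ref{prop_partial_normalization_of_BK_elements}(ii) allows us to shrink $\mathcal X$ so that $u^{\pm}\in\cO_{\mathcal X}^\times$. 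The main obstacle is the careful bookkeeping in (ii): tracing Kato's Siegel-unit normalizations through Theorem~\ref{thm_control_thm_interpolation_BK}(iv), the Perrin-Riou pairing \eqref{formula: Perrin-Riou pairing}, and the Shimura-period conventions with enough precision to certify that $D^{\pm}\neq 0$ and is independent of $x$; a secondary subtlety in (iv) is verifying that $u^{\pm}$ is genuinely regular (not merely meromorphic) at $x_0$, which rests on the rank-one freeness results of Theorem~\ref{thm_bellaiche} and Proposition~\ref{prop_andreattaiovitastevens2015_Prop318}(ii).
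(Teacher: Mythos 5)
Your outline follows a different logical order than the paper, and that ordering creates a genuine gap. You prove $(ii)\Rightarrow(iii)\Rightarrow(i)\Rightarrow(iv)$, whereas the paper proves $(i)\Rightarrow(ii)\Rightarrow(iv)\Rightarrow(iii)$. The difference is not cosmetic. In the paper, (i) is established \emph{directly} from Kato's results: existence of $(j_\pm,\xi_\pm)$ making $L_{p,\eta}^{\pm}(\f;j_\pm,\xi_\pm,x_0)$ nonzero follows from the explicit reciprocity law combined with the Ash--Stevens theorem, the nonvanishing results of Jacquet--Shalika and Rohrlich, and the fact (Lemma~\ref{lemma_bad_factors_monodromy_conj}) that $\mathcal E_N(x_0)$ is a non-zero-divisor; and the fact that $H^1_{\Iw}(\ZZ[1/S],V'_x(1))$ is free of rank one over $\Lambda[1/p]$ lets one compare any two choices of $(j,\xi)$ by a scalar. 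This is precisely what lets one pin down the constants $D^\pm$ and rule out that they vanish. Your proposal, by contrast, asserts in (ii) that the identification with Manin--Vi\v{s}ik holds up to a \emph{nonzero} constant $D^\pm$ and that one chooses $(j_\pm,\xi_\pm)$ ``so that the $\pm$-eigenspace pairings involved are nondegenerate'' --- but the nondegeneracy of the pairing is not the issue; the constant can still vanish for a poor choice of $(j,\xi)$, because Kato's class $\delta(g,j,\xi)^\pm$ may vanish. The nonvanishing input is Rohrlich's theorem plus Kato's reciprocity law, not a pairing nondegeneracy statement.

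The circularity shows up most acutely when $v_p(\alpha_0)=k_0+1$: there Manin--Vi\v{s}ik does not apply at $x_0$, so your derivation of (i) must rely on (iii); but your (iii) is asserted by saying the Perrin-Riou pairing ``represents a canonical functional $\ldots$ that characterizes the Pollack--Stevens $p$-adic $L$-function'' via Proposition~\ref{prop_andreattaiovitastevens2015_Prop318}(i), without any proof that this functional coincides with the one Pollack--Stevens use. In the paper this identification is exactly the content of (iv) --- the two-variable comparison with $L_p(\f,\Phi^\pm)$, specialized at $x_0$ --- and (iv) is proved \emph{using} (i) and (ii). So you would be assuming the conclusion. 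The paper sidesteps this entirely: (iii) is a one-line corollary of (iv), so no independent characterization of the Pollack--Stevens $L$-function is needed. Two smaller issues: in (ii) the paper also needs to extend the non-vanishing of $L_{p,\eta}^{\pm}(\f;j_\pm,\xi_\pm,\cdot)$ from $x_0$ to a full neighborhood (it does this by finding, for each branch character $\omega^i$, a finite-order twist $\chi_i$ with $L_{p,\eta}^{+}(\f;j_+,\xi_+,x_0,\omega^i\chi_i)\ne 0$ and using analytic continuation in $x$), a step your proposal omits; and in (iv) your appeal to Proposition~\ref{prop_partial_normalization_of_BK_elements}(ii) to show $u^\pm$ is regular is misplaced --- that proposition concerns integrality of the partially normalized Beilinson--Kato element, whereas the paper simply compares both sides as power series with coefficients in $\cO_{\mathcal X}$ and reads off $u^\pm\in\cO_{\mathcal X}$ with $u^\pm(x_0)\ne 0$, then shrinks $\mathcal X$.
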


\begin{proof}
Let  $x \in \mathcal{X}^{\rm cl}(E)$ be a classical point as in the statement of the theorem.
Let $g$ denote the unique newform which admits $f_x$ as a $p$-stabilization if $f_x$ is $p$-old 
and $g=f_x$ if $f_x$ is new. By \eqref{eqn:comparision with Kato normalized element},  the specialization of the big Beilinson--Kato element at $x$ can be compared with the normalized Beilinson--Kato element for $g$:
\[
\mathbb{BK}_{N}^{[\mathcal X]}(j,\xi,x)= \mathcal E_N(x) \,\bz (g^c,j,\xi).
\]
Since Perrin-Riou's exponential map commutes with base-change, the specializations $L_{p,\eta}^{\pm}(\f;j, \xi, x)$ of $p$-adic $L$-functions $L_{p,\eta}^{\pm}(\f;j, \xi)$ 
coincide with one variable Perrin-Riou's $L$-functions constructed using the element $\mathbb{BK}_N^{[\mathcal X]}(j, \xi,x)$:
\[
\begin{aligned}
&L_{p,\eta}^{+}(\f;j, \xi, x):=\left <\res_p\left ( \mathbb{BK}_N^{[\mathcal X], \epsilon(k_0-1)}(j, \xi,x)\right )\,,\, c \circ \Exp_{\bD_x,0}( \widetilde\eta_x)^\iota \right >_x\in \mathscr{H}_{E}(\Gamma)\,,\\
&L_{p,\eta}^{-}(\f;j, \xi, x):=\left <\res_p\left ( \mathbb{BK}_N^{[\mathcal X], \epsilon(k_0)}(j, \xi,x)\right )\,,\, c \circ \Exp_{\bD_x,0}( \widetilde\eta_x)^\iota \right >_x\in \mathscr{H}_{E}(\Gamma)\,.
\end{aligned}
\]
We identify $\bz (g^c,j,\xi)$ with an element of $H^1_{\Iw}(\ZZ [1/S], V'_x(1))$
{\it via} the isomorphism $V'_x(1)\simeq V_{g}(w(x)+2)$ and define 
\[
\begin{aligned}
&L_{p,\eta}^{+}(g, j, \xi):=\left <\res_p\left ( \bz (g^c,j,\xi)^{\epsilon(k_0-1)}\right )\,,\, c \circ \Exp_{\bD_x,0}( \widetilde\eta_x)^\iota \right >_x\in \mathscr{H}_{E}(\Gamma)\,,\\
&L_{p,\eta}^{-}(g, j, \xi):=\left <\res_p\left ( \bz (g^c,j,\xi)^{\epsilon(k_0)}\right )\,,\, c \circ \Exp_{\bD_x,0}( \widetilde\eta_x)^\iota \right >_x\in \mathscr{H}_{E}(\Gamma).
\end{aligned}
\]
Therefore 
\begin{equation}
\label{eqn:comarision p-adic L-functions}
L_{p,\eta}^{\pm}(\f;j, \xi, x)=\mathcal E_N(x) L_{p,\eta}^{\pm}(g, j, \xi)\,.
\end{equation}
The following statements concerning $L_{p,\eta}^{\pm}(\f;j, \xi, x)$  are proved by Kato:
\begin{itemize}[leftmargin=*]
\item[\mylabel{item_Kato_a}{\bf a)}] There exist { $(j_+,\xi_+)$ and $(j_-,\xi_-)$} such that { $L_{p,\eta}^{+}(\f;j_+, \xi_+, x)$ and $L_{p,\eta}^{-}(\f;j_-, \xi_-, x)$} are non-zero functions. This follows from Kato's explicit reciprocity law \cite[Theorem~6.6]{kato04} together with Ash--Stevens theorem (see \cite{kato04}, Theorem~13.6), the non-vanishing results of Jacquet--Shalika and Rohrlich (see \cite{kato04}, Theorem~13.5) and the fact that the product ${\mathcal E}_N(x)\in \mathscr{H}_E(\Gamma)$ of bad Euler factors is a non-zero-divisor (c.f. Lemma~\ref{lemma_bad_factors_monodromy_conj}). 

\item[\mylabel{item_Kato_b}{\bf b)}]  For any pair $(j,\xi),$ there exist constants $C^{\pm}=C^{\pm}(j,\xi)\in E$ such that { $L_{p,\eta}^{\pm}(g;j, \xi, x)=C^\pm L_{p,\eta}^{\pm}(g;j_\pm, \xi_\pm, x)$.}  This follows from Kato's explicit reciprocity law and the fact that 
$H^1_{\Iw}(\ZZ [1/S], V'_x(1))$ is free of rank one over $\LL [1/p].$ See \cite[\S 13.9]{kato04}. 

\item[\mylabel{item_Kato_c}{\bf c)}]   If $v_p(\alpha (x))<w(x)+1,$ the functions { $L_{p,\eta}^{\pm}(g;j_\pm, \xi_\pm)$} agree  with the  Manin--Vi\v sik $p$-adic $L$-functions associated to the  Shimura periods that come attached to the choice of the classes  { $\delta (g,j_\pm,\xi_\pm)^\pm$} (see \cite{kato04} for precise definitions). 
\end{itemize}

Plugging in $x=x_0$ in  \eqref{eqn:comarision p-adic L-functions} we have 
$$ L_{p,\eta}^{\pm}(\f;j, \xi, x_0)=\mathcal E_N(x) L_{p,\eta}^{\pm}(g, j, \xi)\,.$$ 
Applying \ref{item_Kato_a} and \ref{item_Kato_b}, and noticing  that  ${\mathcal E}_N(x_0)\in \mathscr{H}_E(\Gamma)$  is a non-zero-divisor (c.f. Lemma~\ref{lemma_bad_factors_monodromy_conj}), the proof of Part (i) follows. 

Let us choose { $(j_+,\xi_+)$} as in (i), so that { $L_{p,\eta}^{+}(\f;j_+, \xi_+, x_0)$} is non-zero. In particular, for each even integer $1\leq i \leq p-1$, there exists a character $\chi_i\in X(\Gamma_1)$ so that { $L_{p,\eta}^{+}(\f;j_+, \xi_+, x_0, \omega^i\chi_i)\neq 0$}. Since { $L_{p,\eta}^{+}(\f;j_+, \xi_+, x, \omega^i\chi_i)$} is analytic in the variable $x$ (where $\omega$ is the Teichm\"uller character), there exists a neighborhood $\mathcal X$ of $x_0$ such that { $L_{p,\eta}^{+}(\f;j_+, \xi_+, x)$} is a non-zero-divisor of $\mathscr{H}_E(\Gamma)^+$, for all $x\in \mathcal X$. Similarly, one checks that { $L_{p,\eta}^{-}(\f;j_-, \xi_-, x)$} is a non-zero-divisor of $\mathscr{H}_E(\Gamma)^-$. Combining these observations with \ref{item_Kato_a}, \ref{item_Kato_b} and \ref{item_Kato_c} above, we conclude the proof of (ii). 

We next prove  Part (iv). Using the Amice transform, we can consider the functions $L_p(\f,\Phi^\pm)$
as elements of {$\CH_{\mathcal X}(\Gamma)$}. It follows from \cite[Theorem~3]{bellaiche2012} for classical non-critical points $x\in \mathcal X(E)$ that $L_p(\f,\Phi^+,x)$ coincides, up to multiplication by a non-zero constant, with the Manin--Vi\v sik $p$-adic $L$-function. We infer from (i) that for classical non-critical  points  $x\in \mathcal X^{\rm cl}(E)$, we have
\[
{ L_{p,\eta}^{+}(\f;j_+, \xi_+, x)  = u_x{\mathcal E}_N(x) L_p(\f,\Phi^+,x)}
\]
for some constant $u_x\in E$, with $u_{x_0}\neq 0$ by the choice of { $(j_+,\xi_+)$}. On expressing { $L_{p,\eta}^{+}(\f;j_+, \xi_+, x)$} and ${\mathcal E}_N(x) L_p(\f,\Phi^+,x)$ as power series with coefficients in $O_{\mathcal X}$, we immediately deduce that there exists a function $u^+(x)\in O_{\mathcal X} $ such that  $u^+(x_0)\neq 0$ and 
\[
{ L_{p,\eta}^{+}(\f;j_+, \xi_+, x)  =u^+(x)  {\mathcal E}_N(x) L_p(\f,\Phi^+,x).}
\]
On shrinking $\mathcal{X}$ as necessary, we can ensure that $u^+(x)$ does not vanish on $\mathcal X.$
The same argument proves that ${ L_{p,\eta}^{-}(\f;j_-, \xi_-, x) } =u^-(x) L_p(\f,\Phi^-,x)$
for some $u^-(x)\in O_{\mathcal X}.$  This concludes the proof of (iv). 

Part (iii) is a direct consequence of (iv) (as per the definition of the Pollack--Stevens $p$-adic $L$-function) and our theorem is proved.
\end{proof}

We note that in the particular case when $\f$ is the Coleman family passing through a newform $f_0=f^{\alpha_0}_0$ of level $\Gamma_0(Np)$ and weight $k_0+2$ with $\alpha_0=a_p(f_0)=p^{k_0/2}$, the conclusions of Theorem~\ref{thm_interpolative_properties_bis} play a crucial role in \cite{benoisbuyukboduk}. When $f=f_0^{\alpha_0}$ has critical slope but not $\theta$-critical, it is also crucially used in the proof of a conjecture of Perrin-Riou in \cite{BPSI}.



\appendix
\section{Integrality of normalizations}
\label{sec_appendix_integrality}

For any $x\in \mathcal{X}$, let us define $d^{w(x)}:=d^{k_0}\langle d \rangle^{w(x)-k_0}$. We define the partial normalization factor
\begin{equation}
\label{eqn_normalization_factor_in_families_bis}
\mu_0 (c,d,j,x):=\underbrace{(c^2-c^{-j} \sigma_c)}_{\mu_1(c,j)}\underbrace{(d^2-d^{j-w(x)}\sigma_d)}_{\mu_2(d,j,x)}  \in \LL_{\mathcal X}
\end{equation}
so that we have 
$$\mu(c,d,j,x)=\mu_0(c,d,j,x) \underbrace{\underset{\ell \mid  N}\prod (1- a_\ell(\f) \sigma_\ell^{-1})}_{{\mathcal E}_N}$$
for the normalization factor $\mu(c,d,j,x)$ given as in \eqref{eqn_normalization_factor_in_families}.

\subsection{The normalization factors revisited} 

Let ${\bbchi}_\cyc:G_\QQ\twoheadrightarrow \Gamma\hookrightarrow\LL^\times$ denote the universal cyclotomic character, so that ${\bbchi}_\cyc^{-1}$ gives the action on $\LL^{\iota}$. We also define the universal cyclotomic character parametrized by the weight space $\bbchi_{\rm wt}$ as the compositum
$$\bbchi_{\rm wt}: G_\QQ\twoheadrightarrow \Gamma\xrightarrow[\chi]{\sim}\ZZ_p^\times\hookrightarrow \ZZ_p[[\ZZ_p^\times]]^\times\hookrightarrow \cO_\cW^\times\,.$$
Let $\gamma\in \Gamma$ denote a fixed topological generator.

\begin{lemma}
\label{lemma_vanishing_locus_mu}
Suppose $c$ and $d$ are integers coprime to $p$. 
\item[i)] We have 
$$-c^{-j}\mu_1(c,j)=\sigma_c-\chi^{j+2}(\sigma_c).$$ 
In particular, if $c$ is a primitive root modulo $p^2$ (so that $\sigma_c$ generates $\Gamma$),  then $\mu_1(c,j)\,\dot{=}\, \gamma-\chi^{j+2}(\gamma)$, where ``$\dot{=}$'' means equality up to a unit multiple.
\item[ii)]  We have 
$$d^{j-w(x)}\mu_2(d,j,x)=\sigma_d-\bbchi_{\rm wt}\chi^{-j+2}(\sigma_d).$$ 
In particular, if $d$ is a primitive root modulo $p^2$ (so that $\sigma_d$ generates $\Gamma$),  then $\mu_2(d,j,x)\,\dot{=}\, \gamma-\bbchi_{\rm wt}\chi^{-j+2}(\gamma)$\,.
\item[iii)] Suppose $c$ and $d$ are primitive roots modulo $p^2$. For each $y\in \cX^{\rm cl}(E)$ with $w(y)\neq 2j$, the elements $\mu_1(c,j)$ and $\mu_2(d,j,y)$ of $\LL\otimes_{\ZZ_p}\cO_E$ are coprime.
\end{lemma}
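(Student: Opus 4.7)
\emph{Proof plan.} Parts (i) and (ii) reduce to direct algebraic manipulations. For part (i), writing
\[
\mu_1(c,j)\;=\;c^2-c^{-j}\sigma_c\;=\;-c^{-j}\bigl(\sigma_c-c^{j+2}\bigr)
\]
and using $\chi(\sigma_c)=c$ yields the first identity after clearing the unit factor. For the ``in particular'' statement, we observe that when $c$ is a primitive root modulo $p^2$, the Teichm\"uller part $\omega(c)\in\Delta$ generates $\Delta$, while $\langle c\rangle=c/\omega(c)\in 1+p\ZZ_p=\Gamma_1$ is a topological generator of $\Gamma_1$; together these show that $\sigma_c$ topologically generates $\Gamma$, so taking $\gamma:=\sigma_c$ gives directly $\mu_1(c,j)\doteq\gamma-\chi^{j+2}(\gamma)$. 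Part (ii) is parallel: one factors $\mu_2(d,j,x)=-d^{j-w(x)}\bigl(\sigma_d-d^{w(x)-j+2}\bigr)$ and uses $\chi(\sigma_d)=d$ together with $\bbchi_{\rm wt}(\sigma_d)=[d]\in\cO_{\cW}^{\times}$, which specializes at $y\in\cX^{\rm cl}(E)$ to $d^{w(y)}$; setting $\gamma:=\sigma_d$ completes the assertion.

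For part (iii), we exploit the isotypic decomposition
\[
\LL\otimes_{\ZZ_p}\cO_E \;\simeq\; \prod_{\eta\in X(\Delta)}\cO_E[[T]],\qquad T=\gamma_1-1,
\]
valid since $\mu_{p-1}\subset\ZZ_p\subset\cO_E$ and $p\nmid (p-1)$. Coprimality in this finite product of PIDs is checked component by component. By parts (i) and (ii), $\mu_1(c,j)$ is a unit multiple of $\sigma_c-c^{j+2}$ and $\mu_2(d,j,y)$ is a unit multiple of $\sigma_d-d^{w(y)-j+2}$. In the $\eta$-component, the former element has constant term $\eta(\omega(c))-c^{j+2}$; since $c^{j+2}\equiv\omega(c)^{j+2}\pmod p$ and $\omega(c)$ generates $\Delta$, this is a non-unit in $\cO_E$ precisely when $\eta=\omega^{j+2}$. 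The symmetric computation shows $\sigma_d-d^{w(y)-j+2}$ is a non-unit only in the $\omega^{w(y)-j+2}$-component. Consequently, in any $\eta$-component where at least one of the two is a unit, coprimality is automatic, and a common non-unit component exists only when $\omega^{2j-w(y)}=1$.

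It remains to handle that last case. In the common non-unit component, both $\sigma_c-c^{j+2}$ and $\sigma_d-d^{w(y)-j+2}$ are Weierstrass polynomials of degree one in $\cO_E[[T]]$ (the primitive-root hypothesis guarantees that the relevant exponents $\log\langle c\rangle/\log\chi(\gamma_1)$ and $\log\langle d\rangle/\log\chi(\gamma_1)$ lie in $\ZZ_p^{\times}$, so that the linear terms in $T$ do not vanish). Each is therefore a unit multiple of $T-\lambda$, for the unique $\lambda\in\pi\cO_E$ identifying the unique continuous character $\psi\colon\Gamma\to\cO_E^{\times}$ at which the element vanishes. A direct computation shows this character is $\chi^{j+2}$ for the first element and $\chi^{w(y)-j+2}$ for the second; they coincide if and only if $\chi^{2j-w(y)}$ is trivial on $\Gamma_1$, which --- as $\chi|_{\Gamma_1}$ has infinite order --- is equivalent to $2j=w(y)$. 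The hypothesis $w(y)\neq 2j$ therefore forces distinct roots, so the two Weierstrass polynomials are coprime in $\cO_E[[T]]$, completing the argument. The main technical obstacle in executing this plan is the careful bookkeeping of the Weierstrass form and the verification that the zero of $\sigma_c-c^{j+2}$ in the $\omega^{j+2}$-component corresponds precisely to the character $\chi^{j+2}$, independently of the choice of $c$.
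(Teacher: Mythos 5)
Your proof is correct and is exactly the kind of ``direct calculation'' the paper invokes without elaboration: parts (i) and (ii) are the right factorizations (indeed the displayed constants $-c^{-j}$ and $d^{j-w(x)}$ in the lemma statement should read $-c^{j}$ and $-d^{w(x)-j}$; only the ``up to unit'' conclusions matter, and your factorizations $\mu_1=-c^{-j}(\sigma_c-c^{j+2})$ and $\mu_2=-d^{j-w(x)}(\sigma_d-d^{w(x)-j+2})$ are what is actually used). For part (iii), your argument via the isotypic decomposition $\Lambda\otimes\cO_E\simeq\prod_{\eta}\cO_E[[T]]$ correctly separates the generic case ($\omega^{2j-w(y)}\neq 1$, where the two elements are non-units in distinct components) from the residual case $\omega^{2j-w(y)}=1$, and your Weierstrass-degree-one analysis there — using that the primitive-root hypothesis forces the relevant $p$-adic exponent to lie in $\ZZ_p^\times$, so the linear coefficient is a unit, and then reading off distinct zeros $\chi^{j+2}(\gamma_1)-1\neq\chi^{w(y)-j+2}(\gamma_1)-1$ from $w(y)\neq 2j$ — is precisely the content the paper's one-line ``Direct calculation'' glosses over.
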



\begin{proof}
Direct calculation.
\end{proof}




\subsection{Dependence on $c$ and $d$}
In this subsection, we prove the first part of Proposition~\ref{prop_partial_normalization_of_BK_elements}.

\begin{proposition}
\label{prop_independce_on_c_d_j_bis}
Assume that $\mathcal X$ is sufficiently small. 
\item[i)]{}
Suppose $c,c',d,d',j$ are auxiliary integers so that $(c,d,j)$ and $(c',d',j)$ are as in Definition~\ref{defn:Kato big-zeta}. Then
$$\mu_0(c',d',j,x)\, {}_{c,d}\mathbb{BK}_{N}^{[\mathcal X]}(j,\xi)= \mu_0(c,d,j,x)\,{}_{c',d'}\mathbb{BK}_{N}^{[\mathcal X]}(j,\xi)\,.$$

\item[ii)]{}
For $c,d,j$ as above, the partially normalized Beilinson--Kato element
$$\mathbb{BK}_{N}^{[\mathcal X]}(j,\xi):=\mu_0(c,d,j,x)^{-1}{}_{c,d}\mathbb{BK}_{N}^{[\mathcal X]}(j,\xi)\in H^1_{\rm Iw}(\ZZ[1/S],V_{\cX}'(1))\otimes_{\LL_\cX}\LL_{\cX}[\mu_0^{-1}]$$
is independent of the choice of $c$ and $d$. Here and elsewhere, $\mu_0$ is a shorthand for $\mu_0(c,d,j)$ when $c,d,j$ are understood.
\end{proposition}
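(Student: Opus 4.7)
Part (ii) is a formal consequence of Part (i): since $\mu_0(c,d,j,x)$ is a unit in $\Lambda_\cX[\mu_0^{-1}]$, the two identities are equivalent. I will therefore focus on Part (i), via a three-step ``interpolation plus rigidity'' strategy: (a) reduce the asserted family identity to one at each classical point of $\cX$; (b) apply Kato's classical normalization theorem at those points; and (c) lift the pointwise identity to the $\cO_\cX$-adic family via rigidity.

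For (a), Corollary~\ref{cor_thm_control_thm_interpolation_BK} and Remark~\ref{remark_compare_with_classical_Kato_newforms}, combined with the identification $V_\cX'(1)_x \simeq V_{f_x^c}(w(x)+2)$ furnished by Proposition~\ref{lemma_Adj_diagram_eigencurve}(i), identify the specialization of ${}_{c,d}\mathbb{BK}_N^{[\cX]}(j,\xi)$ at any $x \in \cX^{\rm cl}(E)$ with Kato's Iwasawa-theoretic Beilinson--Kato class ${}_{c,d}\mathrm{BK}_{Np,\Iw}(f_x^c,j,\xi)$. For (b), Kato's normalization theorem \cite[\S 13]{kato04}, recorded in the discussion preceding equation~\eqref{eqn:comparision with Kato normalized element}, gives
$$
{}_{c,d}\mathrm{BK}_{Np,\Iw}(f_x^c,j,\xi) \;=\; \mu_0(c,d,j,x)\,\mathcal E_N(x)\,\bz(f_x^c,j,\xi),
$$
with $\bz(f_x^c,j,\xi)$ independent of $(c,d)$. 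Combining (a) and (b), the class
$$
\delta \;:=\; \mu_0(c',d',j,x)\,{}_{c,d}\mathbb{BK}_N^{[\cX]}(j,\xi) \;-\; \mu_0(c,d,j,x)\,{}_{c',d'}\mathbb{BK}_N^{[\cX]}(j,\xi) \in \HIw\bigl(\ZZ[1/S],V_\cX'(1)\bigr)
$$
has trivial specialization at every classical point of $\cX$, and it remains to show $\delta = 0$.

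The rigidity step (c) is where the main technical work lies. After shrinking $\cX$ so that Lemma~\ref{lemma_H1_Iw_torsion_free_small_X} applies and $\cO_\cX$ is a PID, the module $M := \HIw(\ZZ[1/S],V_\cX'(1))$ is finitely generated and torsion-free over $\Lambda_\cX(\Gamma_1)$. Choosing a generator of the principal ideal $\mathfrak m_x\subset \cO_\cX$, the associated short exact sequence $0 \to V_\cX' \to V_\cX' \to V_{\cX,x}' \to 0$ yields (via the long exact sequence of Iwasawa cohomology) an injection $M/\mathfrak m_x M \hookrightarrow \HIw(\ZZ[1/S], V_{\cX,x}'(1))$; classical pointwise vanishing of $\delta$ therefore places $\delta$ in $\bigcap_{x \in \cX^{\rm cl}(E)} \mathfrak m_x M$. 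Since a finitely generated torsion-free module over the integral domain $\Lambda_\cX(\Gamma_1)$ embeds into a finite free $\Lambda_\cX(\Gamma_1)$-module $\Lambda_\cX(\Gamma_1)^n$, and since $\bigcap_{x\in\cX^{\rm cl}(E)} \mathfrak m_x=0$ in the Jacobson ring $\cO_\cX$, a coefficient-wise argument on power-series expansions in $\Lambda_\cX(\Gamma_1)\simeq\cO_\cX[[T]]$ gives $\bigcap_x \mathfrak m_x\,\Lambda_\cX(\Gamma_1)^n=0$; thus $\delta=0$, completing Part (i).
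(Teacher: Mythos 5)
Your argument is correct and follows essentially the same path as the paper's: reduce to the pointwise identity at classical $x$ via Kato's normalization (\cite[\S 13.9]{kato04} together with Corollary~\ref{cor_thm_control_thm_interpolation_BK} and Remark~\ref{remark_compare_with_classical_Kato_newforms}), observe this places the difference in $\bigcap_{x}\mathfrak m_x M$ with $M=H^1_{\Iw}(\ZZ[1/S],V'_{\mathcal X}(1))$, and conclude $\delta=0$ from torsion-freeness (Lemma~\ref{lemma_H1_Iw_torsion_free_small_X}) by embedding $M$ into a finite free $\Lambda_{\mathcal X}(\Gamma_1)$-module, which is exactly the content of Lemma~\ref{lemma_tor_free_means_finitely_divisible} that the paper invokes and you re-prove in place.
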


Before we proceed with the proof of Proposition~\ref{prop_independce_on_c_d_j_bis}, we record the following auxiliary lemma. 

\begin{lemma}
\label{lemma_tor_free_means_finitely_divisible}
Suppose $R$ is an integral domain and $M$ is a finitely generated torsion-free $R$-module. Let $\{P_{i}\}_{i\in I}$ be  an infinite collection of prime ideals of $R$ such that $\bigcap_{ i\in I} P_{i}=\{0\}.$ Then,
$$\bigcap_{ i\in I} P_{i} M=\{0\}\,.$$
\end{lemma}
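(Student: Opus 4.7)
The plan is to reduce the statement for a general finitely generated torsion-free module $M$ to the free case, where the claim becomes essentially formal.

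First, I would invoke the standard fact that a finitely generated torsion-free module over an integral domain $R$ embeds into a finitely generated free module: letting $K = \mathrm{Frac}(R)$, the natural map $M \to M \otimes_R K \cong K^n$ is injective (by torsion-freeness), and since $M$ is finitely generated we can clear denominators of a finite generating set by some nonzero $r \in R$ to obtain an injection $M \hookrightarrow R^n$.

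Next, for each $i \in I$, the submodule $P_iM \subseteq M$ is contained in $P_i \cdot R^n = P_i^{\oplus n}$ under this embedding. Intersecting over $i$ and using that direct sums commute with intersections of submodules of the same ambient module, we obtain
\[
\bigcap_{i\in I} P_iM \;\subseteq\; \bigcap_{i\in I} P_i^{\oplus n} \;=\; \Bigl(\bigcap_{i\in I} P_i\Bigr)^{\oplus n} \;=\; 0,
\]
by the hypothesis $\bigcap_{i\in I} P_i = \{0\}$. This concludes the proof.

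There is no real obstacle here; the only subtle point is the first step (embedding $M$ into a free module), which relies only on $R$ being a domain and on $M$ being finitely generated torsion-free. Neither the primality of the $P_i$ nor the infinitude of $I$ is needed for the conclusion — only the vanishing of $\bigcap_{i\in I} P_i$ is used — although these additional hypotheses are natural in the intended application to the situation of Lemma~\ref{lemma: elimination of torsion} and Lemma~\ref{lemma_H1_Iw_torsion_free_small_X}, where one wishes to certify that an element of $M$ annihilated by every $P_i$ (for a suitable infinite collection of height-one primes) must vanish.
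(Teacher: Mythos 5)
Your proof is correct and follows essentially the same strategy as the paper: both reduce to the free case, where $\bigcap_{i} P_i R^n = (\bigcap_{i} P_i)^n = 0$ is immediate. The paper chooses a full-rank free submodule $M_0 \subseteq M$ with $aM \subseteq M_0$ for some nonzero $a$ and scales $m$ by $a$, while you instead embed $M$ into a free module by clearing denominators; these are two presentations of the same reduction, and the argument closes in the same way.
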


\begin{proof}[Proof of Lemma~\ref{lemma_tor_free_means_finitely_divisible}]
 Let $r$ be the rank of $M.$ Then $M$ contains a free submodule $M_0:=\underset{k=1}{\overset{r}\oplus} Re_k$  of rank $r$, and $M/M_0$ is annihilated by multiplication by some $a\in R.$ Let $m\in \bigcap_{i\in I} P_i M$ be any element. Then $am$ belongs to  $\bigcap_{i\in I} P_i M_0$. Since $am$ can be written in a unique way in the form
\[
am= \underset{k=1}{\overset{r}\sum} \alpha_ke_k, \qquad \alpha_k\in R,
\]
we see that $\alpha_k\in  \bigcap_{i\in I} P_i=\{0\}$ for all $k$.

\end{proof}

\begin{proof}[Proof of Proposition~\ref{prop_independce_on_c_d_j_bis}]
We will proceed in two steps.
\item[a)] We consider $H^1_{\rm Iw}(\ZZ[1/S],V_{\cX}'(1))$ as a module 
over $\LL_{\cX}(\Gamma_1).$
For any $x\in \cX^{\rm cl}(E),$ let $\mathfrak m_x\subset \LL_{\cX}(\Gamma_1)$ 
denote the kernel of the evaluation-at-$x$ map
\[
\LL_{\cX}(\Gamma_1)  \lra \LL_E(\Gamma_1),\qquad
\underset{i=0}{\overset{\infty}\sum} a_i(X)(\gamma_1-1)^i \longmapsto
\underset{i=0}{\overset{\infty}\sum} a_i(x)(\gamma_1-1)^i.
\]
It is clear from its definition that $\mathfrak m_x$ is the principal ideal generated by $(X-x).$
Then  $\bigcap_{x\in \cX^{\rm cl}(E)} \frak{m}_x =0.$ 
Combining Lemma~\ref{lemma_H1_Iw_torsion_free_small_X} and Lemma~\ref{lemma_tor_free_means_finitely_divisible},
we deduce for sufficiently small $\cX$ that
$$\bigcap_{x\in \cX^{\rm cl}(E)} \frak{m}_x H^1_{\rm Iw}(\ZZ[1/S],V_{\cX}'(1))=\{0\}\,.$$
\item[b)] It follows from \cite[\S13.9]{kato04} that 
\begin{align*}
\mu_0(c',d',j,x)\, {}_{c,d}\mathbb{BK}_{N}^{[\mathcal X]}(j,\xi)- \mu_0(c,d,j,x)\,{}_{c',d'}\mathbb{BK}_{N}^{[\mathcal X]}(j,\xi) &\in \ker \left( H^1_{\rm Iw}(\ZZ[1/S],V_{\cX}'(1))\to H^1_{\rm Iw}(\ZZ[1/S],V_{x}'(1))\right)\\
&\quad=\frak{m}_x H^1_{\rm Iw}(\ZZ[1/S],V_{\cX}'(1))
\end{align*}
for all $x\in \cX^{\rm cl}$. Now the part i) of the proposition  follows from a).
The part ii) is clear. 
\end{proof}

\subsection{Integrality of partial normalizations}

\subsubsection{}
We will next analyze the regularity of the partially normalized Beilinson--Kato element $\mathbb{BK}_{N}^{[\mathcal X]}(j,\xi)$. This amounts to an analysis of the divisibility of ${}_{c,d}\mathbb{BK}_{N}^{[\mathcal X]}(j,\xi)$ by $\mu_0(c,d,j,x)$. In view of Proposition~\ref{prop_independce_on_c_d_j_bis}, we may (and henceforth will) work with $c$ and $d$ which are both primitive roots mod $p^2$, without any loss.
\begin{proposition}
\label{prop_prop_partial_normalization_of_BK_elements_APPENDIX} 
Suppose $c$ and $d$ are primitive roots modulo $p^2$. There exists an affinoid neighborhood $\mathcal W$ of $k_0$ and a unique element 
$\mathbb{BK}_{N}^{[\mathcal X]}(j,\xi)\in H^1_{\Iw} \left (\ZZ [1/S],V^\prime_{\mathcal X} (1) \right )$  so that 
$${}_{c,d}\mathbb{BK}_{N}^{[\mathcal X]}(j,\xi)= \mu_0(d,j,x)\cdot\mathbb{BK}_{N}^{[\mathcal X]}(j,\xi)\,.$$
\end{proposition}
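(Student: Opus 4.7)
The plan is to prove uniqueness directly from the torsion-freeness of $M := H^1_{\Iw}(\ZZ[1/S], V^\prime_{\mathcal X}(1))$, and then to establish existence by showing separately that $\alpha := {}_{c,d}\mathbb{BK}_N^{[\mathcal X]}(j,\xi)$ is divisible by each of $\mu_1 := \mu_1(c,j)$ and $\mu_2 := \mu_2(d,j,x)$ in $M$, before combining via the coprimality of $\mu_1$ and $\mu_2$. Uniqueness follows at once from Lemma~\ref{lemma_H1_Iw_torsion_free_small_X} (after shrinking $\mathcal X$) together with the fact that $\mu_0 = \mu_1 \mu_2$ is a non-zero-divisor in $\Lambda_{\mathcal X}$, a consequence of Lemma~\ref{lemma_vanishing_locus_mu}.

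For divisibility by $\mu_1 \,\dot{=}\, \gamma - \chi^{j+2}(\gamma)$, I would use the long exact sequence of Galois cohomology associated with
\[
0 \to V^\prime_{\mathcal X}\widehat{\otimes}\,\Lambda^\iota(1) \xrightarrow{\mu_1} V^\prime_{\mathcal X}\widehat{\otimes}\,\Lambda^\iota(1) \to V^\prime_{\mathcal X}(-j-1) \to 0.
\]
After verifying that $H^0(\ZZ[1/S], V^\prime_{\mathcal X}(-j-1)) = 0$ on shrinking $\mathcal X$ (using the irreducibility of $V^\prime_{x_0}$ together with the non-triviality of the Tate twist, then propagating to a neighborhood of $x_0$), the long exact sequence yields an injection $\mathrm{sp}_{\chi^{j+2}} \colon M/\mu_1 M \hookrightarrow H^1(\ZZ[1/S], V^\prime_{\mathcal X}(-j-1))$. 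By Theorem~\ref{thm_control_thm_interpolation_BK}(iv), the further specialization of $\mathrm{sp}_{\chi^{j+2}}(\alpha)$ at any $y \in \mathcal X^{\mathrm{cl}}(E)$ with $w(y) \neq 2j$ coincides with the classical Beilinson--Kato class ${}_{c,d}\mathrm{BK}_{Np, 1}(f_y^c, j, j+2, \xi)$. By Kato's theorem, this equals $\mu(c,d,j,y)(\chi^{j+2}) \cdot \bz(g_y^c, j, \xi)(\chi^{j+2})$, which vanishes because $\mu_1(c,j)(\chi^{j+2}) = 0$. The Zariski density of such $y$ in $\mathcal X$, combined with the $\cO_{\mathcal X}$-module structure of $H^1(\ZZ[1/S], V^\prime_{\mathcal X}(-j-1))$ (after a further shrinking), then forces $\mathrm{sp}_{\chi^{j+2}}(\alpha) = 0$, whence $\alpha \in \mu_1 M$. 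The divisibility by $\mu_2 \,\dot{=}\, \gamma - \bbchi_{\rm wt}\chi^{-j+2}(\gamma)$ proceeds analogously, specializing along the divisor cut out by the universal character $\bbchi_{\rm wt}\chi^{-j+2}$ and invoking Kato's theorem at each classical $y$ with $w(y) \neq 2j$.

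To combine these, write $\alpha = \mu_1 \beta_1 = \mu_2 \beta_2$ with $\beta_1, \beta_2 \in M$. The coprimality of $\mu_1$ and $\mu_2$ in $\Lambda_{\mathcal X}$ (Lemma~\ref{lemma_vanishing_locus_mu}(iii)) implies that the common element $\beta_1/\mu_2 = \beta_2/\mu_1 \in M\otimes_{\Lambda_{\mathcal X}}\mathrm{Frac}(\Lambda_{\mathcal X})$ lies in the localization $M_{\mathfrak p}$ at every height-one prime $\mathfrak p$ of $\Lambda_{\mathcal X}$: at $\mathfrak p \neq (\mu_2)$ one uses the first expression (as $\mu_2$ is a unit in $\Lambda_{\mathcal X, \mathfrak p}$), and at $\mathfrak p = (\mu_2)$ one uses the second. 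To conclude that this common element lies in $M$ itself, I would establish the reflexivity of $M$ by combining $\cO_{\mathcal W}$-flatness of $M$ (achievable after further shrinking) with Kato's fundamental theorem asserting that $H^1_{\Iw}(\ZZ[1/S], V^\prime_f(1))$ is free of rank one over $\Lambda \otimes E$.

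The main obstacle will be the verification that $M$ is reflexive, equivalently locally free in codimension one over the two-dimensional regular ring $\Lambda_{\mathcal X}$, along with the control of the specialization and vanishing arguments globally over $\mathcal X$ rather than merely at $x_0$. Both rely on iterative shrinking of $\mathcal X$ coupled with structure theorems for Iwasawa cohomology of families of Galois representations; particular care is needed because $x_0$ need not be a generic classical point and because the module $M$ carries two directions of deformation (weight and cyclotomic) which must be disentangled to apply the coprimality of $\mu_1$ and $\mu_2$ cleanly.
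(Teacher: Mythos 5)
The genuine gap is in your combination step. You establish $\alpha \in \mu_1 M$ and $\alpha \in \mu_2 M$ independently, then try to deduce $\alpha \in \mu_1\mu_2 M$ from coprimality and reflexivity of $M := H^1_{\Iw}(\ZZ[1/S], V'_{\mathcal X}(1))$ over the two-dimensional ring $\Lambda_{\mathcal X}$. But you never actually establish reflexivity, and the paper only proves torsion-freeness of $M$ (Lemma~\ref{lemma_H1_Iw_torsion_free_small_X}), which is strictly weaker: for a finitely generated torsion-free module over a two-dimensional regular ring, divisibility by two coprime elements does not imply divisibility by their product. (Take $R = k[x,y]$, $M = (x,y) \subset R$, and $\alpha = xy$: both $x$ and $y$ divide $\alpha$ in $M$, yet $xy$ does not, since $1 \notin M$.) Your sketch of the reflexivity argument --- $\cO_{\mathcal W}$-flatness plus Kato's freeness of the fiber at $x_0$ --- would at minimum require fiberwise freeness at a Zariski-dense set of classical points together with rank constancy, and even then a non-trivial local-criterion argument to pass from fibers to the total space; this is exactly the obstacle you flag but do not resolve. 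A minor additional slip: Lemma~\ref{lemma_vanishing_locus_mu}(iii) asserts coprimality of $\mu_1(c,j)$ and the \emph{specialization} $\mu_2(d,j,y)$ in $\LL \otimes \cO_E$ for a fixed classical $y$, not coprimality of $\mu_1$ and $\mu_2$ in the two-variable ring $\Lambda_{\mathcal X}$, so the citation does not cover what you invoke.

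The paper sidesteps all of this by making the two divisibilities \emph{sequential} rather than parallel. Having shown $\alpha = \mu_1\mathfrak{X}_1$, it proves directly that $\mu_2$ divides $\mathfrak{X}_1$ (not $\alpha$) by fiberwise cancellation: at each classical $x$, one has $\mu_1\mathfrak{X}_1(x) = \alpha(x) \in \mu_1\mu_2(x)\, H^1_{\Iw}(\ZZ[1/S], V'_x(1))$ by Kato's integrality, and since $H^1_{\Iw}(\ZZ[1/S], V'_x(1))$ is a torsion-free $\LL[1/p]$-module (Kato's theorem, over a PID), the nonzero factor $\mu_1$ cancels to give $\mathfrak{X}_1(x) \in \mu_2(x)\, H^1_{\Iw}(\ZZ[1/S], V'_x(1))$. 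The same diagram chase and Lemma~\ref{lemma_tor_free_means_finitely_divisible} as in the $\mu_1$-step then upgrade this to $\mathfrak{X}_1 \in \mu_2 M$. The only torsion-freeness ever needed is one-variable, at each classical specialization, where the commutative algebra is elementary. If you replace your abstract combination step with this sequential cancellation --- i.e., prove $\mu_2 \mid \beta_1$ in $M$ rather than trying to merge $\alpha = \mu_1\beta_1 = \mu_2\beta_2$ through reflexivity --- the rest of your argument goes through as written.
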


The uniqueness of $\mathbb{BK}_{N}^{[\mathcal X]}(j,\xi)$ follows from Lemma~\ref{lemma_H1_Iw_torsion_free_small_X} (on shrinking $\cX$ as necessary). The existence will be proved in the \S\ref{subsubsec_Appendix_A32} and \S\ref{subsubsec_Appendix_A32} below.

\subsubsection{}\label{subsubsec_Appendix_A32} We first prove that there exists an affinoid neighborhood $\mathcal W$ of $k_0$ such that 
\begin{equation}
\label{eqn:first part normalization prop}
{}_{c,d}\mathbb{BK}_{N}^{[\mathcal X]}(j,\xi)\in   \mu_1(c,j)\cdot H^1_{\Iw} \left (\ZZ [1/S],V^\prime_{\mathcal X} (1) \right ) 
\end{equation}
for all $\xi \in \mathrm{SL}_2(\ZZ)$. 

Considering the long exact sequence of Galois cohomology induced from
$$0\lra V_{\mathcal{X}}'(-j-1)\xrightarrow{ X} V_{\mathcal{X}}'(-j-1) \lra V_{f}'(-j-1)\lra 0$$
together with the vanishing of  $H^0(\ZZ[1/S],V_f'(-j-1))$ (which follows from the fact that 
$V_f$ is irreducible), we infer that the $\cO_{\mathcal X}$-module $H^1(\ZZ[1/S], V_{\mathcal{X}}'(-j-1))$ has no $X$-torsion. On shrinking $\mathcal{X}$ as necessary, we may therefore ensure that it is torsion free, and therefore free (since $\cO_{\mathcal{X}}$ is a PID) as a $\cO_{\mathcal{X}}$-module. As a matter of fact, on shrinking $\cX$, we can ensure that the $L$-value $L(f_x^c,w(x)-j)$ is non-central for any $x\in \mathcal{X}^{\rm cl}(E) \setminus \{x_0\}$. 
For all such $x$, it follows from \cite[Theorem~14.5]{kato04} and the isomorphism 
$V_{x}'(-j-1)\simeq V_{f_x^c}(w(x)-j)$ that  $H^1(\ZZ[1/S], V_{x}'(-j-1))$ is a $1$-dimensional $E$-vector space. This in turn shows that $H^1(\ZZ[1/S], V_{\mathcal{X}}'(-j-1))$ is a free $\cO_{\mathcal X}$-module of rank one.

The specialization map $\gamma \mapsto \chi^{j+2}(\gamma)$ induces an exact sequence 
\begin{equation}
\nonumber
\label{eqn_Prop617_fix_augment_mu_1}
0\lra V_{\mathcal{X}}'(1)\widehat{\otimes}_{\ZZ_p}\LL^{\iota} \xrightarrow{\gamma-\chi^{j+2}(\gamma)} V_{\mathcal{X}}'(1)\widehat{\otimes}_{\ZZ_p}\LL^{\iota} \xrightarrow{} V_{\mathcal X}'(-j-1)\lra 0,
\end{equation}

which gives rise to an exact sequence 
\[
H^1_{\rm Iw}(\ZZ[1/S], V_{\mathcal{X}}'(1))
\xrightarrow{\gamma-\chi^{j+2}(\gamma)}     H^1_{\rm Iw}(\ZZ[1/S], V_{\mathcal{X}}'(1))\lra H^1(\ZZ[1/S], V_{\mathcal{X}}'(-j-1)). 
\]
For each $x\in \mathcal X^{\mathrm{cl}}(E),$ let $\mathfrak m_x\subset O_{\mathcal X}$ denote
the maximal ideal $(X-x).$ We have a commutative diagram 
\[
\xymatrix{
 &&  & 0\ar[d]\\
  &&  & \mathfrak m_x H^1(\ZZ[1/S], V_{\mathcal{X}}'(-j-1))\ar[d]\\
H^1_{\rm Iw}(\ZZ[1/S], V_{\mathcal{X}}'(1)) \ar[rr]^{\gamma-\chi^{j+2}(\gamma)}  \ar[d]^{\mathrm{sp}_x}
&&H^1_{\rm Iw}(\ZZ[1/S], V_{\mathcal{X}}'(1))\ar[r] \ar[d]^{\mathrm{sp}_x} & H^1(\ZZ[1/S], V_{\mathcal{X}}'(-j-1))\ar[d]^{\mathrm{sp}_x}\\ 
H^1_{\rm Iw}(\ZZ[1/S], V_{x}'(1))
\ar[rr]^{\gamma-\chi^{j+2}(\gamma)} &&H^1_{\rm Iw}(\ZZ[1/S], V_{x}'(1))\ar[r]  &H^1(\ZZ[1/S], V_{x}'(-j-1)), 
}
\]
with exact rows and columns, where the vertical maps that are denoted by ${\rm sp}_x$ are induced by the specialization at $x$. Let us denote by 
\[
{}_{c,d}\mathbb{BK}_{N}^{[\mathcal X]}(j,\xi)\vert_{s=j+2}\in H^1(\ZZ[1/S], V_{\mathcal{X}}'(-j-1))
\]
the image of ${}_{c,d}\mathbb{BK}_{N}^{[\mathcal X]}(j,\xi)$ under the indicated cyclotomic specialization,
and by  
\[
{}_{c,d}\mathbb{BK}_{N}^{[\mathcal X]}(j,\xi, x)\in H^1_{\rm Iw}(\ZZ[1/S], V_{x}'(1))
\]
its image under $\mathrm{sp}_x$. Note that by Corollary~\ref{cor_thm_control_thm_interpolation_BK},
${}_{c,d}\mathbb{BK}_{N}^{[\mathcal X ]}(j,\xi, x)$ coincides with the classical Beilinson--Kato element 
${}_{c,d}\mathrm{BK}_{Np,\Iw}(f_x^c,j,\xi).$ Now Kato's integrality results in \cite[\S13.12]{kato04} (combined with  the discussion in Remark~\ref{remark_compare_with_classical_Kato_newforms} when $f_x$ is $p$-old) show  that 
$${}_{c,d}\mathbb{BK}_{N}^{[\mathcal X ,x]}(j,\xi)\in \mu(c,d,j,x) \cdot H^1_{\rm Iw}(\ZZ[1/S], V_{x}'(1))\subset \mu_1(c,j) H^1_{\rm Iw}(\ZZ[1/S], V_{x}'(1))$$ 
for all $c,d$ as before, integers $j\in [0,k_0]$ and $\xi\in {\rm SL}_2(\ZZ)$. 
Since  $\mu_1(c,j)\,\dot{=}\,\gamma-\chi^{j+2}(\gamma)$ by Lemma~\ref{lemma_vanishing_locus_mu}, 
an easy diagram chase shows that 
\[
{}_{c,d}\mathbb{BK}_{N}^{[\mathcal X]}(j,\xi)\vert_{s=j+2}\in \frak{m}_x H^1(\ZZ[1/S], V_{\mathcal{X}}'(-j-1)).
\]
It follows from Lemma~\ref{lemma_tor_free_means_finitely_divisible}  that 
${}_{c,d}\mathbb{BK}_{N}^{[\mathcal X]}(j,\xi)\vert_{s=j+2}=0.$ Therefore 
\[
{}_{c,d}\mathbb{BK}_{N}^{[\mathcal X]}(j,\xi)\in (\gamma-\chi^{j+2}(\gamma)) \cdot H^1_{\Iw} \left (\ZZ [1/S],V^\prime_{\mathcal X} (1) \right ),
\]
and \eqref{eqn:first part normalization prop} is proved. 

\subsubsection{} 
\label{subsubsec_Appendix_A33}
Let  $\frak{X}_{1,\mathcal{X}}^{(j,\xi)}\in H^1_{\Iw} \left (\ZZ [1/S],V^\prime_{\mathcal X} (1) \right )$ denote an element such that 
$
{}_{c,d}\mathbb{BK}_{N}^{[\mathcal X]}(j,\xi)=\mu_1(c,j)\, \frak{X}_{1,\mathcal{X}}^{(j,\xi)}.
$
We shall prove that there exists an affinoid neighborhood $\mathcal W$ of $k_0$ such that 
\begin{equation}
\label{eqn:normalization proposition part 2}
\frak{X}_{1,\mathcal{X}}^{(j,\xi)}\in   \mu_2(d,j,x)\cdot H^1_{\Iw} \left (\ZZ [1/S],V^\prime_{\mathcal X} (1) \right ). 
\end{equation}
The proof of this part is very similar to the verification of \eqref{eqn:first part normalization prop} in \S\ref{subsubsec_Appendix_A32}, after obvious modifications. 
We consider the Galois representation $V_{\mathcal{X}}'(\bbchi_{\rm wt}^{-1}\chi^{j-1})$,
in place of $V_{\mathcal{X}}'(-j-1)$. Note that the specialization of $V_{\mathcal{X}}'(\bbchi_{\rm wt}^{-1}\chi^{j-1})$
at $x$ is 
\[
V_{x}'(\chi^{-w(x)+j-1})\simeq V_{f_x^c}(j).
\]
 Employing the same argument as in the first paragraph of our proof of (\ref{eqn:first part normalization prop}), one observes that the $\cO_{\mathcal X}$-module $H^1(\ZZ[1/S], V_{\mathcal{X}}'(\bbchi_{\rm wt}^{-1}\chi^{j-1}))$ is free of rank one, on shrinking $\mathcal X$ if necessary.
The cyclotomic specialization $\gamma \mapsto \bbchi_{\rm wt}\chi (\gamma)^{2-j}$ gives an exact sequence 
\begin{equation}
\label{eqn_Prop617_fix_augment_mu_2}
0\lra V_{\mathcal{X}}'(1)\widehat{\otimes}_{\ZZ_p}\LL^{\iota} \xrightarrow{\gamma-\bbchi_{\rm wt}\chi^{-j+2}(\gamma)} V_{\mathcal{X}}'(1)\widehat{\otimes}_{\ZZ_p}\LL^{\iota} \xrightarrow{} V_{\mathcal{X}}'(\bbchi_{\rm wt}^{-1}\chi^{j-1})\lra 0\,.
\end{equation}
 Consider the following commutative diagram with exact rows and columns:
\[
\xymatrix{
 &&  & 0\ar[d]\\
  &&  & \mathfrak m_x H^1(\ZZ[1/S], V_{\mathcal{X}}'(\bbchi_{\rm wt}^{-1}\chi^{j-1}))\ar[d]\\
H^1_{\rm Iw}(\ZZ[1/S], V_{\mathcal{X}}'(1)) \ar[rr]^{\gamma-\bbchi_{\rm wt}\chi^{-j+2}(\gamma)}  \ar[d]^{\mathrm{sp}_x}
&&H^1_{\rm Iw}(\ZZ[1/S], V_{\mathcal{X}}'(1))\ar[r] \ar[d]^{\mathrm{sp}_x} & H^1(\ZZ[1/S], V_{\mathcal{X}}'(\bbchi_{\rm wt}^{-1}\chi^{j-1}))\ar[d]^{\mathrm{sp}_x}\\ 
H^1_{\rm Iw}(\ZZ[1/S], V_{x}'(1))
\ar[rr]^{\gamma-\bbchi_{\rm wt}\chi^{-j+2}(\gamma)} &&H^1_{\rm Iw}(\ZZ[1/S], V_{x}'(1))\ar[r]  &H^1(\ZZ[1/S], V_{x}'(\bbchi_{\rm wt}^{-1}\chi^{j-1})). 
}
\]
Let $\frak{X}_{1,\mathcal{X}}^{(j,\xi)}(x)$ denote the specialization of $\frak{X}_{1,\mathcal{X}}^{(j,\xi)}$ at $x\in \mathcal X^{\mathrm{cl}}(E).$ Since 
\[
\mu_1(c,j)\,\frak{X}_{1,\mathcal{X}}^{(j,\xi)}( x)={}_{c,d}\mathbb{BK}_{N}^{[\mathcal X ,x]}(j,\xi)\in H^1(\ZZ[1/S], V_{x}'(1))
\]
coincides with the classical Beilinson--Kato element 
\begin{equation}
\label{eqn_BK_Kato_specialized_y_vs_levelNp_appendix}
{}_{c,d}\mathrm{BK}_{Np,\Iw}(f_y^c,j,\xi)\in H^1_{\rm Iw}(\ZZ[1/S], V_{x}'(1))\,.
\end{equation}
Kato's integrality results in \cite[\S13.12]{kato04} (combined with the discussion in Remark~\ref{remark_compare_with_classical_Kato_newforms} when $f_x$ is $p$-old) show for such $x$ that 
$${}_{c,d}\mathbb{BK}_{N}^{[\mathcal X ]}(j,\xi,  x)\in \mu_0(c,d,j, x) \cdot H^1_{\rm Iw}(\ZZ[1/S], V'_{x}(1)),$$ 
for all $c,d$ as before, integers $j\in [0,k_0]$ and $\xi\in {\rm SL}_2(\ZZ).$
Since  
\[
\mu_0(c,d,j,{x})=\mu_1(c,j) \mu_2(d,j,x),
\]
and $ H^1_{\rm Iw}(\ZZ[1/S], V'_{x}(1))$ is torsion-free (and even free) $\LL [1/p]$-module, 
we deduce that 
\[
\frak{X}_{1,\mathcal{X}}^{(j,\xi)}(x)\in \mu_2(d,j,x) \cdot H^1_{\rm Iw}(\ZZ[1/S], V'_x(1)).
\]
Let $\frak{X}_{1,\mathcal{X}}^{(j,\xi)}\vert_{s=w-j+2}\in H^1(\ZZ[1/S], V_{\mathcal{X}}'(\bbchi_{\rm wt}^{-1}\chi^{j-1}))$ denote the cyclotomic specialization of $\frak{X}_{1,\mathcal{X}}^{(j,\xi)}.$
Recall  that  $\mu_2(d,j,x)\overset{\cdot}{=}\gamma-\bbchi_{\rm wt}\chi^{-j+2}(\gamma).$ 
Now a diagram chase similar to the one employed in \S\ref{subsubsec_Appendix_A32} shows that
\[
\frak{X}_{1,\mathcal{X}}^{(j,\xi)}\vert_{s=w-j+2}\in\mathfrak m_x H^1(\ZZ[1/S], V_{\mathcal{X}}'(\bbchi_{\rm wt}^{-1}\chi^{j-1})).
\]
Since the intersection of any infinite family of ideals $\mathfrak m_x$ is zero,  
we deduce using Lemma~\ref{lemma_tor_free_means_finitely_divisible} that 
\[
\frak{X}_{1,\mathcal{X}}^{(j,\xi)}\vert_{s=w-j+2} \in \bigcap_{x \in \mathcal{X}^{\rm cl}(E)}  \frak{m}_x H^1(\ZZ[1/S],  V_{\mathcal{X}}'(\bbchi_{\rm wt}^{-1}\chi^{j-1}))=\{0\}
\]
This shows that 
\[
\frak{X}_{1,\mathcal{X}}^{(j,\xi)}\in (\gamma-\bbchi_{\rm wt}\chi^{-j+2}(\gamma))
 H^1_{\Iw}(\ZZ[1/S],  V_{\mathcal{X}}'(1)),
\]
and (\ref{eqn:normalization proposition part 2}) is proved.  This also completes the proof of Proposition~\ref{prop_prop_partial_normalization_of_BK_elements_APPENDIX}. \qed

\begin{remark}
One reason why we need the rather tortuous analysis in Proposition~\ref{prop_prop_partial_normalization_of_BK_elements_APPENDIX}  is that we don't know if the following statement holds true: In the notation of \cite[Theorem 6.6]{kato04}, we have
$$\delta(f,j,\xi)^{\pm}\neq 0$$
for some $j\neq \frac{k_0}{2}\pm 1$, where $\pm$ is the sign of $(-1)^{\frac{k_0}{2}}$.
\end{remark}

\bibliographystyle{amsalpha}
\bibliography{references}

\end{document}